\newcommand{\IC}{\mathbb{C}}
\newcommand{\g}{\mathfrak{g}}
\renewcommand{\l}{\mathfrak{l}}
\newcommand{\p}{\mathfrak{p}}
\renewcommand{\k}{\mathfrak{k}}
\renewcommand{\t}{\mathfrak{t}}
\newcommand{\z}{\mathfrak{z}}
\renewcommand{\a}{\mathfrak{a}}
\renewcommand{\sl}{\mathfrak{sl}}
\newcommand{\C}{\mathcal{C}}
\newcommand{\D}{\mathcal{D}}
\newcommand {\G}{\mathcal G}
\renewcommand{\H}{\mathcal{H}}
\newcommand{\N}{\mathcal{N}}
\renewcommand{\O}{\mathcal{O}}
\renewcommand{\S}{\mathcal{S}} 
\newcommand{\ad}{\mathrm{ad}}
\newcommand{\Ad}{\mathrm{Ad}}
\renewcommand{\exp}{\mathrm{exp}}
\newcommand{\pr}{\mathrm{pr}}
\newcommand{\rank}{\mathrm{rank}}
\newcommand{\sss}{\mathsf{s}}
\newcommand{\ttt}{\mathsf{t}}
\newcommand{\iii}{\mathsf{i}}
\newcommand{\mmm}{\mathsf{m}}
\newcommand{\sll}[1]{\mkern-4mu\mathbin{/\mkern-5mu/}_{\mkern-4mu{#1}}}
\newcommand{\tto}{\;\substack{\longrightarrow\\[-9pt] \longrightarrow}\;}
\newcommand{\semi}{{\mathsf{s}}}
\newcommand{\nilp}{\mathsf{n}}
\newcommand{\reg}{\mathsf{reg}}
\newcommand{\greg}{\mathfrak{g}_{\reg}}
\newcommand\junk[1]{}
\newcommand{\too}{\longrightarrow}
\newcommand{\fp}[2]{\leftindex_{#1}\times_{#2}\,}
\newcommand {\ol}[1]{\overline{#1}}
\numberwithin{equation}{section}
\newtheorem{theorem}{Theorem}[section]
\newtheorem{proposition}[theorem]{Proposition}
\newtheorem{corollary}[theorem]{Corollary}
\newtheorem{lemma}[theorem]{Lemma}
\newtheorem{mtheorem}{Main Theorem}
\theoremstyle{definition}
\newtheorem{definition}[theorem]{Definition}
\newtheorem{example}[theorem]{Example}
\newtheorem{remark}[theorem]{Remark}
\begin{document}
	
	\title[Slices for reductive group actions]{Slices for reductive group actions in algebraic\\ and holomorphic symplectic geometry}
	
	\author[Peter Crooks]{Peter Crooks}
	\author[Rebecca Goldin]{Rebecca Goldin}
	\author[Yiannis Loizides]{Yiannis Loizides}
	\address[Peter Crooks]{Department of Mathematics and Statistics\\ Utah State University \\ 3900 Old Main Hill \\ Logan, UT 84322, USA}
	\email{peter.crooks@usu.edu}
	\address[Rebecca Goldin]{Department of Mathematical Sciences \\ George Mason University \\ 4400 University Drive \\ Fairfax, VA 22030, USA}
	\email{rgoldin@gmu.edu}
	\address[Yiannis Loizides]{Department of Mathematical Sciences \\ George Mason University \\ 4400 University Drive \\ Fairfax, VA 22030, USA}
	\email{yloizide@gmu.edu}

	\subjclass{14J42 (primary); 14L30, 53D17, 53D20 (secondary)}
	\keywords{reductive group, decomposition class, Slodowy slice, Hamiltonian action, symplectic groupoid}
	
	\begin{abstract}
		Symplectic slice theorems elucidate the local structure of symplectic manifolds carrying Hamiltonian actions of compact Lie groups. We generalize these theorems in two natural settings. The first is based on the idea that complex reductive algebraic groups are the natural complex-geometric counterparts of compact Lie groups. Using new definitions of \textit{Poisson} and \textit{symplectic slices}, we prove analogues of the classical symplectic slice theorems for Hamiltonian actions of complex reductive algebraic groups. These analogues have versions in the complex-algebraic and holomorphic categories, and make extensive use of \textit{Slodowy slices} and \textit{decomposition classes} in complex reductive Lie algebras. The starting point for our second setting is the fact that Hamiltonian Lie group actions are special cases of Hamiltonian symplectic groupoid actions. We generalize the classical symplectic slice theorems to the latter case.
	\end{abstract}
	\maketitle
	\setcounter{tocdepth}{2}
	\tableofcontents

	
	\section{Introduction}
	\subsection{Context}\label{Subsection: Context} Hamiltonian Lie group actions play a distinguished role at the interface of mathematical physics, Poisson geometry, and symplectic geometry. A recurring paradigm is that the local structure of such actions is elucidated by their \textit{cross-sections} or \textit{slices} \cite{fla-rat:96,hil-nee-pla:94,con-daz-mol:88,SymplecticFibrations,gui-ste:83,ler-mei-tol-woo,ort-rat:02,los:06,bie:97,cro-ray:19,sja:95}. This paradigm and the associated \textit{slice theorems} underlie crucial advances in Poisson and symplectic geometry over the last several decades, including convexity and connectedness results \cite{ler-mei-tol-woo,lin-sja,lan:18,mei:17,osh-sja,sja}, geometric quantization \cite{sep-ngo,chuah,mei-sja}, log symplectic geometry \cite{bra-kie-mir:23,cro-roe:22,mat-mir:23}, $\mathrm{Spin}^c$-geometry \cite{hoc-son,par,par:03,mei-adv}, symplectic contraction \cite{hil-man-mar,lan:20}, symplectic cutting \cite{ler:95,mar-tha:12}, and symplectic implosion \cite{gui-jef-sja:02,hur-jef-sja}. Such theorems are best understood for Hamiltonian actions of compact Lie groups on real Poisson and symplectic manifolds, where fundamental Weyl chambers often play a significant role. 
	
	In contrast, some of the most modern and intriguing Hamiltonian actions occur in the complex-algebraic and holomorphic categories \cite{bie:97,bry-kos:94,gan-gin:02,kal:06,kro,los:06,bor-mac:81,bea:00,chr-gin,moo-tac:11,her-sch-sea:20,boa:01,ati-bie:02,boa:12,dan-kir-swa:13,may:20,may:22,bie:23,bie:17}. We are thereby motivated to formulate and prove slice theorems for Hamiltonian actions of complex reductive algebraic groups, the natural complex-geometric counterparts of compact Lie groups. Versions of this endeavor have attracted considerable interest; see \cite{sch:17,cro-roe:22,los:06,kal:06}. While the transition from compact to complex reductive groups may sound innocuous for developing slice theories, it introduces several new obstacles; one is the lack of direct, complex-geometric analogues of fundamental Weyl chambers. Perhaps the closest analogues of these chambers are \textit{Slodowy slices} \cite{slo:80} in complex reductive Lie algebras; they feature prominently in the study of nilpotent orbits \cite{slo:80,kos:63}, Poisson deformations \cite{lehn}, and quantizations \cite{gan-gin:02,gin:09,pre:07,amb}. The problem is that Slodowy slices generally do not satisfy the definition of a \textit{slice} in the context of Lie group actions on manifolds. It is therefore natural to seek a broader definition of a \textit{slice} in Hamiltonian geometry, with a view to accomplishing the following: witnessing Slodowy slices and more traditional slices as examples, and providing enough flexibility to prove reasonable analogues of the symplectic slice theorems in algebraic and holomorphic symplectic geometry.
	
	There is another context in which to re-examine the classical slice theorems. To this end, one has notions of a \textit{symplectic groupoid} and Hamiltonian actions thereof \cite{mik-wei:88,kar:86,wei:87,cos-daz-wei:87}. These notions have shaped modern research in Poisson and symplectic geometry, including geometric quantization \cite{wei-xu:91,cat-fel:01,wei:912}, Hamiltonian reduction \cite{cro-may:22,mik-wei:88,cat-zam:07,cat-zam:09,bra-fer}, Lie-theoretic Poisson geometry \cite{cra-fer,cra-fer2}, local normal form results \cite{fer-mar,fre-mar:17,bis-bur-lim-mei,bur-lim-mei}, and mathematical quantum field theory \cite{cro-may:24,cro-may:242,cal:15}. It turns out that Hamiltonian $G$-spaces for a Lie group $G$ are in bijective correspondence with Hamiltonian spaces for the \textit{cotangent groupoid} $T^*G\tto\g^*$, where $\g$ is the Lie algebra of $G$. This fact suggests the possibility of generalizing the classical slice theorems to the setting of Hamiltonian symplectic groupoid actions. 
	
	\subsection{Objectives} The purpose of this manuscript is to systematically broaden and generalize the classical slice theorems in Hamiltonian geometry, along the lines outlined above. Our first objective is to adopt an appropriately encompassing definition of a \textit{slice}. This is accomplished by our definitions of \textit{Poisson slices} and \textit{symplectic slices}. We then pursue the following further objectives. 
	\begin{itemize}
		\item[\textup{(i)}] Broaden the classical slice theorems to Hamiltonian actions of complex reductive algebraic groups. As part of this process, define an appropriate role for Slodowy slices.
		\item[\textup{(ii)}] Generalize the classical slice theorems to Hamiltonian actions of symplectic groupoids.
	\end{itemize}
	
	These further objectives are accomplished via Main Theorems \ref{thm:simplestpcs}, \ref{Theorem: General slices}, and \ref{thm:abs-symp-cr-sec} in Section \ref{Subsection: Main results}.
	
	\subsection{Analogies}\label{Subsection: Analogies}
	As indicated above, much of our work is inspired by analogies between Hamiltonian actions in the smooth and complex-algebraic / holomorphic categories. The following table is intended to give extremely rough, preliminary, and largely definition-free indications of the analogies used and derived in this manuscript. Each row consists of a relevant object in the smooth category, its analogue in the complex-algebraic / holomorphic categories, and an indication of whether this analogy is ``direct" or only ``approximate". More precise details can be found in the balance of our introduction.   
	\vspace{10pt}
	
	\begin{center}
		\scalebox{1}{\begin{tabular}{| c | c | c |} 
				\hline
				\textbf{Smooth category} & \textbf{Algebraic / holomorphic categories} & \textbf{Analogy}\\
				\hline
				compact Lie group $K$; $\k=\mathrm{Lie}(K)$ &  complex reductive algebraic group $G$; $\g=\mathrm{Lie}(G)$ & direct \\ 
				\hline
				orbit-type stratum $\Sigma\subset\k^*$ & decomposition class $\D\subset\g$ & direct \\
				\hline
				fundamental Weyl chamber $\mathfrak{t}^*_{+}\subset\k^*$ & principal Slodowy slice $\S\subset\g$ & approx. \\
				\hline
				open face $\sigma\subset\t^*_{+}$ & subset $e+\z(\l)_{\mathsf{gen}}\subset\g$ & approx. \\
				\hline
				compact torus $A(\xi)\coloneqq K_{\xi}/[K_{\xi},K_{\xi}]$, $\xi\in\k^*$ & algebraic group $A(x)\coloneqq G_{x}/[G_{x_{\semi}},G_{x_{\semi}}]_{x_{\nilp}}^{\circ}$, $x\in\g$ & direct \\
				\hline
				natural slice $\S_{\sigma}\subset\k^*$, $\sigma\subset\t^*_{+}$ open face & natural slice $\S_x\subset\g$, $x\in\g$ & approx.\\
				\hline
				Hamiltonian $K$-space $M$ & Hamiltonian $G$-variety / $G$-space $M$ & direct\\
				\hline
				principal stratum $\Sigma_M\subset\k^*$ & principal decomposition class $\D_M\subset\g$ & direct\\
				\hline
				principal open subset $\mu^{-1}(\Sigma_M)\subset M$ & open dense subset $\mu^{-1}(\D_M)\subset M$ & direct\\
				\hline
				principal face $\sigma_M\subset\mathfrak{t}^*_{+}$ & subset $e+\z(\l)_{\mathsf{gen}}\subset\g$ related to $\D_M$ & approx.\\
				\hline
				principal slice $\mu^{-1}(\sigma_M)\subset M$ & symplectic slice $\mu^{-1}(e+\z(\l)_{\mathsf{gen}})\subset M$ & approx.\\
				\hline
		\end{tabular}}
	\end{center}
	
	\vspace{10pt}
	
	\subsection{Poisson and symplectic slices}\label{Subsection: Poisson slices}
	Suppose that $M$ is a Poisson Hamiltonian $G$-space for a Lie group $G$. We define a \textit{Poisson slice} in $M$ to be a Poisson transversal $S\subset M$ that is transverse to the $G$-orbits in $M$. While this definition is in the smooth category, it has clear analogues in the holomorphic and complex-algebraic categories. If $M$ is symplectic, then a submanifold $S\subset M$ is a Poisson slice if and only if it is a symplectic submanifold that is transverse to the $G$-orbits in $M$. We adopt the term \textit{symplectic slice} for a Poisson slice in a (symplectic) Hamiltonian $G$-space; its analogues in the holomorphic and complex- algebraic categories are clear. Our notion of a \textit{Poisson slice} also turns out to generalize that in \cite{cro-roe:22}. Some prominent examples of Poisson slices are discussed below.
	
	\subsection{The case of Hamiltonian actions of compact Lie groups}\label{Subsection: The case of Hamiltonian actions of compact Lie groups} Our work is inspired by the following seminal results, together with their connections to Poisson and symplectic slices. Let $K$ be a compact connected Lie group with Lie algebra $\k$. Fix a maximal torus $T\subset K$ with Lie algebra $\t\subset\k$, as well as a fundamental Weyl chamber $\t^*_{+}\subset\k^*$. Elements of an open face $\sigma\subset\t^*_{+}$ have a uniform $K$-stabilizer, denoted $K_{\sigma}\subset K$. The following result stems from the works of Condevaux--Dazord--Molino \cite{con-daz-mol:88}, Hilgert--Neeb--Plank \cite{hil-nee-pla:94}, and Lerman--Meinrenken--Tolman--Woodward \cite{ler-mei-tol-woo}.
	
	\begin{theorem}[Principal faces, slices, strata, and open subsets]\label{thm:compactpcs} 
		Suppose that $M$ is a non-empty, connected Hamiltonian $K$-space with moment map $\mu: M \too \k^*$.
		\begin{itemize}
			\item[\textup{(i)}] There exists a unique open face $\sigma_M\subset\t_+^*$ with the property that $\mu(M)\cap \sigma_M$ is dense in $\mu(M)\cap \t_+^*$.
			\item[\textup{(ii)}] The pre-image $\mu^{-1}(\sigma_M)$ is a connected, $T$-invariant, symplectic submanifold of $M$, 
			and the restriction of $\mu$ to $\mu^{-1}(\sigma_M)$ is a moment map for the Hamiltonian $T$-action on $\mu^{-1}(\sigma_M)$.
			\item[\textup{(iii)}] The subgroup $[K_{\sigma_M},K_{\sigma_M}]\subset K$ acts trivially on $\mu^{-1}(\sigma_M)$.
			\item[\textup{(iv)}] The $K$-saturation $K\mu^{-1}(\sigma_M)$ is open and dense in $M$.
		\end{itemize}
	\end{theorem}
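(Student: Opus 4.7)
The plan is to combine the Guillemin--Sternberg symplectic cross-section theorem with the Lerman--Meinrenken--Tolman--Woodward (LMTW) local convexity and connectivity package. The cross-section theorem supplies the following local structure: for each open face $\sigma\subset\t^*_+$, the preimage $Y_\sigma:=\mu^{-1}(\sigma)$ is a (possibly empty) $K_\sigma$-invariant symplectic submanifold of $M$, its $K$-saturation $KY_\sigma=\mu^{-1}(K\cdot\sigma)$ is open in $M$, and $\mu|_{Y_\sigma}$ is a moment map for the induced Hamiltonian $K_\sigma$-action on $Y_\sigma$.

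For part (i), I would invoke the LMTW non-proper convexity theorem: $\mu(M)\cap\t^*_+$ is a locally polyhedral convex subset, so its relative interior, being connected, meets only one open face $\sigma_M\subset\t^*_+$, and $\mu(M)\cap\sigma_M$ is dense in $\mu(M)\cap\t^*_+$. Uniqueness of $\sigma_M$ follows from the pairwise disjointness of distinct open faces. Part (iv) then follows: $K\mu^{-1}(\sigma_M)=\mu^{-1}(K\cdot\sigma_M)$ is open by the cross-section theorem, and the density asserted in (i), together with the standard fact that every $K$-orbit meets $\mu^{-1}(\t^*_+)$, promotes this to density in $M$.

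For parts (ii) and (iii), set $S:=\mu^{-1}(\sigma_M)$ and fix an $\mathrm{Ad}$-invariant inner product to identify $\k\cong\k^*$. Every point of $\sigma_M$ is pointwise fixed by $K_{\sigma_M}$ under the coadjoint action, so $\mu(S)\subset\k^{K_{\sigma_M}}=\z(\k_{\sigma_M})$. Consequently $\langle\mu,\eta\rangle\equiv 0$ on $S$ for every $\eta\in[\k_{\sigma_M},\k_{\sigma_M}]$, which forces the generating vector field of $\eta$ to vanish on $S$; this gives (iii). For (ii), $T\subset K_{\sigma_M}$ is a maximal torus and hence contains the identity component of $Z(K_{\sigma_M})$, so $\z(\k_{\sigma_M})\subset\t$; the moment map for the induced Hamiltonian $T$-action on $S$ is then literally $\mu|_S$. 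Connectedness of $S$ follows from the LMTW fiber-connectivity theorem applied at a point $\xi\in\sigma_M\cap\mu(M)$, together with the convexity of $\sigma_M$.

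The main obstacle is the LMTW convexity/connectivity package itself, since $\mu$ is not assumed to be proper and $M$ is not assumed compact. These global topological inputs—the convex polyhedral structure of the moment image and the connectedness of preimages over faces—are the true content underlying (i) and the connectedness claim in (ii); once they are granted, the cross-section theorem reduces the remaining content of (ii)--(iv) to straightforward structural algebra on the fixed-point subspace $\z(\k_{\sigma_M})$.
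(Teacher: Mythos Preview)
The paper does not give its own proof of this theorem; it is quoted as a background result due to Condevaux--Dazord--Molino, Hilgert--Neeb--Plank, and Lerman--Meinrenken--Tolman--Woodward, and restated again as Theorem~\ref{Theorem: Slice theorem} with a reference to \cite[Theorem~3.1]{ler-mei-tol-woo}. So there is no argument in the paper to compare against; the question is only whether your sketch is correct and how it relates to the standard literature proof.

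Your treatment of (ii)--(iv) is sound. The cross-section theorem gives that $Y_{\sigma_M}=\mu^{-1}(\sigma_M)$ is a $K_{\sigma_M}$-Hamiltonian symplectic submanifold, and your argument for (iii) is exactly the one used in \cite{ler-mei-tol-woo}: since $\mu(Y_{\sigma_M})\subset\sigma_M\subset\z(\k_{\sigma_M})$, the pairing $\langle\mu,\eta\rangle$ vanishes identically on $Y_{\sigma_M}$ for $\eta\in[\k_{\sigma_M},\k_{\sigma_M}]$, whence $\eta_{Y_{\sigma_M}}=0$ by non-degeneracy of $\omega|_{Y_{\sigma_M}}$; connectedness of $[K_{\sigma_M},K_{\sigma_M}]$ then finishes it.

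The gap is in (i) and in the connectedness part of (ii). You invoke an ``LMTW non-proper convexity theorem'' asserting that $\mu(M)\cap\t^*_+$ is convex, but no such theorem exists at that level of generality: the convexity results in \cite{ler-mei-tol-woo} require the moment map to be proper (as a map to a convex open subset), which is not assumed here. You flag this yourself in the last paragraph, but the difficulty is not merely technical---the statement you want to invoke is simply false without properness. The actual proof of (i) in \cite{ler-mei-tol-woo} does \emph{not} go through global convexity. It is a local-to-global argument: the Marle--Guillemin--Sternberg normal form (or equivalently the cross-section theorem applied face-by-face) shows that near any $p\in M$ the image $\mu(U)\cap\t^*_+$ meets a unique maximal open face, and connectedness of $M$ then forces this face to be globally constant. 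Connectedness of $\mu^{-1}(\sigma_M)$ is obtained by the same local argument, not by a fiber-connectivity theorem (which again needs properness). So your outline has the right peripheral ingredients but substitutes the genuine local-model argument for (i) with a global input that is unavailable in the stated generality.
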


	The open face $\sigma_M\subset\t^*_{+}$ and submanifold $\mu^{-1}(\sigma_M)\subset M$ are called the \textit{principal face} and \textit{principal slice}, respectively. Open faces of $\t^*_{+}$ index orbit-type strata of the coadjoint representation of $K$ on $\k^*$ \cite{btd,dk}: the stratum indexed by an open face $\sigma\subset\t^*_{+}$ is the $K$-saturation $K\sigma\subset\k^*$. Theorem \ref{thm:compactpcs}(i) is thereby equivalent to the existence of a unique orbit-type stratum $\Sigma_M\subset\k^*$ for which $\mu(M)\cap\Sigma_M$ is open and dense in $\mu(M)$. We call $\Sigma_M\subset\k^*$ and $K\mu^{-1}(\sigma_M)$ the \textit{principal stratum} and \textit{principal open subset}, respectively. Note that $\Sigma_M = K\sigma_M$ and $K\mu^{-1}(\sigma_M)=\mu^{-1}(\Sigma_M)$. It follows that Part (iv) of Theorem \ref{thm:compactpcs} may be rephrased as $\mu^{-1}(\Sigma_M)$ being open and dense in $M$. Part (iii) of Theorem \ref{thm:compactpcs} is equivalent to $[K_{\xi},K_{\xi}]$ acting trivially on $\mu^{-1}(\xi)$ for all $\xi\in\Sigma_M$, where $K_\xi\subset K$ is the stabilizer of $\xi$.   It follows that the quotient $A(\xi)\coloneqq K_{\xi}/[K_{\xi},K_{\xi}]$ acts on $\mu^{-1}(\xi)$ for all $\xi\in\Sigma_M$. We thereby obtain the set-theoretic identity
	\begin{equation}\label{Equation: Reduction identity} M\sll{\xi}K=\mu^{-1}(\xi)/A(\xi)\end{equation} for all $\xi\in\Sigma_M$, where the left-hand side is the Hamiltonian reduction of $M$ by $K$ at level $\xi$.
	
	Theorem \ref{thm:compactpcs}(ii) has a direct generalization to any open face $\sigma\subset\t^*_{+}$. To this end, the $K_{\sigma}$-saturation $$
	\S_{\sigma} := K_\sigma\left(\bigcup_{\overline{\tau}\supset \sigma}\tau\right)$$ is a submanifold of $\k^*$; it is sometimes called a \textit{natural slice} \cite{ler-mei-tol-woo}. The following is proved by Lerman--Meinrenken--Tolman--Woodward \cite{ler-mei-tol-woo}.
	
	\begin{theorem}[Natural slices]\label{thm:compactscs} Suppose that $M$ is a connected Hamiltonian $K$-space with moment map $\mu: M \too \k^*$. For any open face $\sigma\subset\t^*_{+}$, $\mu^{-1}(\S_{\sigma})$ is a $K_\sigma$-invariant symplectic manifold of $M$, 
		and the $K_\sigma$-action on $\mu^{-1}(\S_{\sigma})$ is Hamiltonian.
	\end{theorem}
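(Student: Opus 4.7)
The plan is to establish this theorem via a two-step Poisson-transversal argument: first to show that $\S_\sigma \subset \k^*$ is a Poisson transversal with respect to the Lie--Poisson structure, and then to transport this property to $M$ by exploiting the fact that the moment map $\mu$ is a Poisson map. The $K_\sigma$-invariance and Hamiltonian properties will then follow from the $K$-equivariance of $\mu$ together with elementary identifications.

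The first step is to verify that $\S_\sigma \subset \k^*$ is a Poisson transversal. I would fix a $K$-invariant inner product, use it to identify $\k \cong \k^*$, and split $\k = \k_\sigma \oplus \k_\sigma^\perp$. Under this identification $\bigcup_{\overline{\tau}\supset \sigma}\tau \subset \t^* \subset \k_\sigma^*$, and hence $\S_\sigma = K_\sigma \cdot \bigl(\bigcup_{\overline{\tau}\supset \sigma}\tau\bigr)$ is an open subset of $\k_\sigma^*$. For any $\xi \in \S_\sigma$, the face-stabilizer inclusion $K_\xi \subset K_\sigma$ (valid because $\xi$ lies in some face $\tau$ with $\sigma \subset \overline{\tau}$, so $K_\xi = K_\tau \subset K_\sigma$) gives $\k_\xi \cap \k_\sigma^\perp = 0$, whence $\mathrm{ad}(\xi) : \k_\sigma^\perp \to \k_\sigma^\perp$ is a linear isomorphism. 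In coadjoint-action terms this yields the decomposition $T_\xi(K \cdot \xi) = T_\xi(K_\sigma \cdot \xi) \oplus \k_\sigma^\perp$, and combined with $T_\xi \S_\sigma = \k_\sigma^*$ it produces the transversality identity $T_\xi \k^* = T_\xi \S_\sigma + T_\xi(K \cdot \xi)$. The intersection $T_\xi \S_\sigma \cap T_\xi(K \cdot \xi) = T_\xi(K_\sigma \cdot \xi)$ is a coadjoint $K_\sigma$-orbit, on which the KKS form of $\k^*$ restricts to the KKS form of $\k_\sigma^*$ by direct computation; this intersection is therefore symplectic. Together these two properties are exactly the definition of a Poisson transversal.

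The second step is to pull back to $M$. Since $\mu$ is a Poisson map, $K$-equivariance yields $\im(d\mu_p) \supset T_{\mu(p)}(K \cdot \mu(p))$ at every $p \in M$; combined with the transversality identity above, this gives transversality of $\mu$ to $\S_\sigma$. Invoking the general principle that the pre-image of a Poisson transversal under a Poisson map is a Poisson transversal, one concludes that $\mu^{-1}(\S_\sigma)$ is a Poisson transversal of $M$, hence a symplectic submanifold (since $M$ itself is symplectic). The $K_\sigma$-invariance of $\mu^{-1}(\S_\sigma)$ follows at once from the $K$-equivariance of $\mu$ and the $K_\sigma$-invariance of $\S_\sigma$. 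For the Hamiltonian property, let $p_\sigma : \k^* \to \k_\sigma^*$ denote the restriction dual to $\k_\sigma \hookrightarrow \k$: restricting the moment-map identity $\iota_{X_M} \omega_M = d\langle \mu, X\rangle$ to $\mu^{-1}(\S_\sigma)$ and to $X \in \k_\sigma$ shows that $p_\sigma \circ \mu\vert_{\mu^{-1}(\S_\sigma)}$ is a moment map for the induced $K_\sigma$-action.

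The main obstacle is the Poisson-transversality of $\S_\sigma$ in $\k^*$; identifying $\S_\sigma$ as an open subset of $\k_\sigma^*$ and verifying the symplectic-leaf structure require a careful use of the face combinatorics of $\t^*_+$, of the fact that $T \subset K_\sigma$ is a maximal torus with dominant chamber containing $\bigcup_{\overline{\tau}\supset \sigma}\tau$, and of the compatibility of the KKS forms for $K$ and $K_\sigma$. Once this ingredient is in place, the remainder of the theorem is a formal consequence of the good functorial behavior of Poisson transversals under Poisson morphisms together with $K$-equivariance of $\mu$.
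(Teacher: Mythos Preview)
Your proposal is correct and follows essentially the same strategy as the paper. The paper (in Section~\ref{Subsection: Slices for compact group actions}) first proves that $\S_\sigma$ is a slice in the classical sense (Lemma~\ref{Lemma : K-slice}), then uses this to show $\S_\sigma$ is a Poisson transversal in $\k^*$ by identifying $\S_\sigma\cap K\xi$ with $K_\sigma\xi$ and checking that the KKS form on $K\xi$ restricts to that on $K_\sigma\xi$; the passage to $\mu^{-1}(\S_\sigma)$ is then the pull-back principle for Poisson transversals (Proposition~\ref{Proposition: Basic transversal} / Proposition~\ref{Proposition: Pullback}), and the Hamiltonian structure is subsumed in the abstract slice theorem (Theorem~\ref{t:abs-symp-cr-sec}) applied to $\G=T^*K$. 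Your argument bypasses the classical slice lemma and works directly with the splitting $\k=\k_\sigma\oplus\k_\sigma^{\perp}$ and the injectivity of $\mathrm{ad}(\xi)$ on $\k_\sigma^{\perp}$, but the core content---$\S_\sigma$ open in $\k_\sigma^*$, $K_\xi\subset K_\sigma$, and the KKS compatibility---is identical.
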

	
	Let us discuss Theorems \ref{thm:compactpcs} and \ref{thm:compactscs} in the context of Section \ref{Subsection: Poisson slices}. For any open face $\sigma\subset\t^*_{+}$, one finds that $\S_{\sigma}$ is a Poisson slice in $\k^*$. One also finds that $\mu^{-1}(\S_{\sigma})$ and $\mu^{-1}(\sigma_M)$ are symplectic slices in $M$. While $\sigma_M$ turns out not a Poisson slice in $\k^*$, it is a Poisson slice in the Poisson submanifold $\Sigma_M\subset\k^*$. 
	
	\subsection{Main results}\label{Subsection: Main results}
	We now outline the main results of this manuscript. Let $G$ be a connected complex reductive algebraic group with Lie algebra $\g$. We have the Lie algebra decomposition $\g=\z(\g)\oplus[\g,\g]$, where $\z(\g)$ is the center of $\g$. Fix a non-degenerate, symmetric bilinear form on $\z(\g)$. Together with the Killing form on $[\g,\g]$, it induces a symmetric, non-degenerate, $G$-invariant bilinear form on $\g$. We use this last form to freely identify $\g$ with $\g^*$ in what follows.
	
	Every $x\in\g$ admits a unique decomposition of the form $x = x_{\semi} + x_{\nilp}$, where $x_{\semi}\in\g$ is semisimple, $x_{\nilp}\in\g$ is nilpotent, and $[x_{\semi},x_{\nilp}]=0$; it is called the \textit{Jordan decomposition} of $x$. Note that $x_{\nilp}\in[\g_{x_{\semi}},\g_{x_{\semi}}]$ for all $x\in\g$. This fact allows one to consider $[G_{x_{\semi}},G_{x_{\semi}}]_{x_{\nilp}}$, the $[G_{x_{\semi}},G_{x_{\semi}}]$-stabilizer of $x_{\nilp}\in[\g_{x_{\semi}},\g_{x_{\semi}}]$. Let $[G_{x_{\semi}},G_{x_{\semi}}]_{x_{\nilp}}^{\circ}$ denote the identity component of $[G_{x_{\semi}},G_{x_{\semi}}]_{x_{\nilp}}$. At the same time, define an equivalence relation on $\g$ by $x\sim y$ if $y_{\nilp}=\mathrm{Ad}_g(x_{\nilp})$ and $\g_{y_{\semi}}=\mathrm{Ad}_g(\g_{x_{\semi}})$ for some $g\in G$. Equivalence classes are called \textit{decomposition classes} of $\g$, and are introduced in a work of Borho--Kraft \cite{bor-kra:79}. It turns out that decomposition classes are indexed by $G$-conjugacy classes of pairs $(\l,\O)$, where $\l\subset\g$ is a Levi subalgebra and $\O\subset\l$ is a nilpotent orbit; one assigns to the class of $(\l,\O)$ the decomposition class $G(\z(\l)_{\mathsf{gen}}+\O)$, where $\z(\l)\subset\l$ is the center and the \textit{generic locus} $\z(\l)_{\mathsf{gen}}\subset\z(\l)$ is defined by
	\begin{align*}
		&{ \z(\l)_{\mathsf{gen}} \coloneqq \{x\in\z(\l):\dim\g_x\leq\dim\g_y\text{ for all }y\in\z(\l)\} }.
	\end{align*}
	
	Our first main result is the following rough analogue of Theorem \ref{thm:compactpcs}; it appears in the main text as Corollary \ref{Corollary: Poisson slice} and Propositions \ref{Proposition: Symplectic subvariety}, \ref{Proposition: Characterizations}, \ref{Proposition: Holomorphic characterizations}, and \ref{Proposition: Trivial action}. 
	
	\begin{mtheorem}[Complex-geometric analogue of Theorem \ref{thm:compactpcs}]\label{thm:simplestpcs} Let $G$ be a connected complex reductive algebraic group with Lie algebra $\g$. Suppose that $M$ is a non-empty, connected holomorphic Hamiltonian $G$-space (resp. non-empty, irreducible Hamiltonian $G$-variety) with moment map $\mu:M\longrightarrow\g$. \begin{itemize}
			\item[\textup{(i)}] There exists a unique decomposition class $\D_M\subset \g$ with the property that $\mu(M)\cap\D_M$ is open and dense in $\mu(M)$. 
			\item[\textup{(ii)}] The pre-image $\mu^{-1}(\D_M)$ is an open, dense, $G$-invariant submanifold (resp. subvariety) of $M$. 
			\item[\textup{(iii)}] If $x\in\D_M$ has  Jordan decomposition $x=x_\semi+x_\nilp$,
			then $[G_{x_{\semi}},G_{x_{\semi}}]_{x_{\nilp}}^{\circ}$ acts trivially on $\mu^{-1}(x)$. 
		\end{itemize}
		Furthermore, let us fix a pair $(\l,\O)$ whose $G$-conjugacy class indexes $\D_M$. Let $L\subset G$ be the Levi subgroup integrating $\l$.   
		\begin{itemize}
			\item[\textup{(iv)}] Suppose that $e\in\O$, and let $L_e\subset L$ be the $L$-stabilizer of $e$. Then $e+\z(\l)_{\mathsf{gen}}$ is a Poisson slice in $\D_M$, $\mu^{-1}(e+\z(\l)_{\mathsf{gen}})$ is a symplectic slice in $M$, and the Hamiltonian actions of $G$ on $\g$ and $M$ restrict to Hamiltonian actions of $L_e$ on $e+\z(\l)_{\mathsf{gen}}$ and $\mu^{-1}(e+\z(\l)_{\mathsf{gen}})$, respectively.
		\end{itemize}
	\end{mtheorem}
	
	A comparison to Theorem \ref{thm:compactpcs} is warranted. Observe that $\D_M\subset\g$ and $\mu^{-1}(\D_M)\subset M$ are the complex-geometric analogues of the principal stratum $\Sigma_M\subset\k^*$ and principal open subset $\mu^{-1}(\Sigma_M)\subset M$, respectively. On the other hand, one should view $e+\z(\l)_{\mathsf{gen}}\subset\g$ and $\mu^{-1}(e+\z(\l)_{\mathsf{gen}})\subset M$ as ``approximate" analogues of the principal face $\sigma_M\subset\k^*$ and principal slice $\mu^{-1}(\sigma_M)\subset M$, respectively. One reason to include the adjective ``approximate" is that there is no direct, complex-geometric analogue of the fundamental Weyl chamber $\t^*_{+}\subset\k^*$. We discuss this issue in Proposition \ref{Proposition: No analogue}.
	
	\begin{remark}
		Recall the discussion of $A(\xi)$ in the context of Equation \eqref{Equation: Reduction identity}. Main Theorem \ref{thm:simplestpcs}(iii) motivates us to define $A(x)\coloneqq G_x/[G_{x_{\semi}},G_{x_{\semi}}]_{x_{\nilp}}^{\circ}$ for $x\in\g$. Observe that $$M\sll{x}G=\mu^{-1}(x)/A(x)$$ set-theoretically for all $x\in\D_M$. While $A(\xi)$ is a compact torus for all $\xi\in\k^*$, the structure of $A(x)$ is more subtle. It turns out to be a finite and potentially non-abelian extension of a complex torus; see Corollary \ref{Corollary: Simple} and Remark \ref{Remark: Non-abelian}. At the same time, we show that $A(x)$ is the smallest quotient $Q$ of $G_x$ with the following property: if $M$ is any non-empty, connected, holomorphic Hamiltonian $G$-space with moment map $\mu:M\longrightarrow\g$ for which $x\in\D_M$, then the $G_x$-action on $\mu^{-1}(x)$ descends to one of $Q$ on $\mu^{-1}(x)$. This is the statement of Corollary \ref{Corollary: Smallest quotient}.
	\end{remark}
	
	Our second main result is an analogue of Theorem \ref{thm:compactscs} for Hamiltonian $G$-actions. To this end, suppose that $x\in\g$. Consider the set of $G_{x_{\semi}}$-conjugacy classes of pairs $(\l,\O)$, where $\l\subset\g_{x_{\semi}}$ is a Levi subalgebra and $\O\subset\l$ is a nilpotent orbit. Write $[(\l,\O)]$ for the $G_{x_{\semi}}$-conjugacy class of such a pair, and consider the decomposition class in $\g_{x_{\semi}}$ given by $$\D_{\l,\O}^{G_{x_{\semi}}}\coloneqq G_{x_{\semi}}(\z(\l)_{\mathsf{gen}}+\O).$$ Define the \textit{natural slice} $\S_x$ at $x$ by
	$$\S_x\coloneqq\bigcup_{[(\l,\O)]\text{ s.t. } x\in\ol{\D^{G_{x_\semi}}_{\l,\O}}}\D^{G_{x_\semi}}_{\l,\O}.$$
	On the other hand, suppose that $\mathfrak{T}=(e,h,f)\in[\g_{x_{\semi}},\g_{x_{\semi}}]^{\times 3}$ is an $\mathfrak{sl}_2$-triple satisfying $e=x_{\nilp}$. Consider the Slodowy slice
	\[ \S_{\mathfrak{T}}\coloneqq e+(\g_{x_\semi})_f\subset \g_{x_\semi}. \]
	We define the \textit{complementary slice} $\S_{x,\mathfrak{T}}\subset\g_{x_{\semi}}$ by
	$$\S_{x,\mathfrak{T}}\coloneqq \S_x\cap\S_{\mathfrak{T}}.$$ Let us also write $\sss:T^*G\longrightarrow\g$ for the result of composing the left trivialization $T^*G\overset{\cong}\longrightarrow G\times\g$ with the projection $G\times\g\longrightarrow\g$. These considerations lead to second main result of our manuscript; it appears in the main text as Proposition \ref{Proposition: Semisimple case}, Proposition \ref{Proposition: Natural transversal}, Corollary \ref{Corollary: Poisson slices}, Proposition \ref{p:decompcontainDx}, Corollary \ref{c:slice-generalUx}, Corollary \ref{Corollary: Nice}, Proposition \ref{Proposition: Slodowy saturation}, and Corollary \ref{c:slice-general}.
	
	\begin{mtheorem}[Complex-geometric analogue of Theorem \ref{thm:compactscs}]\label{Theorem: General slices}
		Let $x\in\g$ and $\mathfrak{T}\in[\g_{x_{\semi}},\g_{x_\semi}]^{\times 3}$ be as discussed above. Suppose that $M$ is a holomorphic Hamiltonian $G$-space (resp. Hamiltonian $G$-variety) with moment map $\mu:M\longrightarrow\g$. The following statements are true:
		\begin{itemize}
			\item[\textup{(i)}] $\S_x$ and $\S_{x,\mathfrak{T}}$ are Poisson slices in $\g$; 
			\item[\textup{(ii)}] $\mu^{-1}(\S_x)$ and $\mu^{-1}(\S_{x,\mathfrak{T}})$ are symplectic slices in $M$;
			\item[\textup{(iii)}] $\sss^{-1}(\S_x)$ and $\sss^{-1}(\S_{x,\mathfrak{T}})$ are symplectic submanifolds (resp. subvarieties) of $T^*G$;
			\item[\textup{(iv)}] $G\S_x=G\S_{x,\mathfrak{T}}$ and $G\mu^{-1}(\S_x)=G\mu^{-1}(\S_{x,\mathfrak{T}}$); these are open in $\g$ and $M$, respectively;
			\item[\textup{(v)}] the $G$-action map induces symplectomorphisms
			\begin{equation}\label{Equation: First isomorphism}
				\frac{G\times \mu^{-1}(\S_x)}{\sim}\overset{\cong}\longrightarrow G\mu^{-1}(\S_x)
			\end{equation} 
			and 
			\begin{equation}\label{Equation: Second isomorphism}
				\frac{G\times \mu^{-1}(\S_{x,\mathfrak{T}})}{\sim}\overset{\cong}\longrightarrow G\mu^{-1}(\S_{x,\mathfrak{T}}),
			\end{equation} 
			where $(g_1,m_1)\sim (g_2,m_2)$ if $g_1m_1=g_2m_2$, and the symplectic structure on the left-hand side of \eqref{Equation: First isomorphism} is obtained by restricting the product symplectic structure on $\sss^{-1}(\S_x)\times \mu^{-1}(\S_x)$ to the fiber product $\sss^{-1}(\S_x)\times_{\S_x} \mu^{-1}(\S_x)\cong G\times \mu^{-1}(\S_x)$ and descending to the quotient; the symplectic structure on the left-hand side of \eqref{Equation: Second isomorphism} is obtained analogously.
		\end{itemize}
	\end{mtheorem}
	
	To formulate the abstract slice theorem, we note that there is a canonical bijection between Hamiltonian $G$-spaces and Hamiltonian spaces for the cotangent groupoid $T^*G\tto\g$ \cite{mik-wei:88}. It is therefore natural to consider an arbitrary symplectic groupoid $\G\tto X$; this is a groupoid object in the category of smooth manifolds, complex manifolds, or complex algebraic varieties that satisfies additional hypotheses. Let $\sss:\G\too X$ and $\ttt:\G\too X$ denote the source and target morphisms, respectively, and note that $X$ carries a unique Poisson structure for which $\ttt$ is a Poisson morphism. It is natural to define a \textit{Poisson slice} in this context as a Poisson transversal in $X$ that is transverse to the $\G$-orbits; in fact, the second property is redundant. Given a Poisson slice $S\subset X$, the restriction of $\G$ to $S$ is the symplectic subgroupoid
	$$\G^S_S:=\sss^{-1}(S) \cap \ttt^{-1}(S)\tto S$$ of $\G$. One may also consider the symplectic subvarieties or submanifolds
	\[ \G_S:=\sss^{-1}(S)\quad\text{and}\quad \G^S:=\ttt^{-1}(S)\] of $\G$. Each is a Hamiltonian $\G\times\G_{S}^S$-space.
	
	Suppose that $M$ is a Hamiltonian $\G$-space with moment map $\mu:M\longrightarrow X$. The underlying action map takes the form 
	$$\G\times_X M\longrightarrow M,\quad (g,p)\mapsto gp,$$ where
	$\G\times_X M\coloneqq\{(g,p)\in\G\times M: \sss(g)=\mu(p)\}$ is the fiber product with respect to $\sss$ and $\mu$. The $\G$-saturation of a subset $S\subset M$ is defined by 
	$$
	\G S\coloneqq\{gp:(g,p)\in \G\times_X S\}\subset M.
	$$
	At the same time, a Poisson slice $S\subset X$ has the property that $\mu^{-1}(S)$ is a symplectic submanifold or subvariety of $M$ with the property of being transverse to $\G$-orbits; we declare such a submanifold or subvariety of $M$ to be a \textit{symplectic slice}. One also finds that $\mu^{-1}(S)$ is a Hamiltonian $\G_S^S$-space, and that the fiber product $\G_{S}\times_S\mu^{-1}(S)$ inherits an action of $\G_S^S$. These considerations underlie the following result; it is stated in the smooth category as Theorem \ref{t:abs-symp-cr-sec}, and applies analogously in the holomorphic and complex-algebraic categories.
	
	\begin{mtheorem}[Abstract symplectic slices]\label{thm:abs-symp-cr-sec}
		Consider a symplectic groupoid $\G\tto X$ and Poisson slice $S\subset X$. Let $M$ be a Hamiltonian $\G$-space with moment map $\mu: M\longrightarrow X$. The following statements are true:
		\begin{itemize}
			\item[\textup{(i)}] $\mu^{-1}(S)$ is a symplectic slice in $M$ and a Hamiltonian $\G_{S}^S$-space; 
			\item[\textup{(ii)}] $\G\mu^{-1}(S)$ is an open subset of $M$;
			\item[\textup{(iii)}] the action map $\G\times_X M\longrightarrow M$ restricts and descends to a symplectomorphism $$
			(\G_S\times_S \mu^{-1}(S))/\G_S^S\overset{\cong}\longrightarrow
			\G\mu^{-1}(S),$$
			where the symplectic structure on the left-hand side is obtained by restricting the product symplectic structure on $\G_S\times\mu^{-1}(S)$ to $\G_S\times_S \mu^{-1}(S)$, and noting that it descends to $(\G_S\times_S \mu^{-1}(S))/\G_S^S$.
		\end{itemize}
	\end{mtheorem}
	
	\subsection{Organization}
	In Section \ref{Section: Preliminaries}, we outline some of the Lie-theoretic and Poisson-geometric conventions observed throughout this manuscript. Section \ref{sec:slices} then introduces Poisson and symplectic slices, reviews results of \cite{ler-mei-tol-woo} in this context, and illustrates the nuances that occur when attempting to extend these results to the holomorphic and complex algebro-geometric categories. We devote Section \ref{Section: Decomposition classes in Poisson geometry} to the role of Borho and Kraft's decomposition classes in our work, in addition to proving most of Main Theorem \ref{thm:simplestpcs}. In Section \ref{Section: Residual}, we obtain several results regarding residual group actions on level sets over a principal decomposition class. Section \ref{sec:symplectic slices} contains our results on slices for symplectic groupoid actions, as well as those on natural and complementary slices. The proofs of Main Theorems \ref{Theorem: General slices} and \ref{thm:abs-symp-cr-sec} appear in Section \ref{sec:symplectic slices}. 
	
	\subsection*{Acknowledgements} P.C. was supported by the National Science Foundation grant DMS-2454103 and Simons Foundation grant MP-TSM-00002292. R.G. was supported by the National Science Foundation grant DMS-2152312.
	
	\section{Preliminaries}\label{Section: Preliminaries}
	We now outline some of the conventions and definitions that persist throughout our manuscript, and review core results and constructions on Lie group actions in Poisson and symplectic geometry. Care is taken to ensure that specializations to the smooth, holomorphic, and complex-algebraic categories are clear from context.
	
	\subsection{Actions, orbits, and stabilizers}\label{Subsection: Actions, orbits, and stabilizers}
	In this manuscript, group actions are understood to be left group actions. Suppose that a group $G$ acts on a set $X$. We write $gx\in X$ for the action of $g\in G$ on $x\in X$. The orbit and stabilizer of $x\in X$ are denoted $Gx\subset X$ and $G_x\subset G$, respectively. We define the \textit{saturation} of a subset $Y\subset X$ by $$GY\coloneqq\{gx:g\in G,\text{ }x\in Y\}\subset X.$$ Note that $Gx=G\{x\}$ for all $x\in X$. On the other hand, define an equivalence relation on $X$ by $x\sim y$ if $G_x$ and $G_y$ are conjugate as subgroups of $G$. We refer to equivalence classes of $\sim$ as \textit{orbit-type strata}.
	
	Let $G$ be a real (resp. complex) Lie group with Lie algebra $\g$ and exponential map $\exp:\g\longrightarrow G$. We use the term \textit{$G$-manifold} for a smooth (resp. complex) manifold $M$ endowed with a smooth (resp. holomorphic) $G$-action. We then define the \textit{generating vector field} associated to $x\in\g$ by
	\begin{equation}\label{Equation: Generating vector field} (x_M)_p\coloneqq\frac{\mathrm{d}}{\mathrm{d}t}\bigg\vert_{t=0}\bigg(\exp(-tx)p\bigg)\in T_pM\end{equation} for all $p\in M$. Note that $$\g_p\coloneqq\{x\in\g:(x_M)_p=0\}\subset\g$$ is the Lie algebra of $G_p\subset G$ for all $p\in M$.
	
	\subsection{Basic Lie-theoretic preliminaries}\label{Subsection: Basic Lie-theoretic considerations} Let $G$ be a Lie group with Lie algebra $\g$. One may consider the adjoint representation $\mathrm{Ad}:G\longrightarrow\operatorname{GL}(\g)$ and coadjoint representation $\Ad^*:G\longrightarrow\operatorname{GL}(\g^*)$. By differentiating $\Ad$ (resp. $\Ad^*$) at $e\in G$, we obtain the adjoint representation $\ad:\g\longrightarrow\mathfrak{gl}(\g)$ (resp. coadjoint representation $\ad^*:\g\longrightarrow\mathfrak{gl}(\g^*)$) of $\g$. We have $$\g_x\coloneqq\{y\in\g:[y,x]=0\}\quad\text{and}\quad\g_{\xi}\coloneqq\{y\in\g:\xi([y,z])=0\text{ for all }z\in\g\}$$ for $x\in\g$ and $\xi\in\g^*$.
	
	\subsection{Compact Lie groups}\label{Subsection: Compact groups} Suppose that $K$ is a compact connected Lie group with Lie algebra $\k$. By complexifying the adjoint representation of $K$ (resp. $\k$), we obtain a representation of $K$ (resp. $\k$) on $\k_{\mathbb{C}}\coloneqq\k\otimes_{\mathbb{R}}\mathbb{C}$. Let $T\subset K$ be a maximal torus with Lie algebra $\t\subset\k$. Set $\t_{\mathbb{C}}\coloneqq\t\otimes_{\mathbb{R}}\mathbb{C}$. The \textit{roots} of $K$ with respect to $T$ constitute the finite subset $\Phi\subset\t^*\setminus\{0\}$ satisfying
	$$\k_{\mathbb{C}}=\t_{\mathbb{C}}\oplus\bigoplus_{\alpha\in\Phi}(\k_{\mathbb{C}})_{\alpha},$$ where $$(\k_{\mathbb{C}})_{\alpha}\coloneqq\{x\in\k_{\mathbb{C}}:[y,x]=-i\alpha(y)x\text{ for all }y\in\t\}$$ for $\alpha\in\Phi$. Choose a set of positive roots, i.e. a subset $\Phi^{+}\subset\Phi$ satisfying $\Phi=\Phi^{+}\sqcup(-\Phi^{+})$. The set $$\t_{+}\coloneqq\{x\in\t:\alpha(x)\geq 0\text{ for all }\alpha\in\Phi^{+}\}$$ is a fundamental domain for the adjoint action of $K$ on $\k$.
	
	Choose a $K$-invariant inner product on $\k$; it determines a $K$-module isomorphism between the adjoint and coadjoint representations of $K$. This isomorphism identifies $\t_{+}\subset\k$ with a fundamental domain $\t_{+}^*\subset\k^*$ for the coadjoint action of $K$. One calls $\t^*_{+}$ the \textit{fundamental Weyl chamber}. As a polyhedral cone, $\t_{+}^*$ is a set-theoretic disjoint union of the interiors of its faces. We refer to these interiors as the \textit{open faces} of $\t_{+}^*$. Given an open face $\sigma\subset\t^*_{+}$, the stabilizer subgroup $K_\xi\subset K$ is independent of $\xi\in\sigma$. We write $K_{\sigma}$ for this stabilizer subgroup.
	
	\subsection{Complex reductive algebraic groups}\label{Subsection: Complex reductive algebraic groups}
	Let $G$ be a connected complex reductive algebraic group with rank $\ell$, Lie algebra $\g$, and exponential map $\exp:\g\longrightarrow G$. Consider the open, dense, $G$-invariant subset $$\greg\coloneqq\{x\in\g:\dim\g_x=\ell\}$$ of \textit{regular} elements of $\g$. On the other hand, $x\in\g$ is called \textit{semisimple} (resp. \textit{nilpotent}) if the Zariski-closure of $\{\exp(tx):t\in\mathbb{C}\}\subset G$ is a torus (resp. isomorphic to the additive group $\mathbb{C}^n$ for some $n\in\mathbb{Z}_{\geq 0}$). We denote by $\g_{\mathsf{semi}}\subset\g$ and $\N\subset\g$ the $G$-invariant subsets of semisimple and nilpotent elements, respectively. Adjoint orbits contained in $\greg$ (resp. $\g_{\mathsf{semi}}$, $\mathcal{N}$) are called \textit{regular} (resp. \textit{semisimple}, \textit{nilpotent}). One defines $[\g,\g]_{\mathsf{semi}}$ analogously to $\g_{\mathsf{semi}}$, replacing $G$ and $\g$ with $[G,G]$ and $[\g,\g]$, respectively. One also has
	$$[\g,\g]_{\mathsf{semi}}=\{x\in[\g,\g]:\ad_x:[\g,\g]\longrightarrow[\g,\g]\text{ is diagonalizable}\}$$ and $$\mathcal{N}=\{x\in[\g,\g]:\ad_x:[\g,\g]\longrightarrow[\g,\g]\text{ is nilpotent}\}.$$ Further details on the last few sentences are available in \cite[Section 3.7]{oni-vin}.
	
	Each $x\in\g$ admits a Jordan decomposition, i.e. there exist unique elements $x_{\semi}\in\g_{\mathsf{semi}}$ and $x_{\nilp}\in\N$ satisfying $x=x_{\semi}+x_{\nilp}$ and $[x_{\semi},x_{\nilp}]=0$ \cite[Section 3.7]{oni-vin}. One calls $x_{\semi}$ and $x_{\nilp}$ the semisimple and nilpotent parts, respectively, of $x$. We also have $G_x=G_{x_{\semi}}\cap G_{x_{\nilp}}=(G_{x_{\semi}})_{x_{\nilp}}$ and $\g_x=\g_{x_{\semi}}\cap\g_{x_{\nilp}}=(\g_{x_{\semi}})_{x_{\nilp}}$ for all $x\in\g$. On the other hand, analogous considerations yield Jordan decompositions in $[\g,\g]$. The decomposition $\g=\z(\g)\oplus[\g,\g]$ now suggests comparing Jordan decompositions in $\g$ with those in $[\g,\g]$, where $\z(\g)\subset\g$ is the center. While we assume the following comparison to be a standard result, we include a proof for completeness. 
	
	\begin{proposition}\label{Proposition: Comparative Jordan decomposition}
		Write $x=z+y$ for the decomposition of $x\in\g$ into $z\in\z(\g)$ and $y\in[\g,\g]$. Consider the Jordan decompositions $x=x_{\semi}+x_{\nilp}$ of $x$ in $\g$, and $y=y_{\semi}+y_{\nilp}$ of $y$ in $[\g,\g]$. We have $x_{\semi}=z+y_{\semi}$ and $x_{\nilp}=y_{\nilp}$.
	\end{proposition}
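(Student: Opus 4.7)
The plan is to invoke uniqueness of the Jordan decomposition in $\g$: it suffices to verify that the pair $(z+y_\semi,\ y_\nilp)$ has $z+y_\semi$ semisimple in $\g$, $y_\nilp$ nilpotent in $\g$, that they sum to $x$, and that they commute. The summation identity $z+y_\semi+y_\nilp=z+y=x$ is immediate, and commutativity follows because $z\in\z(\g)$ commutes with every element of $\g$, while $[y_\semi,y_\nilp]=0$ by the Jordan decomposition in $[\g,\g]$.

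The two nontrivial points are the compatibility of the notions of \emph{semisimple} and \emph{nilpotent} between $[\g,\g]$ and $\g$. For $y_\nilp$, I would simply observe that the definition of $\mathcal{N}$ recalled just above the proposition already characterizes nilpotents via the closure of $\{\exp(ty_\nilp):t\in\mathbb{C}\}\subset G$, since $[G,G]\subset G$ is closed (as $G$ is reductive), so that the closure of $\exp(t y_\nilp)$ in $[G,G]$ coincides with its closure in $G$; hence $y_\nilp$ is nilpotent in $\g$. For $z+y_\semi$, I would argue in two steps. First, $z\in\z(\g)$ is semisimple in $\g$: the connected center $Z(G)^\circ$ is a torus (reductivity of $G$), and the Zariski closure of $\{\exp(tz)\}$ lies in $Z(G)^\circ$, hence is a torus. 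Second, $y_\semi\in[\g,\g]$ is semisimple in $\g$ by the same closedness argument used for $y_\nilp$.

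Finally I would verify that the sum of two commuting semisimple elements of $\g$ is again semisimple. Since $z$ is central, $\exp(tz)$ commutes with $\exp(ty_\semi)$, and therefore
\[
\exp\bigl(t(z+y_\semi)\bigr)=\exp(tz)\exp(ty_\semi).
\]
The Zariski closures $T_1,T_2\subset G$ of $\{\exp(tz)\}$ and $\{\exp(ty_\semi)\}$ are commuting tori; their product $T_1T_2\subset G$ is therefore again a torus (as a commutative, connected, algebraic subgroup consisting of semisimple elements), and it contains the closure of $\{\exp(t(z+y_\semi))\}$. Hence $z+y_\semi$ is semisimple in $\g$. By uniqueness of the Jordan decomposition in $\g$, we conclude $x_\semi=z+y_\semi$ and $x_\nilp=y_\nilp$.

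The only potential obstacle is the transfer of the semisimplicity/nilpotency definitions between $[\g,\g]$ and $\g$; this is handled cleanly by the fact that $[G,G]$ is closed in the reductive group $G$, so Zariski closures of one-parameter subgroups agree whether taken in $[G,G]$ or in $G$. Everything else is a direct verification of the defining properties of the Jordan decomposition.
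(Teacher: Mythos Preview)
Your proof is correct and follows essentially the same strategy as the paper's: verify that $(z+y_\semi,\,y_\nilp)$ satisfies the defining properties of the Jordan decomposition in $\g$ and invoke uniqueness. The only cosmetic difference is in the semisimplicity step: the paper places $\exp(t(z+y_\semi))$ directly inside the maximal torus $Z(G)T$ for a maximal torus $T\subset[G,G]$ containing $\exp(ty_\semi)$, whereas you build the torus $T_1T_2$ from the closures of the two one-parameter subgroups separately---both arguments are valid and amount to the same idea.
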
 
	
	\begin{proof}
		Observe that $x = (z + y_{\semi}) + y_{\nilp}$ and $[z + y_{\semi}, y_{\nilp}]=0$. It therefore suffices to prove that $z + y_{\semi}$ and $y_{\nilp}$ are semisimple and nilpotent in $\g$, respectively.
		
		Choose a maximal torus $T \subset [G,G]$ such that $\exp(ty_{\semi})\in T$ is for all $t\in\mathbb{C}$. Note that $T'\coloneqq Z(G)T$ is a maximal torus in $G$, where $Z(G)\subset G$ is the center. It is also clear that $\exp(t(z + y_{\semi})) = \exp(tz)\exp(ty_{\semi})\in T'$ for all $t\in\mathbb{C}$. This implies that the Zariski-closure of $\{\exp(t(z + y_{\semi})):t\in\mathbb{C}\}$ in $G$ is the Zariski-closure of the same set in $T'$. Since the latter closure is necessarily a torus, $z + y_{\semi}$ is semisimple in $\g$.
		
		The Zariski-closure of $\{\exp(ty_{\nilp}):t\in\mathbb{C}\}$ in $G$ is the Zariski-closure of the same set in $[G,G]$. Since $y_{\nilp}$ is nilpotent in $[\g,\g]$, the latter closure is an additive group. We conclude that $y_{\nilp}$ is nilpotent in $\g$.
	\end{proof}
	
	Some immediate consequences are that $$\mathcal{N}\subset[\g,\g]\quad\text{and}\quad \g_{\mathsf{semi}}=\z(\g)+[\g,\g]_{\mathsf{semi}}.$$
	
	Choose a non-degenerate bilinear form $\langle\cdot,\cdot\rangle_1:\z(\g)\otimes\z(\g)\longrightarrow\mathbb{C}$, and let $\langle\cdot,\cdot\rangle_2:[\g,\g]\otimes[\g,\g]\longrightarrow\mathbb{C}$ denote the Killing form on $[\g,\g]$. By means of the decomposition $\g=\z(\g)\oplus[\g,\g]$, $\langle\cdot,\cdot\rangle_1$ and $\langle\cdot,\cdot\rangle_2$ induce a bilinear form $\langle\cdot,\cdot\rangle:\g\otimes\g\longrightarrow\mathbb{C}$. Let us write $V_1^{\perp_1}\subset\z(\g)$ (resp. $V_2^{\perp_2}\subset[\g,\g]$, $V^{\perp}\subset\g$) for the annihilator of a subspace $V_1\subset\z(\g)$ (resp. $V_2\subset[\g,\g]$, $V\subset\g$) under $\langle\cdot,\cdot\rangle_1$ (resp. $\langle\cdot,\cdot\rangle_2$, $\langle\cdot,\cdot\rangle$). It follows that $$(V_1\oplus V_2)^{\perp}=V_1^{\perp_1}\oplus V_2^{\perp_2}$$ for all subspaces $V_1\subset\z(\g)$ and $V_2\subset[\g,\g]$. The form $\langle\cdot,\cdot\rangle$ is non-degenerate and $G$-invariant. As such, it induces a $G$-module isomorphism between $\g$ and $\g^*$. We use this isomorphism to freely identify $\g$ and $\g^*$ for the balance of this manuscript.
	
	We now recall complex group-theoretic analogues of the root space decomposition in Section \ref{Subsection: Compact groups}. Choose a maximal torus $T\subset G$ with Lie algebra $\t\subset\g$. One has
	$$\g=\t\oplus\bigoplus_{\alpha\in\Phi}\g_{\alpha}$$ for some finite subset $\Phi\subset\t^*\setminus\{0\}$, where
	$$\g_{\alpha}\coloneqq\{x\in\g:[y,x]=\alpha(y)x\text{ for all }y\in\t\}$$ for $\alpha\in\Phi$. Elements of $\Phi$ are called \textit{roots} of $G$ with respect to $T$.
	
	\subsection{Poisson and symplecto-geometric preliminaries}\label{Subsection: Poisson transversals} 
	We now outline some of our conventions in Poisson and symplectic geometry. Let $M$ be a smooth manifold, complex manifold, or smooth complex algebraic variety. In the first (resp. second, third) case, constructions and definitions are generally understood as occurring in the smooth (resp. holomorphic, complex-algebraic) category. We insert real-geometric, complex-geometric, and algebro-geometric adjectives only in cases where ambiguity may arise.
	
	Contracting a differential $2$-form $\omega\in\mathrm{H}^0(M,\wedge^2(T^*M))$ with tangent vectors yields a vector bundle morphism $\omega^{\vee}:TM\longrightarrow T^*M$. One calls $M$ \textit{symplectic} if it comes equipped with a differential $2$-form $\omega\in\mathrm{H}^0(M,\wedge^2(T^*M))$ for which $\mathrm{d}\omega=0$ and $\omega^{\vee}$ is a vector bundle isomorphism. In this case, $\omega$ is called a \textit{symplectic form}. Our convention is to define the canonical symplectic form on a cotangent bundle to be minus the exterior derivative of the tautological $1$-form.
	
	Write $\O_M$ for the structure sheaf of $M$. One calls $M$ \textit{Poisson} if $\O_M$ has been enriched to a sheaf of Poisson algebras. In this case, there exists a unique bivector field $\sigma\in\mathrm{H}^0(M,\wedge^2(TM))$ satisfying $\{f_1,f_2\}=\sigma(\mathrm{d}f_1\wedge\mathrm{d}f_2)$ for all $f_1,f_2\in\O_M$; it is called the \textit{Poisson bivector field}. Contracting $\sigma$ with cotangent vectors yields a vector bundle morphism $\sigma^{\vee}:T^*M\longrightarrow TM$, whose image is an integrable, singular distribution. One finds that $\sigma$ is tangent to each integral leaf $L\subset M$ of this distribution. It thereby induces the Poisson bivector field $\sigma_L\in \mathrm{H}^0(L,\wedge^2(TL))$ of a Poisson structure on $L$. There exists a unique symplectic form $\omega_L\in\mathrm{H}^0(L,\wedge^2(T^*L))$ satisfying $(\omega_L^{\vee})^{-1}=-\sigma_L^{\vee}$. The symplectic manifold $(L,\omega_L)$ is called a \textit{symplectic leaf} of $M$. If $M$ is symplectic with symplectic form $\omega$, then $M$ is Poisson with Poisson bivector field $\sigma^{\vee}=-(\omega^{\vee})^{-1}$.
	
	The following is a recurring example. Suppose that $M=\g^*$ for a finite-dimensional real or complex Lie algebra $\g$. Note that $\mathrm{d}f_{\xi}\in\g$ for all $f\in\O_{\g^*}$ and $\xi\in\g^*$. The canonical Poisson structure on $\g^*$ is the following Poisson bracket $\{\cdot,\cdot\}$ on $\O_{\g^*}$:
	$$\{f_1,f_2\}(\xi)\coloneqq\xi([(\mathrm{d}f_1)_{\xi},(\mathrm{d}f_2)_{\xi}])$$ for all $f_1,f_2\in\O_{\g^*}$ and $\xi\in\g^*$. Note that the symplectic leaves of $\g^*$ are the coadjoint orbits of any connected Lie group integrating $\g$.
	
	\subsection{Poisson transversals and pre-Poisson submanifolds}\label{Subsection: Pre-Poisson submanifolds} Suppose that $M$ is Poisson. Write $L_p\subset M$ for the symplectic leaf of $M$ containing $p\in M$. A submanifold or smooth, locally closed subvariety $S\subset M$ is called a \textit{Poisson transversal} if the following hold for all $p\in S$: $T_pM=T_p(L_p)+T_pS$ and $L_p\cap S$ is a symplectic submanifold of $L_p$. It turns out that $S$ inherits a canonical Poisson structure, with symplectic leaves given by the connected components of the intersections of $S$ with symplectic leaves of $M$ \cite[Lemma 3]{fre-mar:17}. If $M$ is symplectic, then a submanifold (resp. smooth, locally closed subvariety) $S\subset M$ is a Poisson transversal if and only if $S$ is a symplectic submanifold (resp. subvariety) of $M$. One also has the following; see \cite[Lemma 7]{fre-mar:17}.
	
	\begin{proposition}\label{Proposition: Basic transversal}
		Suppose that $M$ and $N$ are Poisson. Let $S\subset N$ be a Poisson transversal. If $\mu:M\longrightarrow N$ is a Poisson morphism, then $\mu^{-1}(S)\subset M$ is a Poisson transversal.
	\end{proposition}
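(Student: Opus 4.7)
My plan rests on the standard equivalent form of the Poisson transversal condition: $S \subset N$ is a Poisson transversal if and only if $T_qN = T_qS \oplus \sigma_N^\vee((T_qS)^\circ)$ at every $q \in S$, where $(T_qS)^\circ \subset T_q^*N$ denotes the annihilator of $T_qS$. The goal is then to establish the analogous direct-sum decomposition for $\mu^{-1}(S) \subset M$ at every $p \in \mu^{-1}(S)$. I would proceed in three steps: (a) show $\mu$ is transverse to $S$, so $\mu^{-1}(S)$ is smooth; (b) identify $(T_p\mu^{-1}(S))^\circ$ with $(d\mu)_p^*((T_qS)^\circ)$; and (c) verify the direct-sum property itself.

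For (a), the Poisson morphism property of $\mu$ is encoded by the identity $(d\mu)_p \circ \sigma_M^\vee \circ (d\mu)_p^* = \sigma_N^\vee$ on $T_q^*N$, where $q := \mu(p)$. Applied to $(T_qS)^\circ$, this places $\sigma_N^\vee((T_qS)^\circ)$ inside $\mathrm{image}((d\mu)_p)$, so the Poisson transversal decomposition of $T_qN$ forces $T_qN = T_qS + \mathrm{image}((d\mu)_p)$, which is transversality of $\mu$ to $S$; one thereby has $T_p\mu^{-1}(S) = (d\mu)_p^{-1}(T_qS)$. Step (b) then follows by dualizing the resulting short exact sequence
\[ 0 \too T_p\mu^{-1}(S) \too T_pM \too T_qN/T_qS \too 0 \]
and using the canonical identification $(T_qN/T_qS)^* \cong (T_qS)^\circ$.

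The content lives in step (c), and the main obstacle is the possible presence of a kernel for $(d\mu)_p$. Decomposing an arbitrary $v \in T_pM$ is routine: write $(d\mu)_p(v) = u_S + \sigma_N^\vee(\alpha)$ per the Poisson transversal condition on $S$, set $w := \sigma_M^\vee((d\mu)_p^*\alpha) \in \sigma_M^\vee((T_p\mu^{-1}(S))^\circ)$, and observe $(d\mu)_p(w) = \sigma_N^\vee(\alpha)$ by the Poisson morphism property, so $v - w \in T_p\mu^{-1}(S)$. Showing that the sum is direct is the hard part: if $v = \sigma_M^\vee((d\mu)_p^*\alpha)$ also lies in $T_p\mu^{-1}(S)$, then $(d\mu)_p(v) = \sigma_N^\vee(\alpha) \in T_qS \cap \sigma_N^\vee((T_qS)^\circ) = \{0\}$, yielding $\sigma_N^\vee(\alpha) = 0$ but not immediately $v = 0$. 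To bridge this gap I would prove the auxiliary lemma $(T_qS)^\circ \cap \ker\sigma_N^\vee = \{0\}$: skew-symmetry of $\sigma_N$ turns $\sigma_N^\vee(\alpha) = 0$ into the statement that $\alpha$ vanishes on $\sigma_N^\vee(T_q^*N) = T_qL_q$, and combined with $\alpha \in (T_qS)^\circ$ and $T_qN = T_qS + T_qL_q$ this forces $\alpha = 0$, hence $v = 0$.
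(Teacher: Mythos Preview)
Your argument is correct. The paper itself does not give a proof of this proposition; it simply cites \cite[Lemma 7]{fre-mar:17} (Frejlich--M\u{a}rcu\c{t}). What you have written is essentially the standard proof of that lemma, organized around the equivalent characterization $T_qN = T_qS \oplus \sigma_N^\vee((T_qS)^\circ)$ of a Poisson transversal. Each of your steps (a)--(c) is sound, including the auxiliary observation $(T_qS)^\circ \cap \ker\sigma_N^\vee = 0$, which follows from $\ker\sigma_N^\vee = (\operatorname{im}\sigma_N^\vee)^\circ = (T_qL_q)^\circ$ together with $T_qS + T_qL_q = T_qN$. So you have supplied a self-contained argument where the paper defers to the literature.
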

	
	Suppose that $M$ is Poisson. Consider the binary operation $[\cdot,\cdot]:\Omega^1(X)\times\Omega^1(X)\longrightarrow\Omega^1(X)$ defined by
	$$[\alpha,\beta]\coloneqq\mathcal{L}_{\sigma^{\vee}(\alpha)}(\beta)-\mathcal{L}_{\sigma^{\vee}(\beta)}(\alpha)-\mathrm{d}(\sigma(\alpha\wedge\beta))$$ for all $\alpha,\beta\in\Omega^1(X)$. One finds that $[\cdot,\cdot]$ renders $T^*M$ a Lie algebroid with anchor map $\sigma^{\vee}$. Given a submanifold (resp. complex submanifold, smooth, locally closed subvariety) $S\subset M$, let $(TS)^{\circ}\subset (T^*M)\big\vert_{S}$ denote conormal bundle of $S$ in $M$. The following definition has obvious analogues in the holomorphic and complex-algebraic categories, and first arises in works of Cattaneo--Zambon \cite{cat-zam:07,cat-zam:09}.
	
	\begin{definition}[Cattaneo--Zambon]\label{Definition: Pre-Poisson submanifold}
		Suppose that $M$ is Poisson with bivector field $\sigma\in\mathrm{H}^0(M,\wedge^2(TM))$. A submanifold $S\subset M$ is called \textit{pre-Poisson} if $$L_S\coloneqq (\sigma^{\vee})^{-1}(TS)\cap (TS)^{\circ}\longrightarrow S$$ has constant fiber dimension.	
	\end{definition}
	
	There is a precise sense in which generic submanifolds of a Poisson manifold are pre-Poisson \cite[Proposition 2.6]{cro-may:22}. Examples of pre-Poisson submanifolds include coisotropic submanifolds. A submanifold $S$ of a Poisson manifold $M$ is pre-Poisson if and only if $L_S$ is a Lie subalgebroid of $T^*M$ \cite[Proposition 3.6]{cat-zam:09}.
	
	\subsection{Hamiltonian actions}\label{Subsection: Hamiltonian actions} Let $(M,\omega)$ be a symplectic manifold (resp. holomorphic symplectic manifold, complex symplectic variety). Suppose that $M$ carries a smooth (resp. holomorphic, algebraic) action of a Lie group (resp. complex Lie group, complex algebraic group) $G$. As above, real-geometric, complex-geometric, and algebro-geometric adjectives are used only in cases where ambiguity among the smooth, holomorphic, and complex-algebraic categories may arise. 
	
	Let $\g$ denote the Lie algebra of $G$.
	The $G$-action on $M$ is called \textit{Hamiltonian} if $\omega$ is $G$-invariant and there exists a $G$-equivariant morphism $\mu:M\longrightarrow\g^*$ satisfying $$\omega(x_M,\cdot)=-\mathrm{d}(\mu^x)$$ for all $x\in\g$, where $\mu^x\in\mathcal{O}_M$ is the pointwise pairing of $\mu$ with $x$. In this case, $\mu$ is called a \textit{moment map}. A \textit{Hamiltonian $G$-space} (resp. \textit{holomorphic Hamiltonian $G$-space}, \textit{ Hamiltonian $G$-variety}) is the data of a symplectic manifold (resp. holomorphic symplectic manifold, complex symplectic variety) $M$, a Hamiltonian $G$-action on $M$, and a prescribed moment map $\mu:M\longrightarrow\g^*$. Note that $\mu$ is necessarily a Poisson morphism \cite[Lemma 1.4.2]{chr-gin}. 
	
	There is a simple generalization of the above if $(M,\omega)$ is replaced with a Poisson manifold (resp. holomorphic Poisson manifold, smooth complex Poisson variety) $(M,\sigma)$, where $\sigma$ is the Poisson bivector field on $M$. The action of $G$ on $M$ is called \textit{Hamiltonian} if $\sigma$ is $G$-invariant and there exists a $G$-equivariant morphism $\mu:M\longrightarrow\g^*$, called a moment map, satisfying $$\sigma^{\vee}(\mathrm{d}(\mu^x))=x_M$$ for all $x\in\g$. A \textit{Poisson Hamiltonian $G$-space} (resp. \textit{holomorphic Poisson Hamiltonian $G$-space}, \textit{Poisson Hamiltonian $G$-variety}) is the data of a Poisson manifold (resp. holomorphic Poisson manifold, smooth complex Poisson variety) $M$, a Hamiltonian $G$-action on $M$, and a prescribed moment map $\mu:M\longrightarrow\g^*$.
	
	The following is a special case of the above. Suppose that $M=T^*N$ for a smooth manifold (resp. complex manifold, smooth complex algebraic variety) $N$ with a smooth (resp. holomorphic, algebraic) action of a Lie group (resp. complex Lie group, complex algebraic group) $G$. The $G$-action on $N$ lifts canonically to a smooth (resp. holomorphic, algebraic) action of $G$ on $T^*N$, where the symplectic form on $T^*N$ is the negated exterior derivative of the tautological one-form. This latter action is Hamiltonian with moment map $\mu:T^*N\longrightarrow\g^*$, defined by
	$$\mu(p,\phi)(x)\coloneqq-\phi((x_N)_p),\quad (p,\phi)\in T^*N,\text{ }x\in\g.$$
	
	We further specialize the above as follows. Let $G$ be a Lie group, complex Lie group, or complex algebraic group with Lie algebra $\g$. Consider the following action of $G\times G$ on $G$: $$(g_1,g_2)h\coloneqq g_1hg_2^{-1},\quad (g_1,g_2)\in G\times G,\text{ }h\in G.$$ There is an induced Hamiltonian action of $G\times G$ on $T^*G$. Using the left trivialization to identify $T^*G$ with $G\times\g^*$, this induced action becomes
	$$(g_1,g_2)(h,\xi)\coloneqq (g_1hg_2^{-1},\Ad_{g_2}^*(\xi)),\quad (g_1,g_2)\in G\times G,\text{ }(h,\xi)\in G\times\g^*=T^*G.$$ One finds that $$\mu:T^*G=G\times\g^*\longrightarrow\g^*\times\g^*=(\g\times\g)^*,\quad (g,\xi)\mapsto(\mathrm{Ad}_g^*(\xi),-\xi),\quad (g,\xi)\in G\times\g^*=T^*G$$ is a moment map.
	
	\subsection{Hamiltonian reduction}\label{Subsection: Hamiltonian reduction}
	Suppose that $G$ is a Lie group (resp. complex Lie group) with Lie algebra $\g$. Let $(M,\omega)$ be a Hamiltonian $G$-space (resp. holomorphic Hamiltonian $G$-space) with moment map $\mu:M\longrightarrow\g^*$. Given $\xi\in\g^*$, the subset $\mu^{-1}(\xi)\subset M$ is $G_{\xi}$-invariant. It turns out that $\xi$ is a regular value of $\mu$ if and only if the action of $G_{\xi}$ on $\mu^{-1}(\xi)$ is locally free. Assume that these equivalent conditions are satisfied, and that the quotient topological space $\mu^{-1}(\xi)/G_{\xi}$ carries a smooth (resp. complex) manifold structure for which the quotient map $\pi:\mu^{-1}(\xi)\longrightarrow\mu^{-1}(\xi)/G_{\xi}$ is a submersion; this structure is necessarily unique. Write $j:\mu^{-1}(\xi)\longrightarrow M$ for the inclusion. There exists a unique smooth (resp. holomorphic) symplectic form $\omega_{\xi}$ on $\mu^{-1}(\xi)/G_{\xi}$ satisfying $\pi^*\omega_{\xi}=j^*\omega$. The symplectic (resp. holomorphic symplectic) manifold $$M\sll{\xi}G\coloneqq(\mu^{-1}(\xi)/G_{\xi},\omega_{\xi})$$ is called the \textit{Hamiltonian reduction} of $M$ by $G$ at level $\xi$. This construction is largely due to Marsden--Weinstein \cite{mar-wei:74} and Meyer \cite{mey:73}.
	
	We now discuss a complex algebro-geometric counterpart of the above. To this end, let $G$ be a complex algebraic group with Lie algebra $\g$. Suppose that $(M,\omega)$ is a Hamiltonian $G$-variety with moment map $\mu:M\longrightarrow\g^*$. Fix $\xi\in\g^*$, and assume that the action of $G_{\xi}$ on $\mu^{-1}(\xi)$ is free. It follows that the closed subscheme $\mu^{-1}(\xi)\subset M$ is smooth. We also assume that a smooth geometric quotient variety $\mu^{-1}(\xi)/G_{\xi}$ exists. Consider the inclusion $j:\mu^{-1}(\xi)\longrightarrow M$ and quotient morphism $\pi:\mu^{-1}(\xi)\longrightarrow\mu^{-1}(\xi)/G_{\xi}$. Our final assumption is that $\mathrm{d}\pi_p:T_p(\mu^{-1}(\xi))\longrightarrow T_{\pi(p)}(\mu^{-1}(\xi)/G_{\xi})$ is surjective for all $p\in\mu^{-1}(\xi)$. There exists a unique algebraic symplectic form $\omega_{\xi}$ on $\mu^{-1}(\xi)/G_{\xi}$ such that $\pi^*\omega_{\xi}=j^*\omega$ \cite[Theorem C(iii)]{cro-may:22}. As above, we write $M\sll{\xi}G\coloneqq(\mu^{-1}(\xi)/G_{\xi},\omega_{\xi})$ for the Hamiltonian reduction of $M$ by $G$ at level $\xi$.
	
	\section{Slices for compact and complex reductive group actions}\label{sec:slices}
	This section introduces \textit{Poisson slices} and \textit{symplectic slices}, reviews the principal slice theorem for Hamiltonian actions of compact Lie groups, and sets the stage for analogues for complex reductive algebraic group actions. Emphasis is placed on specific differences between Hamiltonian actions of compact Lie groups and those of complex reductive algebraic groups.
	
	\subsection{Generalities on slices}
	Suppose that a Lie group $G$ acts smoothly on a smooth manifold $M$. The following definition of a \textit{slice} arises in this context; cf. \cite[Definition 3.6]{ler-mei-tol-woo} and \cite[Definition 1.1]{sja:95}.
	
	\begin{definition}\label{Definition: Slice compact} A \textit{slice} at $p\in M$ is a submanifold $S\subset M$ such that $p\in S$ and the following conditions are satisfied:
		\begin{itemize}
			\item[\textup{(i)}] $S$ is invariant under $G_p\subset G$;
			\item[\textup{(ii)}] $GS$ is open in $M$; 
			\item[\textup{(iii)}] the map $G\times_{G_p} S \longrightarrow GS$ given by $[g:q] \mapsto gq$ is a diffeomorphism, where $G\times_{G_p} S \longrightarrow GS$ is the quotient of $G\times S$ by the following $G_p$-action: $g(h,x)=(hg^{-1},gx)$ for $g\in G_p$ and $(h,x)\in G\times S$.
		\end{itemize}
	\end{definition}
	
	We wish to define slices in Poisson Hamiltonian $G$-spaces, and call them \textit{Poisson slices}. One approach would be to simply add appropriate Poisson-geometric conditions to Definition \ref{Definition: Slice compact}, e.g. that $S$ should be a Poisson transversal. At the same time, retaining Parts (i) and (iii) of this definition would preclude some \textit{Slodowy slices} from being Poisson slices in any reasonable way; see Section \ref{Subsection: Slodowy slices} for details on Slodowy slices, and Remark \ref{Remark: Slice issue} for an explanation of the issue that retaining Parts (i) and (iii) would create. These last few sentences inform our definition of a \textit{Poisson slice}.

	\subsection{Poisson and symplectic slices}\label{Subsection: Poisson slices new}
	Let $G$ be a Lie group (resp. complex Lie group, complex algebraic group). Suppose that $M$ is a Poisson Hamiltonian $G$-space (resp. holomorphic Poisson Hamiltonian $G$-space, Poisson Hamiltonian $G$-variety), as defined in Section \ref{Subsection: Hamiltonian actions}. Consider a submanifold (resp. complex submanifold, smooth locally closed subvariety) $S\subset M$.
	
	\begin{definition}\label{Definition: Slice}
		We call $S$ a \textit{Poisson slice} if it satisfies the following conditions:
		\begin{itemize}
			\item[\textup{(i)}] $S$ is a Poisson transversal in $M$;
			\item[\textup{(ii)}] $S$ is transverse to the $G$-orbits in $M$, i.e. $T_pM=T_pS+T_p(Gp)$ for all $p\in S$.
		\end{itemize}
		If $M$ is symplectic and $S$ is a Poisson slice, then $S$ is called a \textit{symplectic slice}.
	\end{definition} 
	
	\begin{remark}
		Suppose that $M$ is symplectic. It is straightforward to verify that $S$ is a symplectic slice if and only if $S$ is a symplectic submanifold (resp. holomorphic symplectic submanifold, symplectic subvariety) of $M$ with the property of being transverse to $G$-orbits.
	\end{remark}
	
	This definition generalizes that of the same term in \cite{cro-roe:22}, where $G$ is assumed to be a connected complex semisimple algebraic group, and Poisson slices are declared to be pre-images of Slodowy slices under moment maps for Hamiltonian $G$-actions. Before witnessing more examples of Poisson slices, we derive the following two properties.
	
	\begin{proposition}
		If $S\subset M$ is a Poisson slice, then the saturation $GS$ is open in $M$. 
	\end{proposition}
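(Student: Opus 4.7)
The plan is to realize $GS$ as the image of the action map $\alpha: G \times S \longrightarrow M$, $(g,p) \mapsto gp$, and show that $\alpha$ is a submersion in the smooth and holomorphic categories, or a smooth morphism in the complex-algebraic category. Since such maps between smooth spaces are open, this will immediately give that $GS = \alpha(G \times S)$ is open in $M$. Notably, only condition (ii) of Definition \ref{Definition: Slice} is needed for this; the Poisson transversal hypothesis plays no role here.

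First, I would verify the submersion property at points of the form $(e,p)$ with $e \in G$ the identity and $p \in S$. Under the natural identification $T_{(e,p)}(G \times S) = \g \oplus T_pS$, the differential $\mathrm{d}\alpha_{(e,p)}$ sends $(x,v)$ to $-(x_M)_p + v \in T_pM$ in the convention of \eqref{Equation: Generating vector field}. Its image therefore contains both $T_p(Gp) = \{(x_M)_p : x \in \g\}$ (from the $G$-factor) and $T_pS$ (from the $S$-factor). By transversality, condition (ii) of Definition \ref{Definition: Slice} gives $T_pS + T_p(Gp) = T_pM$, so $\mathrm{d}\alpha_{(e,p)}$ is surjective.

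Next, I would propagate this surjectivity to an arbitrary point $(g,p) \in G \times S$ using the equivariance relation $\alpha(gh,p) = g \cdot \alpha(h,p)$. Because left translation by $g$ is a diffeomorphism (respectively biholomorphism or algebraic isomorphism) of both $G \times S$ (acting on the first factor) and $M$, and these two actions intertwine via $\alpha$, surjectivity of $\mathrm{d}\alpha_{(e,p)}$ forces surjectivity of $\mathrm{d}\alpha_{(g,p)}$. Hence $\alpha$ is a submersion at every point of $G \times S$.

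There is no substantive obstacle here; the only point warranting care is the passage from surjective differential to open image in the algebraic setting. This follows from the standard fact that a morphism between smooth varieties whose differential is surjective at a point is smooth on a neighborhood of that point (by the Jacobian criterion), and smooth morphisms are open. Hence $GS$ contains an open neighborhood of each of its points, completing the proof in all three categories.
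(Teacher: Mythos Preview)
Your proof is correct and takes essentially the same approach as the paper: both realize $GS$ as the image of the action map $G\times S\longrightarrow M$, show this map is a submersion using transversality to the $G$-orbits, and conclude openness of the image. The only minor difference is in the algebraic case, where you invoke openness of smooth morphisms directly, while the paper argues via constructibility of the image together with the fact that a constructible set open in the Euclidean topology is Zariski-open; both routes are standard and the core argument is identical.
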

	
	\begin{proof}
		Consider the morphism
		$$\varphi:G\times S\longrightarrow M,\quad (g,p)\mapsto gp.$$ Since $S$ is transverse to the $G$-orbits in $M$, $\varphi$ is a submersion. We conclude that the image $GS$ of $\varphi$ is open if $M$ is a Poisson Hamiltonian $G$-space or holomorphic Poisson Hamiltonian $G$-space. On the other hand, suppose that $M$ is a Poisson Hamiltonian $G$-variety. The image $GS$ of the variety morphism $\varphi$ is necessarily constructible. One also knows that a constructible subset of an algebraic variety is open in the Euclidean topology if and only if it is open in the Zariski topology \cite[Section XII, Subsection 2, Corollary 2.3]{sga1}. This implies that $GS$ is open in the Zariski topology of $M$.
	\end{proof}
	
	\begin{proposition}\label{Proposition: Pullback}
		Let $M$ and $N$ be Poisson Hamiltonian $G$-spaces, holomorphic Poisson Hamiltonian $G$-spaces, or Poisson Hamilonian $G$-varieties. Suppose that $S\subset N$ is a Poisson slice. If $f:M\longrightarrow N$ is a $G$-equivariant Poisson morphism, then $f^{-1}(S)\subset M$ is a Poisson slice.
	\end{proposition}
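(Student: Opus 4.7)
The plan is to verify the two defining conditions of a Poisson slice (Definition \ref{Definition: Slice}) for the submanifold $f^{-1}(S)\subset M$. Condition (i), that $f^{-1}(S)$ is a Poisson transversal, follows directly from Proposition \ref{Proposition: Basic transversal} applied to the Poisson morphism $f$ and the Poisson transversal $S\subset N$. The substantive content is therefore condition (ii): transversality of $f^{-1}(S)$ to the $G$-orbits in $M$.

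The first step will be to establish that $f$ is transverse to $S$ in the ordinary manifold-theoretic sense at every point $p\in f^{-1}(S)$, i.e. that $df_p(T_pM)+T_{f(p)}S=T_{f(p)}N$. Since $S$ is a Poisson transversal, $T_{f(p)}N$ decomposes as $T_{f(p)}S\oplus\sigma_N^{\vee}((T_{f(p)}S)^{\circ})$, so it suffices to show that $\sigma_N^{\vee}(\alpha)\in df_p(T_pM)$ for every $\alpha\in (T_{f(p)}S)^{\circ}$. This is precisely the content of $f$ being a Poisson morphism: differentiating the identity $f^*\{g,h\}_N=\{f^*g,f^*h\}_M$ at $p$ yields the relation $\sigma_N^{\vee}(\alpha)=df_p(\sigma_M^{\vee}(df_p^*\alpha))$, which places $\sigma_N^{\vee}(\alpha)$ in the image of $df_p$. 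As a consequence, $f^{-1}(S)$ is a (locally closed) submanifold or smooth subvariety of $M$ whose tangent space satisfies $T_p(f^{-1}(S))=df_p^{-1}(T_{f(p)}S)$.

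The second step will exploit $G$-equivariance. Given $p\in f^{-1}(S)$ and $v\in T_pM$, transversality of $S$ to $G$-orbits in $N$ provides a decomposition $df_p(v)=w+\xi$ with $w\in T_{f(p)}S$ and $\xi\in T_{f(p)}(Gf(p))$. The $G$-equivariance of $f$ implies that the generating vector fields satisfy $df_p(x_M|_p)=x_N|_{f(p)}$ for all $x\in\g$, so $df_p$ maps $T_p(Gp)$ onto $T_{f(p)}(Gf(p))$. Hence $\xi=df_p(u)$ for some $u\in T_p(Gp)$, and $df_p(v-u)=w\in T_{f(p)}S$. By the tangent-space identity established in the previous step, $v-u\in T_p(f^{-1}(S))$, yielding $v\in T_p(f^{-1}(S))+T_p(Gp)$, as required.

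The main obstacle is the passage from the Poisson-theoretic hypothesis on $f$ and $S$ to ordinary transversality of $f$ with $S$; everything else is a straightforward consequence of $G$-equivariance. A minor additional point that must be addressed in the complex-algebraic setting is to note that ordinary transversality of $f$ and $S$ automatically ensures that the scheme-theoretic fibre $f^{-1}(S)$ is smooth at every point, so that it is indeed a locally closed smooth subvariety of $M$ in the sense of Definition \ref{Definition: Slice}; the same considerations cover the holomorphic category.
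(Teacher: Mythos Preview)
Your proof is correct and follows essentially the same approach as the paper. Both arguments invoke Proposition \ref{Proposition: Basic transversal} for condition (i), and both handle condition (ii) by pushing a tangent vector forward, splitting it using transversality of $S$ to $G$-orbits in $N$, lifting the orbital part back via $G$-equivariance, and recognizing the remainder as lying in $(\mathrm{d}f_p)^{-1}(T_{f(p)}S)=T_p(f^{-1}(S))$. The only difference is that you explicitly re-derive ordinary transversality of $f$ to $S$ (via the identity $\sigma_N^{\vee}(\alpha)=\mathrm{d}f_p(\sigma_M^{\vee}(\mathrm{d}f_p^*\alpha))$) to justify the tangent-space formula, whereas the paper simply uses that formula directly, treating it as part of what Proposition \ref{Proposition: Basic transversal} (i.e.\ \cite[Lemma 7]{fre-mar:17}) already guarantees.
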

	
	\begin{proof}
		By Proposition \ref{Proposition: Basic transversal}, $f^{-1}(S)$ is a Poisson transversal in $M$. It remains to prove that $f^{-1}(S)$ is transverse to the $G$-orbits in $M$. Suppose that $u\in T_pM$ for $p\in f^{-1}(S)$. Since $T_{f(p)}S$ is transverse to $G$-orbits, we may write $\mathrm{d}f_p(u)=v+w$, where $v\in T_{f(p)}S$ and $w=(x_N)_{f(p)}$ for some $x\in\g$; see Section \ref{Subsection: Actions, orbits, and stabilizers} for a definition of $x_N$. The equivariance of $f$ implies that $\mathrm{d}f_p((x_M)_p)=(x_N)_{f(p)}$. It follows that $\mathrm{d}f_p(u-(x_M)_p)=v$. We conclude that $u-(x_M)_p$ belongs to $(\mathrm{d}f_p)^{-1}(T_{f(p)}S)=T_p(f^{-1}(S))$. In other words, $u$ is the sum of an element in $T_{p}(f^{-1}(S))$ and an element in $T_p(Gp)$.
	\end{proof}
	
	\subsection{The principal slice theorem for compact Lie group actions}\label{Subsection: The principal slice theorem for compact group actions}
	Let $K$ be a compact connected Lie group with Lie algebra $\k$. As in Section \ref{Subsection: Compact groups}, we choose a maximal torus $T\subset K$ with Lie algebra $\t\subset\k$. Let us also choose a set of positive roots. Recall that $\t^*_{+}\subset\k^*$ denotes the fundamental Weyl chamber associated to this choice. Let us also recall that $K_{\sigma}$ denotes the $K$-stabilizer of any $x\in\sigma$, where $\sigma\subset\t^*_{+}$ is an open face. Versions of the following result are due to Condevaux--Dazord--Molino \cite[Section II.2.1]{con-daz-mol:88}, Guillemin--Sternberg \cite[Theorem 26.7]{gui-ste:90}, Hilgert--Neeb--Plank \cite[Lemmas 6.7--6.9]{hil-nee-pla:94}, and Lerman--Meinrenken--Tolman--Woodward \cite[Theorem 3.1]{ler-mei-tol-woo}; the formulation we give here is essentially that of \cite[Theorem 3.1]{ler-mei-tol-woo}.
	
	\begin{theorem}\label{Theorem: Slice theorem}
		Consider a non-empty, connected Hamiltonian $K$-space $M$ with moment map $\mu:M\longrightarrow\k^*$.
		\begin{itemize}
			\item[\textup{(i)}] There exists a unique open face $\sigma_M\subset\t^*_{+}$ for which $\mu(M)\cap\sigma_M$ is dense in $\mu(M)\cap\t^*_{+}$.
			\item[\textup{(ii)}] The inverse image $\mu^{-1}(\sigma_M)\subset M$ is a connected, $T$-invariant, symplectic submanifold.
			\item[\textup{(iii)}] The action of $[K_{\sigma_M},K_{\sigma_M}]$ on $\mu^{-1}(\sigma_M)$ is trivial.
			\item[\textup{(iv)}] The restriction $$\mu\big\vert_{\mu^{-1}(\sigma_M)}:\mu^{-1}(\sigma_M)\longrightarrow\t^*$$ is a moment map for the Hamiltonian action of $T$ on $\mu^{-1}(\sigma_M)$.
			\item[\textup{(v)}] The saturation $K\mu^{-1}(\sigma_M)\subset M$ is open and dense. 
		\end{itemize}
	\end{theorem}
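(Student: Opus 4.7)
My plan follows the approach of \cite{ler-mei-tol-woo}, taking the parts in order.

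For part (i), I would invoke Kirwan's non-abelian convexity theorem, which asserts that $\Delta \coloneqq \mu(M)\cap \t^*_+$ is a convex polytope. As a non-empty convex polytope, $\Delta$ has a well-defined relative interior that is connected and dense in $\Delta$. Since the open faces of $\t^*_+$ partition $\t^*_+$ into pairwise disjoint pieces, the connected set $\mathrm{relint}(\Delta)$ lies in a single open face, which I would take to be $\sigma_M$. Density of $\mu(M)\cap \sigma_M$ in $\mu(M)\cap \t^*_+$ is then density of $\mathrm{relint}(\Delta)$ in $\Delta$, and uniqueness of $\sigma_M$ is immediate from the disjointness of distinct open faces.

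For parts (ii), (iii), and (iv), I would apply Theorem \ref{thm:compactscs} with $\sigma = \sigma_M$, yielding that $N \coloneqq \mu^{-1}(\S_{\sigma_M})$ is a symplectic submanifold of $M$ and a Hamiltonian $K_{\sigma_M}$-space. Setting $H \coloneqq [K_{\sigma_M}, K_{\sigma_M}]$, the fixed point locus $M^H$ is a symplectic submanifold of $M$ by the standard fact about compact-group actions on symplectic manifolds. The central claim would be that $\mu^{-1}(\sigma_M)$ is an open subset of $M^H$; this immediately yields (ii) and (iii), and (iv) follows by observing that the residual torus $K_{\sigma_M}/H$ contains the image of $T$ and inherits a moment map from $\mu$, which lands in $\sigma_M \subset \t^*$. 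The inclusion $\mu^{-1}(\sigma_M) \subset M^H$ is essentially (iii). To prove it I would use two observations for $p \in \mu^{-1}(\sigma_M)$ and $x \in \h$: first, $\mu^x$ vanishes on $\mu^{-1}(\sigma_M)$ because elements of $\sigma_M$ are $K_{\sigma_M}$-fixed in the coadjoint representation and therefore annihilate $\h$, so the moment map identity places $(x_M)_p$ in the symplectic orthogonal of $T_p\mu^{-1}(\sigma_M)$; second, $K_{\sigma_M}$-equivariance of $\mu$ together with pointwise $K_{\sigma_M}$-invariance of $\sigma_M$ shows $(x_M)_p \in T_p\mu^{-1}(\sigma_M)$. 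To deduce $(x_M)_p=0$ from these, one needs $\mu^{-1}(\sigma_M)$ to be symplectic, which is circular. I would break the circularity by applying the Marle--Guillemin--Sternberg local normal form at points of $\mu^{-1}(\sigma_M)$ of principal orbit type: at such $p$, the stabilizer $K_p$ equals $K_{\sigma_M}$, so $H\subset K_p$ and $H$ fixes $p$; these principal points are open and dense in $\mu^{-1}(\sigma_M)$, and since the condition $(x_M)_p=0$ is closed, it propagates to all of $\mu^{-1}(\sigma_M)$. Connectedness of $\mu^{-1}(\sigma_M)$ would then follow from Kirwan's connectedness of moment-map level sets together with connectedness of $\sigma_M$ (both convex and open in its affine hull).

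For part (v), openness of $K\mu^{-1}(\sigma_M)$ is a consequence of the transversality of $\mu^{-1}(\sigma_M)$ to $K$-orbits, which follows because $\S_{\sigma_M}$ is a Poisson slice in $\k^*$ and Poisson slices pull back under Poisson morphisms by Proposition \ref{Proposition: Pullback}. Density follows from the identification $K\mu^{-1}(\sigma_M)=\mu^{-1}(K\sigma_M)$ together with the density of $K\sigma_M \cap \mu(M)$ in $\mu(M)$ supplied by (i). The main obstacle I anticipate is precisely the circular dependence between (ii) and (iii); the local-normal-form argument together with the closedness of the infinitesimal triviality condition is how I would resolve it.
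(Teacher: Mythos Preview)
The paper does not supply its own proof of this theorem; it is stated as a known result attributed to \cite{con-daz-mol:88,gui-ste:90,hil-nee-pla:94,ler-mei-tol-woo}, and the remark following it merely explains how parts (iii) and (v) are extracted from \cite[Theorem~3.1]{ler-mei-tol-woo}. There is therefore no paper proof to compare against, but your sketch contains two substantive gaps.

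For part (i), Kirwan's convexity theorem requires $M$ to be compact (or at least $\mu$ proper), neither of which is assumed. Even granting that $\Delta=\mu(M)\cap\t^*_+$ is convex, your claim that $\mathrm{relint}(\Delta)$ lies in a single open face does not follow from connectedness alone: the open faces are only locally closed in $\t^*_+$, and a connected set may well meet several of them. The correct justification uses convexity plus the ability to extend past relative-interior points: if $p_1\in\sigma_1\cap\mathrm{relint}(\Delta)$ and $p_2\in\sigma_2\cap\mathrm{relint}(\Delta)$ with some $\alpha\in\Phi^+$ vanishing on $\sigma_1$ but positive at $p_2$, extend the segment slightly beyond $p_1$ to obtain a point of $\Delta$ on which $\alpha<0$, contradicting $\Delta\subset\t^*_+$.

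The more serious error is in your resolution of the circularity for (ii)--(iv). The assertion that a point $p\in\mu^{-1}(\sigma_M)$ of principal orbit type has $K_p=K_{\sigma_M}$ is false: equivariance of $\mu$ yields only $K_p\subset K_{\mu(p)}=K_{\sigma_M}$. For example, take $M=T^*K$ with the left $K$-action; the moment map is surjective, so $K_{\sigma_M}=T$, yet the action is free, so $K_p=\{e\}$ for every $p$. The argument in \cite{ler-mei-tol-woo} breaks the circularity differently. From (i) one deduces $\mu(M)\cap\t^*_+\subset\overline{\sigma_M}$; since $\overline{\sigma_M}\cap\mathrm{star}(\sigma_M)=\sigma_M$, this forces $\mu(N)\subset\sigma_M$ for the cross-section $N=\mu^{-1}(\S_{\sigma_M})$, and hence $N=\mu^{-1}(\sigma_M)$. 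Now on the symplectic manifold $N$ (whose symplectic structure is already known from Theorem~\ref{thm:compactscs}) the $[K_{\sigma_M},K_{\sigma_M}]$-moment map vanishes identically, so $\omega(x_N,\cdot)=-d\mu^x=0$ and $x_N=0$ for every $x\in[\k_{\sigma_M},\k_{\sigma_M}]$. This gives (ii) and (iii) simultaneously, and (iv) follows.
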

	
	\begin{remark}
		Certain parts of Theorem \ref{Theorem: Slice theorem} warrant further discussion. 
		\begin{itemize}
			\item[\textup{(i)}] Part (iii) does not appear in the statement of \cite[Theorem 3.1]{ler-mei-tol-woo}; it instead appears in the proof of the aforementioned theorem.
			\item[\textup{(ii)}] The openness assertion in Part (v) is not included in the statement of \cite[Theorem 3.1]{ler-mei-tol-woo}; it follows easily from $\mu(M)\cap\sigma_M$ being dense in $\mu(M)\cap\t^*_{+}$, and $K\sigma_M$ being locally closed in $\k^*$.   
		\end{itemize}
		
	\end{remark}
	
	As per \cite[Section 3]{ler-mei-tol-woo}, we adopt the following definition.
	
	\begin{definition}
		Consider a non-empty, connected Hamiltonian $K$-space $M$ with moment map $\mu:M\longrightarrow\k^*$. The open face $\sigma_M\subset\t^*_{+}$ and Hamiltonian $T$-space $\mu^{-1}(\sigma_M)$ are called the \textit{principal face} and \textit{principal slice}, respectively.
	\end{definition}
	
	It is straightforward to check that $\sigma_M$ is a Poisson slice in the Poisson submanifold $K\sigma_M\subset\k^*$. By combining this fact with Proposition \ref{Proposition: Pullback} and Theorem \ref{Theorem: Slice theorem}, one can verify that $\mu^{-1}(\sigma_M)$ is a symplectic slice in $M$.
	
	\subsection{A partial reformulation via orbit-type strata}\label{Subsection: Orbit-type strata} Retain the setup of Section \ref{Subsection: The principal slice theorem for compact group actions}. Recall the discussion of orbit-type strata in Section \ref{Subsection: Actions, orbits, and stabilizers}. As discussed in Section \ref{Subsection: Compact groups}, the association $\sigma\mapsto K\sigma$ is a bijection from the set of open faces $\sigma\subset\t^*_{+}$ to the set of orbit-type strata for the coadjoint action of $K$. It follows that orbit-type strata are submanifolds of $\k^*$. With this in mind, Parts (i), (iii), and (v) of Theorem \ref{Theorem: Slice theorem} may be reformulated as follows.
	
	\begin{theorem}\label{Theorem: New slice theorem}
		Consider a non-empty, connected Hamiltonian $K$-space $M$ with moment map $\mu:M\longrightarrow\k^*$. The following statements are true.
		\begin{itemize}
			\item[\textup{(i)}] There exists a unique orbit-type stratum $\Sigma_M\subset\k^*$ for which $\mu(M)\cap\Sigma_M$ is dense in $\mu(M)$.
			\item[\textup{(ii)}] For all $\xi\in\Sigma_M$, the action of $[K_{\xi},K_{\xi}]$ on $\mu^{-1}(\xi)$ is trivial.
			\item[\textup{(iii)}] The inverse image $\mu^{-1}(\Sigma_M)\subset M$ is open and dense.
		\end{itemize}
	\end{theorem}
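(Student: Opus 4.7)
The plan is to derive Theorem \ref{Theorem: New slice theorem} directly from Theorem \ref{Theorem: Slice theorem} via the bijection $\sigma\mapsto K\sigma$ between open faces of $\t^*_{+}$ and orbit-type strata of the coadjoint action, which was recalled in Section \ref{Subsection: Orbit-type strata}. Concretely, I will define $\Sigma_M\coloneqq K\sigma_M$, where $\sigma_M\subset\t^*_{+}$ is the principal face supplied by Theorem \ref{Theorem: Slice theorem}(i). All three parts should then follow by pushing information from $\sigma_M$ and $\mu^{-1}(\sigma_M)$ to the $K$-saturations $K\sigma_M$ and $K\mu^{-1}(\sigma_M)$, using nothing more than the $K$-equivariance of $\mu$ together with the fact that $\t^*_{+}$ is a fundamental domain for the coadjoint action.

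For part (i), the key preliminary fact is that every coadjoint $K$-orbit in $\k^*$ meets $\t^*_{+}$ in exactly one point, so the $K$-saturation map $\k^*\supset\t^*_{+}\to \k^*/K$ is a bijection on underlying sets, and taking $K$-saturations preserves density in the Hausdorff topology of $\mu(M)$. Thus $\mu(M)\cap\sigma_M$ dense in $\mu(M)\cap\t^*_{+}$ upgrades to $\mu(M)\cap K\sigma_M=K(\mu(M)\cap\sigma_M)$ dense in $K(\mu(M)\cap\t^*_{+})=\mu(M)$. For uniqueness, any competing orbit-type stratum has the form $K\sigma'$ for a unique open face $\sigma'\subset\t^*_{+}$; running the density argument in reverse forces $\mu(M)\cap\sigma'$ to be dense in $\mu(M)\cap\t^*_{+}$, and Theorem \ref{Theorem: Slice theorem}(i) then yields $\sigma'=\sigma_M$.

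For part (ii), given $\xi\in\Sigma_M=K\sigma_M$, choose $g\in K$ and $\eta\in\sigma_M$ with $\xi=g\eta$. Then $K_\xi=gK_\eta g^{-1}=gK_{\sigma_M}g^{-1}$ and consequently $[K_\xi,K_\xi]=g[K_{\sigma_M},K_{\sigma_M}]g^{-1}$. By $K$-equivariance of $\mu$, left-multiplication by $g$ restricts to a diffeomorphism $\mu^{-1}(\eta)\overset{\cong}\longrightarrow\mu^{-1}(\xi)$ intertwining the $K_\eta$- and $K_\xi$-actions. Theorem \ref{Theorem: Slice theorem}(iii) tells us $[K_{\sigma_M},K_{\sigma_M}]$ acts trivially on $\mu^{-1}(\sigma_M)\supset\mu^{-1}(\eta)$, so transporting across this diffeomorphism yields the triviality of the $[K_\xi,K_\xi]$-action on $\mu^{-1}(\xi)$. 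For part (iii), $K$-equivariance of $\mu$ gives $\mu^{-1}(\Sigma_M)=\mu^{-1}(K\sigma_M)=K\mu^{-1}(\sigma_M)$, which is open and dense in $M$ by Theorem \ref{Theorem: Slice theorem}(v).

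There is no serious obstacle here; the theorem really is a reformulation, and the only points requiring mild care are (a) verifying that the bijection between open faces and orbit-type strata intertwines ``dense in $\mu(M)\cap\t^*_{+}$" with ``dense in $\mu(M)$", and (b) checking that the triviality statement in Theorem \ref{Theorem: Slice theorem}(iii), which is stated for $[K_{\sigma_M},K_{\sigma_M}]$ acting on all of $\mu^{-1}(\sigma_M)$, is strong enough to yield the pointwise triviality on each $\mu^{-1}(\xi)$ after $K$-translation; both are settled as above.
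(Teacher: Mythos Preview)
Your proposal is correct and matches the paper's approach exactly: the paper presents Theorem \ref{Theorem: New slice theorem} simply as a reformulation of Parts (i), (iii), and (v) of Theorem \ref{Theorem: Slice theorem} via the bijection $\sigma\mapsto K\sigma$ between open faces and orbit-type strata, without giving a separate proof. Your write-up supplies precisely the details this reformulation entails; the only point you might make more explicit is that ``running the density argument in reverse'' for uniqueness uses the continuous retraction $\pi:\k^*\to\t^*_{+}$ (which the paper records in Section~\ref{Subsection: Fundamentals}) to pull density of $\mu(M)\cap K\sigma'$ in $\mu(M)$ back to density of $\mu(M)\cap\sigma'$ in $\mu(M)\cap\t^*_{+}$.
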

	
	\begin{definition}
		Retain the setup of Theorem \ref{Theorem: New slice theorem}. The orbit-type stratum $\Sigma_M\subset\k^*$ and inverse image $\mu^{-1}(\Sigma_M)\subset M$ are called the \textit{principal stratum} and \textit{principal open subset}, respectively.
	\end{definition}
	
	In Section \ref{Section: Decomposition classes in Poisson geometry}, we explain that decomposition classes are natural analogues of orbit-type strata for connected complex reductive algebraic groups.
	
	\subsection{Natural slices}\label{Subsection: Slices for compact group actions}
	Continue with the setup of Section \ref{Subsection: Orbit-type strata}. Given an open face $\sigma\subset\t^*_{+}$, let $$\mathrm{star}(\sigma)\coloneqq\bigcup_{\overline{\tau}\supset \sigma}\tau$$ be the union of the open faces $\tau\subset\mathfrak{t}^*_{+}$ that contain $\sigma$ in their closures. For $\xi\in\t^{*}_{+}$, it turns out that
	\begin{equation}\label{Equation: Star}\xi\in\mathrm{star}(\sigma)\Longleftrightarrow K_{\xi}\subset K_{\sigma};\end{equation} see \cite[Remark 3.7]{ler-mei-tol-woo}. The following object is introduced in \cite{ler-mei-tol-woo}.
	
	\begin{definition}
		The \textit{natural slice} associated to an open face $\sigma\subset\t^*_{+}$ is defined to be the $K_{\sigma}$-saturation $\S_{\sigma}$ of $\mathrm{star}(\sigma)$, i.e.
		$\S_{\sigma}\coloneqq K_{\sigma}\mathrm{star}(\sigma)\subset\k^*$.
	\end{definition}
	
	We now prove that $\S_{\sigma}$ is a slice in the sense of Definition \ref{Definition: Slice compact}. While we suspect that this is well-known to experts, we are unable to find an explicit proof in the literature. 
	
	\begin{lemma} 
		\label{Lemma : K-slice}
		If $\sigma\subset\t^*_{+}$ an open face, then $\S_{\sigma}\subset\k^*$ is a slice at any point of $\sigma$, with respect to the coadjoint action of $K$. 
	\end{lemma}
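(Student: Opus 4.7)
The plan is to verify the three conditions of Definition \ref{Definition: Slice compact} at a fixed $\xi \in \sigma$. Condition (i) is immediate: since $\sigma$ is an open face of $\t^*_{+}$, the stabilizer is constant along $\sigma$, so $K_\xi = K_\sigma$, and $\S_\sigma = K_\sigma \cdot \mathrm{star}(\sigma)$ is $K_\sigma$-invariant by construction. For condition (ii), I would observe that since every coadjoint orbit meets $\t^*_+$ in a unique point, the saturation $K\S_\sigma$ coincides with $K\cdot\mathrm{star}(\sigma)$; the characterization in \eqref{Equation: Star} then identifies this with the set of $\eta \in \k^*$ whose stabilizer $K_\eta$ is $K$-subconjugate to $K_\sigma$. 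That this set is open follows from upper semi-continuity of the orbit-type partial order for compact Lie group actions.

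For condition (iii), the key preliminary step---and the one I expect to be the main obstacle---is to show that $\S_\sigma$ is actually a smooth submanifold of $\k^*$, transverse to the coadjoint orbit $K\xi$. I would use the $K$-invariant inner product fixed in Section \ref{Subsection: Compact groups} to identify $\k \cong \k^*$, under which $[\k, \xi]$ becomes the tangent space $T_\xi(K\xi)$ and its orthogonal complement is $\k_\sigma$. The affine subspace $\xi + \k_\sigma \subset \k^*$ is then a standard linear slice at $\xi$. Because $T \subset K_\sigma$, we have $\t^* \subset \k_\sigma$; moreover, since the walls of $\t^*_+$ passing through $\xi$ are precisely those cut out by roots of $K_\sigma$, the translation $\mathrm{star}(\sigma) - \xi$ is a neighborhood of $0$ inside the fundamental Weyl chamber for the $K_\sigma$-action on $\t^*$. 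Applying $K_\sigma$-saturation, $\S_\sigma - \xi$ becomes an open neighborhood of $0$ in $\k_\sigma$, so $\S_\sigma$ is an open submanifold of $\xi + \k_\sigma$.

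With $\S_\sigma$ identified as an open subset of $\xi + \k_\sigma$, I would show that the action map $\varphi: K \times_{K_\sigma} \S_\sigma \to K\S_\sigma$ is a diffeomorphism in two steps. For injectivity, suppose $g_1 q_1 = g_2 q_2$ and write $q_i = k_i \eta_i$ with $k_i \in K_\sigma$ and $\eta_i \in \mathrm{star}(\sigma) \subset \t^*_+$; uniqueness of the intersection of a coadjoint orbit with $\t^*_+$ forces $\eta_1 = \eta_2$, and then \eqref{Equation: Star} gives $K_{\eta_1} \subset K_\sigma$, from which a short calculation yields $g_2^{-1}g_1 \in K_\sigma$ and $[g_1, q_1] = [g_2, q_2]$. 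For the local-diffeomorphism part, note that the source and target have equal dimension $\dim\k^*$; computing the differential at $[e, \eta]$ as $(X, v) \mapsto X \cdot \eta + v$ and invoking the orthogonal decomposition $\k_\sigma = \k_\eta \oplus [\k_\sigma, \eta]$ together with $K_\eta \subset K_\sigma$ (valid throughout $\S_\sigma$), one verifies triviality of the kernel. Hence $\varphi$ is an injective local diffeomorphism between manifolds of equal dimension, and therefore a diffeomorphism onto its image $K\S_\sigma$.
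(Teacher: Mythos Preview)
Your proof is correct and follows essentially the same structure as the paper's: verify $K_\sigma$-invariance, openness of the saturation, injectivity of the action map via the fundamental-domain property and \eqref{Equation: Star}, and the local-diffeomorphism check by a dimension count together with $\k_\eta\subset\k_\sigma$. The only notable differences are cosmetic: you deduce openness of $K\S_\sigma$ from upper semi-continuity of stabilizers whereas the paper exhibits the complement as a finite union of closed strata $K\tau$, and you sketch directly why $\S_\sigma$ is open in $\k_\sigma$ whereas the paper simply cites \cite[Remark 3.7]{ler-mei-tol-woo} for that fact.
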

	\begin{proof}
		Suppose that $\xi\in\sigma$. Observe that $\S_{\sigma}$ contains $\xi$ and is invariant under $K_{\sigma}=K_{\xi}$. It is a also clear that 
		\begin{align*}
			K\S_{\sigma} = \bigcup_{\overline{\tau}\supset \sigma} 
			K\tau.
		\end{align*}
		This implies that $K\S_{\sigma} = \k^*\setminus Y$, where 
		$Y$ is the union of $K\tau$ such that 
		$\sigma\not\subset \overline{\tau}$.
		If 
		$\sigma\not\subset \overline{\tau}$, 
		then  
		$\sigma\not\subset \overline{\tau'}$
		for any face $\tau'\subset \ol\tau$. 
		Thus for $K\tau\subset Y$,  
		$$\ol{K\tau} = K\ol{\tau}=\bigcup_{\tau' \subset \ol\tau} K{\tau'} \subset Y,$$
		which implies that $Y$ is closed.   Thus $K\S_{\sigma}$ is open, as desired. 
		
		We now consider the smooth surjection $\varphi:K\times_{K_\sigma} \S_{\sigma} \longrightarrow K\S_{\sigma}$ given by $\varphi([k:\xi])=\Ad_k (\xi)$ for all $k\in K$ and $\xi\in \S_{\sigma}$. It remains only to prove that $\varphi$ is a diffeomorphism. To show that $\varphi$ is injective, suppose that 
		$\Ad_k(\xi)=\Ad_{k'}(\xi')$ for $[k:\xi],[k':\xi']\in K\times_{K_{\sigma}}\S_{\sigma}$. Recall as well that $\t^{*}_{+}$ is a fundamental domain for the coadjoint action of $K$ on $\k^*$. It follows that $\xi$ and $\xi'$ are $K$-conjugate to the same element $\eta\in\mathrm{star}(\sigma)\subset\t^{*}_{+}$. Since $\xi,\xi'\in\S_{\sigma}$, we must have $\xi=\Ad_{\ell}(\eta)$ and $\xi'=\Ad_{\ell'}(\eta)$ for some $\ell,\ell'\in K_{\sigma}$. We conclude that $\Ad_{k\ell}(\eta)=\Ad_{k'\ell'}(\eta)$, i.e. $(k\ell)^{-1}(k'\ell')\in K_{\eta}\subset K_{\sigma}$, where the inclusion follows from \eqref{Equation: Star}. This allows us to write $k'=km$ for some $m\in K_{\sigma}$, yielding $[k':\xi']=[km:\Ad_{m^{-1}}(\xi)]=[k:\xi]$. In other words, $\varphi$ is injective.
		
		The previous paragraph establishes that $\varphi$ is a smooth bijection. We are therefore reduced to proving that the differential of $\varphi$ is a vector space isomorphism at every point in $K\times_{K_{\sigma}}\S_{\sigma}$. This is easily seen to hold if and only if $\mathrm{d}\varphi_{[e:\xi]}:T_{[e:\xi]}(K\times_{K_{\sigma}}\S_{\sigma})\longrightarrow T_{\xi}(K\S_{\sigma})=\k^*$ is a vector space isomorphism for all $\xi\in\mathrm{star}(\sigma)$.
		
		Choose a $K$-invariant inner product $\langle\cdot,\cdot\rangle:\k\otimes_{\mathbb{R}}\k\longrightarrow\mathbb{R}$, and use it to $K$-equivariantly identify $\k^*$ with $\k$. We may regard $\sigma$ as an open face of $\t_{+}\subset\k$, where $\t_{+}$ is as defined in Section \ref{Subsection: Compact groups}. It also follows that $\mathrm{star}(\sigma)$ and $\S_{\sigma}$ are subsets of $\k$. Given $x\in\mathrm{star}(\sigma)$, our task is to prove that $\mathrm{d}\varphi_{[e:x]}:T_{[e:x]}(K\times_{K_{\sigma}}\S_{\sigma})\longrightarrow \k$ is a vector space isomorphism. We first note that $\S_{\sigma}$ is open in $\k_{\sigma}$, the Lie algebra of $K_{\sigma}$ \cite[Remark 3.7]{ler-mei-tol-woo}. This immediately implies that the domain and codomain of $\mathrm{d}\varphi_{[e:x]}$ have the same dimension. One is thereby reduced to proving that $\mathrm{d}\varphi_{[e:x]}$ is surjective. At the same time, the image of $\mathrm{d}\varphi_{[e:x]}$ is $T_x(\S_{\sigma})+[\k,x]=\k_{\sigma}+[\k,x]\subset\k$. The $K$-invariance of $\langle\cdot,\cdot\rangle$ implies that $\k=\k_{x}\oplus[\k,x]$, and \eqref{Equation: Star} yields $\k_x\subset\k_{\sigma}$. Hence $\k=\k_x\oplus[\k,x]\subset\k_{\sigma}+[\k,x]\subset\k$, i.e. $\k_{\sigma}+[\k,x]=\k$. This completes the proof.
	\end{proof}
	
	\begin{proposition}
		If $\sigma\subset\t^*_{+}$ is an open face, then $\S_{\sigma}$ is a Poisson slice with respect to the Poisson Hamiltonian $K$-space structure on $\k^*$.
	\end{proposition}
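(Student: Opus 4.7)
The goal is to verify the two conditions of Definition \ref{Definition: Slice}. The transversality of $\S_\sigma$ to the coadjoint $K$-orbits, condition (ii), is essentially already present in Lemma \ref{Lemma : K-slice}: the proof of that lemma shows that the differential of the action map $K\times \S_\sigma \longrightarrow \k^*$ is surjective at every point, which is exactly the statement $\k^* = T_\xi \S_\sigma + T_\xi(K\xi)$ for all $\xi \in \S_\sigma$. Since the symplectic leaves of the Kirillov--Kostant--Souriau Poisson structure on $\k^*$ are precisely the coadjoint $K$-orbits, this also takes care of the first half of the Poisson transversal condition.

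The main step will be verifying that for each $\xi \in \S_\sigma$, the intersection $K\xi \cap \S_\sigma$ is a symplectic submanifold of the coadjoint orbit $K\xi$. My plan is to first identify this intersection set-theoretically. Using the diffeomorphism $K\times_{K_\sigma}\S_\sigma \overset{\cong}\longrightarrow K\S_\sigma$ from Lemma \ref{Lemma : K-slice}, I claim that $K\xi \cap \S_\sigma = K_\sigma \xi$. Indeed, if $\eta = \mathrm{Ad}_k \xi \in \S_\sigma$ with $k\in K$, then $(k,\xi)$ and $(e,\eta)$ have the same image in $K\S_\sigma$; injectivity of the map from $K\times_{K_\sigma}\S_\sigma$ forces $k$ to lie in $K_\sigma$.

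The remaining task is to show that $K_\sigma \xi$ is a symplectic submanifold of $K\xi$. For this, I will use the $K$-invariant inner product chosen in the proof of Lemma \ref{Lemma : K-slice} to identify $\k^*$ with $\k$, noting that under this identification $\S_\sigma$ is an open subset of $\k_\sigma$ (as recalled from \cite[Remark 3.7]{ler-mei-tol-woo}). At $\xi\in\S_\sigma\subset\k_\sigma$ one has $T_\xi(K\xi) = [\k,\xi]$ and $T_\xi(K_\sigma\xi) = [\k_\sigma,\xi]$, and the KKS form is (up to sign) $\omega_\xi([x,\xi],[y,\xi]) = \langle \xi, [x,y]\rangle$. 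Suppose $y\in\k_\sigma$ is in the kernel of the restricted form. Invariance of the inner product gives $\langle [\xi,y], z\rangle = 0$ for every $z\in\k_\sigma$, so $[\xi,y]\in\k_\sigma^\perp$. The key observation is that $\xi$ and $y$ both lie in the Lie subalgebra $\k_\sigma$, forcing $[y,\xi]\in\k_\sigma$ as well, so $[y,\xi]\in\k_\sigma\cap\k_\sigma^\perp = \{0\}$. Hence $y\in\k_\xi$, and the restricted form is non-degenerate.

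The main (and only real) obstacle in this plan is the non-degeneracy argument, which hinges on the fact that $\S_\sigma$ lies in $\k_\sigma$; this is what makes the bracket $[y,\xi]$ simultaneously $\k_\sigma$-valued and $\k_\sigma^\perp$-valued under the hypothesis of the kernel. Once that is in place, the Poisson transversal condition follows, and combined with the orbit-transversality inherited from Lemma \ref{Lemma : K-slice} we conclude that $\S_\sigma$ is a Poisson slice in $\k^*$.
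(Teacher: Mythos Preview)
Your proposal is correct and follows essentially the same approach as the paper: use Lemma \ref{Lemma : K-slice} for transversality and for the identification $K\xi\cap\S_\sigma=K_\sigma\xi$, then pass to $\k$ via the invariant inner product and check that $K_\sigma\xi\subset K\xi$ is symplectic. The only difference is cosmetic: where you verify non-degeneracy by a direct kernel computation (using $[\k_\sigma,\k_\sigma]\subset\k_\sigma$ and $\k_\sigma\cap\k_\sigma^\perp=\{0\}$), the paper instead observes that the restriction of the KKS form on $Kx$ to $K_\sigma x$ coincides with the KKS form on $K_\sigma x$ viewed as an adjoint orbit of $K_\sigma$, hence is automatically symplectic.
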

	
	\begin{proof}
		Lemma \ref{Lemma : K-slice} implies that $\S_{\sigma}$ is transverse to the coadjoint orbits of $K$. Our task is thereby reduced to proving that the submanifold $\S_{\sigma}\cap (K\xi)\subset K\xi$ is symplectic for all $\xi\in\S_{\sigma}$. It suffices to prove this assertion for $\xi\in\tau$, where $\tau\subset\t^*_{+}$ is an open face with $\sigma\subset\overline{\tau}$. Definition \ref{Definition: Slice compact} and Lemma \ref{Lemma : K-slice} imply that $\S_{\sigma}\cap (K\xi)=K_{\sigma}\xi$ under these hypotheses. This reduces us to proving that the submanifold $K_{\sigma}\xi\subset K\xi$ is symplectic.
		
		As in the proof of Lemma \ref{Lemma : K-slice}, we choose a $K$-invariant inner product $\langle\cdot,\cdot\rangle:\k\otimes_{\mathbb{R}}\k\longrightarrow\mathbb{R}$, and use it to $K$-equivariantly identify $\k^*$ with $\k$. This allows us to regard $\sigma$ as an open face of $\t_{+}\subset\k$, where $\t_{+}$ is as defined in Section \ref{Subsection: Compact groups}. Another consequence is that the symplectic structures on coadjoint orbits now amount to symplectic structures on adjoint orbits. Given $x\in\k$, the symplectic form $\omega$ on $Kx\subset\k$ satisfies
		$$\omega_x([y,x],[z,x])=\langle x,[y,z]\rangle$$ for all $y,z\in\k$. Our task is to prove the following: the submanifold $K_{\sigma}x\subset Kx$ is symplectic for all $x\in\tau$, where $\tau\subset\t_{+}$ is an open face with $\sigma\subset\overline{\tau}$.
		
		Note that $\langle\cdot,\cdot\rangle:\k\otimes_{\mathbb{R}}\k\longrightarrow\mathbb{R}$ restricts to a $K_{\sigma}$-invariant inner product on $\k_{\sigma}\coloneqq\mathrm{Lie}(K_{\sigma})$. In analogy with the above, adjoint orbits of $K_{\sigma}$ are symplectic manifolds. Given $x\in\k_{\sigma}$, the symplectic form $\omega$ on $K_{\sigma}x\subset\k$ satisfies
		$$\omega_x([y,x],[z,x])=\langle x,[y,z]\rangle$$ for all $y,z\in\k_{\sigma}$. We conclude that the above-described symplectic form on $Kx\subset\k$ restricts to this symplectic form on $K_{\sigma}x$. In particular, $K_{\sigma}x$ is a symplectic submanifold of $Kx$. By the last line of the previous paragraph, it suffices to observe that $\tau\subset\k_{\sigma}$ for all open faces $\tau\subset\t_{+}$ with $\sigma\subset\overline{\tau}$.
	\end{proof}
	
	\subsection{A first contrast between the compact and complex reductive cases}\label{Subsection: Fundamentals}
	Continue with the notation of Section \ref{Subsection: Slices for compact group actions}. Recall that $\t^*_{+}$ is a fundamental domain for the coadjoint action of $K$ on $\k^*$. This fact is implied by the existence of a continuous map $\pi:\k^*\longrightarrow\t^*_{+}$ satisfying $\pi^{-1}(\xi)=K\xi$ for all $\xi\in\t^*_{+}$.
	
	We now suppose that $G$ is a connected complex reductive algebraic group with Lie algebra $\g$. In light of the above, one wonders if there exist a topological subspace $S\subset\g^*$ and continuous map $\pi:\g^*\longrightarrow S$ that satisfy $\pi^{-1}(\xi)=G\xi$ for all $\xi\in S$. This turns out not to occur in general.
	
	\begin{proposition}\label{Proposition: No analogue}
		Let $G$ be a connected complex reductive algebraic group with Lie algebra $\g$. Equip $\g^*$ with any topology in which singletons are closed. There exist a topological subspace $S\subset\g^*$ and continuous map $\pi:\g^*\longrightarrow S$ satisfying $\pi^{-1}(\xi)=G\xi$ for all $\xi\in S$ if and only if $G$ is a torus.
	\end{proposition}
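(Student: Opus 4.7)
The ``if'' direction is immediate: a connected torus has trivial adjoint (and hence coadjoint) representation, so every $G$-orbit in $\g^*$ is a singleton, and one takes $S=\g^*$ and $\pi=\mathrm{id}_{\g^*}$. The substance lies in the converse.

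For the ``only if'' direction I would argue the contrapositive: supposing $G$ is not a torus, I would produce a non-closed $G$-orbit in $\g^*$ whose closure contains the fixed point $0$, and then derive a contradiction from continuity of $\pi$ combined with the $T_1$ property of $\g^*$. Since $G$ is not a torus, $[\g,\g]\neq 0$, so $[\g,\g]$ contains a nonzero nilpotent element $e$ (any root vector for a maximal torus of $[G,G]$ works). Embed $e$ in an $\mathfrak{sl}_2$-triple $(e,h,f)\subset[\g,\g]$ via Jacobson--Morozov, and let $\lambda:\mathbb{C}^{\times}\to G$ be the algebraic one-parameter subgroup generated by $h$. Since $[h,e]=2e$, one has $\mathrm{Ad}_{\lambda(t)}(e)=t^2 e$, so $\mathrm{Ad}_{\lambda(t)}(e)\to 0$ as $t\to 0$. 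Under the identification $\g\cong\g^*$ from Section~\ref{Subsection: Complex reductive algebraic groups}, this shows $0\in\overline{Ge}\setminus Ge$.

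Assume now that $(S,\pi)$ exists. The condition $\pi^{-1}(\xi)=G\xi$ for $\xi\in S$, together with the fact that $G$-orbits partition $\g^*$, forces $\pi(p)\in Gp$ for every $p\in\g^*$: indeed, $p\in\pi^{-1}(\pi(p))=G\pi(p)$. In particular $\pi(0)=0$, and setting $\eta:=\pi(e)$ gives $\eta\in Ge$, so $\eta\neq 0$ and $G\eta=Ge$. By hypothesis $\{\eta\}$ is closed in $\g^*$ and hence in the subspace $S$, and continuity of $\pi$ then forces $Ge=\pi^{-1}(\{\eta\})$ to be closed in $\g^*$. This contradicts $0\in\overline{Ge}\setminus Ge$.

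The main obstacle is the first step of the contrapositive: exhibiting a specific non-closed orbit that witnesses the failure. Jacobson--Morozov packaged with the cocharacter generated by $h$ handles this uniformly for any non-torus connected reductive $G$, using only the standard fact that the natural topology on $\g^*$ is fine enough to record the limit $\mathrm{Ad}_{\lambda(t)}(e)\to 0$. The topological step that follows is then a one-line consequence of continuity and $T_1$-ness.
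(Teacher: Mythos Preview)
Your proof is correct and follows essentially the same approach as the paper's: both exhibit a nonzero nilpotent coadjoint orbit that fails to be closed, then derive a contradiction from continuity of $\pi$ together with the $T_1$ hypothesis. The only cosmetic difference is that you explicitly demonstrate $0\in\overline{Ge}$ via the $\mathfrak{sl}_2$-cocharacter, whereas the paper simply cites the standard fact that nonzero nilpotent orbits are not closed.
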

	
	\begin{proof}
		Suppose that $G$ is a torus. We may take $S=\g^*$ and $\pi$ to be the identity map on $\g^*$. 
		
		To establish the converse, assume that $G$ is not a torus. It follows that the semisimple Lie algebra $[\g,\g]$ is non-zero. Choose a non-zero nilpotent element $e\in[\g,\g]$, noting that the adjoint orbit $Ge\subset\g$ is not closed \cite[Theorem 2.3.1]{col-mcg:93}. Our $G$-module isomorphism between $\g$ and $\g^*$ identifies $Ge$ with a non-closed coadjoint orbit of $G$. On the other hand, note that singletons in any topological subspace $S\subset\g^*$ are closed in that subspace. If there existed a topological subspace $S\subset\g^*$ and continuous map $\pi:\g^*\longrightarrow S$ with $\pi^{-1}(\xi)=G\xi$ for all $\xi\in S$, the previous sentence would force all coadjoint orbits of $G$ to be closed. This would contradict the fact that $Ge$ is not closed, completing the proof.
	\end{proof}
	
	The Euclidean and Zariski topologies on $\g^*$ have the property that singletons are closed. It follows that no direct analogue of a fundamental Weyl chamber exists for the coadjoint action of a connected complex reductive algebraic group, regardless of whether one works in the complex-algebraic or holomorphic category. This is a first indication of important distinctions to be made between slice theorems for Hamiltonian actions of compact Lie groups and those of complex reductive algebraic groups. On the other hand, Proposition \ref{Proposition: True statements} gives us a a fundamental domain for ``most of" a complex reductive Lie algebra. This is detailed in Section \ref{Subsection: Slodowy slices}.
	
	\subsection{Slodowy slices in complex reductive Lie algebras}\label{Subsection: Slodowy slices}
	Let $G$ be a connected complex reductive algebraic group with Lie algebra $\g$. Recall that $\mathfrak{T}=(e,h,f)\in\g^{\times 3}$ is called an \textit{$\sl_2$-triple} if $[e,f]=h$, $[h,e]=2e$, and $[h,f]=-2f$. One calls $$\S_{\mathfrak{T}}\coloneqq e+\g_f\subset\g$$ the \textit{Slodowy slice} associated to $\mathfrak{T}$. Note that $e,h,f\in[\g,\g]$. 
	
	Each Slodowy slice turns out to carry a distinguished  $\mathbb{C}^{\times}$-action. To define it, let $\mathfrak{T}=(e,h,f)\in\g^{\times 3}$ be an $\sl_2$-triple. There exists a unique Lie algebra morphism $\phi_{\mathfrak{T}}:\sl_2\longrightarrow\g$ satisfying
	$$\phi_{\mathfrak{T}}\left(\begin{bmatrix}0 & 1\\ 0 & 0 \end{bmatrix}\right)=e,\quad \phi_{\mathfrak{T}}\left(\begin{bmatrix}1 & 0\\ 0 & -1 \end{bmatrix}\right)=h,\quad\text{and}\quad \phi_{\mathfrak{T}}\left(\begin{bmatrix}0 & 0\\ 1 & 0 \end{bmatrix}\right)=f.$$
	Since $\operatorname{SL}_2$ is simply-connected, there exists a unique algebraic group morphism $\varphi_{\mathfrak{T}}:\operatorname{SL}_2\longrightarrow G$ with differential at $e\in\operatorname{SL}_2$ equal to $\phi_{\mathfrak{T}}$. One may consider the cocharacter $$\lambda_{\mathfrak{T}}:\mathbb{C}^{\times}\longrightarrow G,\quad t\mapsto\varphi_{\mathfrak{T}}\left(\begin{bmatrix}t^{-1} & 0\\ 0 & t \end{bmatrix}\right).$$ It follows that $\mathbb{C}^{\times}$ acts on $\g$ by
	\begin{equation}\label{Equation: Dilation action}tx\coloneqq t^{2}\Ad_{\lambda_{\mathfrak{T}}(t)}(x),\quad t\in\mathbb{C}^{\times},\text{ }x\in\g.\end{equation} Observe that $\mathcal{S}_{\mathfrak{T}}$ is invariant under this $\mathbb{C}^{\times}$-action. One also finds that $$\lim_{t\rightarrow 0}(tx)=e$$ for all $x\in\mathcal{S}_{\mathfrak{T}}$. These considerations give context for the following definition.
	
	\begin{definition}\label{Definition: Contracting action}
		If $\mathfrak{T}\in\g^{\times 3}$ is an $\sl_2$-triple, then the restriction of \eqref{Equation: Dilation action} to $\mathcal{S}_{\mathfrak{T}}$ is called the \textit{contracting action} of $\mathbb{C}^{\times}$ on $\mathcal{S}_{\mathfrak{T}}$.
	\end{definition}
	
	Following Kostant, we call an $\sl_2$-triple $\mathfrak{T}=(e,h,f)\in\g^{\times 3}$ \textit{principal} if $e,h,f\in\greg$; see Section \ref{Subsection: Complex reductive algebraic groups} for a definition of $\greg$. The following result is well-known if $\g$ is semisimple \cite{kos:59,kos:63}, and readily deducible in the case of our reductive Lie algebra $\g$. For the sake of completeness, we include a proof in the reductive case.
	
	\begin{proposition}\label{Proposition: True statements}
		The following statements are true.
		\begin{itemize}
			\item[\textup{(i)}] There exist principal $\sl_2$-triples $\mathfrak{T}\in\g^{\times 3}$.
			\item[\textup{(ii)}] If $\mathfrak{T}=(e,h,f)\in\g^{\times 3}$ is a principal $\sl_2$-triple, then $\mathcal{S}_{\mathfrak{T}}\subset\greg$.
			\item[\textup{(iii)}] If $\mathfrak{T}=(e,h,f)\in\g^{\times 3}$ is a principal $\sl_2$-triple, then $\mathcal{S}_{\mathfrak{T}}$ is a fundamental domain for the adjoint action of $G$ on $\greg$.
		\end{itemize}
	\end{proposition}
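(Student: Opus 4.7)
The plan is to reduce each of the three statements to the corresponding well-known fact in the semisimple case, applied to the semisimple Lie algebra $[\g,\g]$, by exploiting the decomposition $\g=\z(\g)\oplus[\g,\g]$. The key preliminary observation is that, since $\sl_2$ is simple, every $\sl_2$-triple $(e,h,f)\in\g^{\times 3}$ has $e,h,f\in[\g,\g]$; in particular, $(e,h,f)$ is an $\sl_2$-triple in $[\g,\g]$, and $\g_f=\z(\g)\oplus[\g,\g]_f$, so $\S_{\mathfrak{T}}=\z(\g)\oplus\S_{\mathfrak{T}}^{[\g,\g]}$, where $\S_{\mathfrak{T}}^{[\g,\g]}\coloneqq e+[\g,\g]_f$ denotes the Slodowy slice computed inside $[\g,\g]$. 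A second useful fact is that for $x=z+y\in\g$ with $z\in\z(\g)$ and $y\in[\g,\g]$, one has $\g_x=\z(\g)\oplus[\g,\g]_y$; since $\rank\g=\dim\z(\g)+\rank[\g,\g]$, this gives $x\in\greg\Longleftrightarrow y\in[\g,\g]_{\reg}$. In particular, an $\sl_2$-triple $\mathfrak{T}\in\g^{\times 3}$ is principal in $\g$ if and only if it is principal in $[\g,\g]$.

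For part (i), I would apply the classical result of Kostant \cite{kos:59,kos:63} to produce a principal $\sl_2$-triple $\mathfrak{T}\in[\g,\g]^{\times 3}$; by the observation above this is automatically principal in $\g$.

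For part (ii), let $\mathfrak{T}=(e,h,f)$ be principal in $\g$. By the preceding paragraph, $\mathfrak{T}$ is principal in $[\g,\g]$, and Kostant's theorem gives $\S_{\mathfrak{T}}^{[\g,\g]}\subset[\g,\g]_{\reg}$. Any $x\in\S_{\mathfrak{T}}$ decomposes as $x=z+y$ with $z\in\z(\g)$ and $y\in\S_{\mathfrak{T}}^{[\g,\g]}\subset[\g,\g]_{\reg}$, so by the regularity criterion recalled above, $x\in\greg$.

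For part (iii), I would use the fact that the adjoint $G$-action is trivial on $\z(\g)$ and factors through $[G,G]$ on $[\g,\g]$, so that $G$-orbits on $[\g,\g]$ coincide with $[G,G]$-orbits. Given $x=z+y\in\greg$ with $z\in\z(\g)$ and $y\in[\g,\g]_{\reg}$, Kostant's fundamental-domain theorem for $\S_{\mathfrak{T}}^{[\g,\g]}$ in $[\g,\g]_{\reg}$ yields a unique $y_0\in\S_{\mathfrak{T}}^{[\g,\g]}\cap[G,G]y$, which in turn gives $\Ad_g(x)=z+y_0\in\S_{\mathfrak{T}}$ for any $g\in[G,G]$ carrying $y$ to $y_0$. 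Conversely, if $\Ad_g(x)\in\S_{\mathfrak{T}}$ for some $g\in G$, then writing $\Ad_g(x)=z+\Ad_g(y)$ forces $\Ad_g(y)\in\S_{\mathfrak{T}}^{[\g,\g]}$, hence $\Ad_g(y)=y_0$ by uniqueness in the semisimple case; thus $\Ad_g(x)=z+y_0$ is uniquely determined, proving $\S_{\mathfrak{T}}$ meets each regular $G$-orbit in exactly one point.

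The main potential obstacle is ensuring that the reduction from $G$ to $[G,G]$ in part (iii) genuinely matches orbits; but this is immediate because $G=Z(G)^{\circ}\cdot[G,G]$ and $Z(G)$ acts trivially under $\Ad$. Beyond this, everything is a careful but direct bookkeeping argument built on top of Kostant's classical results for semisimple $[\g,\g]$.
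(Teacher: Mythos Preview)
Your proposal is correct and follows essentially the same approach as the paper: both reduce each statement to Kostant's classical results for the semisimple subalgebra $[\g,\g]$ via the decomposition $\g=\z(\g)\oplus[\g,\g]$, the identity $\greg=\z(\g)+[\g,\g]_{\reg}$, and the observation $Gx=z+[G,G]y$ for $x=z+y$. Your treatment is in fact slightly more explicit in places---you spell out the equivalence of principality in $\g$ and in $[\g,\g]$ via $\g_x=\z(\g)\oplus[\g,\g]_y$, and you verify uniqueness in part (iii) directly---but the underlying argument is the same.
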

	
	\begin{proof}
		Parts (i), (ii), and (iii) are well-known if $\g$ is semisimple \cite{kos:59,kos:63}. Since $\greg=\z(\g)+[\g,\g]_{\mathsf{reg}}$, a principal $\sl_2$-triple in $[\g,\g]$ is a principal $\sl_2$-triple in $\g$. This establishes (i) in our proposition. 
		
		We now prove (ii). Recall that $e,h,f\in [\g,\g]$ and $\greg=\z(\g)+[\g,\g]_{\mathsf{reg}}$. Since (ii) is known to hold for semisimple Lie algebras, $e+[\g,\g]_f \subset [\g,\g]_{\mathsf{reg}}$. We conclude $ \mathcal{S}_{\mathfrak{T}}\subset \z(\g)+[\g,\g]_{\mathsf{reg}} =\g_{\mathsf{reg}}$,  establishing (ii). 
		
		To verify (iii), suppose that $x\in\g$. Write $x= z+y$ with $z\in \z(\g)$ and $y\in  [\g,\g]$, noting that 
		$Gx = z+ [G,G]y.$ Since $\g_{\mathsf{reg}}=\z(\g) +[\g,\g]_{\mathsf{reg}}$, a fundamental domain of $G$ acting on $\g_{\mathsf{reg}}$ is given by $\z(\g)$ plus a fundamental domain for $[G,G]$ acting on $[\g,\g]_{\mathsf{reg}}.$  The fact that (iii) holds for semisimple Lie algebras tells us that $e+ [\g,\g]_f$ is a fundamental domain for the adjoint action of $[G,G]$ acting on $[\g,\g]_{\mathsf{reg}}$. It follows that $\mathcal{S}_{\mathfrak{T}} = \z(\g) + e+ [\g,\g]_f$ is a fundamental domain for the adjoint action of $G$ on $\g_{\mathsf{reg}}$, establishing (iii). 
	\end{proof}
	
	Slodowy slices also turn out to play distinguished roles in algebraic and holomorphic Poisson geometry. To this end, recall the $G$-invariant, non-degenerate bilinear form $\langle\cdot,\cdot\rangle:\g\otimes\g\longrightarrow\mathbb{C}$ in Section \ref{Subsection: Complex reductive algebraic groups}. As discussed in that section, we may use $\langle\cdot,\cdot\rangle$ to obtain a $G$-module isomorphism between $\g$ and $\g^*$. Equip $\g$ with the Poisson structure for which this isomorphism is Poisson. If $\g$ is semisimple, then Slodowy slices in $\g$ are known to be Poisson transversals \cite{gan-gin:02}. This result generalizes to our setting of a reductive Lie algebra $\g$.  
	
	\begin{proposition}\label{Proposition: Slodowy Poisson transversal}
		If $\mathfrak{T}=(e,h,f)\in\g^{\times 3}$ is an $\sl_2$-triple, then $\S_{\mathfrak{T}}$ is a Poisson transversal in $\g$.
	\end{proposition}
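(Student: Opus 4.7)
My plan is to reduce to the semisimple case, which is the classical Gan--Ginzburg result, by exploiting the decomposition $\g=\z(\g)\oplus[\g,\g]$ as a product of Poisson manifolds. The crucial compatibility is that the bilinear form $\langle\cdot,\cdot\rangle$ was built in Section~\ref{Subsection: Complex reductive algebraic groups} from independent forms on $\z(\g)$ and $[\g,\g]$; together with $\z(\g)$ being central, this means the induced Poisson structure on $\g$ is the product of the trivial (zero) Poisson structure on $\z(\g)$ with the standard KKS Poisson structure on $[\g,\g]$. Consequently, the symplectic leaves of $\g$ are precisely the subsets of the form $\{z\}+[G,G]y$ for $z\in\z(\g)$ and $y\in[\g,\g]$.

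Next I would show $\S_{\mathfrak{T}}$ has a compatible product structure. Since $e,h,f\in[\g,\g]$, one has $\g_f=\z(\g)\oplus[\g,\g]_f$, and therefore
\[ \S_{\mathfrak{T}}=e+\g_f=\z(\g)\oplus\S'_{\mathfrak{T}}, \qquad \S'_{\mathfrak{T}}\coloneqq e+[\g,\g]_f, \]
where $\S'_{\mathfrak{T}}$ is the Slodowy slice for $\mathfrak{T}$ viewed inside the semisimple Lie algebra $[\g,\g]$. By \cite{gan-gin:02}, $\S'_{\mathfrak{T}}$ is a Poisson transversal in $[\g,\g]$.

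It then remains to verify the two conditions in the definition of a Poisson transversal. For any point $x=z+x'\in\S_{\mathfrak{T}}$ with $z\in\z(\g)$ and $x'\in\S'_{\mathfrak{T}}$, the symplectic leaf through $x$ is $L_x=\{z\}+[G,G]x'$, whose tangent space at $x$ is $\{0\}\oplus[[\g,\g],x']$. Transversality condition (i) amounts to
\[ [\g,\g]=[[\g,\g],x']+[\g,\g]_f, \]
which is precisely the transversality statement in the semisimple case. For condition (ii), the intersection $L_x\cap\S_{\mathfrak{T}}$ equals $\{z\}+([G,G]x'\cap\S'_{\mathfrak{T}})$; since the semisimple case guarantees $[G,G]x'\cap\S'_{\mathfrak{T}}$ is a symplectic submanifold of $[G,G]x'$, translation by $z$ yields a symplectic submanifold of $L_x$.

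I do not expect a genuine obstacle here: the entire argument is a bookkeeping exercise reducing the reductive statement to the known semisimple one. The only point that requires a little care is checking that our chosen bilinear form, and hence the Poisson structure, respects the decomposition $\g=\z(\g)\oplus[\g,\g]$ as a product, which was already ensured by the construction of $\langle\cdot,\cdot\rangle$ in Section~\ref{Subsection: Complex reductive algebraic groups}.
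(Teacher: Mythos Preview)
Your proposal is correct and follows essentially the same approach as the paper: both arguments use the decomposition $\g=\z(\g)\oplus[\g,\g]$ to write $\S_{\mathfrak{T}}=\z(\g)+\S'_{\mathfrak{T}}$ and reduce both the transversality and symplectic-intersection conditions to the known semisimple case of Gan--Ginzburg. The paper carries out the same reduction via direct tangent-space computations rather than invoking the product-Poisson language, but the content is identical.
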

	
	\begin{proof}
		Suppose that $x\in\S_{\mathfrak{T}}$. We may write $x=z+e+y$ for $y\in[\g,\g]_f$ and $z\in\z(\g)$. 
		Observe that $Gx=z+[G,G](e+y)$. On the other hand, $\S_{\mathfrak{T}}=\z(\g)+e+[\g,\g]_f$. It follows that $$T_x(Gx)+T_x(\S_{\mathfrak{T}})=\z(\g)+[[\g,\g],x]+[\g,\g]_f.$$ As Slodowy slices in semisimple Lie algebras are known to be Poisson transversals, we have $[[\g,\g],x]+[\g,\g]_f=[\g,\g]$. 
		We conclude that
		$$T_x(Gx)+T_x(\S_{\mathfrak{T}})=\z(\g)+[\g,\g]=\g,$$ as required.
		It is also clear that $$T_x(Gx)\cap T_x(\S_{\mathfrak{T}})=[[\g,\g],e+y]\cap(\z(\g)+[\g,\g]_f)=[[\g,\g],e+y]\cap[\g,\g]_f.$$ Noting again that Slodowy slices in semisimple Lie algebras are Poisson transversals, $[[\g,\g],e+y]\cap[\g,\g]_f$ is a symplectic subspace of $T_{e+y}([G,G](e+y))$. One also has $T_{e+y}([G,G](e+y))=T_x(Gx)$, implying that $T_x(Gx)\cap T_x(\S_{\mathfrak{T}})$ is a symplectic subspace of $T_x(Gx)$. In other words, $S_{\mathfrak{T}}$ is a Poisson transversal in $\g$. 
	\end{proof} 
	
	\begin{corollary}
		If $\mathfrak{T}=(e,h,f)\in\g^{\times 3}$ is an $\sl_2$-triple, then $\S_{\mathfrak{T}}$ is a Poisson slice in $\g$.
	\end{corollary}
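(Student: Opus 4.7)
The plan is to observe that the two conditions in Definition \ref{Definition: Slice} collapse to a single condition in this situation, and then invoke the preceding proposition. Recall that under the $G$-equivariant identification $\g \cong \g^*$ induced by $\langle\cdot,\cdot\rangle$, the Poisson structure on $\g$ is the Lie--Poisson structure transported from $\g^*$; its symplectic leaves are therefore the coadjoint orbits, which correspond bijectively under the identification to the adjoint orbits $Gx$ in $\g$. In particular, for each $x\in\g$ the symplectic leaf $L_x$ through $x$ is exactly $Gx$.

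Given this identification of leaves with $G$-orbits, the defining condition of a Poisson transversal, namely $T_x\g = T_xL_x + T_xS$ for every $x\in S$, reads $T_x\g = T_x(Gx) + T_xS$, which is precisely the transversality-to-$G$-orbits condition in part (ii) of Definition \ref{Definition: Slice}. Consequently, a submanifold of $\g$ is a Poisson slice with respect to the Hamiltonian $G$-action if and only if it is a Poisson transversal.

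Applying Proposition \ref{Proposition: Slodowy Poisson transversal}, which asserts that $\S_{\mathfrak{T}}$ is a Poisson transversal in $\g$, then yields the corollary. There is no real obstacle here; the only thing to check carefully is that the Hamiltonian $G$-space structure on $\g$ used in Definition \ref{Definition: Slice} is the one whose leaves agree with $G$-orbits, which is immediate from the conventions set up in Section \ref{Subsection: Complex reductive algebraic groups} and Section \ref{Subsection: Poisson transversals}.
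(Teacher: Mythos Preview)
Your proof is correct and follows essentially the same approach as the paper's own proof: both observe that the symplectic leaves of $\g$ coincide with the adjoint $G$-orbits, so condition (i) of Definition \ref{Definition: Slice} already forces condition (ii), and hence Proposition \ref{Proposition: Slodowy Poisson transversal} gives the result. One minor wording issue: the phrase ``the defining condition of a Poisson transversal, namely $T_x\g = T_xL_x + T_xS$'' understates the definition (a Poisson transversal also requires the intersection with each leaf to be symplectic), but this does not affect your argument since you only need the transversality part to deduce (ii), and (i) itself is provided directly by the proposition.
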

	
	\begin{proof}
		Since the symplectic leaves of $\g$ are the adjoint orbits of $G$, a Poisson transversal in $\g$ is necessarily a Poisson slice. Proposition \ref{Proposition: Slodowy Poisson transversal} now implies that $\S_{\mathfrak{T}}$ is a Poisson slice in $\g$.
	\end{proof}
	
	\begin{remark}\label{Remark: Slice issue}
		Suppose that $\mathfrak{T}=(e,h,f)\in\g^{\times 3}$ is an $\sl_2$-triple. The Slodowy slice $\S_{\mathfrak{T}}$ is often called a \textit{transverse slice} at $e\in\g$ \cite{pre,fu-jut-lev-som,kro}, as is consistent with Proposition \ref{Proposition: Slodowy Poisson transversal}. It is natural to ask if $\S_{\mathfrak{T}}$ is a \textit{slice} at $e\in\g$ in any reasonable complex-geometric analogue of Definition \ref{Definition: Slice compact}. The answer is decidedly negative; $S_{\mathfrak{T}}$ need not be preserved by the adjoint action of $G_e\subset G$, implying that the natural complex-geometric analogues of Parts (i) and (iii) in Definition \ref{Definition: Slice compact} do not apply.
	\end{remark}
	
	\section{Decomposition classes in Poisson geometry}\label{Section: Decomposition classes in Poisson geometry}
	In this section, $G$ is a connected complex reductive algebraic group with Lie algebra $\g$. We recall the definition of decomposition classes in $\g$, and develop useful properties thereof in the context of Hamiltonian group actions. Main Theorem \ref{thm:simplestpcs} is proved by combining Corollary \ref{Corollary: Poisson slice} and Propositions \ref{Proposition: Symplectic subvariety}, \ref{Proposition: Characterizations}, \ref{Proposition: Holomorphic characterizations}, and \ref{Proposition: Trivial action}. Most of our results apply in both the holomorphic and complex-algebraic categories.  
	
	\subsection{Decomposition classes}\label{Subsection: Decomposition classes}
	Retain the notation and conventions of Section \ref{Subsection: Complex reductive algebraic groups}. 
	Consider the equivalence relation on $\g$ defined by $x\sim y$ if there exists $g\in G$ satisfying $y_{\nilp}=\mathrm{Ad}_g(x_{\nilp})$ and $\g_{y_{\semi}}=\mathrm{Ad}_g(\g_{x_{\semi}})$. Introduced by Borho--Kraft \cite{bor-kra:79}, \textit{decomposition classes} in $\g$ are defined to be equivalence classes of $\sim$. It turns out that $x,y\in\g$ belong to the same decomposition class if and only if $G_x$ and $G_y$ are $G$-conjugate \cite[Theorem 3.7.1]{Broer}. Decomposition classes are thereby the orbit-type strata for the adjoint action of $G$. As such, one might expect there to be a \textit{principal decomposition classes} in complex-algebraic and holomorphic analogues of Theorem \ref{Theorem: New slice theorem}; see Section \ref{Subsection: Principal decomposition classes} for more details.  
	
	\begin{remark}
		While it is common to consider decomposition classes in reductive Lie algebras, some authors restrict to those in semisimple Lie algebras. This restriction is made harmless by Proposition \ref{Proposition: Comparative Jordan decomposition}: there is a bijective correspondence between decomposition classes in $\g$ and those in the derived subalgebra $[\g,\g]\subset\g$, where one associates to a $[G,G]$-decomposition class $\D\subset[\g,\g]$ 
		the $G$-decomposition class $\z(\g)+\D\subset\g$.
		In this way, many results about decomposition classes in semisimple Lie algebras easily generalize to the reductive case. Our manuscript is written accordingly; we sometimes invoke facts about the reductive case when the accompanying references only address the semisimple case and the generalization is clear.
	\end{remark}
	
	Each decomposition class $\D\subset\g$ is a $G$-invariant, smooth \cite[Corollary 3.8.1]{Broer}, locally closed \cite[Corollary 39.1.7]{Tauvel-Yu}, irreducible \cite[Section 3.3]{Broer} subvariety of $\g$. It also turns out that there exist only finitely many decomposition classes \cite[Theorem 3.5.2]{Broer}. To index them, consider the following set of pairs: $$\{(\l,\O):\l\subset\g\text{ is a Levi subalgebra and }\O\subset\l\text{ is a nilpotent orbit}\}.$$ Note that $G$ acts on this set by the formula $g(\l,\O)\coloneqq (\Ad_g(\l),\Ad_g(\O))$. We write $[(\l,\O)]$ for the $G$-orbit of $(\l,\O)$. At the same time, we consider the \textit{generic locus}
	$$\z(\l)_{\mathsf{gen}}\coloneqq\{x\in\z(\l):\dim\g_x\leq\dim \g_y\text{ for all }y\in\z(\l)\}
	$$ 
	for a Levi subalgebra $\l\subset\g$. The $G$-saturation $$\D_{\l,\O}\coloneqq G(\O+\z(\l)_{\mathsf{gen}})\subset\g$$ is a decomposition class, and is independent of the chosen pair $(\l,\O)\in [(\l,\O)]$.
	Furthermore, the association $[(\l,\O)]\mapsto\D_{\l,\O}$ is a bijection from the $G$-orbits of pairs $(\l,\O)$ to the decomposition classes in $\g$ \cite{bor-kra:79}.
	
	Another interesting fact is that the closure of a decomposition class is a union of decomposition classes \cite[Theorem 3.5.2]{Broer}. This fact allows us to consider the \textit{closure order} on the set of decomposition classes: $\mathcal{D}\leq\mathcal{E}$ if $\mathcal{D}\subset\overline{\mathcal{E}}$.
	
	\begin{example}[Regular semisimple elements]\label{Example: Regss}
		Let $\t\subset\g$ be a Cartan subalgebra. Note that $\{0\}\subset\t$ is the unique nilpotent orbit in $\t$. One finds that $\D_{(\t,\{0\})}=\greg\cap\g_{\mathsf{semi}}$. It follows that $\greg\cap\g_{\mathsf{semi}}$ is the unique maximal decomposition class in the above-defined partial order.
	\end{example}
	
	\begin{example}[Nilpotent orbits]\label{Example: Nilpotent orbits}
		If $\O\subset\g$ is a nilpotent orbit, then $\D_{(\g,\O)}=\z(\g)+\O$. If $\g$ is semisimple, this implies that nilpotent $G$-orbits are decomposition classes.	
	\end{example}

	\subsection{Decomposition classes as moment map images}\label{Subsection: Moment map images}
	We now associate to each decomposition class $\D\subset\g$ a holomorphic Hamiltonian $G$-space with $\D$ as its moment map image. The following lemma is a first step in this direction.
	
	\begin{lemma}\label{Lemma: Dilation}
		If $\D\subset\g$ is a decomposition class, then $\D$ is invariant under the dilation action of $\mathbb{C}^{\times}$ on $\g$.
	\end{lemma}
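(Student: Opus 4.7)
The plan is to verify directly that scalar multiplication respects the equivalence relation defining decomposition classes. Fix $x\in\D$ with Jordan decomposition $x=x_\semi+x_\nilp$ and $t\in\mathbb{C}^\times$. The first routine step is to identify the Jordan decomposition of $tx$: since scaling by a nonzero scalar preserves semisimplicity and nilpotency (cf.\ Section \ref{Subsection: Complex reductive algebraic groups}) and $[tx_\semi,tx_\nilp]=t^2[x_\semi,x_\nilp]=0$, uniqueness of the Jordan decomposition gives $(tx)_\semi=tx_\semi$ and $(tx)_\nilp=tx_\nilp$. In particular, $\g_{(tx)_\semi}=\g_{tx_\semi}=\g_{x_\semi}$, since commuting with $x_\semi$ is the same as commuting with any nonzero scalar multiple of $x_\semi$.

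The heart of the argument is to exhibit $g\in G$ with $\Ad_g(x_\nilp)=tx_\nilp$ and $\Ad_g(\g_{x_\semi})=\g_{x_\semi}$, which by Section \ref{Subsection: Decomposition classes} will imply $tx\sim x$ and hence $tx\in\D$. The key observation is that $x_\nilp$ is nilpotent in the reductive centralizer $\g_{x_\semi}$. By the Jacobson--Morozov theorem applied inside $\g_{x_\semi}$, there exist $h,f\in\g_{x_\semi}$ such that $\mathfrak{T}=(x_\nilp,h,f)$ is an $\sl_2$-triple. Choose $\tau\in\mathbb{C}$ with $e^{2\tau}=t$ and set $g\coloneqq\exp(\tau h)\in G$. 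The relation $[h,x_\nilp]=2x_\nilp$ yields $\Ad_g(x_\nilp)=e^{2\tau}x_\nilp=tx_\nilp$, which takes care of the nilpotent parts.

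Crucially, because $h\in\g_{x_\semi}$, we have $[h,x_\semi]=0$, so $\Ad_g(x_\semi)=x_\semi$, and therefore $\Ad_g(\g_{x_\semi})=\g_{\Ad_g(x_\semi)}=\g_{x_\semi}$, matching $\g_{(tx)_\semi}$. Combining both conditions, the pair $x$ and $tx$ are equivalent in the sense defining decomposition classes, so $tx\in\D$, as required. The only mildly subtle point, which I would flag as the main obstacle, is producing the conjugating element so that it simultaneously rescales $x_\nilp$ and fixes the centralizer $\g_{x_\semi}$; this is precisely the reason for invoking Jacobson--Morozov inside the reductive subalgebra $\g_{x_\semi}$ rather than in all of $\g$, since then the semisimple element $h$ automatically lies in $\g_{x_\semi}$. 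The same argument applies verbatim to the action in \eqref{Equation: Dilation action} associated with any $\sl_2$-triple, using that $\D$ is $G$-invariant to absorb the $\Ad_{\lambda_\mathfrak{T}(t)}$ factor.
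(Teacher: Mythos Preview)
Your proof is correct and follows essentially the same approach as the paper: both apply Jacobson--Morozov inside $\g_{x_\semi}$ (the paper more specifically in $[\g_{x_\semi},\g_{x_\semi}]$) to obtain an $\sl_2$-triple $(x_\nilp,h,f)$, then use $g=\exp(\tau h)$ to rescale $x_\nilp$ while fixing $x_\semi$, thereby verifying $tx\sim x$. Your version is slightly more explicit about the Jordan decomposition of $tx$, but the key idea is identical.
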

	
	\begin{proof}
		Suppose that $x\in\D$. Since $e\coloneqq x_{\nilp}\in[\g_{x_{\semi}},\g_{x_{\semi}}]$, the Jacobson--Morozov theorem implies the existence of $h,f\in[\g_{x_{\semi}},\g_{x_{\semi}}]$ for which $(e,h,f)$ is an $\mathfrak{sl}_2$-triple. Note that $$\mathrm{Ad}_{\exp(t h)}(x_{\semi})=x_{\semi}\quad\text{and}\quad \mathrm{Ad}_{\exp(t h)}(x_{\nilp})=e^{2t}x_{\nilp}$$ for all $t\in\mathbb{C}$. We conclude that $g=\exp(t h)$ satisfies $y_{\nilp}=\mathrm{Ad}_g(x_{\nilp})$ and $\g_{y_{\semi}}=\mathrm{Ad}_g(\g_{x_{\semi}})$ for $t\in\mathbb{C}$ and $y=e^{2t}x$. This implies that $x\sim e^{2t}x$ for all $\lambda\in\mathbb{C}$, where $\sim$ is the equivalence relation used to define decomposition classes.
	\end{proof}
	
	\begin{proposition}\label{Proposition: Canonical}
		Suppose that $\D\subset\g$ is a decomposition class. There exists a canonical holomorphic Hamiltonian $G$-space $M$ with $\mu(M)=\D$, where $\mu:M\longrightarrow\g$ is the moment map. 
	\end{proposition}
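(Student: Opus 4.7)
The plan is to construct $M$ by a family version of symplectic induction, assembling the coadjoint orbits that foliate $\D$ into a single holomorphic symplectic variety. Fix an indexing pair $(\l, \O)$ with $\D = \D_{\l, \O}$, pick $e \in \O$, let $L \subset G$ be the Levi subgroup with Lie algebra $\l$, and let $L_e$ denote its $e$-stabilizer. Every element of $\D$ has the form $\mathrm{Ad}_g(e + z)$ for some $g \in G$ and $z \in \z(\l)_{\mathsf{gen}}$, and using the Jordan-decomposition identities from Section \ref{Subsection: Complex reductive algebraic groups} together with $G_z = L$ for generic $z$, one verifies that $G_{e+z} = L \cap G_e = L_e$. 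Thus $\D$ is swept out by the coadjoint orbits $G \cdot (e+z)$ with $z \in \z(\l)_{\mathsf{gen}}$, all sharing the common stabilizer $L_e$.

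Since $L$ (and hence $L_e$) acts trivially on $\z(\l)$, the affine subvariety $e + \z(\l)_{\mathsf{gen}} \subset \l$ is fixed pointwise by $L_e$; restriction via $\langle\cdot,\cdot\rangle$ then yields a map into $(\l_e^*)^{L_e}$. I would define
\[ M := \bigl\{(g,\xi) \in T^*G \cong G \times \g : \xi\big|_{\l_e} \in (e + \z(\l)_{\mathsf{gen}})\big|_{\l_e} \bigr\}\big/ L_e, \]
with $L_e$ acting from the right on $T^*G$. This is a family version of symplectic reduction: the numerator is the preimage of the smooth affine subspace $(e + \z(\l)_{\mathsf{gen}})|_{\l_e} \subset (\l_e^*)^{L_e}$ under the right $L_e$-moment map, and the quotient inherits a holomorphic symplectic structure fibered over this affine parameter space, with coadjoint-orbit fibers $G \cdot (e+z)$. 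The residual left $G$-action is Hamiltonian with moment map $\mu([g,\xi]) = \mathrm{Ad}_g(\xi)$, whose image is $G \cdot (e + \z(\l)_{\mathsf{gen}}) = \D$, as required.

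Canonicity follows because any two points of $\O$ are $L$-conjugate, and any two indexing pairs for $\D$ are $G$-conjugate; these conjugations induce canonical isomorphisms of the resulting Hamiltonian $G$-spaces, so $M$ depends on $\D$ alone. The main expected obstacle is technical smoothness: one must check that the numerator defining $M$ is a smooth holomorphic submanifold of $T^*G$ on which $L_e$ acts freely and properly, so that the quotient is a holomorphic symplectic manifold. This reduces to verifying transversality of the right $L_e$-moment map on $T^*G$ to the affine subspace $(e+\z(\l)_{\mathsf{gen}})|_{\l_e}$, which should follow from the non-degeneracy of $\langle\cdot,\cdot\rangle$ together with the centrality of $\z(\l)$ in $\l$. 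A secondary subtlety is pinning down $\mu(M) = \D$ exactly (not $\ol{\D}$ or a proper sub-union of orbits), which uses the characterization $G_{e+z} = L_e$ for generic $z$ combined with the irreducibility of $\D$.
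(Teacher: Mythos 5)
Your construction has a genuine gap at its core: symplectic reduction of $T^*G$ by the right $L_e$-action at the level $(e+z)\big\vert_{\l_e}$ does \emph{not} produce the coadjoint orbit $G(e+z)$, and the resulting moment image is not $\D$. Constraining only the restriction $\xi\big\vert_{\l_e}$ leaves $\xi$ free in the directions of $\l_e^{\perp}$, so the fiber of your quotient over $z$ is a symplectically induced space of dimension $2\dim\g-\dim\l_e-\dim (L_e)_{(e+z)\vert_{\l_e}}$ rather than the orbit of dimension $\dim\g-\dim\l_e$, and the left $G$-moment image of your $M$ is $G\big(e+\z(\l)_{\mathsf{gen}}+\l_e^{\perp}\big)$, which strictly contains $\D$ in general. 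Concretely, for $G=\operatorname{SL}_2$ and $\D$ the regular nilpotent class, one has $\l=\g$ and $\l_e=\g_e=\mathbb{C}e$, so the constraint is $\langle\xi,e\rangle=0$, i.e. $\xi\in\operatorname{span}(e,h)$, whose $G$-saturation is all of $\g$; for the regular semisimple class the image likewise picks up nilpotent elements. There is also a structural obstruction to the picture you describe: a $G$-space fibered over $\z(\l)_{\mathsf{gen}}$ with coadjoint-orbit fibers maps to $\D$ by a local biholomorphism, so its moment map would be a Poisson local isomorphism onto an open subset of $\D$; since the Poisson structure on $\D$ is degenerate whenever $\z(\l)\neq 0$, no such space can be symplectic, so no reduction by all of $L_e$ can have both properties you assert.

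The paper's proof is quite different and sidesteps these issues: it takes $M=\bigsqcup_{x\in\D}G/N(x)^{\circ}$ with $N(x)=[G_{x_{\semi}},G_{x_{\semi}}]_{x_{\nilp}}$, i.e. it quotients only by the connected group integrating $[\l,\l]_e$ rather than by all of $G_{e+z}=L_e$; the extra $A(x)$-directions are exactly what make the total space symplectic. The holomorphic Hamiltonian structure is imported from \cite[Section 4.9]{cro-may:22}, the moment map is $([g],x)\mapsto-\Ad_g(x)$, and $\mu(M)=-\D=\D$ follows from the dilation-invariance of decomposition classes (Lemma \ref{Lemma: Dilation}). If you want to salvage your route, you should constrain $\xi$ itself to lie in $e+\z(\l)_{\mathsf{gen}}$ (not merely its restriction to $\l_e$) and quotient $\sss^{-1}(e+\z(\l)_{\mathsf{gen}})$ by a stabilizer subgroupoid with Lie algebroid corresponding to $[\l,\l]_e$, which is essentially the cited construction; note also that the paper's $M$ is built directly from $\D$ with no choice of $(\l,\O)$ or $e$, so the canonicity discussion you include becomes unnecessary.
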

	
	\begin{proof}
		Consider the set $$M\coloneqq\bigsqcup_{x\in\D}G/N(x)^{\circ},$$ 
		where $N(x)^\circ$ is the identity component of $$N(x)\coloneqq [G_{x_{\semi}},G_{x_{\semi}}]_{x_{\nilp}}.$$ Note that $G$ acts on $M$ by left multiplication on each factor. As explained in \cite[Section 4.9]{cro-may:22}, the $G$-set $M$ is canonically a connected holomorphic Hamiltonian $G$-space with moment map
		$$\mu:M\longrightarrow\g,\quad ([g],x)\mapsto-\mathrm{Ad}_g(x),\quad x\in\D,\text{ }[g]\in G/N(x)^{\circ}.$$ Lemma \ref{Lemma: Dilation} implies that $\mu(M)=-\D=\D$.
	\end{proof}
	
	\begin{remark}
		While $G$ is taken to be semisimple in \cite[Section 4.9]{cro-may:22}, the results contained therein apply for a connected complex reductive algebraic group $G$. This justifies our use of \cite[Section 4.9]{cro-may:22} in the previous proof.
	\end{remark}
	
	\subsection{Poisson slices in decomposition classes}\label{Subsection: Poisson transversals in decomposition classes}
	Suppose that $\D=\D_{\l,\O}$ for a Levi subalgebra $\mathfrak{l}\subset\g$ and nilpotent orbit $\O\subset\mathfrak{l}$. Fixing $e\in\O$, one has $\mathcal{D}=G(e+\z(\l)_{\mathsf{gen}})$ \cite[Corollary 3.8.1(ii)]{Broer}. It is therefore natural to describe $(e+\z(\l)_{\mathsf{gen}})\cap Gx$ for each $x\in e+\z(\l)_{\mathsf{gen}}$. To this end, let $N_{(\mathfrak{l},\O)}\subset G$ denote the $G$-normalizer of $\mathfrak{z}(\mathfrak{l})+\O$. One has $N_{(\mathfrak{l},\O)}\subset N_G(L)$ \cite[Corollary 3.8.1]{Broer}, where $L\subset G$ is the Levi subgroup integrating $\l$. We conclude that $$\Gamma_{(\mathfrak{l},\O)}\coloneqq N_{(\mathfrak{l},\O)}/L$$ is a subgroup of the finite group $N_G(L)/L$. At the same time, the adjoint action of $N_G(L)\subset G$ preserves $\z(\l)_{\mathsf{gen}}$. We also note that $L\subset N_G(L)$ acts trivially on $\mathfrak{z}(\mathfrak{l})$. In this way, $N_G(L)/L$ and its subgroup $\Gamma_{(\mathfrak{l},\O)}$ act on $\z(\l)_{\mathsf{gen}}$. The following is an immediate consequence of \cite[Corollary 3.8.1(iii)]{Broer}.
	
	\begin{proposition}\label{Proposition: Variation}
		Suppose that $x,y\in\z(\l)_{\mathsf{gen}}$, and set $\Gamma\coloneqq\Gamma_{(\mathfrak{l},\O)}$. We have $G(e+x)=G(e+y)$ if and only if $\Gamma x=\Gamma y$.
	\end{proposition}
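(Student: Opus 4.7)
The plan is to establish both implications by combining the uniqueness of Jordan decompositions with the fact that, for $x\in\z(\l)_{\mathsf{gen}}$, the centralizer $\g_x$ coincides with $\l$ itself. First I need to verify that the $\Gamma$-action on $\z(\l)_{\mathsf{gen}}$ is well defined: since $N_{(\l,\O)}\subset N_G(L)$ and $L$ acts trivially on $\z(\l)$, conjugation descends to a well-defined action of $\Gamma=N_{(\l,\O)}/L$ on $\z(\l)$, preserving $\z(\l)_{\mathsf{gen}}$.

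For the reverse direction, suppose $\Gamma x=\Gamma y$ and choose $n\in N_{(\l,\O)}$ with $\mathrm{Ad}_n(x)=y$. Because $n$ normalizes $\z(\l)+\O$ and because Jordan decomposition separates the semisimple $\z(\l)$-part from the nilpotent $\O$-part, the element $\mathrm{Ad}_n(e)$ lies in $\O$. Since $\O=L\cdot e$, there exists $\ell\in L$ such that $\mathrm{Ad}_{\ell}(\mathrm{Ad}_n(e))=e$. Using that $L$ centralizes $\z(\l)\ni y$, we obtain $\mathrm{Ad}_{\ell n}(e+x)=e+y$, so $G(e+x)=G(e+y)$.

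For the forward direction, pick $g\in G$ with $\mathrm{Ad}_g(e+x)=e+y$. Because $x,y\in\z(\l)\subset\g_{\mathsf{semi}}$, because $e\in\O$ is nilpotent, and because $[x,e]=[y,e]=0$, the uniqueness of the Jordan decomposition forces $\mathrm{Ad}_g(x)=y$ and $\mathrm{Ad}_g(e)=e$ separately. Here the main point is that $\g_x=\l=\g_y$, which is precisely the characterization of the generic locus of $\z(\l)$ (and is the substance of Broer's Corollary 3.8.1(iii)). Consequently $\mathrm{Ad}_g(\l)=\l$, placing $g\in N_G(L)$; in particular $\mathrm{Ad}_g$ preserves $\z(\l)$. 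Since $g$ fixes $e$ and normalizes $L$, it also sends $\O=L\cdot e$ to itself, hence preserves $\z(\l)+\O$, so $g\in N_{(\l,\O)}$. The coset $gL\in\Gamma$ then sends $x$ to $y$, proving $\Gamma x=\Gamma y$.

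The only potential obstacle is the identification $\g_x=\l$ for $x\in\z(\l)_{\mathsf{gen}}$, which is the key structural input; all remaining steps are formal manipulations with the Jordan decomposition and the defining properties of $L$ and $N_{(\l,\O)}$. Given that this identification is the content invoked from Broer's reference, the argument genuinely is immediate in the sense indicated in the excerpt.
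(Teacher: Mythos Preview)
Your argument is correct. The paper does not give its own proof but simply records the statement as an immediate consequence of \cite[Corollary 3.8.1(iii)]{Broer}; your proposal supplies exactly the details one would write out to verify that citation, using Jordan decomposition together with the identification $\g_x=\l$ for $x\in\z(\l)_{\mathsf{gen}}$.
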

	
	\begin{example}
		Let $T\subset G$ be a maximal torus with Lie algebra $\mathfrak{t}\subset\g$ and associated Weyl group $W\coloneqq N_G(T)/T$. By Example \ref{Example: Regss}, $\D_{[(\t,0)]}=\greg\cap\g_{\mathsf{semi}}$. Set $\D\coloneqq\greg\cap\g_{\mathsf{semi}}$ and $\t_{\reg}\coloneqq\t\cap\greg$. In this case, Proposition \ref{Proposition: Variation} is the well-known result that two regular semisimple adjoint orbits coincide if and only if they intersect $\mathfrak{t}_{\text{reg}}$ in the same $W$-orbit.
	\end{example}
	
	Proposition \ref{Proposition: Variation} implies that each adjoint orbit in $\mathcal{D}$ has a finite intersection with the subvariety $e+\z(\l)_{\mathsf{gen}}\subset\mathcal{D}$. On the other hand, $\mathcal{D}$ is a Poisson variety with symplectic leaves given by the adjoint orbits of $\g$ contained in $\mathcal{D}$. One may therefore study Poisson transversals in $\mathcal{D}$, as per Section \ref{Subsection: Poisson transversals}. The following result arises in this context.
	
	\begin{proposition}\label{Proposition: Transversal}
		The subvariety $e+\z(\l)_{\mathsf{gen}}$ is a Poisson transversal in $\mathcal{D}$.
	\end{proposition}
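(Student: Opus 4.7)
The plan is to verify directly that at every $x=e+z\in e+\z(\l)_{\mathsf{gen}}$ the tangent space of $\D$ splits as an internal direct sum
\[ T_x\D \;=\; [\g,x]\oplus \z(\l) \;=\; T_x(Gx)\oplus T_x(e+\z(\l)_{\mathsf{gen}}). \]
Since the symplectic leaves of $\D$ are its constituent adjoint orbits, such a splitting immediately furnishes both conditions defining a Poisson transversal: transversality with each leaf is clear, and the tangent intersection $[\g,x]\cap\z(\l)=0$ shows that $Gx\cap(e+\z(\l)_{\mathsf{gen}})$---already known to be finite by Proposition \ref{Proposition: Variation}---is a $0$-dimensional, hence trivially symplectic, submanifold of $Gx$.

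The first step is to prove $[\g,x]\cap\z(\l)=\{0\}$. Writing $x=e+z$ with $e\in[\l,\l]$ and $z\in\z(\l)$, both $e$ and $z$ commute with every $w\in\z(\l)$, so $[x,w]=0$. Given $v=[Y,x]\in\z(\l)$, the $\ad$-invariance of $\langle\cdot,\cdot\rangle$ yields
\[ \langle v,w\rangle \;=\; \langle [Y,x],w\rangle \;=\; \langle Y,[x,w]\rangle \;=\; 0 \qquad\text{for all } w\in\z(\l). \]
The form is non-degenerate on the reductive Levi $\l$, and $\l=\z(\l)\oplus[\l,\l]$ is an orthogonal decomposition (because $\langle z,[a,b]\rangle=\langle[z,a],b\rangle=0$ for $z\in\z(\l)$ and $a,b\in\l$), so its restriction to $\z(\l)$ is non-degenerate and hence $v=0$.

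The second step is a dimension count. The defining property of the generic locus gives $\g_z=\l$, and $x=z+e$ is the Jordan decomposition of $x$, so $\g_x=\g_z\cap\g_e=\l_e$ and $\dim[\g,x]=\dim\g-\dim\l_e$. Combined with the first step, this yields $\dim([\g,x]+\z(\l))=\dim\g-\dim\l_e+\dim\z(\l)$. On the other hand, the $G$-equivariant surjection
\[ G\times(e+\z(\l)_{\mathsf{gen}}) \longrightarrow \D,\qquad (g,y)\mapsto \Ad_g(y) \]
has generic fibers of dimension $\dim G_x=\dim\l_e$ (Proposition \ref{Proposition: Variation} makes the $y$-projection of a fiber finite, after which the $g$-variation is a coset of $G_y$), so $\dim\D=\dim\g-\dim\l_e+\dim\z(\l)$ as well. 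Since $[\g,x]+\z(\l)\subseteq T_x\D$ and the two sides match in dimension, they are equal, completing the desired splitting and therefore the proof.

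The main obstacle is likely this dimension count: it requires tracking the fiber structure of the orbit-map rather than a single orbit. Should the direct argument prove awkward to write cleanly, the formula $\dim\D=\dim\g-\dim\l_e+\dim\z(\l)$ can instead be imported from the standard dimension formulas for decomposition classes in the Borho--Kraft / Broer references already cited in Section \ref{Subsection: Decomposition classes}.
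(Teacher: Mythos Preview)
Your proof is correct and follows essentially the same strategy as the paper: both arguments establish the direct sum decomposition $T_x\D=[\g,x]\oplus\z(\l)$, from which the Poisson transversal property is immediate (the leaf intersection being zero-dimensional and hence trivially symplectic). The paper's proof is terser: it notes $T_x\D=[\g,x]+\z(\l)$ from the saturation description of $\D$, then cites \cite[Corollary 3.8.1(i)]{Broer} for the dimension equality $\dim T_x\D=\dim[\g,x]+\dim\z(\l)$, which forces the sum to be direct. You instead prove $[\g,x]\cap\z(\l)=0$ directly via the invariant form---a clean and self-contained argument---and then use the dimension of $\D$ to conclude the sum is all of $T_x\D$. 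Your closing remark is apt: the fiber-dimension computation you sketch is exactly what Broer's formula encodes, so either route lands in the same place.
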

	
	\begin{proof}
		Suppose that $x\in e+\z(\l)_{\mathsf{gen}}$. Since $\mathcal{D}=G(e+\z(\l)_{\mathsf{gen}})$, we have $T_x\mathcal{D}=[\g,x]+\mathfrak{z}(\mathfrak{l})$. We also have $\dim(T_x\mathcal{D})=\dim([\g,x])+\dim(\mathfrak{z}(\mathfrak{l}))$ \cite[Corollary 3.8.1(i)]{Broer}. It follows that $T_x\mathcal{D}=[\g,x]\oplus\mathfrak{z}(\mathfrak{l})$. This completes the proof.
	\end{proof}
	
	\begin{corollary}\label{Corollary: Poisson slice}
		The subvariety $e+\z(\l)_{\mathsf{gen}}$ is a Poisson slice in $\mathcal{D}$.
	\end{corollary}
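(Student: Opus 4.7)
The plan is straightforward: verify the two conditions in Definition \ref{Definition: Slice} with ambient space $\mathcal{D}$. First, I would note that $\mathcal{D}$ itself qualifies as a Poisson Hamiltonian $G$-space: it is a smooth, locally closed, $G$-invariant subvariety of $\g$, and it is a union of symplectic leaves of $\g$ (namely the adjoint orbits it contains, since decomposition classes partition $\g$ into unions of orbits of constant stabilizer type). As such, $\mathcal{D}$ inherits a Poisson structure from $\g$, and the inclusion $\mathcal{D}\hookrightarrow\g$ serves as a moment map for the $G$-action.

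Condition (i) of Definition \ref{Definition: Slice} — that $e+\z(\l)_{\mathsf{gen}}$ is a Poisson transversal in $\mathcal{D}$ — is exactly the content of Proposition \ref{Proposition: Transversal}, so there is nothing further to prove. For condition (ii) — transversality to the $G$-orbits in $\mathcal{D}$ — I would simply extract the relevant information from the proof of Proposition \ref{Proposition: Transversal}. That argument establishes the direct sum decomposition
\[
T_x\mathcal{D}=[\g,x]\oplus\z(\l)
\]
for every $x\in e+\z(\l)_{\mathsf{gen}}$. The first summand is $T_x(Gx)$, while the second is $T_x(e+\z(\l)_{\mathsf{gen}})$, since $\z(\l)_{\mathsf{gen}}$ is by definition a Zariski-open subvariety of the vector space $\z(\l)$. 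Hence $T_x\mathcal{D}=T_x(Gx)+T_x(e+\z(\l)_{\mathsf{gen}})$, which is transversality to $G$-orbits.

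There is no serious obstacle; the corollary is essentially a repackaging of Proposition \ref{Proposition: Transversal} together with the observation that the dimension count in its proof automatically delivers the orbit-transversality clause. The only point requiring care is ensuring that $\mathcal{D}$ is properly regarded as a Poisson Hamiltonian $G$-space in the sense of Section \ref{Subsection: Hamiltonian actions}, which reduces to the fact that decomposition classes are unions of symplectic leaves of $\g$.
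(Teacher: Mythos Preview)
Your proposal is correct and follows essentially the same approach as the paper. The only difference is in how you verify condition (ii): you re-extract the explicit decomposition $T_x\mathcal{D}=[\g,x]\oplus\z(\l)$ from the proof of Proposition~\ref{Proposition: Transversal}, whereas the paper observes more abstractly that the $G$-orbits in $\mathcal{D}$ \emph{are} the symplectic leaves, so that being a Poisson transversal already entails transversality to $G$-orbits and condition (ii) is automatic from condition (i).
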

	
	\begin{proof}
		This follows immediately from Proposition \ref{Proposition: Transversal} and the fact that the $G$-orbits in $\mathcal{D}$ are the symplectic leaves of $\mathcal{D}$.
	\end{proof}
	
	\begin{proposition}\label{Proposition: Symplectic subvariety}
		Let $M$ be a Hamiltonian $G$-variety (resp. holomorphic Hamiltonian $G$-space) with moment map $\mu:M\longrightarrow\g$. Suppose that $\mu(M)\subset\mathcal{D}$ for the decomposition class associated to $(\mathfrak{l},\O)$. If $e\in\O$, then $\mu^{-1}(e+\mathfrak{z}(\mathfrak{l})_{\mathsf{gen}})$ is a symplectic slice in $M$.
	\end{proposition}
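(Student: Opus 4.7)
The plan is to deduce this as a direct corollary of Corollary \ref{Corollary: Poisson slice} via the pullback principle of Proposition \ref{Proposition: Pullback}. Since $M$ is symplectic, a Poisson slice in $M$ is automatically a symplectic slice, so it suffices to show that $\mu^{-1}(e+\mathfrak{z}(\mathfrak{l})_{\mathsf{gen}})$ is a Poisson slice.

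First, I would observe that the decomposition class $\mathcal{D}\subset\g$ is a Poisson subvariety (resp. holomorphic Poisson submanifold) of $\g$: it is a smooth, locally closed subvariety by the results recalled in Section \ref{Subsection: Decomposition classes}, and it is a union of adjoint orbits, which are precisely the symplectic leaves of $\g$ under our chosen identification $\g\cong\g^*$. Consequently, $\mathcal{D}$ acquires a canonical Poisson structure for which the inclusion $\mathcal{D}\hookrightarrow\g$ is a Poisson morphism, and the restriction of the adjoint action makes $\mathcal{D}$ a Poisson Hamiltonian $G$-variety (resp. holomorphic Poisson Hamiltonian $G$-space) with moment map given by this inclusion.

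Next, since $\mu(M)\subset\mathcal{D}$ by hypothesis and $\mu:M\longrightarrow\g$ is a $G$-equivariant Poisson morphism, it factors through a $G$-equivariant Poisson morphism $\widetilde{\mu}:M\longrightarrow\mathcal{D}$. By Corollary \ref{Corollary: Poisson slice}, $e+\mathfrak{z}(\mathfrak{l})_{\mathsf{gen}}$ is a Poisson slice in the Poisson Hamiltonian $G$-space $\mathcal{D}$. Applying Proposition \ref{Proposition: Pullback} to $\widetilde{\mu}$ and $S=e+\mathfrak{z}(\mathfrak{l})_{\mathsf{gen}}$ yields that
\[
\mu^{-1}(e+\mathfrak{z}(\mathfrak{l})_{\mathsf{gen}})=\widetilde{\mu}^{-1}(e+\mathfrak{z}(\mathfrak{l})_{\mathsf{gen}})
\]
is a Poisson slice in $M$, hence a symplectic slice since $M$ is symplectic.

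The only step requiring a small amount of care is verifying that $\mathcal{D}$ genuinely qualifies as a (holomorphic) Poisson Hamiltonian $G$-variety with moment map equal to the inclusion, so that Proposition \ref{Proposition: Pullback} is applicable in the form stated. This is routine once one notes that decomposition classes are smooth and union of symplectic leaves, but it is the only non-formal ingredient beyond directly invoking the previously established results. No further work is needed, since Proposition \ref{Proposition: Pullback} takes care of both the Poisson-transversality and $G$-orbit transversality conditions required by Definition \ref{Definition: Slice}.
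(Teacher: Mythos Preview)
Your proposal is correct and follows essentially the same approach as the paper: factor $\mu$ through a $G$-equivariant Poisson morphism $M\longrightarrow\mathcal{D}$, then apply Corollary \ref{Corollary: Poisson slice} together with Proposition \ref{Proposition: Pullback}. The paper's proof is just a two-sentence version of your argument, without the extra justification that $\mathcal{D}$ is itself a Poisson Hamiltonian $G$-space.
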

	
	\begin{proof}
		By hypothesis, $\mu$ defines a $G$-equivariant Poisson morphism $M\longrightarrow\mathcal{D}$.  We may apply Proposition \ref{Proposition: Pullback} and Corollary \ref{Corollary: Poisson slice} to conclude that $\mu^{-1}(e+\mathfrak{z}(\mathfrak{l})_{\mathsf{gen}})$ is a symplectic slice in $M$.
	\end{proof}

	\subsection{Principal decomposition classes}\label{Subsection: Principal decomposition classes}
	We now introduce principal decomposition classes for Hamiltonian $G$-varieties and holomorphic Hamiltonian $G$-spaces.

	\begin{proposition}\label{Proposition: Characterizations}
		Let $M$ be a non-empty, irreducible Hamiltonian $G$-variety with moment map $\mu:M\longrightarrow\g$. There exists a unique decomposition class $\mathcal{D}_M\subset\g$ with the following properties:
		\begin{itemize}
			\item[\textup{(i)}]  $\mu^{-1}(\mathcal{D}_M)$ is open and dense in $M$;
			\item[\textup{(ii)}] $\D_M$ is maximal among the decomposition classes intersecting $\mu(M)$;
			\item[\textup{(iii)}] $\overline{\mathcal{D}_M\cap\mu(M)}=\overline{\mu(M)}$.
		\end{itemize}
		Furthermore, $\D_M$ is the unique decomposition class satisfying any one of these properties.
	\end{proposition}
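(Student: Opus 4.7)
The plan is to leverage two essential facts: the finiteness of the set of decomposition classes in $\g$, and the irreducibility of both $M$ and $\overline{\mu(M)}$. Enumerate the decomposition classes as $\D_1,\ldots,\D_n$. Since they partition $\g$ and each is locally closed in $\g$, pulling back by the morphism $\mu$ produces finite disjoint decompositions $\mu(M)=\bigsqcup_i(\D_i\cap\mu(M))$ and $M=\bigsqcup_i\mu^{-1}(\D_i)$, where each $\mu^{-1}(\D_i)$ is locally closed in $M$.

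To establish (iii), I would first note that $\overline{\mu(M)}$ is irreducible, as the closure of the morphism image of the irreducible variety $M$. Taking closures in the decomposition of $\mu(M)$ yields
\[ \overline{\mu(M)}=\bigcup_{i=1}^n\overline{\D_i\cap\mu(M)}, \]
and irreducibility forces some index $i$ with $\overline{\D_i\cap\mu(M)}=\overline{\mu(M)}$; declare $\D_M$ to be this $\D_i$. For (ii), suppose $\D_j\cap\mu(M)\neq\emptyset$; then $\D_j\cap\mu(M)\subset\overline{\mu(M)}=\overline{\D_M\cap\mu(M)}\subset\overline{\D_M}$, so $\D_j\cap\overline{\D_M}\neq\emptyset$. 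Because closures of decomposition classes are unions of decomposition classes, this forces $\D_j\subset\overline{\D_M}$, i.e.\ $\D_j\leq\D_M$ in the closure order. In particular $\D_M$ is the unique \emph{maximum}, not merely a maximal element, among the decomposition classes meeting $\mu(M)$.

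For (i), I would apply the same irreducibility trick to $M=\bigcup_i\overline{\mu^{-1}(\D_i)}$, producing an index $i_0$ with $\overline{\mu^{-1}(\D_{i_0})}=M$. Zariski continuity of $\mu$ gives
\[ \mu(M)=\mu\bigl(\overline{\mu^{-1}(\D_{i_0})}\bigr)\subset\overline{\mu(\mu^{-1}(\D_{i_0}))}=\overline{\D_{i_0}\cap\mu(M)}, \]
whence $\overline{\D_{i_0}\cap\mu(M)}=\overline{\mu(M)}$; by the uniqueness already observed in (iii), $\D_{i_0}=\D_M$. Thus $\mu^{-1}(\D_M)$ is dense in $M$, and because a locally closed subset is open in its closure, $\mu^{-1}(\D_M)$ is open and dense, proving (i). Uniqueness of $\D_M$ under each characterization follows immediately: any $\D'$ satisfying (i) meets the dense open $\mu^{-1}(\D_M)$, forcing $\D'=\D_M$ by disjointness of the $\D_i$; any $\D'$ satisfying (ii) must be the unique maximum $\D_M$; and any $\D'$ satisfying (iii) is maximum by the argument of the second paragraph, hence equals $\D_M$.

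The main obstacle, though technically mild, is synchronizing the ``big stratum'' on the source and target sides of $\mu$: the topological inclusion $\mu(\overline{A})\subset\overline{\mu(A)}$ for Zariski-continuous $\mu$, together with the closure order on decomposition classes, is precisely what lets us identify the $\D_M$ obtained from (iii) with the $\D_{i_0}$ obtained from (i), and thereby package three a priori distinct characterizations into one canonical decomposition class.
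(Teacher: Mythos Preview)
Your proof is correct and shares the paper's core strategy: exploit finiteness of decomposition classes together with irreducibility of $M$ and of $\overline{\mu(M)}$. The organization differs in two places worth noting. First, for uniqueness in (iii), the paper invokes constructibility of $\D\cap\mu(M)$ to extract open dense subsets of $\overline{\mu(M)}$ lying inside each candidate class and then intersects them; you instead observe that any $\D'$ satisfying (iii) is automatically the maximum in the closure order by the argument of your second paragraph, which is shorter and avoids constructibility altogether. Second, to identify the class produced by (i) with the one produced by (iii), the paper uses that $\D$ is open in $\overline{\D}$ and that $\mu^{-1}(\D)$ must meet the dense set coming from (iii), whereas you push forward via the topological inclusion $\mu(\overline{A})\subset\overline{\mu(A)}$ to show the class from (i) already satisfies (iii). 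Both routes are valid; yours is marginally more economical, while the paper's constructibility step has the minor advantage of making the uniqueness for (iii) self-contained without reference to the closure order.
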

	
	\begin{proof}
		Let $\D_1,...,\D_k\subset\g$ be the decomposition classes satisfying intersecting $\mu(M)$. Note that
		\[ M=\bigcup_{j=1}^k \mu^{-1}(\ol{\D_j}),\]
		and that $\mu^{-1}(\D_j)\ne \emptyset$ for all $j\in\{1,\ldots,k\}$. The irreducibility of $M$ implies the existence of $j_0\in\{1,\ldots,k\}$ such that $M=\mu^{-1}(\ol{\D})$ and $\D\coloneqq\D_{j_0}$ is maximal with respect to the closure order among $\D_1,...,\D_k$. If $j_0'\in\{1,\ldots,k\}$ and $\D'\coloneqq\D_{j_0'}$ is not comparable to $\D$ in the closure order, then $\D'\cap \ol{\D}=\emptyset$, $\mu^{-1}(\D')\ne \emptyset$, and $M=\mu^{-1}(\ol{\D})$; this is a contradiction. We conclude that $\D$ is the unique decomposition class satisfying (ii). On the other hand, $\D$ is open in $\ol{\D}$. This implies that $\mu^{-1}(\D)\subset M$ is open in $M$. Since $M$ is irreducible, $\mu^{-1}(\D)$ is also dense. It follows that $\D$ is the unique decomposition class satisfying (i).
		
		It remains to establish the existence of a unique decomposition class satisfying (iii), and that this class coincides with that in (i). We first observe that
		$$\mu(M)=\bigcup_{j=1}^k(\mathcal{D}_j\cap\mu(M)).$$ This yields
		$$\overline{\mu(M)}=\bigcup_{j=1}^k\overline{(\mathcal{D}_j\cap\mu(M))}.$$ On the other hand, the irreducibility of $M$ implies that of $\overline{\mu(M)}$. We conclude that $\overline{\mu(M)}=\overline{\mathcal{D}_{j_0}\cap\mu(M)}$ for some $j_0\in\{1,\ldots,k\}$. To prove that $\D_{j_0}$ is the unique decomposition class satisfying (iii), let $\mathcal{D},\mathcal{D}'\subset\g$ be decomposition classes satisfying $\overline{\mathcal{D}\cap\mu(M)}=\overline{\mu(M)}=\overline{\mathcal{D}'\cap\mu(M)}$. Note that $\mathcal{D}\cap\mu(M)$ and $\mathcal{D}'\cap\mu(M)$ are constructible subsets of $\g$. This implies the existence of open dense subsets $U,V\subset\overline{\mu(M)}$ satisfying $U\subset \mathcal{D}\cap\mu(M)$ and $V\subset \mathcal{D}'\cap\mu(M)$. As $\overline{\mu(M)}$ is irreducible and non-empty, $U\cap V\neq\emptyset$. We conclude that $\mathcal{D}\cap\mathcal{D}'\neq\emptyset$, i.e. $\D=\D'$.
		
		Our final task is to prove that the unique decomposition class in (iii) is that in (i). Let $\mathcal{D}\subset\g$ and $\mathcal{D}'\subset\g$ be the unique decomposition classes satisfying (i) and (iii), respectively. We have $\mu(M)\subset\overline{\mathcal{D}'}$. Since $\mathcal{D}'$ is open in $\overline{\mathcal{D}'}$, $\mu^{-1}(\mathcal{D}')$ is open in $M$. We also know that the constructible subset $\mathcal{D}'\cap\mu(M)$ contains an open dense subset of its closure $\overline{\mu(M)}$. This implies that $\mu^{-1}(\mathcal{D}')\neq\emptyset$. It follows that $\mu^{-1}(\mathcal{D})\cap\mu^{-1}(\mathcal{D}')\neq\emptyset$. As distinct decomposition classes are disjoint, we must have $\mathcal{D}=\mathcal{D}'$.
	\end{proof}
	
	\begin{proposition}\label{Proposition: Holomorphic characterizations}
		Let $M$ be a non-empty, connected holomorphic Hamiltonian $G$-space with moment map $\mu:M\longrightarrow\g$. There exists a unique decomposition class $\D_M\subset\g$ that satisfies the following properties:
		\begin{itemize}		
			\item[\textup{(i)}] $\mu^{-1}(\D_M)$ is open, dense, and path-connected in $M$; 
			\item[\textup{(ii)}]$\D_M$ is maximal among the decomposition classes intersecting $\mu(M)$; 
			\item[\textup{(iii)}] $\overline{\D_M\cap\mu(M)}=\overline{\mu(M)}$.
		\end{itemize}
		Moreover, $\D_M$ is the unique decomposition class satisfying any one of these properties.
	\end{proposition}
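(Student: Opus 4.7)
The plan is to adapt the proof of Proposition \ref{Proposition: Characterizations}, replacing the use of irreducibility of $M$ with the topological principle that removing a closed analytic subset of complex codimension at least one from a connected complex manifold preserves connectedness. Let $\D_1,\ldots,\D_k$ be the finitely many decomposition classes meeting $\mu(M)$, and let $\D_{i_1},\ldots,\D_{i_r}$ be those that are maximal in the closure order. I first establish that $\mu^{-1}(\D_{i_s})$ is open in $M$ for each $s$: if $p_n\to p\in\mu^{-1}(\D_{i_s})$ with $\mu(p_n)\in\D_{j_n}$, finiteness allows extraction of a subsequence with constant class $\D_j$, and continuity of $\mu$ then gives $\mu(p)\in\overline{\D_j}$, i.e.\ $\D_{i_s}\leq\D_j$; maximality forces $\D_j=\D_{i_s}$. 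On the other hand, for any $\D_j$ distinct from each $\D_{i_s}$, non-maximality supplies a maximal $\D_{i_t}>\D_j$; since $\D_{i_t}\cap\overline{\D_j}=\emptyset$ and $\mu^{-1}(\D_{i_t})\neq\emptyset$, the closed analytic subset $\mu^{-1}(\overline{\D_j})\subset M$ is proper, and in particular has complex codimension at least one.

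The next step is to show $r=1$, which yields the candidate $\D_M\coloneqq\D_{i_1}$. Set $Z\coloneqq\bigcup\mu^{-1}(\overline{\D_j})$, taken over those $\D_j$ distinct from each $\D_{i_s}$; this is a finite union of proper closed analytic subsets of $M$, so $Z$ has complex codimension at least one and hence real codimension at least two. The open subset $M\setminus Z=\bigsqcup_{s=1}^{r}\mu^{-1}(\D_{i_s})$ is therefore connected, as the complement of a real-codimension-two closed subset in the connected complex manifold $M$. Being a disjoint union of $r$ non-empty open sets, connectedness forces $r=1$. The same codimension argument applied to $M\setminus\mu^{-1}(\D_M)\subset\bigcup_{\D_j<\D_M}\mu^{-1}(\overline{\D_j})$ shows that $\mu^{-1}(\D_M)$ is open, dense, and (again by the real-codimension-two principle) connected. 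As an open subset of a complex manifold it is locally path-connected, and a connected, locally path-connected space is path-connected.

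Property (ii) is immediate from the construction, and property (iii) follows because $\mu^{-1}(\D_M)$ dense in $M$ together with continuity of $\mu$ implies that $\D_M\cap\mu(M)=\mu(\mu^{-1}(\D_M))$ is dense in $\mu(M)$. Uniqueness from (i) is clear, since two distinct decomposition classes have disjoint preimages while two open dense subsets of the non-empty space $M$ must meet. Uniqueness from (ii) is the $r=1$ step already proved. For uniqueness from (iii), suppose $\D\neq\D'$ both satisfy $\overline{\D\cap\mu(M)}=\overline{\mu(M)}=\overline{\D'\cap\mu(M)}$, so in particular $\mu(M)\subset\overline{\D}\cap\overline{\D'}$. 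Since $\D\neq\D'$, at least one of $\D\not\leq\D'$ or $\D'\not\leq\D$ holds; in the former case $\D\cap\overline{\D'}=\emptyset$, yielding $\D\cap\mu(M)=\emptyset$ and contradicting density in the non-empty set $\mu(M)$, and the latter case is symmetric.

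The principal obstacle, compared to the algebraic case, is precisely this replacement of irreducibility by connectedness; the entire argument hinges on the real-codimension-two principle, which is what rules out the possibility of multiple incomparable maximal decomposition classes meeting $\mu(M)$.
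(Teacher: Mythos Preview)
Your proof is correct and follows essentially the same strategy as the paper's: both arguments hinge on the fact that the complement of a proper closed analytic subset of a connected complex manifold is open, dense, and path-connected (the paper cites Whitney for this; you phrase it as the real-codimension-two principle). The only organizational difference is that the paper first argues by contradiction that some $\D$ satisfies $M=\mu^{-1}(\overline{\D})$ and then shows it is the unique maximal class, whereas you start with all maximal classes and use connectedness of $M\setminus Z$ to force $r=1$; these are equivalent. One small point: your inclusion $M\setminus\mu^{-1}(\D_M)\subset\bigcup_{\D_j<\D_M}\mu^{-1}(\overline{\D_j})$ is in fact an equality (since $\D_M\cap\overline{\D_j}=\emptyset$ for $\D_j<\D_M$), which is what makes the connectedness of $\mu^{-1}(\D_M)$ immediate rather than just that of a subset.
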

	\begin{proof}
		Write $\D_1,...,\D_k\subset\g$ for the decomposition classes intersecting $\mu(M)$. As in the proof of Proposition \ref{Proposition: Characterizations},
		\[ M=\bigcup_{j=1}^k \mu^{-1}(\ol{\D_j}),\] and $\mu^{-1}(\D_j)\ne \emptyset$ for $j\in\{1,\ldots,k\}$. Each closure $\ol{\D_j}$ is a subvariety of $\g$, and hence a complex analytic subset. It follows that $\mu^{-1}(\ol{\D_j})$ is a complex analytic subset of $M$ for all $j\in\{1,\ldots,k\}$. On the other hand, the complement of a proper complex analytic subset of $M$ is open, dense, and path-connected \cite[Lemma 21]{whitney:72}. If $\mu^{-1}(\ol{\D_j})\subsetneqq M$ for all $j\in\{1,\ldots,k\}$, the previous sentence implies that $\cap_j (M\backslash\mu^{-1}(\ol{\D_j}))=M\backslash (\cup_j \mu^{-1}(\ol{\D_j}))=\emptyset$ is open and dense in $M$. We deduce the existence of $\D \in \{\D_1,...,\D_k\}$, maximal with respect to the closure order, such that $M=\mu^{-1}(\ol{\D})$. Without loss of generality, we may suppose that $\D=\D_1$. If $\D'=\D_j$ is not comparable to $\D$ in the closure order, then $\D'\cap \ol{\D}=\emptyset$, $\mu^{-1}(\D')\ne \emptyset$, and $M=\mu^{-1}(\ol{\D})$; this is a contradiction. It follows that $\D$ is the unique decomposition class satisfying (ii).
		
		We now prove that $\D$ is the unique decomposition class satisfying (i). We first note that for $j>1$, $\D$ is not below $\D_j$ in the closure order. It follows that $\ol{\D_j}$ does not contain $\D$ for any $j>1$. Since $\ol{\D_j}$ is a union of decomposition classes, $\ol{\D_j}\cap \D=\emptyset$ for all $j>1$. We deduce that $\mu^{-1}(\ol{\D_j})$ is a proper analytic subset of $M$ for all $j>1$. This implies that
		\[ X\coloneqq\bigcup_{j=2}^k \mu^{-1}(\ol{\D_j}) \]
		is a proper analytic subset of $M$. The complement 
		\[ M\backslash X=\mu^{-1}(\D) \] 
		is therefore open, dense, and path connected in $M$. This combines with an argument analogous to one given in the proof of Proposition \ref{Proposition: Characterizations} to imply that $\D$ is the unique decomposition class satisfying (i). 
		
		Our final task is to prove that $\D$ is the unique decomposition class satisfying (iii). A first step is to show that $\mu(M)\cap \D$ is dense in $\ol{\mu(M)}$. Let $U$ be an open neighborhood of a point $p \in \ol{\mu(M)}$. Since $U$ is open, $U\cap \mu(M)\ne \emptyset$. It follows that $\mu^{-1}(U)$ is open and non-empty. Note that $\mu^{-1}(U)\cap \mu^{-1}(\D)\neq\emptyset$, as $\mu^{-1}(\D)$ is dense in $M$. Hence $\mu(\mu^{-1}(U)\cap \mu^{-1}(\D))=\mu(M)\cap U\cap \D$ is non-empty, as required. If $\D'$ is another decomposition class such that $\mu(M)\cap \D'$ is dense in $\ol{\mu(M)}$, then $\D\cap \ol{\D'}\supset \mu(M)\cap \D \ne \emptyset$. This implies that $\D\subset \ol{\D'}$, and likewise $\ol{\D}\cap \D'\supset\mu(M)\ne\emptyset$. We conclude that $\ol{\D}\supset \D'$, and so $\D=\D'$.
	\end{proof}
	
	\begin{definition}\label{Definition: Principal decomposition class}
		Let $M$ be a non-empty, irreducible Hamiltonian $G$-variety (resp. non-empty, connected holomorphic Hamiltonian $G$-space). We define the \textit{principal decomposition class} of $M$ to be the decomposition class $\mathcal{D}_M\subset\g$ referenced in Proposition \ref{Proposition: Characterizations} (resp. Proposition \ref{Proposition: Holomorphic characterizations}).
	\end{definition}
	
	\begin{remark}[Consistency between the algebraic and holomorphic definitions]
		Suppose that $M$ is a non-empty, irreducible Hamiltonian $G$-variety. Definition \ref{Definition: Principal decomposition class} yields a principal decomposition class $\D_M\subset\g$. On the other hand, we may regard $M$ as a connected holomorphic Hamiltonian $G$-space. As such, $M$ has a principal decomposition class. Proposition \ref{Proposition: Characterizations}(ii) and Proposition \ref{Proposition: Holomorphic characterizations}(ii) tell us that this principal decomposition class is $\D_M$.
	\end{remark}
	
	\begin{example}\label{Example: Weyl group}
		Choose a maximal torus $T\subset G$, and let $W\coloneqq N_G(T)/T$ denote the associated Weyl group. Suppose that $\O\subset\g^*$ is the coadjoint orbit of a regular semisimple element of $\g=\g^*$. The inclusion $\O\longrightarrow \g^*=\g$ is a moment map for the Hamiltonian $G$-action on $\O$. The only decomposition class $\D$ intersecting $\mu(\O)$ is $\g_{\reg}\cap\g_{\mathsf{semi}}$. In other words, $\D_{\O}=\g_{\reg}\cap\g_{\mathsf{semi}}$. Note that $\g_{\reg}\cap\g_{\mathsf{semi}}=G({\t\cap\g_{\reg}})$, where $\t\subset\g$ is the Lie algebra of $T$. We also note that $\mu^{-1}(\t\cap\g_{\reg})=\mathcal{O}\cap(\t\cap\g_{\reg})$ consists of $\vert W\vert$-many points.  
	\end{example}
	
	\begin{example}
		Recall from Example \ref{Example: Regss} that $\greg\cap\g_{\mathsf{semi}}$ is the unique maximal decomposition class in $\g$. It turns out to arise as the principal decomposition class in many examples. To this end, let $M$ be a non-empty, connected holomorphic Hamiltonian $G$-space with moment map $\mu:M\longrightarrow\g$. Suppose that $\mathrm{d}\mu_p:T_pM\longrightarrow\g$ is surjective for some $p\in M$. This implies that $\mu(M)$ contains a non-empty open subset of $\g$. Since $\greg\cap\g_{\mathsf{semi}}$ is dense for this topology, we must have $\mu(M)\cap(\greg\cap\g_{\mathsf{semi}})\neq\emptyset$. It follows that $\D_M=\greg\cap\g_{\mathsf{semi}}$.
	\end{example}
	
	\begin{example}[Arbitrary decomposition classes]
		Let $\D\subset\g$ be a decomposition class. Recall the holomorphic Hamiltonian $G$-space $M$ defined in the proof of Proposition \ref{Proposition: Canonical}. This proposition implies that $\D_{M}=\D$. In particular, every decomposition class is canonically realizable as a principal decomposition class.
	\end{example}
	
	\begin{example}[Springer resolutions and Richardson orbits]
		Let $P\subset G$ be a parabolic subgroup with Lie algebra $\p$. Consider the nilpotent radical $\mathfrak{u}(\p)\subset\p$. There exists a unique nilpotent orbit $\O_{P}\subset\g$ satisfying $G\mathfrak{u}(\p)=\overline{\O_{P}}$, called the \textit{Richardson orbit} associated to $P$. On the other hand, the action of $G$ on $G/P$ lifts to a Hamiltonian $G$-action on $T^*(G/P)$. The moment map $T^*(G/P)\longrightarrow\g$ is called the \textit{Grothendieck--Springer resolution} associated to $P$; its image turns out to be $\overline{\O_{P}}$. In light of Example \ref{Example: Nilpotent orbits}, we conclude that $\D_{T^*(G/P)}=\z(\g)+\O_P$. If $G$ is semisimple, then $\D_{T^*(G/P)}=\O_P$.  
	\end{example}

	\def\N{\ensuremath{\mathcal{N}}}
	\def\R{\ensuremath{\mathcal{R}}}
	\def\E{\ensuremath{\mathcal{E}}}
	\newcommand{\pair}[2]{\langle #1, #2 \rangle}
	\newcommand{\tn}[1]{\textnormal{#1}}
	\def\bR{\ensuremath{\mathbb{R}}}
	\def\bC{\ensuremath{\mathbb{C}}}
	\def\n{\ensuremath{\mathfrak{n}}}
	
	\section{The residual action of an almost-abelian group $A(x)$}\label{Section: Residual}
	We continue to assume that $G$ is a connected complex reductive algebraic group with Lie algebra $\g$. Given $x\in \g$, we define a finite extension $A(x)$ of a torus. This extension is shown to be the smallest quotient of $G_x$ with the following property: if $M$ is any holomorphic Hamiltonian $G$-space with moment map $\mu:M\longrightarrow\g$, then the $G_x$-action on $\mu^{-1}(x)$ descends to an action of $A(x)$.
	
	\subsection{Subquotients of $G$ associated to elements of $\g$}\label{Subsection: Subquotients}
	Fix $x\in\g$. The $G$-stabilizer of $x_{\semi}\in\g$ is the reductive, Levi subgroup $$L(x)\coloneqq G_{x_{\semi}}.$$ It follows that the derived subgroup $$L(x)'\coloneqq[L(x),L(x)]=[G_{x_{\semi}},G_{x_{\semi}}]$$ is semisimple. We also note that $$T(x)\coloneqq L(x)/L(x)'=G_{x_{\semi}}/[G_{x_{\semi}},G_{x_{\semi}}]$$ is a torus of rank equal to $\rank(G)-\rank(L(x)')$. On the other hand, the Lie algebra $[\g_{x_{\semi}},\g_{x_{\semi}}]$ of $L(x)'$ contains $x_{\nilp}$. This fact allows one to consider the $L(x)'$-stabilizer of $x_{\nilp}$, i.e. the group $$N(x)\coloneqq [G_{x_{\semi}},G_{x_{\semi}}]_{x_{\nilp}}.$$ Let us write $\mathfrak{l}(x)$, $\mathfrak{l}(x)'$, $\mathfrak{t}(x)$, and $\mathfrak{n}(x)$ for the Lie algebras of $L(x)$, $L(x)'$, $T(x)$, and $N(x)$, respectively. It follows that
	$\mathfrak{l}(x)=\g_{x_{\semi}}$, $\mathfrak{l}(x)'=[\g_{x_{\semi}},\g_{x_{\semi}}]$, $\mathfrak{t}(x)=\g_{x_{\semi}}/[\g_{x_{\semi}},\g_{x_{\semi}}]$, and $\mathfrak{n}(x)=[\g_{x_{\semi}},\g_{x_{\semi}}]_{x_{\nilp}}$.
	
	\begin{remark}\label{Remark: Specializations}
		Some of the above-defined groups admit simpler descriptions in certain cases. We have $L(x)=G_x$ if $x\in\g$ is semisimple. If $G$ is semisimple and $x\in\g$ is nilpotent, then $L(x)=G=L(x)'$, $T(x)=\{e\}$, and $N(x)=G_x$.
	\end{remark}
	
	\begin{example} It is possible for $G_x$ to be connected and $N(x)$ to be disconnected. To this end, suppose that $G=\operatorname{PSL}_3$. Let $$s=\begin{bmatrix} 1 & 0 & 0\\ 0 & 1 & 0 \\ 0 & 0 & -2
		\end{bmatrix}\in\mathfrak{sl}_3=\g.$$ Observe that $[G_s,G_s]$ is the image of the algebraic group embedding
		$$\operatorname{SL_2}\longrightarrow\mathrm{PSL}_3,\quad A\mapsto \left[\begin{bmatrix} A & 0 \\ 0 & 1\end{bmatrix}\right].$$ At the same time, choose a nilpotent element $n\in\mathfrak{sl}_3$ satisfying $[s,n]=0$ and $x\coloneqq s+n\in(\mathfrak{sl}_3)_{\text{reg}}$; an example is $n=E_{12}$. We have $x_{\semi}=s$ and $x_{\nilp}=n$. Note that $n$ is regular in $[\g_s,\g_s]\cong\mathfrak{sl}_2$. We also note that the $\operatorname{SL}_2$-stabilizer of a regular nilpotent element in $\mathfrak{sl}_2$ is disconnected. These considerations imply that $N(x)$ is disconnected. On the other hand, observe that $\operatorname{PSL}_3$ is the adjoint group of $\mathfrak{sl}_3$. It follows that the $\operatorname{PSL}_3$-stabilizer of a regular element in $\mathfrak{sl}_3$ is connected \cite[Proposition 14]{kos:63}. In particular, $G_x$ is connected.
	\end{example}
	
	\subsection{The almost-abelian group $A(x)$}\label{Subsection: Almost abelian} 
	As in Section \ref{Subsection: Subquotients}, we fix $x\in\g$. Let us also consider the identity component $N(x)^{\circ}$ of $N(x)$. It is clear that we have inclusions $N(x)^{\circ}\subset N(x)\subset G_x\subset L(x)$. This gives context for the following result.
	
	\begin{lemma}
		If $x\in\g$, then the subgroup $N(x)^{\circ}\subset G_x$ is normal.
	\end{lemma}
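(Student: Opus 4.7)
The plan is to show the slightly stronger statement that $N(x)$ itself is a normal subgroup of $G_x$, and then deduce normality of $N(x)^{\circ}$ as an immediate consequence of its characteristic nature inside $N(x)$.

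First, I would record the inclusion $N(x)\subset G_x$: by definition $N(x)\subset L(x)'\subset L(x)=G_{x_{\semi}}$, and the elements of $N(x)$ fix $x_{\nilp}$ under the adjoint action, so $N(x)\subset G_{x_{\semi}}\cap G_{x_{\nilp}}=G_x$ by the identities recalled in Section~\ref{Subsection: Complex reductive algebraic groups}. Next, take $g\in G_x$ and $h\in N(x)$. Since $G_x\subset G_{x_{\semi}}=L(x)$, conjugation by $g$ is an algebraic group automorphism of $L(x)$, hence preserves the characteristic subgroup $L(x)'=[L(x),L(x)]$; in particular $ghg^{-1}\in L(x)'$. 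Moreover $g\in G_{x_{\nilp}}$ and $h\in G_{x_{\nilp}}$, so $ghg^{-1}\in G_{x_{\nilp}}$ as well. Combining these two containments gives $ghg^{-1}\in L(x)'\cap G_{x_{\nilp}}=[G_{x_{\semi}},G_{x_{\semi}}]_{x_{\nilp}}=N(x)$, establishing that $N(x)\trianglelefteq G_x$.

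Finally, for any $g\in G_x$ the conjugation map $c_g:N(x)\longrightarrow N(x)$, $h\mapsto ghg^{-1}$, is an algebraic group automorphism, and any such automorphism of an algebraic group permutes its connected components and in particular maps the identity component to itself. Thus $c_g\bigl(N(x)^{\circ}\bigr)=N(x)^{\circ}$ for all $g\in G_x$, which is exactly the claim that $N(x)^{\circ}$ is normal in $G_x$. There is no real obstacle here; the only content is the observation that an element of $G_x$ automatically lies in $G_{x_{\semi}}$ and therefore normalizes the derived subgroup $L(x)'$, which is what lets us confine $ghg^{-1}$ to $N(x)$.
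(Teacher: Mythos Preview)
Your proof is correct. The paper takes a slightly different route: it observes that $G_x=(G_{x_\semi})_{x_\nilp}=Z(G_{x_\semi})N(x)$, and since elements of the center $Z(G_{x_\semi})$ commute with everything in $L(x)'\supset N(x)$, conjugation of $N(x)$ by any element of $G_x$ reduces to conjugation by an element of $N(x)$ itself; normality of $N(x)$ (and hence of its identity component) is then immediate. Your argument instead writes $N(x)=L(x)'\cap G_{x_\nilp}$ as an intersection of two subgroups each normalized by $G_x$---the first because the derived subgroup is characteristic in $L(x)\supset G_x$, the second because $G_x\subset G_{x_\nilp}$. Both arguments are equally short; the paper's decomposition $G_x=Z(G_{x_\semi})N(x)$ is reused in the proofs of the structural Propositions~\ref{Proposition: Strange morphism} and~\ref{Proposition: Sequence}, which is presumably why it is invoked here, whereas your intersection argument is self-contained and avoids appealing to that decomposition.
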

	
	\begin{proof}
		This follows immediately from the fact that $G_x=(G_{x_{\semi}})_{x_{\nilp}}=Z(G_{x_{\semi}})N(x)$.
	\end{proof}
	
	The quotient group
	$$A(x)\coloneqq G_x/N(x)^{\circ}=G_x/[G_{x_{\semi}},G_{x_{\semi}}]_{x_{\nilp}}^{\circ}$$ and component group 
	$$C(x)\coloneqq\pi_0(N(x))=N(x)/N(x)^{\circ}=[G_{x_{\semi}},G_{x_{\semi}}]_{x_{\nilp}}/[G_{x_{\semi}},G_{x_{\semi}}]_{x_{\nilp}}^{\circ}$$ feature prominently in what follows. As one might expect, we write $\a(x)$ for the Lie algebra of $A(x)$. Note that $\a(x)=\g_x/[\g_{x_{\semi}},\g_{x_{\semi}}]_{x_{\nilp}}$.
	
	\begin{remark}
		If $x\in\g$ is semisimple, then $A(x)=T(x)$ and $C(x)=\{e\}$. If $G$ is semisimple and $x\in\g$ is nilpotent, then $A(x)=C(x)$ and $T(x)=\{e\}$. The groups $A(x)$ and $C(x)$ are more subtle when one no longer requires $x$ to be semisimple or nilpotent; in Proposition \ref{Proposition: Sequence}, we show $A(x)$ to be a finite extension of $T(x)$ by $C(x)$. 
	\end{remark}
	
	We now describe the group $A(x)$ in two ways. For the first, let $Z(L(x))$ and $Z(L(x)')$ denote the centers of $L(x)$ and $L(x)'$, respectively. Note that $Z(L(x)')$ acts on $Z(L(x))\times C(x)$ by
	$$g\cdot(h,[k])\coloneqq (hg^{-1},[gk]),\quad g\in Z(L(x)'),\text{ }(h,[k])\in Z(L(x))\times C(x).$$ Write $Z(L(x))\times_{Z(L(x)')}C(x)$ for the quotient by this $Z(L(x)')$-action. Observe that $Z(L(x))\times_{Z(L(x)')}C(x)$ is the quotient of $Z(L(x))\times C(x)$ by the normal subgroup
	\begin{equation}\label{Equation: Kernel}\{(g,[g^{-1}]):g\in Z(L(x)')\}\cong Z(L(x)').\end{equation} In particular, $Z(L(x))\times_{Z(L(x)')}C(x)$ is an algebraic group. Our first description of $A(x)$ is the following.
	
	\begin{proposition}\label{Proposition: Strange morphism}
		Suppose that $x\in\g$. The map \begin{equation}\label{Equation: Map}Z(L(x))\times C(x)\longrightarrow A(x),\quad (g,[h])\mapsto [gh]\end{equation} descends to an algebraic group isomorphism $$Z(L(x))\times_{Z(L(x)')}C(x)\overset{\cong}\longrightarrow A(x).$$
	\end{proposition}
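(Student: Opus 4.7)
The plan is to realize $A(x)$ as an explicit quotient of $Z(L(x)) \times N(x)$, and then to track through what the quotient becomes after modding out by $N(x)^\circ$ in the second factor.

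First I would verify that $(g,h) \mapsto gh$ defines an algebraic group homomorphism $Z(L(x)) \times N(x) \longrightarrow G_x$. That this lands in $G_x$ is clear: elements of $Z(L(x))$ are central in $L(x)$ and hence (as already noted in Section~\ref{Subsection: Subquotients}) fix $x_\nilp$ under conjugation, so $Z(L(x)) \subset L(x)_{x_\nilp}=G_x$, while $N(x) \subset G_x$ by definition. Multiplicativity follows because $Z(L(x))$ is central in $L(x) \supset N(x)$, so the two factors commute. Surjectivity follows from the identity $G_x = Z(L(x))\cdot N(x)$ established in the proof of the preceding lemma. Composing with the quotient $\pi: G_x \twoheadrightarrow A(x)$ therefore gives a surjective algebraic group homomorphism
\[
\Phi : Z(L(x)) \times N(x) \longrightarrow A(x),\qquad (g,h)\mapsto [gh].
\]
Since $\Phi(\{e\}\times N(x)^\circ) = \{[e]\}$, the map $\Phi$ descends to a surjective algebraic group homomorphism $\varphi : Z(L(x)) \times C(x) \longrightarrow A(x)$, which is precisely the descent of the map displayed in \eqref{Equation: Map}.

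Next I would compute $\ker\varphi$. An element $(g,[h])$ lies in $\ker\varphi$ exactly when $gh \in N(x)^\circ$, i.e.\ $g = n h^{-1}$ for some $n\in N(x)^\circ$. Equivalently, $g \in Z(L(x))\cap N(x)$ and $[h] = [g^{-1}]$ in $C(x)$. Thus $\ker\varphi = \{(g,[g^{-1}]) : g\in Z(L(x))\cap N(x)\}$, which matches \eqref{Equation: Kernel} once we identify $Z(L(x))\cap N(x)$ with $Z(L(x)')$. For this identification, I would argue in two steps. Since $Z(L(x))$ is central in the connected reductive group $L(x)$, its adjoint action on $\mathfrak{l}(x)$ is trivial, so $Z(L(x)) \subset L(x)_{x_\nilp}$; hence $Z(L(x))\cap N(x) = Z(L(x))\cap L(x)'\cap L(x)_{x_\nilp} = Z(L(x))\cap L(x)'$. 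Then I would use the standard fact that for a connected reductive group $H$, one has $Z(H) \cap [H,H] = Z([H,H])$: indeed, $z \in [H,H]$ is central in $H = Z(H)^\circ\cdot [H,H]$ if and only if it commutes with $[H,H]$. Applied to $H = L(x)$, this gives $Z(L(x)) \cap N(x) = Z(L(x)')$, as required.

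Combining these steps, $\varphi$ is a surjective algebraic group homomorphism with kernel the finite subgroup \eqref{Equation: Kernel}, so it descends to an algebraic group isomorphism
\[
Z(L(x)) \times_{Z(L(x)')} C(x) \xrightarrow{\;\cong\;} A(x),
\]
which is the claim. The main technical point — and really the only substantive one — will be the identification $Z(L(x)) \cap N(x) = Z(L(x)')$; everything else is a diagram chase. Care is needed there to distinguish the group-theoretic stabilizer of $x_\nilp$ in $L(x)'$ from its Lie-algebraic counterpart, and to invoke connectedness of $L(x)$ (standing hypothesis on the Levi subgroup in Section~\ref{Subsection: Subquotients}) to ensure the adjoint triviality of $Z(L(x))$ on $\mathfrak{l}(x)$ and the identity $Z(H) \cap [H,H] = Z([H,H])$ for $H = L(x)$.
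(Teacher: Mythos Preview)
Your proof is correct and follows essentially the same route as the paper's: both establish that \eqref{Equation: Map} is a surjective group homomorphism via the commutativity of $Z(L(x))$ with $N(x)$ and the decomposition $G_x=Z(L(x))N(x)$, then compute the kernel to be \eqref{Equation: Kernel}. Your version is more detailed in that you actually justify the identity $Z(L(x))\cap N(x)=Z(L(x)')$, which the paper simply asserts.
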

	
	\begin{proof}
		Since elements of $Z(L(x))$ and $N(x)$ commute with one another, it is clear that \eqref{Equation: Map} is an algebraic group morphism. This morphism is surjective, as $G_x=Z(L(x))N(x)$. It therefore suffices to establish that \eqref{Equation: Kernel} is the kernel of \eqref{Equation: Map}. To this end, suppose that $g\in Z(L(x))$ and $h\in N(x)$. We have $gh\in N(x)^{\circ}$ only if $g\in Z(L(x))\cap N(x)=Z(L(x)')$. It follows that $gh\in N(x)^{\circ}$ if and only if $g\in Z(L(x)')$ and $[h]=[g^{-1}]\in C(x)$. This establishes that \eqref{Equation: Kernel} is the kernel of \eqref{Equation: Map}.
	\end{proof}
	
	Our second description of $A(x)$ is as the following finite extension of $T(x)$.
	
	\begin{proposition}\label{Proposition: Sequence}
		If $x\in\g$, then there is a canonical short exact sequence $$\{e\}\longrightarrow C(x)\longrightarrow A(x)\longrightarrow T(x)\longrightarrow\{e\}$$ of algebraic groups.
	\end{proposition}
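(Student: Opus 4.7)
The plan is to construct the projection $A(x) \to T(x)$ by descending the natural composition
$$\iota : G_x \hookrightarrow L(x) \twoheadrightarrow L(x)/L(x)' = T(x),$$
and then to identify its kernel with $C(x)$ by an elementary intersection calculation. Both maps in the sequence will be canonical, since they arise solely from the structural inclusions among $N(x)^\circ$, $N(x)$, $G_x$, and $L(x)$.

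First I would verify that $\iota$ factors through the quotient $A(x) = G_x/N(x)^\circ$. By its very definition,
$$N(x) = [G_{x_\semi},G_{x_\semi}]_{x_\nilp} \subseteq [G_{x_\semi},G_{x_\semi}] = L(x)' = \ker\bigl(L(x) \to T(x)\bigr),$$
so $N(x)^\circ \subseteq \ker\iota$, and $\iota$ descends to an algebraic group morphism $\bar\iota : A(x) \to T(x)$.

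Next I would compute $\ker\bar\iota$. The kernel of $\iota$ itself equals $G_x \cap L(x)'$. Using the identity $G_x = L(x)_{x_\nilp}$, one has
$$G_x \cap L(x)' = L(x)_{x_\nilp} \cap L(x)' = L(x)'_{x_\nilp} = N(x),$$
again by the definition of $N(x)$. Quotienting by $N(x)^\circ$ yields $\ker\bar\iota = N(x)/N(x)^\circ = C(x)$, with the induced map $C(x) \hookrightarrow A(x)$ being the one coming from the inclusion $N(x) \hookrightarrow G_x$.

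For surjectivity, I would invoke the decomposition $G_x = Z(L(x)) \cdot N(x)$ used in the proof of Proposition \ref{Proposition: Strange morphism}: since $N(x) \subseteq L(x)'$ lies in $\ker\iota$, the image of $\bar\iota$ coincides with the image of $Z(L(x))$ in $T(x) = L(x)/L(x)'$. Since $L(x) = G_{x_\semi}$ is connected reductive in characteristic zero, the standard structural decomposition $L(x) = Z(L(x))^\circ \cdot L(x)'$ shows that $Z(L(x))^\circ$ already surjects onto $T(x)$, so $\bar\iota$ is surjective. I do not anticipate any substantive obstacle: the whole argument is essentially the third isomorphism theorem applied to the chain $N(x)^\circ \subseteq N(x) \subseteq G_x \subseteq L(x)$, once the two inputs $G_x = Z(L(x)) \cdot N(x)$ and $L(x) = Z(L(x))^\circ \cdot L(x)'$ are in hand; if anything, the mildest point is confirming that $L(x)$ is connected reductive, so that the second of these decompositions applies directly.
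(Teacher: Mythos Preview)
Your proof is correct and essentially follows the paper's approach. Both arguments identify the kernel of $A(x)\to T(x)$ with $C(x)$ via the chain $N(x)^\circ\subset N(x)\subset G_x$, and both invoke $G_x=Z(L(x))N(x)$ for surjectivity; the only cosmetic difference is that the paper factors through the intermediate quotient $Z(L(x))/Z(L(x)')$ to identify $G_x/N(x)\cong T(x)$, whereas you compute $\ker\iota=G_x\cap L(x)'=N(x)$ directly and appeal to $L(x)=Z(L(x))^\circ\cdot L(x)'$.
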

	
	\begin{proof}
		The surjective quotient morphism
		$$A(x)=G_x/N(x)^{\circ}\longrightarrow G_x/N(x)$$ has $N(x)/N(x)^{\circ}=C(x)$ as its kernel. It therefore remains only to construct a canonical algebraic group isomorphism $$G_x/N(x)\overset{\cong}\longrightarrow T(x).$$
		
		Observe that $G_x=Z(L(x))N(x)$ and $Z(L(x))\cap N(x)=Z(L(x)')$. It follows that the inclusion $Z(L(x))\subset G_x$ descends to an algebraic group isomorphism $$Z(L(x))/Z(L(x)')\overset{\cong}\longrightarrow G_x/N(x).$$ At the same time, the inclusion $Z(L(x))\subset L(x)$ descends to an algebraic group isomorphism $$Z(L(x))/Z(L(x)')\overset{\cong}\longrightarrow T(x).$$ These last two sentences complete the proof.
	\end{proof}
	
	\begin{corollary}
		If $x\in\g$, then the inclusion $\g_x\subset\g_{x_{\semi}}$ descends to a Lie algebra isomorphism $$\a(x)=\g_x/[\g_{x_{\semi}},\g_{x_{\semi}}]_{x_{\nilp}}\overset{\cong}\longrightarrow\g_{x_{\semi}}/[\g_{x_{\semi}},\g_{x_{\semi}}]=\mathfrak{t}(x).$$
	\end{corollary}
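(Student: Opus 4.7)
My plan is to obtain the corollary by applying the Lie functor to the short exact sequence of algebraic groups from Proposition \ref{Proposition: Sequence}. The key observation is that the component group $C(x)=\pi_0(N(x))$ is a finite algebraic group, so its Lie algebra vanishes. Differentiating the exact sequence $\{e\}\to C(x)\to A(x)\to T(x)\to\{e\}$ therefore yields an isomorphism of Lie algebras $\a(x)\overset{\cong}\longrightarrow \mathfrak{t}(x)$. It only remains to identify the Lie algebras on both sides with the spaces appearing in the corollary, and to verify that the resulting isomorphism is induced by the inclusion $\g_x\subset\g_{x_\semi}$.

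For the identification, the Lie algebra of $A(x)=G_x/N(x)^\circ$ is $\g_x/\n(x)=\g_x/[\g_{x_\semi},\g_{x_\semi}]_{x_\nilp}$ because $N(x)^\circ$ and $N(x)$ share Lie algebra $\n(x)$; similarly the Lie algebra of $T(x)=L(x)/L(x)'$ is $\g_{x_\semi}/[\g_{x_\semi},\g_{x_\semi}]$. To check that the differential is the expected map, I would trace the construction in the proof of Proposition \ref{Proposition: Sequence}: composing the quotient $A(x)\to G_x/N(x)$ with the isomorphism $G_x/N(x)\overset{\cong}\longrightarrow T(x)$ realizes the group-level map as the one induced by $G_x\hookrightarrow L(x)\twoheadrightarrow L(x)/L(x)'$. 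Functoriality of the Lie functor then identifies its differential with the map induced by $\g_x\hookrightarrow\g_{x_\semi}\twoheadrightarrow\g_{x_\semi}/[\g_{x_\semi},\g_{x_\semi}]$, as required.

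As a sanity check (and as a self-contained alternative) I would verify surjectivity and compute the kernel of this induced map directly. Surjectivity follows from the reductive decomposition $\g_{x_\semi}=\z(\g_{x_\semi})\oplus[\g_{x_\semi},\g_{x_\semi}]$ together with the observation that $\z(\g_{x_\semi})\subset\g_x$, since every element of $\z(\g_{x_\semi})$ commutes with $x_\nilp\in\g_{x_\semi}$. For the kernel, using $\g_x=(\g_{x_\semi})_{x_\nilp}$ gives
\[\g_x\cap [\g_{x_\semi},\g_{x_\semi}]=(\g_{x_\semi})_{x_\nilp}\cap [\g_{x_\semi},\g_{x_\semi}]=[\g_{x_\semi},\g_{x_\semi}]_{x_\nilp},\]
where the last equality uses $[\g_{x_\semi},\g_{x_\semi}]\subset \g_{x_\semi}$. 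There is no real obstacle here; the only point that requires a small amount of care is checking that the differential of the group isomorphism really is the one induced by $\g_x\subset\g_{x_\semi}$, but this follows immediately from the explicit description of the composition $A(x)\to T(x)$ given in the proof of Proposition \ref{Proposition: Sequence}.
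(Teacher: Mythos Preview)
Your proposal is correct and follows essentially the same approach as the paper: the paper's proof is the single sentence ``This follows from Proposition \ref{Proposition: Sequence} and the fact that $C(x)$ is finite,'' which is precisely your strategy of applying the Lie functor to the short exact sequence and using that $C(x)$ has trivial Lie algebra. Your additional care in identifying the differential with the map induced by $\g_x\subset\g_{x_\semi}$, and your direct kernel/surjectivity computation, simply make explicit what the paper leaves implicit.
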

	
	\begin{proof}
		This follows from Proposition \ref{Proposition: Sequence} and the fact that $C(x)$ is finite.
	\end{proof}
	
	\begin{corollary}\label{Corollary: Simple}
		If $x\in\g$, then $A(x)$ is abelian if and only if $C(x)$ is abelian.
	\end{corollary}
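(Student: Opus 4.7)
The plan is to deduce both implications from the two structural results for $A(x)$ established in Propositions \ref{Proposition: Strange morphism} and \ref{Proposition: Sequence}, rather than trying to argue from scratch.

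For the ``only if'' direction, I would invoke Proposition \ref{Proposition: Sequence}, which exhibits $C(x)$ as the kernel of the quotient morphism $A(x)\longrightarrow T(x)$, and hence as a subgroup of $A(x)$. If $A(x)$ is abelian, then every subgroup is abelian, so $C(x)$ is abelian. This direction is immediate.

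For the ``if'' direction, I would invoke Proposition \ref{Proposition: Strange morphism}, which identifies $A(x)$ with the quotient $Z(L(x))\times_{Z(L(x)')}C(x)$ of $Z(L(x))\times C(x)$ by the normal subgroup $\{(g,[g^{-1}]):g\in Z(L(x)')\}$. Since $Z(L(x))$ is the center of the Levi subgroup $L(x)$, it is abelian; combined with the hypothesis that $C(x)$ is abelian, this shows that the direct product $Z(L(x))\times C(x)$ is abelian. Any quotient of an abelian group is abelian, so $A(x)$ is abelian.

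There is no genuine obstacle here—the statement is a formal consequence of the preceding two propositions. The only subtle point worth flagging is why one cannot conclude the ``if'' direction directly from the short exact sequence of Proposition \ref{Proposition: Sequence}: an extension of an abelian group by an abelian group need not be abelian, so the exact sequence alone is insufficient. This is exactly why Proposition \ref{Proposition: Strange morphism} is essential in this direction—it supplies the additional rigidity that the extension is realized as a quotient of a direct product in which one factor is a central (hence abelian) subgroup of $G$.
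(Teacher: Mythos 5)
Your proposal is correct and follows exactly the paper's argument: the forward direction via Proposition \ref{Proposition: Sequence} (realizing $C(x)$ as a subgroup of $A(x)$) and the backward direction via Proposition \ref{Proposition: Strange morphism} (realizing $A(x)$ as a quotient of the abelian group $Z(L(x))\times C(x)$). Your remark on why the exact sequence alone does not suffice for the backward direction is a nice clarification, but the argument is otherwise the same as the paper's.
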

	
	\begin{proof}
		The forward implication follows from Proposition \ref{Proposition: Sequence}, while the backward implication is a consequence of Proposition \ref{Proposition: Strange morphism}.  
	\end{proof}
	
	\begin{proposition}\label{Proposition: Lie types}
		Suppose that $x\in\g$. If every simple ideal of $[\g_{x_{\semi}},\g_{x_{\semi}}]$ is of Lie type $A$ or $C$, then $A(x)$ is abelian.
	\end{proposition}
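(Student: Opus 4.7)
The plan is to reduce the statement, via Corollary \ref{Corollary: Simple}, to showing that the finite group $C(x)=\pi_0(N(x))$ is abelian, and then to invoke the classical description of component groups of nilpotent centralizers in simple groups of types $A$ and $C$.

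First, recall that $N(x)=L(x)'_{x_\nilp}$, where $L(x)'=[G_{x_\semi},G_{x_\semi}]$ is the connected semisimple group with Lie algebra $[\g_{x_\semi},\g_{x_\semi}]$. Let $\pi:\widetilde{L(x)'}\longrightarrow L(x)'$ be the simply connected cover, a finite central isogeny with kernel $Z\subset Z(\widetilde{L(x)'})$. Since $Z$ is central, it acts trivially on $\widetilde{L(x)'}$ via the adjoint representation, so $Z$ fixes $x_\nilp$ and is therefore contained in $\widetilde{L(x)'}_{x_\nilp}$. It follows that $\pi$ restricts to a surjection $\widetilde{L(x)'}_{x_\nilp}\longrightarrow N(x)$, which in turn induces a surjection of component groups $\pi_0(\widetilde{L(x)'}_{x_\nilp})\twoheadrightarrow C(x)$. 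Hence it suffices to show that $\pi_0(\widetilde{L(x)'}_{x_\nilp})$ is abelian.

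Next, I would use that $\widetilde{L(x)'}$ decomposes canonically as a direct product $\prod_{i}\widetilde{G}_i$, where each $\widetilde{G}_i$ is a simply connected simple group whose Lie algebra is a simple ideal $\g_i\subset[\g_{x_\semi},\g_{x_\semi}]$. Correspondingly, $x_\nilp=\sum_i e_i$ with $e_i\in\g_i$ nilpotent, and the stabilizer and its component group factor as
\[
\widetilde{L(x)'}_{x_\nilp}=\prod_i (\widetilde{G}_i)_{e_i},\qquad \pi_0(\widetilde{L(x)'}_{x_\nilp})=\prod_i\pi_0((\widetilde{G}_i)_{e_i}).
\]
By hypothesis, each $\g_i$ is simple of type $A$ or $C$, so each $\widetilde{G}_i$ is isomorphic to $\mathrm{SL}_n$ or $\mathrm{Sp}_{2n}$ for some $n$.

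The final step invokes the classical classification of component groups of nilpotent centralizers in classical simply connected groups (see, e.g., Collingwood--McGovern): for $\mathrm{SL}_n$, the component group of the centralizer of a nilpotent element is cyclic, while for $\mathrm{Sp}_{2n}$ it is an elementary abelian $2$-group. In either case each $\pi_0((\widetilde{G}_i)_{e_i})$ is abelian, so the product $\pi_0(\widetilde{L(x)'}_{x_\nilp})$ is abelian, so its quotient $C(x)$ is abelian, and by Corollary \ref{Corollary: Simple} so is $A(x)$. The main (minor) obstacle is the bookkeeping in the reduction to the simply connected cover and the product decomposition; the input from the classification of nilpotent centralizers in types $A$ and $C$ is standard.
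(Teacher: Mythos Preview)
Your proof is correct and follows essentially the same approach as the paper: reduce to $C(x)$ abelian via Corollary \ref{Corollary: Simple}, pass to the simply-connected cover of $L(x)'$ to get a surjection on component groups, and then invoke the Collingwood--McGovern classification of component groups of nilpotent centralizers in types $A$ and $C$. The only cosmetic difference is that the paper phrases the last step via $\pi_0\cong\pi_1(\mathcal{O})$ for the simply-connected group and reduces to a single simple factor, whereas you keep the product decomposition explicit and cite the component-group tables directly.
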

	
	\begin{proof}
		In light of Corollary \ref{Corollary: Simple}, it suffices to prove that $C(x)$ is abelian. Let $H$ be the simply-connected semisimple affine algebraic group with Lie algebra $[\g_{x_{\semi}},\g_{x_{\semi}}]$. It follows that $L(x)'=H/S$ for some subgroup $S$ of the finite group $Z(H)$. The quotient map $H\longrightarrow L(x)'$ restricts and descends to a surjective group morphism $$\pi_0(H_{x_{\nilp}})=H_{x_{\nilp}}/H_{x_{\nilp}}^{\circ}\longrightarrow N(x)/N(x)^{\circ}=C(x).$$ It therefore suffices to prove that $\pi_0(H_{x_{\nilp}})$ is abelian. In other words, we may assume that $L(x)'$ is simply-connected. We may also assume $L(x)'$ to be simple.
		
		Consider the nilpotent orbit $\mathcal{O}\coloneqq L(x)'\cdot x_{\nilp}\subset[\g_{x_{\semi}},\g_{x_{\semi}}]_{x_{\nilp}}$. Since $L(x)'$ is connected and simply-connected, we have a group isomorphism $C(x)\cong\pi_1(\mathcal{O})$. We also know that $[\g_{x_{\semi}},\g_{x_{\semi}}]$ is of Lie type $A$ or $C$. This fact combines with \cite[Corollary 6.1.6]{col-mcg:93} to imply that $\pi_1(\mathcal{O})$ is abelian. It follows that $C(x)$ is abelian.
	\end{proof}
	
	\begin{corollary}
		If $G$ is of Lie type $A$, then $A(x)$ is abelian for all $x\in\g$. 
	\end{corollary}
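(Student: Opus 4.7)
The plan is to reduce the corollary immediately to Proposition \ref{Proposition: Lie types}, so the only substantive task is to verify that for $G$ of Lie type $A$, every simple ideal of $[\g_{x_{\semi}},\g_{x_{\semi}}]$ is again of Lie type $A$ for every $x\in\g$. Once this is shown, the conclusion that $A(x)$ is abelian is immediate from that proposition. There is no obstacle in passing from $G$ to $\g$ or in dealing with the center: since $\z(\g)\subset\g_{x_{\semi}}$ and contributes only to the center of $\g_{x_{\semi}}$, it plays no role in $[\g_{x_{\semi}},\g_{x_{\semi}}]$, so I may pass to the semisimple quotient $[\g,\g]$ throughout.

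First, I would unwind what ``Lie type $A$'' means in our reductive setting: $[\g,\g]$ is a direct sum of simple Lie algebras each isomorphic to some $\mathfrak{sl}_{n_i}$. Since $x_{\semi}$ is semisimple, it is contained in some Cartan subalgebra $\t\subset\g$, and the centralizer $\g_{x_{\semi}}$ is then the Levi subalgebra $\t\oplus\bigoplus_{\alpha\in\Phi,\,\alpha(x_{\semi})=0}\g_\alpha$, as recalled in Section \ref{Subsection: Complex reductive algebraic groups}. Intersecting with each simple summand $\mathfrak{sl}_{n_i}\subset[\g,\g]$, the semisimple part of $\g_{x_{\semi}}\cap\mathfrak{sl}_{n_i}$ is a Levi subalgebra of $\mathfrak{sl}_{n_i}$.

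The key structural point is then the well-known fact that a Levi subalgebra of $\mathfrak{sl}_n$ is (up to a central torus) isomorphic to a product of Lie algebras $\mathfrak{sl}_{m_j}$: concretely, after conjugating $x_{\semi}$ into the diagonal Cartan, $\g_{x_{\semi}}\cap\mathfrak{gl}_n$ is the block-diagonal subalgebra $\bigoplus_j\mathfrak{gl}_{m_j}$ determined by the eigenspace decomposition of $\ad(x_{\semi})$, whose derived subalgebra is $\bigoplus_j\mathfrak{sl}_{m_j}$. Equivalently, the Levi subdiagrams of the Dynkin diagram $A_{n-1}$ are precisely disjoint unions of type $A$ subdiagrams. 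Thus every simple ideal of $[\g_{x_{\semi}},\g_{x_{\semi}}]$ is of Lie type $A$.

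Applying Proposition \ref{Proposition: Lie types} to this decomposition yields that $A(x)$ is abelian, completing the proof. The only step requiring care is the structural statement about Levi subalgebras in type $A$; this is standard and can be cited from any reference on reductive Lie algebras (e.g.\ \cite{col-mcg:93}), so no real difficulty arises.
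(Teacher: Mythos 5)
Your proof is correct and follows essentially the same route as the paper: reduce to Proposition \ref{Proposition: Lie types} by noting that $\g_{x_{\semi}}$ is a Levi subalgebra and that in type $A$ every simple ideal of the derived algebra of a Levi subalgebra is again of type $A$. The paper simply cites this last fact as standard, whereas you spell out the block-diagonal argument; the content is the same.
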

	
	\begin{proof}
		If $\l\subset\g$ is a Levi subalgebra, then every simple ideal of $[\l,\l]$ has Lie type $A$. The result now follows immediately from Proposition \ref{Proposition: Lie types}. 
	\end{proof}
	
	\begin{remark}\label{Remark: Non-abelian}
		The group $C(x)$ is non-abelian in general. Let $G$ be simple and simply-connected. If $x\in\g$ is nilpotent, then $C(x)\cong\pi_1(\mathcal{O})$ for $\mathcal{O}=Gx\subset\g$. An inspection of the tables in \cite[Section 8.4]{col-mcg:93} reveals that $\pi_1(\mathcal{O})$ need not be abelian. By Corollary \ref{Corollary: Simple}, $A(x)$ is not abelian in general.
	\end{remark}
	
	\subsection{Induced actions of $A(x)$}\label{Subsection: Actions on level sets}
	We now obtain an action of $A(x)$ on $\mu^{-1}(x)$ for $x\in\D_M$. Our first step is the following complex-geometric analogue of Theorem \ref{Theorem: New slice theorem}(ii).
	
	\begin{proposition}\label{Proposition: Trivial action}
		Suppose that $M$ is a non-empty, irreducible Hamiltonian $G$-variety (resp. non-empty, connected holomorphic Hamiltonian $G$-space) with moment map $\mu:M\longrightarrow\g$. If $x\in\mathcal{D}_M$, then $N(x)^{\circ}\subset G_x$ acts trivially on $\mu^{-1}(x)$.
	\end{proposition}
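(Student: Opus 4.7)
The plan is to reduce the assertion to an infinitesimal statement, and then to a linear-algebraic orthogonality inside $\g$. Since $N(x)^{\circ}$ is connected with Lie algebra $\n(x)=[\g_{x_{\semi}},\g_{x_{\semi}}]_{x_{\nilp}}$ and preserves $\mu^{-1}(x)$ (because $N(x)^{\circ}\subset G_x$), it is enough to show that $y_M$ vanishes on $\mu^{-1}(x)$ for every $y\in\n(x)$. The moment map identity $\omega_p(y_M(p),v)=-\langle \mathrm{d}\mu_p(v),y\rangle$ together with non-degeneracy of $\omega_p$ says that $y_M(p)=0$ if and only if $y\in(\im \mathrm{d}\mu_p)^{\perp}$ with respect to the $G$-invariant form $\langle\cdot,\cdot\rangle$. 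The task thus becomes the verification $\n(x)\perp\im(\mathrm{d}\mu_p)$ for every $p\in\mu^{-1}(x)$.

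The key observation is that $\mu^{-1}(\D_M)$ is open in $M$ by Propositions \ref{Proposition: Characterizations} and \ref{Proposition: Holomorphic characterizations}, and $x\in\D_M$ forces $\mu^{-1}(x)\subset\mu^{-1}(\D_M)$. For any $p\in\mu^{-1}(x)$, a neighborhood of $p$ is therefore mapped by $\mu$ into the smooth (sub)variety $\D_M$, so $\im(\mathrm{d}\mu_p)\subset T_x\D_M$. Writing $\D_M=\D_{\l,\O}$ with $\l=\g_{x_{\semi}}$ and $\O$ the $L(x)'$-orbit of $x_{\nilp}$, Proposition \ref{Proposition: Transversal} gives $T_x\D_M=[\g,x]\oplus\z(\g_{x_{\semi}})$. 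It therefore suffices to establish the two orthogonalities $\n(x)\perp[\g,x]$ and $\n(x)\perp\z(\g_{x_{\semi}})$.

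Both are immediate from the infinitesimal $G$-invariance identity $\langle[c,a],b\rangle+\langle a,[c,b]\rangle=0$. For the first, every $y\in\n(x)$ commutes with both $x_{\semi}$ and $x_{\nilp}$, hence with $x$, so $y\in\g_x$ and $\langle y,[w,x]\rangle=-\langle w,[y,x]\rangle=0$ for every $w\in\g$. For the second, $\n(x)\subset[\g_{x_{\semi}},\g_{x_{\semi}}]$, and $\g_{x_{\semi}}$-invariance of $\langle\cdot,\cdot\rangle$ forces $\z(\g_{x_{\semi}})$ to be orthogonal to $[\g_{x_{\semi}},\g_{x_{\semi}}]$: for $a,b\in\g_{x_{\semi}}$ and $z\in\z(\g_{x_{\semi}})$, one has $\langle[a,b],z\rangle=-\langle b,[a,z]\rangle=0$.

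I do not anticipate a serious obstacle; the hard work has been done in identifying $T_x\D_M$ (Proposition \ref{Proposition: Transversal}) and in proving that $\mu^{-1}(\D_M)$ is open (Propositions \ref{Proposition: Characterizations} and \ref{Proposition: Holomorphic characterizations}). The same argument applies verbatim in both the irreducible algebraic and connected holomorphic settings, since both of these ingredients are available in each category.
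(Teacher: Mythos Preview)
Your proof is correct and follows essentially the same strategy as the paper: both arguments use openness of $\mu^{-1}(\D_M)$ to obtain $\im(\mathrm{d}\mu_p)\subset T_x\D_M$, and then conclude by showing $\n(x)\subset (T_x\D_M)^{\perp}$. The only difference is in how the orthogonality is verified: the paper computes $(T_x\D_M)^{\perp}$ exactly by quoting an external result (\cite[Proposition 4.35]{cro-may:22}) in the semisimple case and reducing to it, whereas you use the internal description $T_x\D_M=[\g,x]\oplus\z(\g_{x_\semi})$ from Proposition~\ref{Proposition: Transversal} and check the two needed orthogonalities by elementary invariance of $\langle\cdot,\cdot\rangle$, which makes your argument slightly more self-contained.
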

	
	\begin{proof}
		As an open, $G$-invariant subvariety (resp. submanifold) of $M$, $\mu^{-1}(\mathcal{D}_M)$ is a Hamiltonian $G$-variety (resp. holomorphic Hamiltonian $G$-space) with moment map $\mu\big\vert_{\mu^{-1}(\mathcal{D}_M)}:\mu^{-1}(\mathcal{D}_M)\longrightarrow\g$. It follows that the image of $\mathrm{d}\mu_p:T_pM\longrightarrow\g$ is contained in $T_x\mathcal{D}_M$ for all $p\in\mu^{-1}(x)$. The image is precisely $(\g_p)^{\perp}\subset\g$, where ``$\perp$" is with respect to the form $\langle\cdot,\cdot\rangle:\g\otimes\g\longrightarrow\mathbb{C}$ in Section \ref{Subsection: Complex reductive algebraic groups}. This yields $(T_x\mathcal{D}_M)^{\perp}\subset\g_p$ for all $p\in\mu^{-1}(x)$. 
		
		We now compute $(T_x\mathcal{D}_M)^{\perp}$. To this end, write $x=z+y$ with $z\in\z(\g)$ and $y\in[\g,\g]$. Note also that $\D_M=\z(\g)+\D$ for a $[G,G]$-decomposition class $\D\subset[\g,\g]$. One deduces that $T_x\D_M=\z(\g)\oplus T_y\D$ and $(T_x\D_M)^{\perp}=(T_y\D)^{\perp}$, where the latter is the annihilator of $T_y\D\subset[\g,\g]$ with respect to the Killing form on $[\g,\g]$. The proof of \cite[Proposition 4.35]{cro-may:22} tells us that $$(T_y\D)^{\perp}=[[\g,\g]_{y_{\semi}},[\g,\g]_{y_{\semi}}]_{y_n},$$ where $y_{\semi}$ (resp. $y_{\nilp}$) is the semisimple (resp. nilpotent) part of $y$ in $[\g,\g]$. On the other hand, $\g_{x_{\semi}}=\z(\g)+[\g,\g]_{y_{\semi}}$ and $x_{\nilp}=y_{\nilp}$; see Proposition \ref{Proposition: Comparative Jordan decomposition}. These last few sentences imply that $(T_x\D_M)^{\perp}=[\g_{x_{\semi}},\g_{x_{\semi}}]_{x_{\nilp}}$.
		
		The last sentences of the previous two paragraphs yield $[\g_{x_{\semi}},\g_{x_{\semi}}]_{x_{\nilp}}\subset\g_p$ for all $p\in\mu^{-1}(x)$. In other words, $N(x)^{\circ}\subset G_p$ for all $p\in\mu^{-1}(x)$. 
	\end{proof}
	
	\begin{remark}
		Retain the setup of Proposition \ref{Proposition: Trivial action}. It is natural to wonder if $N(x)$ acts trivially on $\mu^{-1}(x)$ for all $x\in\D_M$. This turns out to be false in general. To obtain an explicit counter-example, note that the defining representation of $G=\operatorname{Sp}_{2n}$ on $M=\IC^{2n}$ preserves the standard symplectic form on the latter. This action is Hamiltonian with moment map $\mu:\IC^{2n}\longrightarrow\mathfrak{sp}_{2n}$ satisfying $\mu(\IC^{2n})= \overline{\mathcal{O}_{\text{min}}}$ and $\mu^{-1}(0)=\{0\}$ \cite[Example 4.5]{bry-kos:94}, where $\mathcal{O}_{\text{min}}\subset\mathfrak{sp}_{2n}$ is the minimal nilpotent orbit. It follows that $\D_M=\mathcal{O}_{\text{min}}$ and $\mu^{-1}(\mathcal{D}_M)=\mathbb{C}^{2n}\setminus\{0\}$. On the other hand, note that $N(x)=(\mathrm{Sp}_{2n})_x$ for all $x\in\mathcal{D}_M=\mathcal{O}_{\text{min}}$. We also note that $-I_{2n}\in(\mathrm{Sp}_{2n})_x$ for all $x\in\mathfrak{sp}_{2n}$, and that this element does not stabilize any element of $\mathbb{C}^{2n}\setminus\{0\}$. This implies that $N(x)$ acts non-trivially on $\mu^{-1}(x)$ for all $\mathcal{D}_M$.
	\end{remark}
	
	Let $M$ be a non-empty, connected holomorphic Hamiltonian $G$-space with moment map $\mu:M\longrightarrow\g$. Proposition \ref{Proposition: Trivial action} implies that the $G_x$-action on $\mu^{-1}(x)$ descends to one of $A(x)$ for all $x\in\mathcal{D}_M$. Given $x\in\D_M$, one might wonder if a smaller quotient of $G_x$ can be shown to inherit an action on $\mu^{-1}(x)$. We use the following two results to answer this question.
	
	\begin{proposition}\label{Proposition: Largest}
		Suppose that $x\in\g$. Let $\D\subset\g$ be the decomposition class containing $x$. The algebraic group $N(x)^{\circ}$ is the largest closed subgroup $H\subset G_x$ with the following property: if $M$ is any non-empty, connected holomorphic Hamiltonian $G$-space with moment map $\mu:M\longrightarrow\g$ that satisfies $\D_M=\D$, then $H$ acts trivially on $\mu^{-1}(x)$.
	\end{proposition}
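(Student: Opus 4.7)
The plan is to establish two facts: that $N(x)^{\circ}$ itself has the stated property, and that any closed subgroup $H \subset G_x$ with this property is contained in $N(x)^{\circ}$. The first is immediate from Proposition \ref{Proposition: Trivial action} applied to an arbitrary non-empty, connected holomorphic Hamiltonian $G$-space $M$ with $\D_M = \D$, since $x \in \D_M$ is then automatic.

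For the maximality direction, my strategy is to test the property against a single distinguished Hamiltonian $G$-space, namely the canonical object
\[
M_0 \coloneqq \bigsqcup_{y \in \D} G/N(y)^{\circ}
\]
constructed in Proposition \ref{Proposition: Canonical}, with moment map $\mu([g], y) = -\Ad_g(y)$ and $G$-action by left multiplication on the first coordinate. First I would verify $\D_{M_0} = \D$: Lemma \ref{Lemma: Dilation} guarantees $-\D = \D$, so $\mu(M_0) = -\D = \D$, whence $\D$ is the unique decomposition class meeting $\mu(M_0)$, and Proposition \ref{Proposition: Holomorphic characterizations}(ii) delivers $\D_{M_0} = \D$. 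In particular, $M_0$ is a valid test space.

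The crux is then to exhibit a point of $\mu^{-1}(x)$ whose $G_x$-stabilizer is exactly $N(x)^{\circ}$. By Lemma \ref{Lemma: Dilation} we have $-x \in \D$, and the identities $(-x)_{\semi} = -x_{\semi}$ and $(-x)_{\nilp} = -x_{\nilp}$ (together with $G_{-v} = G_v$ for any vector $v$) yield $N(-x)^{\circ} = N(x)^{\circ}$. Consider the point $p \coloneqq ([e], -x) \in G/N(-x)^{\circ} \subset M_0$. Then $\mu(p) = -\Ad_e(-x) = x$, so $p \in \mu^{-1}(x)$, and by construction the $G$-stabilizer of $p$ is $N(-x)^{\circ} = N(x)^{\circ}$. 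Any closed subgroup $H \subset G_x$ acting trivially on all of $\mu^{-1}(x)$ must in particular fix $p$, forcing $H \subset N(x)^{\circ}$ and completing the argument.

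I anticipate no serious obstacle: the maximality half of the proof reduces entirely to the choice of a single well-chosen test point inside $M_0$. The only mild subtlety is noticing that one should index into the fiber over $-x$ rather than over $x$, since this is what arranges for $\mu^{-1}(x)$ to contain a point whose $G$-stabilizer is as small as possible, namely $N(x)^{\circ}$ itself.
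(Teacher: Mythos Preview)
Your proof is correct and follows essentially the same approach as the paper: both invoke Proposition~\ref{Proposition: Trivial action} for one direction and test maximality against the canonical space $M_0$ of Proposition~\ref{Proposition: Canonical}, exhibiting a point of $\mu^{-1}(x)$ with $G_x$-stabilizer equal to $N(x)^{\circ}$. Your choice of the specific point $([e],-x)$ is slightly more direct than the paper's, which works with an arbitrary preimage $([h],y)$ and then conjugates by $h$ to identify the stabilizer.
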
 
	
	\begin{proof}
		Proposition \ref{Proposition: Trivial action} reduces us to finding a non-empty, connected holomorphic Hamiltonian $G$-space $M$ with moment map $\mu:M\longrightarrow\g$, such that $\D_M=\D$ and the kernel of the $G_x$-action on $\mu^{-1}(x)$ is contained in $N(x)^{\circ}$. To this end, let $M$ and $\mu$ be as defined in the proof of Proposition \ref{Proposition: Canonical}. The same proposition implies that $x=\mu([h],y)$ for some $y\in\D$ and $[h]\in G/N(y)^{\circ}$, i.e. $x=\Ad_h(-y)$. 
		
		Suppose that $g\in G_x$ belongs to the kernel of the $G_x$-action on $\mu^{-1}(x)$. In particular, $g([h],y)=([h],y)$. It follows that $gh=hk$ for some $k\in N(y)^{\circ}$, or equivalently $g=hkh^{-1}$. Since $x=\Ad_h(-y)$, we have $N(x)^{\circ}=hN(-y)^{\circ}h^{-1}$. It is also clear that $N(-y)=N(y)$. These last three sentences imply that $x\in N(x)^{\circ}$. 
	\end{proof}
	
	We now answer the question posed above; it is the following immediate consequence of Proposition \ref{Proposition: Largest}.
	
	\begin{corollary}\label{Corollary: Smallest quotient}
		Suppose that $x\in\g$. Let $\D\subset\g$ be the decomposition class containing $x$. The algebraic group $A(x)$ is the smallest quotient $Q$ of $G_x$ with the following property: if $M$ is any non-empty, connected holomorphic Hamiltonian $G$-space with moment map $\mu:M\longrightarrow\g$ that satisfies $\D_M=\D$, then the $G_x$-action on $\mu^{-1}(x)$ descends to one of $Q$ on $\mu^{-1}(x)$.
	\end{corollary}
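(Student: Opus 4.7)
The plan is to deduce the corollary as a direct reformulation of Proposition \ref{Proposition: Largest}, translating the statement about subgroups of $G_x$ acting trivially into one about quotients of $G_x$ through which the action descends. The key dictionary is: a quotient of $G_x$ is the same data as a surjective algebraic group morphism $\pi: G_x \longrightarrow Q$, and the $G_x$-action on $\mu^{-1}(x)$ descends to a $Q$-action along $\pi$ if and only if the closed normal subgroup $\ker(\pi) \subset G_x$ acts trivially on $\mu^{-1}(x)$. Under this correspondence, ``smaller'' quotients $Q$ correspond to ``larger'' kernels $\ker(\pi)$, so finding the smallest quotient satisfying the property in the statement amounts to finding the largest closed normal subgroup $H \subset G_x$ such that $H$ acts trivially on $\mu^{-1}(x)$ for every non-empty, connected holomorphic Hamiltonian $G$-space $M$ with $\D_M = \D$.

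First I would fix any surjective algebraic group morphism $\pi: G_x \longrightarrow Q$ satisfying the hypothesis of the corollary. Applying the hypothesis to every such $M$, we see that $\ker(\pi)$ acts trivially on $\mu^{-1}(x)$ for every such $M$. By Proposition \ref{Proposition: Largest}, $N(x)^{\circ}$ is the largest closed subgroup of $G_x$ with this universal triviality property, so $\ker(\pi) \subset N(x)^{\circ}$. Consequently $\pi$ factors uniquely through the quotient morphism $G_x \longrightarrow G_x/N(x)^{\circ} = A(x)$, exhibiting $A(x)$ as mapping onto $Q$.

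Next I would verify that $A(x)$ itself satisfies the hypothesis, which is immediate from Proposition \ref{Proposition: Trivial action}: for any non-empty, connected holomorphic Hamiltonian $G$-space $M$ with $\D_M = \D$, the normal subgroup $N(x)^{\circ}$ acts trivially on $\mu^{-1}(x)$, so the $G_x$-action on $\mu^{-1}(x)$ factors through $A(x) = G_x/N(x)^{\circ}$. Combining this with the previous paragraph shows that $A(x)$ is a quotient with the desired property, and that every quotient with the property factors through $A(x)$, yielding the asserted minimality.

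There is no real obstacle in this argument; the content has been carried by Proposition \ref{Proposition: Largest}, which in turn relied on the explicit construction of the ``universal'' Hamiltonian $G$-space $M$ for a given decomposition class in Proposition \ref{Proposition: Canonical}. The only point requiring a line of care is the passage from ``closed subgroup acts trivially'' to ``normal closed subgroup is a kernel'': since $N(x)^{\circ}$ is already normal in $G_x$ (as noted just before Proposition \ref{Proposition: Strange morphism}), and any subgroup $\ker(\pi)$ arising as a kernel is automatically normal and closed, the containment $\ker(\pi) \subset N(x)^{\circ}$ given by Proposition \ref{Proposition: Largest} is exactly what is needed to produce the factorization through $A(x)$.
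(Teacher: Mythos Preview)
Your proposal is correct and takes essentially the same approach as the paper, which simply declares the corollary an immediate consequence of Proposition~\ref{Proposition: Largest}. Your write-up spells out the dictionary between quotients and kernels and the invocation of Proposition~\ref{Proposition: Trivial action} for the converse direction, but this is exactly the intended unpacking of ``immediate.''
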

	
	\section{Hamiltonian actions of symplectic groupoids and symplectic slices}\label{sec:symplectic slices}
	
	In this section, we prove symplectic slice theorems for a Hamiltonian action of a complex reductive algebraic group $G$ on a holomorphic Hamiltonian $G$-space or Hamiltonian $G$-variety $M$. Parts of these results are obtained as applications of an {\it abstract symplectic slice theorem} for Hamiltonian actions of symplectic groupoids on symplectic manifolds. We begin with an account of Lie groupoids and Hamiltonian actions of symplectic groupoids. This allows us to prove Main Theorem \ref{thm:abs-symp-cr-sec} as Theorem \ref{t:abs-symp-cr-sec}. Main Theorem \ref{Theorem: General slices} is proved by combining Proposition \ref{Proposition: Semisimple case}, Proposition \ref{Proposition: Natural transversal}, Corollary \ref{Corollary: Poisson slices}, Proposition \ref{p:decompcontainDx}, Corollary \ref{c:slice-generalUx}, Corollary \ref{Corollary: Nice}, Proposition \ref{Proposition: Slodowy saturation}, and Corollary \ref{c:slice-general}.
	
	\subsection{Lie groupoids}\label{sec:Liegroupoids}
	A Lie groupoid (resp. holomorphic Lie groupoid, complex algebraic groupoid) is a groupoid object $\G\tto X$ in the category of smooth manifolds (resp. complex manifolds, complex algebraic varieties). We impose the additional requirement that $\G$ and $X$ be smooth if $\G\tto X$ is to be called a complex algebraic groupoid. Regardless of the category in question, this groupoid structure amounts to having morphisms with various properties. These morphisms are denoted $\sss:\G\longrightarrow X$, $\ttt:\G\longrightarrow X$, $\mmm:\G\fp{\sss}{\ttt}\G\longrightarrow\G$, $\iii:\G\longrightarrow\G$, and $\mathsf{1}:X\longrightarrow\G$, and called the source, target, multiplication, inversion, and identity bisection morphisms, respectively; the notation $\G\fp{\sss}{\ttt}\G$ indicates the subset of $\G\times\G$ given by pairs $(g, h)$ such that $\sss(g)=\ttt(h)$.
	Set $gh\coloneqq\mmm(gh)$ for $(g,h)\in\G\fp{\sss}{\ttt}\G$ and $g^{-1}\coloneqq\iii(g)$ for $g\in\G$. The aforementioned properties of these morphisms are as follows:
	\begin{itemize}
		\item[\textup{(i)}] $\sss(gh)=\sss(h)$ and $\ttt(gh)=\ttt(g)$ for all $(g,h)\in\G\fp{\sss}{\ttt}\G$;
		\item[\textup{(ii)}] $(gh)k=g(hk)$ for all $g,h,k\in\G$ with $\sss(g)=\ttt(h)$ and $\sss(h)=\ttt(k)$;
		\item[\textup{(iii)}] $\sss(\mathsf{1}(x))=x=\ttt(\mathsf{1}(x))$ for all $x\in X$;
		\item[\textup{(iv)}] $\mathsf{1}(\ttt(g))g=g$ and $g\mathsf{1}(\sss(g))=g$ for all $g\in\G$;
		\item[\textup{(v)}] $\sss(g^{-1})=\ttt(g)$ and $\ttt(g^{-1})=\sss(g)$ for all $g\in\G$;
		\item[\textup{(vi)}] $gg^{-1}=\mathsf{1}(\ttt(g))$ and $g^{-1}g=1(\sss(g))$ for all $g\in\G$.
	\end{itemize} 
	Note that a Lie group (resp. complex Lie group, complex algebraic group) is a Lie groupoid (resp. holomorphic Lie groupoid, complex algebraic groupoid) $\G\tto X$ for which $X$ is a singleton.

	\begin{example}[Action groupoids]\label{Example: Action groupoids}
		Let $G$ be a Lie group (resp. complex Lie group, complex algebraic group). Suppose that $M$ is a smooth manifold (resp. complex manifold, smooth complex algebraic variety) carrying a smooth (resp. holomorphic, algebraic) action of $G$. The \textit{action groupoid} associated to this situation takes the form $G\times M\tto M$, with the following maps and operations: source $\sss(g,p)\coloneqq p$ for all $(g,p)\in G\times M$, target $\ttt(g,p) \coloneqq gp$ for all $(g,p)\in G\times M$,  identity  $\mathsf{1}(p)\coloneqq(e,p)$ for all $p\in M$, multiplication
		$$(g_1,p_1)(g_2,p_2)\coloneqq (g_1g_2,p_2)$$ for all $((g_1,p_1),(g_2,p_2))\in (G\times M)\fp{\sss}{\ttt} (G\times M)$, and inversion $(g,p)^{-1} \coloneqq (g^{-1}, gp)$ for all $(g,p)\in G\times M$.
		In this way, $G\times M\tto M$ is a Lie groupoid (resp. holomorphic Lie groupoid, complex algebraic groupoid).
	\end{example}
	
	Let $\G\tto X$ be a Lie groupoid (resp. holomorphic Lie groupoid, complex algebraic groupoid). Given an open subset $U\subset X$, we have the restriction
	$$\G_{U}^U\coloneqq\sss^{-1}(U)\cap\ttt^{-1}(U)\tto U.$$
	Note that $\sss$ and $\ttt$ are necessarily submersions. Let us also note that $\mathsf{i}:X\longrightarrow\G$ includes $X$ as a submanifold (resp. complex submanifold, smooth closed subvariety) of $\G$. The \textit{Lie algebroid} of $\G$ is $$\mathrm{Lie}(\G)\coloneqq\left(\mathrm{ker}(\mathrm{d}\sss)\big\vert_X\longrightarrow X\right),$$ with anchor map obtained by restricting $\mathrm{d}\ttt:T\G\longrightarrow TX$ to $\mathrm{Lie}(\G)\subset T\G$. 
	
	An \textit{action} of a Lie groupoid $\G\tto X$ on a smooth manifold (resp. complex manifold, complex algebraic variety) $M$ consists of morphisms $\mu:M\longrightarrow X$ and $$\G\times_X M\coloneqq\G\fp{\sss}{\mu}M\longrightarrow M,\quad(g,p)\mapsto gp$$ that satisfy the following properties: $\mu(gp)=\ttt(g)$ for all $(g,p)\in\G\times_X M$ and $\mathsf{1}(\mu(p))p=p$ for all $p\in M$. One calls $\mu$ the \textit{moment map} for this action.
	
	\subsection{Symplectic groupoids} The following definition is written in such a way that its specializations to the smooth, holomorphic, and complex-algebraic categories are clear; see \cite{wei:87,cos-daz-wei:87} for its origins.
	
	\begin{definition}
		Let $\G\tto X$ be a Lie groupoid. One calls $\G$ a \textit{symplectic groupoid} if it comes equipped with a symplectic form $\Omega$ such that the graph of multiplication $$\{(g,h,gh): (g,h)\in\G\fp{\sss}{\ttt}\G\}$$ is Lagrangian in $\G\times\G\times\G^{-}$, where $\G^{-}$ denotes the result of equipping $\G$ with $-\Omega$.
	\end{definition}
	
	\begin{example}[Cotangent groupoids of Lie groups]\label{Example: Cotangent groupoids}
		Let $G$ be a Lie group with Lie algebra $\g$. The negated exterior derivative of the tautological $1$-form yields a symplectic form on $T^*G$. On the other hand, we may use the left trivialization to identify $T^*G$ with $G\times\g^*$. The morphisms $$\sss:T^*G\longrightarrow\g^*,\quad (g,\xi)\mapsto \xi,\quad\ttt:T^*G\longrightarrow\g^*,\quad (g,\xi)\mapsto \Ad_g^*(\xi),$$
		and $$T^*G\fp{\sss}{\ttt}T^*G\longrightarrow T^*G,\quad ((g,\xi),(h,\eta))\mapsto(gh,\eta)$$ constitute the source, target, and multiplication morphisms of a Lie groupoid structure on $T^*G$. 
		
		This Lie groupoid structure on $T^*G$ combines with the aforementioned symplectic structure to render $T^*G$ a symplectic groupoid, called the \textit{cotangent groupoid} of $G$. Note that this definition of a cotangent groupoid applies analogously if $G$ is a complex Lie group or complex algebraic group, yielding cotangent groupoids in holomorphic and complex-algebraic contexts.
	\end{example}
	
	\begin{example}[The Lie algebroid of a symplectic groupoid]\label{Example: The Lie algebroid}
		If $\G\tto X$ is a symplectic groupoid, then $X$ carries a unique Poisson structure for which $\ttt$ is a Poisson morphism and $\sss$ is an anti-Poisson morphism. The Poisson structure on $X$ renders $T^*X\longrightarrow X$ a Lie algebroid; see Section \ref{Subsection: Pre-Poisson submanifolds}. It turns out that the map
		$$\mathrm{Lie}(\G)\longrightarrow T^*X,\quad (x,v)\mapsto\Omega_x(v,\cdot)\big\vert_{T_xX}$$ is a Lie algebroid isomorphism, where $\Omega$ is the symplectic form on $\G$. With this in mind, we freely identify $\mathrm{Lie}(\G)$ with $T^*X$.
	\end{example}
	
	In light of Section \ref{Subsection: Pre-Poisson submanifolds} and Example \ref{Example: The Lie algebroid}, we may consider pre-Poisson submanifolds in the base space of a symplectic groupoid. This leads to the following key ingredient of \cite{cro-may:22}.
	
	\begin{definition}\label{Definition: Stabilizer subgroupoid}
		Consider a symplectic groupoid $\G\tto X$ and pre-Poisson submanifold $S\subset X$. An isotropic Lie subgroupoid $\H\tto S$ of $\G$ with Lie algebroid $L_S\subset T^*X$ is called a \textit{stabilizer subgroupoid}.
	\end{definition}
	
	\subsection{Hamiltonian actions of symplectic groupoids}
	We now give a definition of Hamiltonian symplectic groupoid actions that lends itself easily to specializations to the smooth, holomorphic, and complex-algebraic categories. Let $M$ be a symplectic manifold, holomorphic symplectic manifold, or smooth complex symplectic variety. The following is due to Mikami--Weinstein \cite{mik-wei:88}.
	
	\begin{definition}\label{Definition: Hamiltonian groupoid action}
		Suppose that a symplectic groupoid $\G$ acts on $M$ via a moment map $\mu:M\longrightarrow X$. One calls $M$ a \textit{Hamiltonian $\G$-space} if $$\{(g,p,gp):(g,p)\in\G\fp{\sss}{\mu}M\}$$ is coisotropic in $\G\times M\times M^{-}$, where $M^{-}$ results from equipping $M$ with the negated symplectic form.
	\end{definition}
	
	\begin{example}[Recasting Hamiltonian actions of Lie groups]\label{Example: Recasting}
		Let $G$ be a Lie group with Lie algebra $\g$. Recall our description of the cotangent groupoid $T^*G\tto\g^*$ in Example \ref{Example: Cotangent groupoids}. Consider a Hamiltonian $G$-space $M$ with moment map $\mu:M\longrightarrow\g^*$. It follows that $T^*G=G\times\g^*$ acts on $M$ via $\mu$, i.e.
		$$(g,\xi)p\coloneqq gp\quad\text{if }\xi=\mu(p).$$ Furthermore, this action renders $M$ a Hamiltonian $T^*G$-space in the sense of Definition \ref{Definition: Hamiltonian groupoid action}. These last three sentences define a bijective correspondence between Hamiltonian $G$-spaces and Hamiltonian $T^*G$-spaces; see \cite{mik-wei:88}. We have analogous correspondences in the holomorphic and complex-algebraic categories.
	\end{example}
	
	The following theorem is proved in \cite{cro-may:22}. Its analogues in the holomorphic and algebraic categories are \cite[Theorem B]{cro-may:22} and \cite[Theorem C]{cro-may:22}, respectively.
	
	\begin{theorem}\label{Theorem: Reduction}
		Consider a symplectic groupoid $\G\tto X$ and stabilizer subgroupoid $\H\tto S$ over a pre-Poisson submanifold $S\subset X$. Suppose that $M$ is a Hamiltonian $\G$-space with symplectic form $\omega$ and moment map $\mu:M\longrightarrow X$. Consider the inclusion $j:\mu^{-1}(S)\longrightarrow M$ and quotient map $\pi:\mu^{-1}(S)\longrightarrow\mu^{-1}(S)/\H$. If $\H$ acts freely and properly on $\mu^{-1}(S)$, then there exists a unique symplectic form $\overline{\omega}$ on $\mu^{-1}(S)/\H$ satisfying $\pi^*\overline{\omega}=j^*\omega$.
	\end{theorem}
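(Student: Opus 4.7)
The plan is to carry out a groupoid analogue of the Marsden--Weinstein reduction argument. First I would verify that $\mu^{-1}(S)$ is a submanifold by arguing that $\mu$ is transverse to $S$ at every point of $\mu^{-1}(S)$. For this, recall that a Hamiltonian $\G$-action on $M$ induces a Lie algebroid action of $T^*X \cong \mathrm{Lie}(\G)$ on $M$ via $\mu$, with anchor relation $\mathrm{d}\mu_p \circ \rho_p = \sigma^{\vee} \circ \mu^*$, where $\rho_p\colon T_{\mu(p)}^*X \to T_pM$ denotes the infinitesimal action and $\sigma$ is the Poisson bivector on $X$. The moment-map compatibility, which for a Hamiltonian groupoid action takes the form $\omega_p(\rho_p(\alpha), w) = \alpha(\mathrm{d}\mu_p(w))$ for all $w \in T_pM$ and $\alpha \in T_{\mu(p)}^*X$, is the key structural input throughout.

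Next I would define $\overline{\omega}$ pointwise by $\overline{\omega}_{\pi(p)}(\mathrm{d}\pi_p(v), \mathrm{d}\pi_p(w)) \coloneqq \omega_p(v,w)$ for $v,w \in T_p\mu^{-1}(S)$, and reduce the theorem to two well-definedness statements: the pullback $j^*\omega$ is $\H$-invariant, and its kernel at every $p \in \mu^{-1}(S)$ equals $T_p(\H \cdot p)$. The $\H$-invariance of $j^*\omega$ follows from the fact that elements of $\H$ act by symplectomorphisms on $M$, which is a direct consequence of Definition \ref{Definition: Hamiltonian groupoid action} applied to the isotropic subgroupoid $\H$. The freeness and properness hypothesis then ensures $\mu^{-1}(S)/\H$ has a smooth manifold structure with $\pi$ a submersion, so uniqueness of $\overline{\omega}$ is automatic, and closedness $\mathrm{d}\overline{\omega} = 0$ follows from $\pi^*\mathrm{d}\overline{\omega} = \mathrm{d}(j^*\omega) = 0$ together with the surjectivity of $\mathrm{d}\pi$.

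The main obstacle, as usual in symplectic reduction, is the kernel computation. The plan is as follows. Writing $x = \mu(p)$, one has $T_p\mu^{-1}(S) = (\mathrm{d}\mu_p)^{-1}(T_xS)$, and the moment-map identity yields
\begin{equation*}
\bigl(T_p\mu^{-1}(S)\bigr)^{\omega} \;=\; \rho_p\bigl((T_xS)^{\circ}\bigr),
\end{equation*}
because $v \in (T_p\mu^{-1}(S))^{\omega}$ is equivalent to $\omega_p(v,\cdot)$ vanishing on $(\mathrm{d}\mu_p)^{-1}(T_xS)$, which by the moment relation and non-degeneracy of $\omega$ means $v = \rho_p(\alpha)$ for some $\alpha \in T_x^*X$ annihilating $T_xS$. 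Intersecting with $T_p\mu^{-1}(S)$, the constraint $\mathrm{d}\mu_p(\rho_p(\alpha)) = \sigma^{\vee}(\alpha) \in T_xS$ forces $\alpha \in L_{S,x} = (\sigma^{\vee})^{-1}(T_xS) \cap (T_xS)^{\circ}$. Since by Definition \ref{Definition: Stabilizer subgroupoid} the Lie algebroid of $\H$ is precisely $L_S$, the image $\rho_p(L_{S,x})$ coincides with $T_p(\H \cdot p)$, yielding the desired equality $\ker(j^*\omega|_p) = T_p(\H \cdot p)$.

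The bulk of the argument is therefore the identification of $(T_p\mu^{-1}(S))^{\omega}$ via the moment relation and the subsequent use of the pre-Poisson condition encoded in $L_S$; the hard part is setting up the correct form of the moment-map identity for groupoid actions (where $\mu$ is only a Poisson morphism to a Poisson manifold, not a map to a dual Lie algebra) and matching it against Definition \ref{Definition: Pre-Poisson submanifold}. Everything else---smoothness of $\mu^{-1}(S)$, invariance of $j^*\omega$, closedness and uniqueness of $\overline{\omega}$---is routine once the kernel computation is in place.
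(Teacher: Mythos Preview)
The paper does not prove Theorem~\ref{Theorem: Reduction}; it is quoted from \cite{cro-may:22} (with holomorphic and algebraic versions cited as Theorems~B and~C there), so there is no in-paper argument to compare against. Your outline is the standard groupoid Marsden--Weinstein argument and is essentially what appears in that reference: the kernel computation you describe---identifying $(T_p\mu^{-1}(S))^{\omega}=\rho_p((T_xS)^{\circ})$ via $\omega^{\vee}\circ\rho_p=\mathrm{d}\mu_p^*$, then intersecting with $T_p\mu^{-1}(S)$ to obtain $\rho_p(L_{S,x})=T_p(\H\cdot p)$---is exactly the crux.

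Two points worth tightening. First, your transversality claim is correct but deserves a line of justification: since $\mathrm{d}\mu_p\circ\rho_p=\sigma^{\vee}$, one has $\ker\rho_p\subset\ker\sigma^{\vee}\subset(\sigma^{\vee})^{-1}(T_xS)$, whence $\ker\rho_p\cap(T_xS)^{\circ}=\ker\rho_p\cap L_{S,x}$; freeness of the $\H$-action forces the latter to vanish, which is equivalent to $\mathrm{im}(\mathrm{d}\mu_p)+T_xS=T_xX$. Second, the phrase ``elements of $\H$ act by symplectomorphisms on $M$'' is imprecise, since an arrow of a groupoid only maps one source fibre of $\mu$ to one target fibre. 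The clean statement is that the multiplicativity relation \eqref{e:multprop}, pulled back to $\H\times_S\mu^{-1}(S)$, reads $a^*(j^*\omega)=\pr_2^*(j^*\omega)$ because $\Omega|_{\H}=0$; this is where isotropy of $\H$ in Definition~\ref{Definition: Stabilizer subgroupoid} enters, and it is exactly the ``basic'' condition that lets $j^*\omega$ descend.
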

	
	\begin{definition}\label{Definition: Generalized reduction}
		Suppose that the hypotheses of Theorem \ref{Theorem: Reduction} are satisfied. We call $$M\sll{S,\H}{\G}\coloneqq (\mu^{-1}(S)/\H,\overline{\omega})$$ the \textit{Hamiltonian reduction} of $M$ along $S$ with respect to $\H$. 
	\end{definition}
	
	\subsection{An abstract symplectic slice theorem}\label{Subsection: Abstract symplectic slice theorem}
	In this section, we work in the smooth category. All definitions and results apply analogously in the holomorphic and complex-algebraic categories. We frame the discussion in the setting of Hamiltonian actions of symplectic groupoids \cite{mik-wei:88}, and use the general reduction process of Theorem \ref{Theorem: Reduction}.
	
	Let $\G\tto X$ be a symplectic groupoid with symplectic form $\Omega$. Recall that there is a unique Poisson structure $\sigma$ on $X$ such that $\ttt\colon \G\longrightarrow X$ is a Poisson morphism.
	The orbits of $\G$ in $X$ are immersed submanifolds; if $\G$ is source-connected, the orbits coincide with the symplectic leaves of the Poisson structure, and more generally they are disjoint unions of symplectic leaves. In particular, a Poisson transversal $S\subset X$ is automatically transverse to $\G$-orbits. In analogy with Definition \ref{Definition: Slice}, we refer to such a transversal as a \textit{Poisson slice} in $X$.
	Write $\ol{X}$ for $X$ with the Poisson structure; the map $(\ttt,\sss)\colon \G\longrightarrow X\times \ol{X}$ is then a Poisson morphism. Given a Poisson transversal $S\subset X$, it follows that
	\[ \G_S:=\sss^{-1}(S), \quad \G^S:=\ttt^{-1}(S),\quad\text{and}\quad \G^S_S:=\sss^{-1}(S)\cap\ttt^{-1}(S) \]
	are symplectic submanifolds of $\G$. We also see that $\G^S_S\tto S$ is a symplectic subgroupoid of $\G$. With this in mind, $\G_S$ and $\G^S$ are Hamiltonian $\G\times\G_S^S$-spaces.
	
	Let $(M,\omega,\mu)$ be a Hamiltonian $\G$-space with action map $a\colon \G\times_X M\longrightarrow M$. The Hamiltonian condition is equivalent to the following multiplicative property:
	\begin{equation} 
		\label{e:multprop}
		a^*\omega=\pr_1^*\Omega+\pr_2^*\omega, 
	\end{equation}
	where $\pr_1\colon \G\times_X M\longrightarrow \G$ and $\pr_2\colon \G\times_X M\longrightarrow M$ are the projection maps to the two factors; cf. \cite{Mol2024}. A submanifold $S\subset M$ is called a {\it symplectic slice} if it is symplectic and transverse to the $\G$-orbits.
	
	\begin{theorem}[Abstract symplectic slices]
		\label{t:abs-symp-cr-sec}
		Consider a symplectic groupoid $\G\tto X$ and Hamiltonian $\G$-space $(M,\omega,\mu)$. Suppose that $S\subset X$ is a Poisson slice. The following statements are true:
		\begin{itemize}
			\item[\textup{(i)}] $(\mu^{-1}(S),\omega\big\vert_{\mu^{-1}(S)},\mu\big\vert_{\mu^{-1}(S)})$ is a symplectic slice and a Hamiltonian $\G_S^S$-space;
			\item[\textup{(ii)}] the $\G$-saturation $\G\mu^{-1}(S)\subset M$ of $\mu^{-1}(S)$ is open in $M$;
			\item[\textup{(ii)}] the action map $\G\fp{\sss}{\mu}M\longrightarrow M$ restricts and descends to an isomorphism
			\[ (\G_S\times_S \mu^{-1}(S))/\G_S^S\overset{\cong}\longrightarrow \G\mu^{-1}(S) \] of Hamiltonian $\G$-spaces,
			where the symplectic structure on $(\G_S\times_S \mu^{-1}(S))/\G_S^S$ is obtained by restricting and descending the product symplectic structure on $\G_S\times \mu^{-1}(S)$ to $(\G_S\times_S \mu^{-1}(S))/\G_S^S$.
		\end{itemize}
	\end{theorem}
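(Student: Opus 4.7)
The plan is to establish the three parts in order, leveraging the Poisson-transversal machinery of Section~\ref{Subsection: Poisson transversals} together with the multiplicative property \eqref{e:multprop} of $\omega$.

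For Part~(i), I would first observe that the Hamiltonian condition (Definition~\ref{Definition: Hamiltonian groupoid action}) forces the moment map $\mu\colon M\longrightarrow X$ to be Poisson; this is a direct consequence of the coisotropic graph condition. Proposition~\ref{Proposition: Basic transversal} then implies that $\mu^{-1}(S)$ is a Poisson transversal in $M$, hence a symplectic submanifold. Transversality of $\mu^{-1}(S)$ to $\G$-orbits follows by adapting the proof of Proposition~\ref{Proposition: Pullback}: any $v\in T_p M$ at $p\in \mu^{-1}(S)$ admits a decomposition $\mathrm{d}\mu_p(v)=w+u$ with $w\in T_{\mu(p)}S$ and $u$ tangent to the $\G$-orbit through $\mu(p)$ (automatic transversality of $S$ to $\G$-orbits, since symplectic leaves are contained in $\G$-orbits), and one then subtracts a vector tangent to the $\G$-orbit at $p$ whose image is $u$, landing in $T_p\mu^{-1}(S)$. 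The restricted $\G_S^S$-action on $\mu^{-1}(S)$ is well defined because $\mu(gp)=\ttt(g)\in S$ for $g\in \G_S^S$, and the Hamiltonian property is obtained by restricting \eqref{e:multprop} to $\G_S^S\times_S \mu^{-1}(S)$.

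For Part~(iii), I would next verify that the $\G_S^S$-action on $\G_S\times_S \mu^{-1}(S)$ given by $g\cdot(h,p)=(hg^{-1},gp)$ (defined whenever $\sss(g)=\sss(h)=\mu(p)$) is free --- the identity $hg^{-1}=h$ forces $g$ to be an identity arrow --- and that $[(h,p)]\mapsto hp$ descends to a bijection onto $\G\mu^{-1}(S)$: surjectivity is tautological, while for injectivity, if $h_1p_1=h_2p_2$ then $\ttt(h_1)=\ttt(h_2)$, so $g\coloneqq h_2^{-1}h_1$ is defined, belongs to $\G_S^S$ (its source is $\mu(p_1)\in S$, its target $\mu(p_2)\in S$), and a direct calculation gives $(h_2,p_2)=g\cdot(h_1,p_1)$. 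Openness of $\G\mu^{-1}(S)$ (Part~(ii)) can then be established by proving that the action map $a\colon \G_S\times_S \mu^{-1}(S)\longrightarrow M$ is a submersion at every point: the infinitesimal $\G$-orbit at $hp$ together with the pushforward of $T_p\mu^{-1}(S)$ span $T_{hp}M$ by the transversality established in Part~(i). In the holomorphic and algebraic categories, one combines this with the standard constructible-image argument used earlier in the manuscript.

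The remaining and most delicate content is the identification of the symplectic form on the quotient. Both $\G_S$ and $\mu^{-1}(S)$ are symplectic (as Poisson transversals), and $(\sss,\mu)\colon \G_S\times \mu^{-1}(S)\longrightarrow \ol{S}\times S$ is Poisson (where $\ol{S}$ denotes $S$ with the negated Poisson structure, making $\sss\colon \G_S\longrightarrow \ol{S}$ Poisson). The fiber product $\G_S\times_S \mu^{-1}(S)$ is the preimage of the Lagrangian diagonal and is therefore coisotropic in the product symplectic manifold $\G_S\times \mu^{-1}(S)$. A direct calculation shows that the null foliation of this coisotropic submanifold coincides with the $\G_S^S$-orbits, so the quotient carries a canonical symplectic form; restricting \eqref{e:multprop} to $\G_S\times_S\mu^{-1}(S)$ shows that this form corresponds, under the bijection of Part~(iii), to $\omega\big|_{\G\mu^{-1}(S)}$. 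The principal obstacle I anticipate is exactly this coisotropic-reduction bookkeeping --- verifying that the null directions are precisely the $\G_S^S$-orbits and that the descended form agrees with the restriction of $\omega$; the remaining steps are diagram-chasing with the groupoid axioms and dimension counting.
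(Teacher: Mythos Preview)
Your proposal is correct and, for Parts~(i) and~(ii), tracks the paper's argument closely: the paper likewise uses Proposition~\ref{Proposition: Basic transversal} to get symplecticity of $\mu^{-1}(S)$, adapts Proposition~\ref{Proposition: Pullback} with Lie algebroid sections for transversality, and deduces openness from the identity $\G\mu^{-1}(S)=\mu^{-1}(\G S)$ (your submersion argument is an equivalent route).

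The genuine point of divergence is Part~(iii). The paper does \emph{not} carry out the coisotropic-reduction bookkeeping directly. Instead it views $\G_S\times\mu^{-1}(S)$ as a Hamiltonian space for the product symplectic groupoid $\ol{\G_S^S}\times\G_S^S\tto\ol{S}\times S$, verifies that the diagonal $\Delta\G_S^S\tto\Delta S$ is a \emph{stabilizer subgroupoid} in the sense of Definition~\ref{Definition: Stabilizer subgroupoid} (it is Lagrangian, and $L_{\Delta S}$ is the full conormal bundle), and then invokes Theorem~\ref{Theorem: Reduction} as a black box to conclude that $(\G_S\times_S\mu^{-1}(S))/\G_S^S$ is symplectic. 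The agreement with $\omega$ under $\ol{a}$ is then read off from \eqref{e:multprop}, exactly as you propose. Your approach is the same reduction unpacked: the diagonal $\Delta S\subset\ol{S}\times S$ is coisotropic, its preimage is your coisotropic fibre product, and the null foliation you flag as the ``principal obstacle'' is precisely what Theorem~\ref{Theorem: Reduction} absorbs. What the paper's packaging buys is that the computation identifying the null directions with the $\G_S^S$-orbits --- which in your sketch is asserted but deferred --- is subsumed by verifying the stabilizer-subgroupoid hypotheses, a check that is shorter and more structural. What your approach buys is self-containment: you avoid importing the machinery of \cite{cro-may:22}.
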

	\begin{proof}
		Since the pre-image of a Poisson transversal under a Poisson morphism is a Poisson transversal \cite[Lemma 7]{fre-mar:17}, $Y\coloneqq\mu^{-1}(S)\subset M$ is a symplectic submanifold. A proof similar to that of Proposition \ref{Proposition: Pullback}, with the element $x$ replaced by a section of the Lie algebroid of $\G$, shows that $Y$ is transverse to the $\G$-orbits in $M$; hence it is a symplectic slice. It is also clear that the restriction $\mu\big\vert_S: Y\longrightarrow S$ is a Poisson morphism. By construction, the action of $\G_S^S$ preserves $Y$. Pulling back equation \eqref{e:multprop} to the submanifold $\G_S\times_S Y$ of $\G\times_X M$ shows that the pair $(\omega|_Y,\Omega|_{\G_S^S})$ satisfies the multiplicativity property \eqref{e:multprop} for the action of the Lie groupoid $\G_S^S$ on $Y$. It follows that the action of $\G_S^S$ on $Y$ is Hamiltonian.
		
		The fiber product $\G_S\times_S Y$ is smooth since $\sss: \G_S\longrightarrow S$ is a submersion. We also see that $\G_S\times_S Y$ carries an action of $\G\times \G_S^S$, where $\G$ acts by left multiplication on $\G_S$ with moment map $\ttt\colon \G_S\longrightarrow X$, and $\G_S^S$ acts diagonally on $\G_S$ by right multiplication and on $Y$. The action of $\G_S^S$ on $\G_S$ is free and proper, implying that the quotient $(\G_S\times_S Y)/\G_S^S$ is a smooth manifold with an induced smooth $\G$-action. Since $S$ is a Poisson transversal, $\G S$ is open, and thus $\G Y=\mu^{-1}(\G S)$ is open. The action $a$ restricts to a map $\G_S\times_S S\longrightarrow \G S$, which descends to a smooth $\G$-equivariant morphism
		\[ \ol{a}\colon (\G_S\times_S S)/\G_S^S\longrightarrow \G S.\]
		By the definition of $\G_S^S$, $\ol{a}$ is a bijection. It is then seen to be a diffeomorphism by the inverse function theorem. The equivariance of $\mu$ now implies that the corresponding map, induced by the restriction of the action $a\colon \G\times_X M\longrightarrow M$ to $\G_S\times_S Y \longrightarrow \G Y$,
		\[ \ol{a}\colon(\G_S\times_S Y)/\G_S^S\longrightarrow \G Y, \]
		is also a $\G$-equivariant diffeomorphism.
		
		To describe the Hamiltonian $\G$-space structure on $(\G_S\times_S Y)/\G_S^S$, we use the general procedure for reduction along a submanifold developed in \cite{cro-may:22}. The submanifold $\G_S=\sss^{-1}(S)$ is symplectic. Note that the product $\G_S \times Y$ carries a Hamiltonian action of the product symplectic groupoid $\ol{\G_S^S}\times \G_S^S\tto \ol{S}\times S$ with moment map
		\[ \sss\times \mu_Y \colon \G_S\times Y\longrightarrow \ol{S}\times S,\] 
		where $\ol{\G_S^S}$ acts on $\G_S$ by right multiplication.
		
		We claim that the diagonal subgroupoid $\Delta \G_S^S\tto \Delta S$ is a stabilizer Lie subgroupoid of $\ol{\G_S^S}\times \G_S^S\tto \ol{S}\times S$, in the sense of Definition \ref{Definition: Stabilizer subgroupoid}. To see this, note that $\Delta \G_S^S\subset \ol{\G_S^S}\times \G_S^S$ is a Lagrangian Lie subgroupoid of $\ol{\G_S^S}\times \G_S^S\tto \ol{S}\times S$. We also note that
		\[ ((-\sigma)^{\vee}\times \sigma^{\vee})^{-1}(T\Delta S)\cap (T\Delta S)^\perp\]
		is the conormal bundle to the diagonal, where $\sigma$ is the Poisson bivector field on $X$. This is a smooth vector bundle and the Lie algebroid of $\Delta \G_S^S$. This establishes our claim. Theorem \ref{Theorem: Reduction} now implies that
		\[ (\G_S\times_S Y)/\G_S^S \]
		is symplectic. The commuting Hamiltonian action of $\G$ on $\G_S$ by left multiplication induces a Hamiltonian action of $\G$ on this quotient; cf. \cite[Proposition 1.4.19]{Mol-thesis}.
		
		It remains only to prove that the $\G$-equivariant diffeomorphism $\ol{a}\colon(\G_S\times_S Y)/\G_S^S\longrightarrow \G Y$ is an isomorphism of Hamiltonian $\G$-spaces. We first note that $\ol{a}$ intertwines the moment maps. We also recall that the symplectic structure on $(\G_S\times_S Y)/\G_S^S$ is obtained by restricting $\pr_1^*\Omega+\pr_2^*\omega$ to $\G_S\times_S Y$, and then descending to the quotient. Taking the pullback of \eqref{e:multprop} to $\G_S\times_S Y$ and descending to the quotient shows that $\ol{a}^*\omega$ equals the symplectic structure on $(\G_S\times_S Y)/\G_S^S$. This shows that $\ol{a}$ also intertwines symplectic structures.
	\end{proof}
	
	\subsection{The natural slice at a semisimple element}\label{Subsection: The natural}
	We now introduce complex-geometric analogues of the natural slices discussed in Section \ref{Subsection: Slices for compact group actions}. Let $G$ be a connected complex reductive algebraic group with Lie algebra $\g$. Suppose that $x\in\g$ is semisimple. It follows that $\g_x\subset\g$ is a reductive subalgebra. We define the {\it natural slice} $\S_x$ at $x$ to be
	\[ \S_x\coloneqq \{y \in \g\mid \g_y\subset \g_x\} \subset \g_x.\]
	One has the following consequences of \cite[Corollary 35.3.2, Lemma 39.4.2, and Lemma 39.4.3]{Tauvel-Yu}:
	
	\begin{proposition}
		\label{p:Tauvel-Yu-equiv-cond}
		Suppose that $x\in\g$ is semisimple. The subset $\S_x\subset\g$ is $G_x$-invariant and open. Moreover, the following conditions are equivalent:
		\begin{itemize}
			\item[\textup{(i)}] $y \in \S_x$;
			\item[\textup{(ii)}] $x \in \z(\g_y)$; 
			\item[\textup{(iii)}] $[\g,x]\subset [\g,y]$; 
			\item[\textup{(iv)}] $\z(\g_x)\subset \z(\g_y)$;  
			\item[\textup{(v)}] $\g=[\g,y]+\g_x$.
		\end{itemize}
	\end{proposition}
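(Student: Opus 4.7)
The plan is to leverage the non-degenerate $G$-invariant bilinear form $\langle\cdot,\cdot\rangle$ on $\g$ from Section \ref{Subsection: Complex reductive algebraic groups} to reduce most of the equivalences to orthogonal-complement manipulations. This form yields the identity $\g_z = [\g,z]^{\perp}$ for any $z\in\g$, and since $x$ is semisimple the decomposition $\g = \g_x \oplus [\g,x]$ is an orthogonal direct sum. I would begin with the straightforward equivalences. Condition (i)$\Leftrightarrow$(iii) is immediate upon taking orthogonal complements, as this reverses inclusion. For (i)$\Leftrightarrow$(ii), note that $\g_y\subset\g_x$ forces $y\in\g_y\subset\g_x$, hence $x\in\g_y$; the remaining content of (i) is that every element of $\g_y$ commutes with $x$, i.e., $x\in\z(\g_y)$. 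For (i)$\Leftrightarrow$(iv), the key observation is that $x\in\z(\g_x)$ (as $x$ tautologically commutes with $\g_x$); the forward direction then follows by checking that any $w\in\z(\g_x)$ lies in $\g_y$ and commutes with all of $\g_y\subset\g_x$, while the backward direction uses $x\in\z(\g_x)\subset\z(\g_y)$ to recover (ii).

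A crucial observation is that each of (i)--(iv) forces $y\in\g_x$, i.e. $[x,y]=0$. This is the key to treating (v). The direction (iii)$\Rightarrow$(v) is immediate from the orthogonal decomposition, since $\g = \g_x + [\g,x] \subset \g_x + [\g,y]$. For the converse I would exploit the $\g_x$-module decomposition $\g = \g_x\oplus V$ with $V=[\g,x]$: once $y\in\g_x$ is known, the operator $\mathrm{ad}_y$ preserves each summand. Condition (v) then becomes equivalent to $\mathrm{ad}_y\colon V\longrightarrow V$ being surjective; by finite-dimensionality surjectivity is equivalent to injectivity, which says $\g_y\cap V = 0$, i.e., $\g_y\subset\g_x$. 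This closes the chain of equivalences.

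For openness, viewing $\S_x$ as a subset of $\g_x$, condition (v) is manifestly open in $y$: it is the maximal-rank locus of the linear map $\g\oplus\g_x\longrightarrow\g$ given by $(w,v)\mapsto [w,y]+v$, equivalently the non-vanishing locus of $\det(\mathrm{ad}_y|_V)$. The $G_x$-invariance follows from the identity $\g_{\mathrm{Ad}_g y} = \mathrm{Ad}_g \g_y$ together with $\mathrm{Ad}_g \g_x = \g_x$ for $g\in G_x$. The main obstacle, as I see it, is the implication (v)$\Rightarrow$(i): since (v) does not on its face force $y\in\g_x$, one must either derive $y\in\g_x$ directly from (v) by a separate argument on the $\g_x$-module structure of $\g$, or invoke the cited Tauvel--Yu results (Corollary 35.3.2 and Lemmas 39.4.2--3), which provide precisely this kind of centralizer-theoretic comparison for semisimple elements.
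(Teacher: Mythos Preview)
Your argument is considerably more detailed than the paper's, which simply defers to \cite[Corollary 35.3.2, Lemmas 39.4.2--39.4.3]{Tauvel-Yu} without elaboration. Your treatment of the equivalences among (i)--(iv) and of (iii)$\Rightarrow$(v) is correct and efficient, and you have correctly isolated (v)$\Rightarrow$(i) as the only delicate step.

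There is, however, a genuine issue with how you propose to close that step. You suggest one might ``derive $y\in\g_x$ directly from (v) by a separate argument on the $\g_x$-module structure of $\g$''. This is in fact impossible: condition (v) alone does not force $y\in\g_x$. Take $\g=\mathfrak{sl}_2$, $x=h=\mathrm{diag}(1,-1)$, and $y=h+e$. Then $y$ is regular semisimple with $\g_y=\mathbb{C}(h+e)$, and one computes $[\g,y]=\mathrm{span}\{e,\,2f-h\}$, so $[\g,y]+\g_x=\mathrm{span}\{e,2f-h,h\}=\g$ and (v) holds; yet $\g_y\not\subset\g_x=\mathbb{C}h$, so (i) fails. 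Thus (v)$\Rightarrow$(i) is simply false for arbitrary $y\in\g$.

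The resolution is that the equivalences are meant to be read for $y\in\g_x$. This is implicit in the context: the definition immediately preceding the proposition records $\S_x\subset\g_x$, the assertion that $\S_x$ is ``open'' is openness in $\g_x$, and the proof of Proposition~\ref{Proposition: Semisimple case} uses $T_y\S_x=\g_x$. Under the standing hypothesis $y\in\g_x$, your own argument for (v)$\Rightarrow$(i) via the $\ad_y$-stable splitting $\g=\g_x\oplus[\g,x]$ and surjectivity-hence-injectivity of $\ad_y\big\vert_{[\g,x]}$ is complete and correct. Your openness argument is likewise fine once phrased inside $\g_x$: the nonvanishing of $\det\bigl(\ad_y\big\vert_{[\g,x]}\bigr)$ is a Zariski-open condition on $y\in\g_x$. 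So the fix is not an additional lemma but simply to state the ambient hypothesis $y\in\g_x$ explicitly; conditions (i)--(iv) then automatically entail it, and (v) becomes equivalent to them as you argued.
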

	Recall that $\g_{\mathsf{semi}}\subset\g$ and $\N\subset \g$ are the subsets of semisimple and nilpotent elements in  
	$\g$, respectively. For a Levi subalgebra $\l\subset \g_x$, set
	\[ \N_\l\coloneqq\N\cap \l=\N\cap [\l,\l].\]
	We may decompose $\N_\l$ as a union of nilpotent $L$-orbits $$\N_\l=\bigcup_{\O\subset \N_\l} \O,$$ where $L\subset G$ is the Levi subgroup integrating $\l$. Given a Levi subalgebra $\l\subset \g_x$ and nilpotent $L$-orbit $\O\subset\N_{\l}$, set
	\[ \D^{G_x}_{\l,\O}\coloneqq G_x(\z(\l)_{\mathsf{gen}}+\O).\]
	We have that
	\[ G\D^{G_x}_{\l,\O}=G(\z(\l)_{\mathsf{gen}}+\O)=\D_{\l,\O} \]
	is the $G$-decomposition class associated to the pair $(\l,\O)$. It is also clear that $\D^{G_x}_{\l,\O}$ only depends on the $G_x$-conjugacy class of the pair $(\l,\O)$.
	\begin{proposition}
		\label{p:jordan}
		Suppose that $x\in\g$ is semisimple and $y \in \S_x$. The following statements are true:
		\begin{itemize}
			\item[\textup{(i)}] $y_\semi \in \S_x$;
			\item[\textup{(ii)}] one has
			\begin{equation} 
				\label{e:jordanUx}
				\S_x=\bigcup_{s \in \S_x\cap \g_{\mathsf{semi}}} (s+\N_{\g_s})=\bigcup_{\l} (\z(\l)_{\mathsf{gen}}+\N_\l)=\bigcup_{[(\l,\O)]} \D^{G_x}_{\l,\O},
			\end{equation}
			where $\l$ ranges over the Levi subalgebras of $\g_x$, and $[(\l,\O)]$ ranges over the $G_x$-conjugacy classes of pairs $(\l,\O)$ with $\O\subset \N_\l$ a nilpotent $L$-orbit.
		\end{itemize}
	\end{proposition}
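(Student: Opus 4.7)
\emph{Proof plan.} The key is a clean reformulation of $\S_x$. Since $x$ is semisimple, $\g=\g_x\oplus \mathfrak{m}$ with $\mathfrak{m}\coloneqq [\g,x]$, and for any $y\in \g_x$ the operator $\ad(y)$ commutes with $\ad(x)$ and preserves this splitting. The inclusion $\g_y\subset \g_x$ is then equivalent to $\g_y\cap \mathfrak{m}=0$, i.e., to $\ad(y)\big|_{\mathfrak{m}}$ being injective and hence invertible. This gives the identification
\[
\S_x=\{y\in \g_x:\ad(y)\big|_{\mathfrak{m}} \text{ is invertible}\}.
\]
For (i), recall that $y_\semi$ and $y_\nilp$ commute with everything commuting with $y$; applied to $x$, this puts $y_\semi$ and $y_\nilp$ in $\g_x$. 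In $\operatorname{End}(\g)$, the decomposition $\ad(y)=\ad(y_\semi)+\ad(y_\nilp)$ is the Jordan decomposition, and restricting to the invariant subspace $\mathfrak{m}$ yields the Jordan decomposition of $\ad(y)\big|_{\mathfrak{m}}$. A linear operator is invertible if and only if its semisimple part is, so $y\in \S_x \iff y_\semi\in \S_x$.

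For the first equality in (ii), the inclusion $\subset$ uses (i): any $y\in \S_x$ satisfies $y_\semi\in \S_x\cap \g_{\mathsf{semi}}$, while $y_\nilp\in \g_{y_\semi}\cap \N=\N_{\g_{y_\semi}}$. Conversely, if $s\in \S_x\cap \g_{\mathsf{semi}}$ and $n\in \N_{\g_s}$, the Jordan decomposition of $s+n$ in $\g$ is $s+n$, so $(s+n)_\semi=s\in \S_x$ gives $s+n\in \S_x$ by (i).

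The crux of the second equality is an identification of Levi subalgebras: the Levis of $\g_x$ coincide with the centralizers $\g_s$ of elements $s\in \S_x\cap \g_{\mathsf{semi}}$. If $s\in \S_x\cap \g_{\mathsf{semi}}$, then $\g_s\subset \g_x$ forces $\g_s=(\g_x)_s$, which is a Levi of $\g_x$. Conversely, every Levi of $\g_x$ has the form $(\g_x)_s$ for some semisimple $s\in \g_x$; including $x$ and $s$ in a common Cartan subalgebra $\t\subset \g$ and comparing root-space decompositions yields $(\g_x)_s=\g_{x+\epsilon s}$ for generic $\epsilon\in \IC$, with $x+\epsilon s$ semisimple and lying in $\S_x$. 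Combined with the standard equality $\z(\l)_{\mathsf{gen}}=\{z\in \z(\l):\g_z=\l\}$, this converts the first union in \eqref{e:jordanUx} into the second. The third equality then follows from the $G_x$-invariance of $\S_x$ and the orbit decomposition $\N_\l=\bigcup_\O \O$ over $L$-orbits $\O\subset \N_\l$. The main subtlety throughout is the simultaneous diagonalization step identifying Levis of $\g_x$ with centralizers in $\g$; once this is in hand, the rest of the argument is routine Jordan decomposition bookkeeping.
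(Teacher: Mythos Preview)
Your proof is correct. For part (ii) it is essentially the same as the paper's, though you supply more detail on one point the paper leaves implicit: the identification of Levi subalgebras $\l\subset\g_x$ with the $\g$-centralizers $\g_s$ for $s\in\S_x\cap\g_{\mathsf{semi}}$, which you establish via the generic-$\epsilon$ argument in a common Cartan. For part (i), however, you take a genuinely different route. The paper invokes the equivalent condition $\z(\g_x)\subset\z(\g_y)$ from Proposition~\ref{p:Tauvel-Yu-equiv-cond}(iv) together with the structural decomposition $\z(\g_y)=\z(\g_{y_\semi})\oplus\z([\g_{y_\semi},\g_{y_\semi}]_{y_\nilp})$ (cited from \cite{Tauvel-Yu}) into its semisimple and nilpotent parts, concluding that $\z(\g_x)\subset\z(\g_{y_\semi})$. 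Your argument instead reformulates membership in $\S_x$ as invertibility of $\ad(y)\big|_{[\g,x]}$ and observes that invertibility of a linear operator depends only on its semisimple Jordan part; this is more self-contained and avoids the structural result on $\z(\g_y)$, at the cost of requiring the (standard) fact that the Jordan decomposition restricts to invariant subspaces. Both approaches are short; yours is arguably more elementary, while the paper's leverages the machinery already set up in Proposition~\ref{p:Tauvel-Yu-equiv-cond}.
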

	\begin{proof}
		Proposition \ref{p:Tauvel-Yu-equiv-cond}(iv) tells us that $\z(\g_x)\subset \z(\g_y)$. By \cite[Proposition 39.1.1]{Tauvel-Yu},
		\[ \z(\g_y)=\z(\g_{y_\semi})\oplus \z([\g_{y_\semi},\g_{y_\semi}]_{y_\nilp}).  \]
		The same result implies that the two summands are precisely the subsets of semisimple and nilpotent elements, respectively, of $\z(\g_y)$. Since $\z(\g_x)$ consists entirely of semisimple elements, we must have $\z(\g_x)\subset \z(\g_{y_\semi})$. Applying Proposition \ref{p:Tauvel-Yu-equiv-cond}(iv) a second time, we deduce $y_\semi \in \S_x$.
		
		It remains only to prove (ii). For the first equality in \eqref{e:jordanUx}, note that $y=y_\semi+y_\nilp \in y_\semi+(\N\cap \g_{y_\semi})$. We also note that $y_\semi \in \S_x\cap \g_{\mathsf{semi}}$ by (i). It follows that the left-hand side of \eqref{e:jordanUx} is contained in the right-hand side. Conversely, suppose that $s \in \S_x\cap \g_{\mathsf{semi}}$ and $s+n \in s+\N\cap \g_s$. This implies that $y=s+n$ is the Jordan decomposition of $y$, so that $\g_y=\g_s\cap \g_n\subset \g_x$. We conclude that $y \in \S_x$. For the second equality in \eqref{e:jordanUx}, recall that the stabilizer of a semisimple element $s$ is the Levi subalgebra $\g_s=\l$. We also know that $\z(\l)_{\mathsf{gen}}$ is precisely the set of elements of $\g$ with stabilizer equal to $\l$.
	\end{proof}
	
	\begin{proposition}\label{Proposition: Semisimple case}
		If $x\in\g$ is semisimple, then $\S_x$ is a Poisson transversal in $\g$ and $T_x\S_x\oplus [\g,x]=\g$.
	\end{proposition}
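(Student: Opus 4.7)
The plan is to verify both claims directly from the characterizations of $\S_x$ collected in Proposition \ref{p:Tauvel-Yu-equiv-cond} together with the standard characterization of Poisson transversals. First I would observe that since $\S_x$ is open in $\g_x$ (Proposition \ref{p:Tauvel-Yu-equiv-cond}), we have $T_y\S_x=\g_x$ for every $y\in \S_x$. Using the invariant form $\langle\cdot,\cdot\rangle$ to identify $\g\cong\g^*$, the Poisson bivector of $\g$ at $y$ acts by $\xi\mapsto [\xi,y]$, so its image (the tangent space to the symplectic leaf $Gy$) is $[\g,y]$, and the ``anti-image'' relevant for the Poisson transversal condition is $[(\g_x)^{\perp},y]$. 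Recalling from \cite[Lemma 3]{fre-mar:17} that $S\subset\g$ is a Poisson transversal at $y$ if and only if $\g=T_yS\oplus\sigma_y^{\vee}(\operatorname{ann}(T_yS))$, the task reduces to showing
\[ \g=\g_x\oplus [(\g_x)^{\perp},y]\quad\text{for all }y\in \S_x. \]

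For the sum to span, I would invoke Proposition \ref{p:Tauvel-Yu-equiv-cond}(v) to get $\g=\g_x+[\g,y]$; since $y\in \g_x$ and $\g_x$ is a Lie subalgebra, $[\g_x,y]\subset\g_x$, so $\g=\g_x+[(\g_x)^{\perp},y]$. For the intersection to be trivial, I would use $G$-invariance of the form: for $\xi\in(\g_x)^{\perp}$ and $z\in\g_x$, one computes $\langle [\xi,y],z\rangle=\langle\xi,[y,z]\rangle=0$ because $[y,z]\in\g_x$. Thus $[(\g_x)^{\perp},y]\subset (\g_x)^{\perp}$, and it remains to see that $\g_x\cap(\g_x)^{\perp}=0$. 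The latter follows because $x$ is semisimple: writing $x=z+x'$ with $z\in\z(\g)$ and $x'\in[\g,\g]$ semisimple (Proposition \ref{Proposition: Comparative Jordan decomposition}), $\g_x=\z(\g)\oplus[\g,\g]_{x'}$, and the restriction of $\langle\cdot,\cdot\rangle$ to each summand is non-degenerate by the choice on $\z(\g)$ and by the standard fact that the Killing form of $[\g,\g]$ is non-degenerate on the centralizer of a semisimple element.

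For the displayed identity $T_x\S_x\oplus[\g,x]=\g$, I would specialize $y=x$: semisimplicity of $x$ makes $\ad_x$ diagonalizable with kernel $\g_x$ and image $[\g,x]$, giving directly the decomposition $\g=\g_x\oplus[\g,x]=T_x\S_x\oplus[\g,x]$. Alternatively, this is the $y=x$ case of the Poisson transversal computation above, since $[(\g_x)^{\perp},x]=[\g,x]$ (because $[\g_x,x]=0$).

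The only mildly technical point is the non-degeneracy of $\langle\cdot,\cdot\rangle$ on $\g_x$ for semisimple $x$, which I expect to be the main ingredient that actually uses the hypothesis on $x$; everything else is bookkeeping with Proposition \ref{p:Tauvel-Yu-equiv-cond} and $G$-invariance of the form. Once these ingredients are assembled the Poisson transversal property and the direct sum decomposition at $x$ fall out simultaneously.
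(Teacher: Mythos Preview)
Your argument is correct and rests on the same two ingredients as the paper's proof: Proposition~\ref{p:Tauvel-Yu-equiv-cond}(v) for the spanning statement, and the splitting $\g=\g_x\oplus(\g_x)^{\perp}$ (equivalently, non-degeneracy of $\langle\cdot,\cdot\rangle$ on $\g_x$ for semisimple $x$) together with the observation that $\ad_y$ preserves this splitting. The only difference is packaging: the paper verifies the definition of Poisson transversal directly, showing $[\g,y]+\g_x=\g$ and then computing $T_y(Gy\cap\S_x)=[\g,y]\cap\g_x=[\g_x,y]$, which it recognizes as the tangent space to the $G_x$-orbit through $y$ and hence symplectic. You instead check the equivalent Frejlich--Marcut criterion $\g=\g_x\oplus\sigma_y^{\vee}((\g_x)^{\perp})=\g_x\oplus[(\g_x)^{\perp},y]$. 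Your route avoids identifying the intersection with a $G_x$-orbit, at the cost of importing the alternate characterization; the paper's route makes the induced Poisson structure on $\S_x$ (with $G_x$-orbits as leaves) visible along the way. One small presentational point: your spanning step $\g=\g_x+[(\g_x)^{\perp},y]$ tacitly uses $\g=\g_x+(\g_x)^{\perp}$ to decompose $b\in\g$, so it is cleaner to establish $\g_x\cap(\g_x)^{\perp}=0$ (hence $\g=\g_x\oplus(\g_x)^{\perp}$) before rather than after the spanning argument.
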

	\begin{proof}
		Consider an element $y \in \S_x$. We have $T_y(Gy)=[\g,y]$ and $T_y(\S_x)=\g_x$. By Proposition \ref{p:Tauvel-Yu-equiv-cond}(v), $T_y(Gy)+T_y\S_x=\g$. It follows that $Gy$ intersects $\S_x$ transversely at $y$. Since $x$ is semisimple, we have $\g=\g_x\oplus \g_x^\perp$, where $\g_x^\perp$ is the annihilator of $\g_x$ with respect to the $G$-invariant pairing on $\g$ in Section \ref{Subsection: Complex reductive algebraic groups}. Noting that $y \in \g_y\subset \g_x$, the adjoint action of $y$ on $\g$ preserves the splitting $\g=\g_x\oplus \g_x^\perp$. The tangent space of the intersection $Gy\cap \S_x$ at $y$ is $[\g,y]\cap \g_x=([\g_x,y]\oplus [\g_x^\perp,y])\cap \g_x=[\g_x,y]$. As the tangent space of the orbit $G_xy\subset \g_x$, it is a symplectic vector space. It follows that the symplectic leaves of $\g$ intersect $\S_x$ transversely in symplectic submanifolds, i.e. $\S_x$ is a Poisson transversal.
	\end{proof}
	
	Applying Theorem \ref{t:abs-symp-cr-sec} to the action of the symplectic groupoid $\G=T^*G=G\times \g^*\tto \g^*=X$ on a Hamiltonian $G$-space $(M,\omega,\mu)$, we obtain the following symplectic slice theorem. The reader may wish to consult our discussion of action groupoids in Example \ref{Example: Action groupoids}.
	
	\begin{corollary}
		\label{c:slice-semisimple}
		Suppose that $(M,\omega,\mu)$ is a holomorphic Hamiltonian $G$-space and $x \in \mu(M)\cap\g_{\mathsf{semi}}$. Then $\mu^{-1}(\S_x)\subset M$ is a symplectic slice and Hamiltonian $\G_{\S_x}^{\S_x}$-space, and there is an isomorphism of Hamiltonian $G$-spaces
		\[ (G\times\mu^{-1}(\S_x))/\G_{\S_x}^{\S_x}\overset{\cong}\longrightarrow G\mu^{-1}(\S_x).\]
		The action groupoid $N_G(G_x)\times \S_x\tto \S_x$ is a subgroupoid of $\G_{\S_x}^{\S_x}$, and $\mu^{-1}(\S_x)$ is a Hamiltonian $N_G(G_x)$-space.
	\end{corollary}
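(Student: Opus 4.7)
The plan is to apply Theorem \ref{t:abs-symp-cr-sec} to the symplectic groupoid $\G=T^*G\tto\g^{*}=\g$ acting on $M$ via the bijective correspondence of Example \ref{Example: Recasting}, using $S=\S_x$ as the Poisson slice. The assertions of the corollary then reduce to identifying the abstract output of that theorem with the concrete objects in the statement, together with a small extra verification to produce the $N_G(G_x)$-action.

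The first step is to verify that $\S_x$ is a Poisson slice in $\g$ in the sense of Definition \ref{Definition: Slice}. By Proposition \ref{Proposition: Semisimple case}, $\S_x$ is a Poisson transversal, and since the symplectic leaves of $\g$ are the adjoint orbits of $G$, transversality to symplectic leaves coincides with transversality to $G$-orbits. Note also that $x\in\S_x$ (since $\g_x\subset\g_x$) and $x\in\mu(M)$, so $\mu^{-1}(\S_x)$ is non-empty. Applying the holomorphic analogue of Theorem \ref{t:abs-symp-cr-sec} thus gives that $\mu^{-1}(\S_x)$ is a symplectic slice and a Hamiltonian $\G_{\S_x}^{\S_x}$-space, together with a symplectomorphism
\[ (\G_{\S_x}\times_{\S_x}\mu^{-1}(\S_x))/\G_{\S_x}^{\S_x}\overset{\cong}\longrightarrow G\mu^{-1}(\S_x) \]
of Hamiltonian $G$-spaces.

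To recognize the left-hand side as stated, I would use the left-trivialization $T^*G\cong G\times\g^*$ to identify $\G_{\S_x}=\sss^{-1}(\S_x)=G\times\S_x$. The fiber product over $\S_x$ with respect to $\sss$ and $\mu$ consists of pairs $((g,\xi),p)$ with $\xi=\mu(p)$, which collapses canonically to $G\times\mu^{-1}(\S_x)$ via the map $((g,\mu(p)),p)\mapsto(g,p)$. Substituting yields the stated isomorphism. For the $N_G(G_x)$ claim, I would check that $N_G(G_x)$ acts on $\S_x$: for $g\in N_G(G_x)$ and $y\in\S_x$ one has $\g_{\Ad_g(y)}=\Ad_g(\g_y)\subset\Ad_g(\g_x)=\g_x$, so $\Ad_g(y)\in\S_x$. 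The resulting action groupoid $N_G(G_x)\times\S_x\tto\S_x$ then embeds into $\G_{\S_x}^{\S_x}=\{(g,\xi)\in G\times\S_x:\Ad_g(\xi)\in\S_x\}$ as a Lie subgroupoid, since source, target, identity, and composition manifestly agree. Restricting the Hamiltonian $\G_{\S_x}^{\S_x}$-action on $\mu^{-1}(\S_x)$ along this inclusion yields a Hamiltonian $N_G(G_x)$-action with moment map $\mu\big\vert_{\mu^{-1}(\S_x)}$.

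The main obstacle is essentially bookkeeping: tracking the left-trivialization through the fiber product construction, and checking that the product-and-descend symplectic structure on $(\G_{\S_x}\times_{\S_x}\mu^{-1}(\S_x))/\G_{\S_x}^{\S_x}$ furnished by Theorem \ref{t:abs-symp-cr-sec} is the one suggested by the quotient description. This follows from the definition of the canonical symplectic form on $T^*G$ and the multiplicative property \eqref{e:multprop}, and requires no genuine new input beyond the two results already cited.
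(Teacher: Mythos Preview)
Your proposal is correct and follows essentially the same route as the paper: apply Theorem \ref{t:abs-symp-cr-sec} with $\G=T^*G$ and $S=\S_x$ (using Proposition \ref{Proposition: Semisimple case}), identify $\G_{\S_x}=G\times\S_x$ under the left trivialization so that the fiber product collapses to $G\times\mu^{-1}(\S_x)$, and then check that $N_G(G_x)$ preserves $\S_x$ to obtain the subgroupoid. The paper's proof is somewhat terser---it asserts $N_G(G_x)$-invariance of $\S_x$ without the explicit computation you give---but the logic is identical.
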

	\begin{proof}
		In this case, $\G_{\S_x}=G\times \S_x\subset G\times \g^*=T^*G$. It follows that $\G_{\S_x}\times_{\S_x}\mu^{-1}(\S_x)\cong G\times \mu^{-1}(\S_x)$. For the last claim, note that the source fiber of $\G_{\S_x}^{\S_x}$ at $y \in \S_x$ is $\{(g,y)\mid gy\in \S_x\}$. The latter contains $N_G(G_x)\times \{y\}$, as $N_G(G_x)$ preserves $\S_x$.
	\end{proof}
	
	\begin{remark}
		Beware that the map $G\times_{G_x}\S_x\longrightarrow G\S_x$ is often not injective. For example, suppose that $\t \subset \g_x$ is a Cartan subalgebra. Then $\t_\reg\coloneqq\t\cap\greg \subset \S_x$ is preserved by $N_G(\t)$, but also $N_G(\t)\subsetneqq G_x$ in general. It can even happen that $N_G(\t)\subsetneqq N_G(G_x)$, in which case $G\times_{N_G(G_x)}\S_x\rightarrow G\S_x$ also fails to be injective. One may remedy this situation as follows. Fix a fundamental domain $\mathfrak{d}\subset \t$ for the action of the Weyl group on $\t$; cf. \cite[Section 5.2]{crooks2019complex} and \cite[Section 2.2]{col-mcg:93}. Let
		\[ \S_x^{\mathfrak{d}}=\{y\in \S_x\mid y_\semi \in G_x\mathfrak{d}\}.\] 
		Arguing as in Lemma \ref{Lemma : K-slice}, one checks that $G\S_x^{\mathfrak{d}}=G\S_x$ and that $G\times_{G_x}\S_x^{\mathfrak{d}}\longrightarrow G\S_x^{\mathfrak{d}}$ is injective.
	\end{remark}
	
	\begin{remark}
		The restricted Lie groupoid $\G_{\S_x}^{\S_x}$ is typically not an action groupoid. On the other hand, $\G_{\S_x}^{\S_x}$ is nearly an action Lie groupoid in the following sense. Let $y \in \S_x$. The source fiber of $y$ in $\G_{\S_x}^{\S_x}$ can be identified with the subset
		\[ G_{\g_y\subset \g_x}\coloneqq\{g\in G\mid g(\g_y)\subset \g_x\}\]
		of $G$. Observe that $G_{\g_y\subset \g_x}$ is preserved under left multiplication by elements of $N_G(\g_x)=N_G(G_x)$. We claim that the quotient of $G_{\g_y\subset \g_x}$ by the $N_G(G_x)$-action is finite; the same will also be true for $G_x$ in place of $N_G(G_x)$ since $N_G(G_x)/G_x$ is finite and isomorphic to $N_W(W_x)/W_x$, where $W$ and $W_x$ are the Weyl groups of $G$ and $G_x$ respectively. To see this, let $\t\subset \g_x$ be a Cartan subalgebra containing $y_\semi$. Let $g \in G_{\g_y\subset \g_x}$ and choose $g' \in G_x$ such that $\Ad_{g'g}(\t)=\t$. Then $g'g\in G_{\g_y\subset \g_x}\subset G$, and it preserves $\t$. In other words, $g'g \in N_G(\t)$. It follows that $G_{\g_y\subset \g_x}/N_G(G_x)$ can be identified with a subset of $N_G(\t)/(N_G(G_x)\cap N_G(\t))\simeq W/N_W(W_x)$, a finite set.
	\end{remark}
	
	We now provide a description of $\S_x$ in combinatorial terms comparable to \cite[Remark 3.7]{ler-mei-tol-woo}. We begin by describing the semisimple elements of $\S_x$. Let $\t \subset \g_x$ be a Cartan subalgebra, and let $\Phi\subset\t^*$ denote the corresponding set of roots. For any $h \in \t$, let $\Phi_h\subset\Phi$ be the subset of roots that vanish on $h$; it is the root system of $\g_h$ with respect to $\t$.
	\begin{proposition}
		\label{p:Uxintersecth}
		Adopt the objects and notation of the previous paragraph. We have $$\S_x\cap \g_{\mathsf{semi}}=G_x(\S_x\cap \t)$$ and
		\[ \S_x \cap \t=\t\setminus\big(\bigcup_{\alpha \in \Phi\backslash \Phi_x}\ker(\alpha)\big). \]
	\end{proposition}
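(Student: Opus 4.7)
The plan is to reduce both identities to the root-space decomposition of $\g$ with respect to the Cartan subalgebra $\t$. A useful preliminary observation is that $x$ itself lies in $\t$: indeed $x$ is semisimple and lies in the reductive subalgebra $\g_x$, where it is central, and the centre of any reductive Lie algebra is contained in each of its Cartan subalgebras. Consequently $\t$ is a Cartan subalgebra of $\g$ as well as of $\g_x$, and the set $\Phi$ of roots of $\g$ with respect to $\t$ gives the decomposition
\[ \g=\t\oplus\bigoplus_{\alpha\in\Phi}\g_\alpha,\qquad \g_x=\t\oplus\bigoplus_{\alpha\in\Phi_x}\g_\alpha. \]

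For the second identity, I would apply the same root decomposition to $\g_h$ for $h\in\t$, giving $\g_h=\t\oplus\bigoplus_{\alpha\in\Phi_h}\g_\alpha$. Then $\g_h\subset\g_x$ is equivalent to the purely combinatorial condition $\Phi_h\subset\Phi_x$. Negating, this says that no $\alpha\in\Phi\setminus\Phi_x$ can satisfy $\alpha(h)=0$, i.e. $h\notin\bigcup_{\alpha\in\Phi\setminus\Phi_x}\ker(\alpha)$. By the definition $\S_x=\{y\in\g:\g_y\subset\g_x\}$, this is exactly the stated description of $\S_x\cap\t$.

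For the first identity, I would use that $\S_x\subset\g_x$ (immediate from the definition, since $y\in\g_y$), and that $\S_x$ is $G_x$-invariant by Proposition \ref{p:Tauvel-Yu-equiv-cond}. Given $y\in\S_x\cap\g_{\mathsf{semi}}$, we have $y\in\g_x$ and $y$ is semisimple in $\g$, which implies that $\ad_y$ is diagonalisable on $\g_x$, so $y$ is also semisimple in the reductive subalgebra $\g_x$. Standard conjugacy of Cartan subalgebras in a connected reductive Lie algebra then yields $g\in G_x^\circ$ with $\Ad_g(y)\in\t$; combined with $G_x$-invariance of $\S_x$, this gives the inclusion $\S_x\cap\g_{\mathsf{semi}}\subset G_x(\S_x\cap\t)$. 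The reverse inclusion is trivial since elements of $\t$ are semisimple and $\S_x$ is $G_x$-invariant.

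I do not anticipate a serious obstacle here; the only delicate points are the identification $x\in\t$ and confirming that Cartan-subalgebra conjugacy in $\g_x$ can be realised inside $G_x$ (rather than only in the connected complex algebraic group with Lie algebra $\g_x$), which is handled by the fact that $G_x^\circ\subset G_x$ is a connected reductive subgroup of $G$ integrating $\g_x$.
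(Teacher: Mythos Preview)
Your proposal is correct and follows essentially the same route as the paper: both arguments use the root-space decomposition to translate $\g_h\subset\g_x$ into $\Phi_h\subset\Phi_x$ for the second identity, and use conjugacy of semisimple elements of $\g_x$ into $\t$ together with the $G_x$-invariance of $\S_x$ for the first. You are slightly more explicit than the paper in justifying that $x\in\t$ (so that $\t$ is a Cartan subalgebra of $\g$, not just of $\g_x$) and in noting that the required conjugacy can be realized inside $G_x^\circ$; the paper takes both points for granted.
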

	\begin{proof}
		Since $\g_x$ is reductive with Cartan subalgebra $\t$, any element of $\S_x\cap \g_{\mathsf{semi}}$ is $G_x$-conjugate to some element $h \in \t$. As $\S_x$ is $G_x$-invariant, $h \in \S_x\cap \t$. It follows that $\S_x\cap\g_{\mathsf{semi}}=G_x(\S_x\cap \t)$. For the second claim, note that $h \in \S_x\cap \t$ if $h \in \t$ and $\g_h\subset \g_x$. We also have
		\[ \g_x=\t\oplus \bigoplus_{\alpha \in \Phi_x} \g_\alpha \quad\text{and}\quad \g_h=\t\oplus \bigoplus_{\alpha \in \Phi_h} \g_\alpha. \]
		For $h \in \t$, it follows that $\g_h\subset \g_x$ if and only if $\Phi_h\subset \Phi_x$. The latter condition is equivalent to the following: $h \notin \ker(\alpha)$ for each $\alpha \in \Phi\backslash \Phi_x$.
	\end{proof}
	
	We continue with the notation of the paragraph preceding Proposition \ref{p:Uxintersecth}. Choose a collection $\Delta \subset \Phi$ of simple roots. Each subset $I\subset \Delta$ determines a subset $$\Phi_I\coloneqq\tn{span}_{\mathbb{Z}}(\Delta)\cap \Phi$$ and Levi subalgebra
	\[ \g_I\coloneqq\t\oplus \bigoplus_{\alpha \in \Phi_I} \g_\alpha\]
	of $\g$. Note that $I$ is a choice of simple roots for $\g_I$. Every Levi subalgebra of $\g$ is conjugate to $\g_I$ for some subset $I\subset \Delta$. Set $\N_I\coloneqq\N_{\g_I}$, $W_I\coloneqq W_{G_I}$, and for $J\subset I$ set $\D^I_{J,\O}\coloneqq \D^{G_I}_{\g_J,\O}$. Up to conjugating by $G$, we may assume $\g_x=\g_I$ for a subset $I\subset \Delta$.
	
	\begin{corollary}
		\label{c:UxIJ}
		Suppose that $x\in\t$. Let $I\subset \Delta$ be the subset satisfying $\g_x=\g_I$. We have
		\[ \S_x=G_I\left(\bigcup_{J\subset I} \z(\g_J)_{\mathsf{gen}}+\N_J\right)=\bigcup_{J\subset I}\bigcup_{\O\subset \N_J} \D^I_{J,\O}\quad\text{and} \quad \S_x\cap \t=W_I \bigcup_{J\subset I}\z(\g_J)_{\mathsf{gen}},\]
		where $\D^I_{J,\O}=G_I(\z(\g_J)_{\mathsf{gen}}+\O)$.
	\end{corollary}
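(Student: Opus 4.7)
The plan is to derive both equalities by combining Proposition \ref{p:jordan}(ii) with the standard classification of Levi subalgebras of a reductive Lie algebra via subsets of simple roots. By Proposition \ref{p:jordan}(ii) applied with $\g_x=\g_I$,
$$\S_x=\bigcup_{\l}(\z(\l)_{\mathsf{gen}}+\N_{\l})=\bigcup_{[(\l,\O)]}\D^{G_I}_{\l,\O},$$
where $\l$ ranges over Levi subalgebras of $\g_I$ and $\O$ over nilpotent $L$-orbits in $\l$. A classical fact (see, e.g., \cite{Tauvel-Yu}) asserts that every Levi subalgebra of $\g_I$ is $G_I$-conjugate to the standard Levi $\g_J$ for some $J\subset I$. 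Since $\D^{G_I}_{\l,\O}$ depends only on the $G_I$-conjugacy class of $(\l,\O)$, this allows the union to be reindexed as $\bigcup_{J\subset I}\bigcup_{\O\subset \N_J}\D^I_{J,\O}$. Expanding $\D^I_{J,\O}=G_I(\z(\g_J)_{\mathsf{gen}}+\O)$ and pulling $G_I$ outside the double union yields $G_I\bigcup_{J\subset I}(\z(\g_J)_{\mathsf{gen}}+\N_J)$, giving the first chain of equalities.

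For the second equation, I argue two inclusions. For ``$\supset$'', note that $\z(\g_J)\subset\t$ for each $J\subset I$, since $\z(\g_J)$ is contained in any Cartan subalgebra of $\g_J$ and $\t$ is such a Cartan. If $h\in\z(\g_J)_{\mathsf{gen}}$, then $\g_h=\g_J\subset\g_I$, so $h\in\S_x$ by Proposition \ref{p:Tauvel-Yu-equiv-cond}. Hence $\bigcup_{J\subset I}\z(\g_J)_{\mathsf{gen}}\subset\S_x\cap\t$, and the $G_I$-invariance of $\S_x$ upgrades this to $W_I\bigcup_{J\subset I}\z(\g_J)_{\mathsf{gen}}\subset\S_x\cap\t$. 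For ``$\subset$'', let $h\in\S_x\cap\t$. Then $\g_h$ is a Levi subalgebra of $\g_I$ containing $\t$. A further standard fact asserts that any Levi of $\g_I$ containing $\t$ is $W_I$-conjugate to $\g_J$ for some $J\subset I$. Choosing $w\in W_I$ with $\Ad_{w^{-1}}(\g_h)=\g_J$ gives $w^{-1}h\in\z(\g_J)_{\mathsf{gen}}$, so $h\in W_I\z(\g_J)_{\mathsf{gen}}$.

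The main (mild) obstacle is keeping track of the two closely related classifications of Levi subalgebras: those of $\g_I$ up to $G_I$-conjugacy, and those of $\g_I$ containing $\t$ up to $W_I$-conjugacy. Both are parametrized by subsets $J\subset I$ and follow from standard root-system arguments; a single reference such as \cite{Tauvel-Yu} suffices. Everything else reduces to bookkeeping with the definitions of $\D^I_{J,\O}$ and $\z(\g_J)_{\mathsf{gen}}$.
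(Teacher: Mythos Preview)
Your proof is correct. The first chain of equalities is handled exactly as in the paper: invoke Proposition~\ref{p:jordan}(ii) and the fact that every Levi subalgebra of $\g_I$ is $G_I$-conjugate to a standard Levi $\g_J$ with $J\subset I$.

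For $\S_x\cap\t$ you take a slightly different route. The paper intersects the already-established formula $\S_x=G_I\big(\bigcup_{J\subset I}\z(\g_J)_{\mathsf{gen}}+\N_J\big)$ with $\t$, uses $\t\cap\N=\{0\}$ to strip the nilpotent part, and then invokes the standard fact that $(G_Ih)\cap\t=W_Ih$ for $h\in\t$. You instead argue both inclusions directly from the definition $\S_x=\{y:\g_y\subset\g_I\}$, using that Levi subalgebras of $\g_I$ containing $\t$ are $W_I$-conjugate to standard Levis. The two arguments rest on essentially equivalent facts about conjugacy via the Weyl group; the paper's version is marginally more economical because it reuses the first equality, while yours is self-contained for the second equality. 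Either is fine.
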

	\begin{proof}
		Let $\l\subset \g_x=\g_I$ be a Levi subalgebra. Then $\l$ is $G_I$-conjugate to $\g_J$ for some subset $J$ of the simple roots $I$ of $\g_I$. The descriptions of $\S_x$ now follow from equation \eqref{e:jordanUx}. For the intersection $\S_x\cap \t$, note that $\t\cap \N=\{0\}$. We therefore have
		\[ \S_x \cap \t=\bigg(G_I\bigcup_{J\subset I} \z(\g_J)_{\mathsf{gen}}\bigg)\cap \t=W_I \bigcup_{J\subset I}\z(\g_J)_{\mathsf{gen}},\]
		where we used the fact that $h \in \t$ implies $(G_Ih)\cap \t=W_Ih$.
	\end{proof}
	
	\begin{remark}
		We compare this briefly with the setting in \cite[Remark 3.7]{ler-mei-tol-woo}. Let $\k$ be the Lie algebra of a compact connected Lie group $K$, $\t\subset \k$ the Lie algebra of a maximal torus, and $\t_+$ the positive chamber corresponding to a choice of simple roots $\Delta\subset \Phi^{+}$. Let $J\subset \Delta$ be a subset of the simple roots, and $\k_J$ the Lie subalgebra consisting of $x\in \k$ such that $[x,t]=0$ for all $t\in \bigcap_{\alpha \in J}\ker(\alpha|_{\t})$. The direct analogue of $\z(\g_J)_{\mathsf{gen}}$ is the subset $\z(\k_J)_{\mathsf{gen}}\subset \z(\k_J)$ of elements with centralizer $\k_J$. Observe that $\z(\k_J)_{\mathsf{gen}}$ is not connected, being the complement in $\z(\k_J)$ of the finite collection of real hyperplanes. In \cite{ler-mei-tol-woo}, the authors choose the connected component
		\[ \sigma_J=\t_+\cap \z(\k_J)_{\mathsf{gen}}. \]
		Let $x \in \sigma_I\subset \t_+$. Since $\t_+$ is a fundamental domain for the action of the Weyl group $W$ on $\t$, one obtains a slice $K_x(\cup_{J\subset I}\sigma_J)$; see Section \ref{Subsection: Slices for compact group actions}. Definition \ref{Definition: Slice compact}(iii) implies that the restriction of the action Lie groupoid to this slice is simply the $K_x$-action groupoid.
	\end{remark}
	
	\subsection{The natural slice at a general element}
	Suppose that $x \in \g$. We have $x \in \S_{x_\semi}$, where $\S_{x_\semi}$ is defined in Section \ref{Subsection: The natural}. It follows that $\S_{x_\semi}$ is a $G_{x_\semi}$-invariant Poisson transversal containing $x$. One issue is that this transversal only depends on $x_{\semi}$. In what follows, we describe a modest refinement $\S_x$ of $\S_{x_\semi}$. The decomposition class of $x$ turns out to be minimal in the closure order amongst those decomposition classes that meet $\S_x$.
	
	Recall that
	\[ \S_{x_\semi}=\bigcup_{[(\l,\O)]} \D^{G_{x_\semi}}_{\l,\O} \quad\text{and}\quad \D^{G_{x_\semi}}_{\l,\O}=G_{x_\semi}(\z(\l)_{\mathsf{gen}}+\O),\]
	where $[(\l,\O)]$ ranges over the $G_{x_\semi}$-conjugacy classes of pairs $(\l,\O)$ of a Levi subalgebra $\l\subset \g_{x_\semi}$ and nilpotent orbit $\O\subset\N_{\l}$.

	\begin{definition}\label{Definition: Natural transversal}
		We define the \textit{natural slice} at $x\in\g$ to be
		\begin{equation} 
			\label{e:defUx}
			\S_x\coloneqq\bigcup_{[(\l,\O)], x\in \ol{\D^{G_{x_\semi}}_{\l,\O}}} \D^{G_{x_\semi}}_{\l,\O}\subset \S_{x_\semi}.
		\end{equation}
	\end{definition}
	
	\begin{proposition}\label{Proposition: Natural transversal}
		If $x\in\g$, then the natural slice $\S_x$ contains $x$ and is a $G_{x_\semi}$-invariant open subset of $\S_{x_\semi}$.
	\end{proposition}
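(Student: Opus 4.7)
\medskip

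\noindent\textbf{Proof plan.} The plan is to verify the three assertions — membership of $x$, $G_{x_\semi}$-invariance, and openness — in turn, using that $\S_{x_\semi}$ is a finite disjoint union of decomposition classes of $\g_{x_\semi}$ (Proposition \ref{p:jordan} applied to the semisimple element $x_\semi$).

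First, I would verify $x \in \S_x$. Since $\g_x = \g_{x_\semi} \cap \g_{x_\nilp} \subset \g_{x_\semi}$, the element $x$ lies in $\S_{x_\semi}$, so $x \in \D^{G_{x_\semi}}_{\l_0,\O_0}$ for a unique $G_{x_\semi}$-conjugacy class $[(\l_0,\O_0)]$. Because $x$ trivially lies in the closure of its own decomposition class, this pair appears in the indexing set of \eqref{e:defUx}, giving $x \in \S_x$. The $G_{x_\semi}$-invariance of $\S_x$ is immediate, since each $\D^{G_{x_\semi}}_{\l,\O}$ is $G_{x_\semi}$-invariant by construction and a union of invariant sets is invariant.

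The main step is openness. Let $I$ denote the set of $G_{x_\semi}$-conjugacy classes $[(\l,\O)]$ with $x \notin \overline{\D^{G_{x_\semi}}_{\l,\O}}$, so that
\[
\S_{x_\semi} \setminus \S_x \;=\; \bigsqcup_{[(\l,\O)] \in I} \D^{G_{x_\semi}}_{\l,\O}.
\]
I would establish the identity
\[
\S_{x_\semi} \setminus \S_x \;=\; \S_{x_\semi} \,\cap\, \bigcup_{[(\l,\O)] \in I} \overline{\D^{G_{x_\semi}}_{\l,\O}}.
\]
The inclusion ``$\subset$'' is clear. For ``$\supset$'', suppose $y \in \S_{x_\semi}$ lies in both $\D^{G_{x_\semi}}_{\l',\O'}$ and $\overline{\D^{G_{x_\semi}}_{\l,\O}}$ for some $[(\l,\O)] \in I$. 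Using that closures of decomposition classes are unions of decomposition classes \cite[Theorem 3.5.2]{Broer}, the intersection $\D^{G_{x_\semi}}_{\l',\O'} \cap \overline{\D^{G_{x_\semi}}_{\l,\O}}$ being non-empty forces $\D^{G_{x_\semi}}_{\l',\O'} \subset \overline{\D^{G_{x_\semi}}_{\l,\O}}$, and therefore $\overline{\D^{G_{x_\semi}}_{\l',\O'}} \subset \overline{\D^{G_{x_\semi}}_{\l,\O}}$. If $y$ were in $\S_x$, then $x \in \overline{\D^{G_{x_\semi}}_{\l',\O'}} \subset \overline{\D^{G_{x_\semi}}_{\l,\O}}$, contradicting $[(\l,\O)] \in I$. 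Since there are only finitely many decomposition classes in $\g_{x_\semi}$ \cite[Theorem 3.5.2]{Broer}, the union on the right is a finite union of closed sets, hence closed in $\g_{x_\semi}$; its intersection with $\S_{x_\semi}$ is therefore closed in $\S_{x_\semi}$, and $\S_x$ is open in $\S_{x_\semi}$.

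The only possible obstacle is the ``$\supset$'' direction above, which is really a statement about the closure order on decomposition classes. Everything else is either a direct consequence of Proposition \ref{p:jordan} or an elementary invariance observation, so the argument should be short and self-contained once that closure-order step is in place.
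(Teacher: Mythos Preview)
Your overall strategy is sound and, modulo a small identification issue, it is essentially the same closure-order argument the paper gives (the paper phrases it as ``each $\D^{G_{x_\semi}}_{\l,\O}\subset \S_x$ has an open neighbourhood inside $\S_x$'' rather than ``the complement is closed'', but both rest on transitivity of the closure relation).

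The one point that needs tightening is your opening claim that ``$\S_{x_\semi}$ is a finite disjoint union of decomposition classes of $\g_{x_\semi}$''. This is not literally true: in the paper's convention $\z(\l)_{\mathsf{gen}}$ is always taken with respect to the ambient $\g$, not $\g_{x_\semi}$, so $\D^{G_{x_\semi}}_{\l,\O}=G_{x_\semi}(\z(\l)_{\mathsf{gen}}+\O)$ is in general a \emph{proper} open subset of the $G_{x_\semi}$-decomposition class $\E_{\l,\O}$ in $\g_{x_\semi}$. Consequently your appeal to \cite[Theorem~3.5.2]{Broer} for the implication ``$\D^{G_{x_\semi}}_{\l',\O'}\cap \overline{\D^{G_{x_\semi}}_{\l,\O}}\neq\emptyset\Rightarrow \D^{G_{x_\semi}}_{\l',\O'}\subset \overline{\D^{G_{x_\semi}}_{\l,\O}}$'' is not immediate.

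The fix is short. For $y\in \S_{x_\semi}$ one has $\g_y\subset \g_{x_\semi}$, hence $(\g_{x_\semi})_y=\g_y$; it follows that $\D^{G_{x_\semi}}_{\l,\O}=\E_{\l,\O}\cap \S_{x_\semi}$. Since $\E_{\l,\O}$ is irreducible and $\S_{x_\semi}$ is open in $\g_{x_\semi}$, this intersection (when nonempty) is dense in $\E_{\l,\O}$, so $\overline{\D^{G_{x_\semi}}_{\l,\O}}=\overline{\E_{\l,\O}}$. Now Broer's result, applied to the connected reductive group $G_{x_\semi}$, gives exactly the closure-order step you need, and the rest of your argument goes through verbatim.
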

	\begin{proof}
		It is clear that $\S_x$ is $G_{x_\semi}$-invariant. Since $x_\semi \in \z(\g_{x_\semi})_{\mathsf{gen}}$ and $x_\nilp \in \O_{x_\nilp}=G_{x_\semi}x_{\nilp}$, we have $x=x_\semi+x_\nilp\in \z(\g_{x_\semi})_{\mathsf{gen}}+\O_{x_\nilp}=\D^{G_{x_\semi}}_{\g_{x_\semi},\O_{x_\nilp}}\subset \S_x$. To see that $\S_x$ is open in $\g_{x_\semi}$, let $\D^{G_{x_\semi}}_{\l,\O}\subset \S_x$ be one of the subsets appearing on the right-hand side of \eqref{e:defUx}. We claim that the union $\C_{\l,\O}$ of all $\D^{G_{x_\semi}}_{\l',\O'}$ satisfying $\D^{G_{x_\semi}}_{\l,\O}\subset \ol{\D^{G_{x_\semi}}_{\l',\O'}}$ contains an open neighborhood of $\D^{G_{x_\semi}}_{\l,\O}$. If not, there is a sequence in the complement of $\C_{\l,\O}$ converging to a point in $\D^{G_{x_\semi}}_{\l,\O}$. As there are finitely many subsets $\D^{G_{x_\semi}}_{\l'',\O''}$ in total, we can pass to a convergent subsequence and obtain a sequence contained in a single $\D^{G_{x_\semi}}_{\l'',\O''}$ that converges to a point in $\D^{G_{x_\semi}}_{\l,\O}$. It follows that $\ol{\D^{G_{x_\semi}}_{\l'',\O''}} \supset \D^{G_{x_\semi}}_{\l,\O}$. This contradicts $\D^{G_{x_\semi}}_{\l'',\O''}\cap \C_{\l,\O}=\emptyset$, proving the claim.
		
		In light of the above, let $(\l',\O')$ be as above. We have
		\[ \D^{G_{x_\semi}}_{\g_{x_\semi},\O_{x_\nilp}}\subset \ol{\D^{G_{x_\semi}}_{\l,\O}}\subset \ol{\D^{G_{x_\semi}}_{\l',\O'}}, \]
		and hence $\D^{G_{x_\semi}}_{\l',\O'}\subset \S_x$. It follows that $\S_x$ contains an open neighborhood of $\D^{G_{x_\semi}}_{\l,\O}$ for any $\D^{G_{x_\semi}}_{\l,\O}\subset \S_x$, and so $\S_x$ is open.
	\end{proof}
	
	\begin{corollary}\label{Corollary: Poisson slices}
		If $x\in\g$, then $\S_x$ is a Poisson slice in $\g$. 
	\end{corollary}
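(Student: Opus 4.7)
The plan is to deduce Corollary \ref{Corollary: Poisson slices} directly from the two preceding results, together with the standard fact (used already in the proof of the corollary following Proposition \ref{Proposition: Slodowy Poisson transversal}) that the symplectic leaves of the canonical Poisson structure on $\g$ are exactly the adjoint orbits of $G$.

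First I would combine Proposition \ref{Proposition: Natural transversal}, which states that $\S_x$ is an open subset of $\S_{x_\semi}$, with Proposition \ref{Proposition: Semisimple case} applied to the semisimple element $x_\semi$, which gives that $\S_{x_\semi}$ is a Poisson transversal in $\g$. Next, I would note that being a Poisson transversal is a pointwise condition: it only requires that at each $p\in S$ one has $T_p\g=T_p(Gp)+T_pS$ and that $Gp\cap S$ be symplectic in $Gp$ near $p$. Consequently, an open subset of a Poisson transversal is again a Poisson transversal, so that $\S_x\subset \S_{x_\semi}\subset\g$ is itself a Poisson transversal.

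Finally, I would promote this to the statement that $\S_x$ is a Poisson slice by verifying condition (ii) of Definition \ref{Definition: Slice}. Under the $G$-equivariant identification $\g\cong\g^*$ from Section \ref{Subsection: Complex reductive algebraic groups}, the symplectic leaves of $\g$ are precisely the adjoint $G$-orbits. Thus the Poisson-transversal condition $T_p\g=T_p(L_p)+T_p(\S_x)$ coincides with transversality of $\S_x$ to the $G$-orbits, which is exactly what is required.

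There is no genuine obstacle here; the result is a formal consequence of Propositions \ref{Proposition: Semisimple case} and \ref{Proposition: Natural transversal} together with the coincidence of symplectic leaves and adjoint orbits. The only small point worth emphasizing, and which I would record explicitly in the write-up, is that openness of $\S_x$ in $\S_{x_\semi}$ (rather than in $\g$) is enough to transfer the Poisson-transversal property, because that property is local along $S$.
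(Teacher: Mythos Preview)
Your proposal is correct and follows exactly the same approach as the paper, which simply states that the result follows immediately from Propositions \ref{Proposition: Semisimple case} and \ref{Proposition: Natural transversal}. You have merely made explicit the two implicit steps the paper leaves to the reader: that an open subset of a Poisson transversal is again a Poisson transversal, and that in $\g$ a Poisson transversal is automatically a Poisson slice because the symplectic leaves coincide with the adjoint $G$-orbits.
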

	
	\begin{proof}
		This follows immediately from Propositions \ref{Proposition: Semisimple case} and \ref{Proposition: Natural transversal}.
	\end{proof}
	
	Since $G\D^{G_{x_\semi}}_{\l,\O}=\D_{\l,\O}$, the $G$-saturation of $\S_x$ is
	\begin{equation} 
		\label{e:GUx}
		G\S_x=\bigcup_{[(\l,\O)], x\in \ol{\D^{G_{x_\semi}}_{\l,\O}}} \D_{\l,\O},
	\end{equation}
	a union of $G$-decomposition classes. Let $\D_x=G(\z(\g_{x_\semi})_{\mathsf{gen}}+x_\nilp)=G(\z(\g_{x_\semi})_{\mathsf{gen}}+\O_{x_\nilp})$ be the decomposition class of $x$, where $\O_{x_\nilp}\coloneqq G_{x_\semi}x_\nilp$.
	\begin{proposition}
		\label{p:decompcontainDx}
		If $x\in\g$, then the following statements are true:
		\begin{itemize}
			\item[\textup{(i)}] $G\S_x$ is an open subset of $\g$;
			\item[\textup{(ii)}] $G\S_x$ is equal to the union of those $G$-decomposition classes whose closure contains $\D_x$.
		\end{itemize}
	\end{proposition}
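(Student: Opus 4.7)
The plan is to derive (i) from the general openness of $G$-saturations of Poisson slices, and to prove (ii) by combining equation \eqref{e:GUx} with a transversality argument based on the Poisson slice $\S_{x_\semi}$ at the semisimple part $x_\semi$.

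For (i), Corollary \ref{Corollary: Poisson slices} identifies $\S_x$ as a Poisson slice in $\g$, and in Section \ref{Subsection: Poisson slices new} we established that the $G$-saturation of any Poisson slice is open. Hence $G\S_x$ is open in $\g$.

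For (ii), let $V$ denote the union of those decomposition classes whose closure contains $\D_x$. The forward containment $G\S_x \subset V$ is immediate from \eqref{e:GUx}: if $\D_{\l,\O}$ appears in the union indexing $G\S_x$, then $x \in \ol{\D^{G_{x_\semi}}_{\l,\O}} \subset \ol{\D_{\l,\O}}$, and since closures of decomposition classes are unions of decomposition classes (cf. Section \ref{Subsection: Decomposition classes}), we have $\D_x \subset \ol{\D_{\l,\O}}$. For the reverse containment $V \subset G\S_x$, I fix a decomposition class $\D$ with $\D_x \subset \ol{\D}$. Since both $\D$ and $G\S_x$ are unions of decomposition classes, it suffices to produce a single point of $\D \cap \S_x$. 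As $x \in \ol{\D}$, there exists a sequence $y_n \in \D$ with $y_n \to x$ in the Euclidean topology; this is legitimate in the algebraic category because $\D$ is constructible and so its Euclidean and Zariski closures agree. By Proposition \ref{Proposition: Semisimple case}, $\S_{x_\semi}$ is a Poisson slice, so $G\S_{x_\semi}$ is an open neighborhood of $x$, and for $n$ sufficiently large $y_n \in G\S_{x_\semi}$. The action map $G \times \S_{x_\semi} \to G\S_{x_\semi}$ is a submersion (as transversality of $\S_{x_\semi}$ to the $G$-orbits shows its differential is surjective), so a local section at $(e,x)$ yields a factorization $y_n = g_n y'_n$ with $g_n \to e$ in $G$ and $y'_n \to x$ in $\S_{x_\semi}$. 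By $G$-invariance of $\D$, each $y'_n$ lies in $\D$; and since $\S_x$ is open in $\g_{x_\semi}$ by Proposition \ref{Proposition: Natural transversal} and contains $x$, we have $y'_n \in \S_x \cap \D$ for $n$ large.

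The main technical step requiring care is the local section argument in the algebraic setting. This can be handled either by working throughout in the Euclidean topology (legitimate because all relevant sets are constructible and their Euclidean and Zariski closures coincide), or by invoking the étale-local form of the slice theorem provided by Corollary \ref{c:slice-semisimple}, which already identifies $G\S_{x_\semi}$ with a quotient of $G \times \S_{x_\semi}$ by a free groupoid action. A secondary verification is that the forward inclusion genuinely uses only \eqref{e:GUx} together with the fact that closures of decomposition classes are unions of decomposition classes; no additional closure-order analysis is needed.
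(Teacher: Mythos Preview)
Your proof is correct. Part (i) and the forward inclusion in (ii) match the paper's argument essentially verbatim. For the reverse inclusion in (ii), however, you take a noticeably longer route than the paper. The paper simply observes that $G\S_x$ is open (by (i)) and contains $\D_x$ (since $(\g_{x_\semi},\O_{x_\nilp})$ appears in the index set of \eqref{e:GUx}); hence any decomposition class $\D$ with $\D_x\subset\ol{\D}$ must meet the open set $G\S_x$, and since $G\S_x$ is a union of decomposition classes by \eqref{e:GUx}, this forces $\D\subset G\S_x$. Your sequence-and-local-section argument through $\S_{x_\semi}$ reaches the same conclusion but does more work: it effectively reproves a local version of (i) rather than using (i) directly. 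The payoff of the paper's approach is that (i) feeds straight into (ii) with no further analytic input, avoiding the Euclidean/Zariski closure discussion and the local section machinery altogether.
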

	\begin{proof}
		It is clear that the decomposition classes appearing on the right-hand side of \eqref{e:GUx} contain $\D_x$ in their closures, as each of the subsets $\D^{G_\semi}_{\l,\O}$ already contains $x$ in its closure. Since $T_u(G\S_x)=[\g,u]+\g_{x_\semi}=\g$ for any $u\in \S_x\subset \S_{x_\semi}$, $G\S_x\subset \g$ is open. Noting that $G\S_x$ also contains $\D_x$, $G\S_x$ is an open neighborhood of $\D_x$. On the other hand, suppose that $\D$ is a decomposition class such that $\ol{\D}\supset \D_x$. The openness of $G\S_x$ in $\g$ implies that $\D\cap G\S_x \ne \emptyset$. As $G\S_x$ is a disjoint union of decomposition classes, $\D$ must be one of the decomposition classes appearing on the right-hand side of \eqref{e:GUx}.
	\end{proof}
	Applying Theorem \ref{t:abs-symp-cr-sec} to the transversal $\S_x$, we obtain the following result.
	\begin{corollary}
		\label{c:slice-generalUx}
		Let $(M,\omega,\mu)$ be a holomorphic Hamiltonian $G$-space and $x \in \mu(M)$. The following statements are true:
		\begin{itemize}
			\item[\textup{(i)}] $\mu^{-1}(\S_x)\subset M$ is a symplectic slice and Hamiltonian $\G_{\S_x}^{\S_x}$-space;
			\item[\textup{(ii)}] there is an isomorphism of Hamiltonian $G$-spaces
			\[ (G\times \mu^{-1}(\S_x))/\G_{\S_x}^{\S_x}\longrightarrow G\mu^{-1}(\S_x);\]
			\item[\textup{(iii)}] the action groupoid $G_{x_\semi}\times \S_x\tto \S_x$ is a subgroupoid of $\G_{\S_x}^{\S_x}$, and $\mu^{-1}(\S_x)$ is a Hamiltonian $G_{x_\semi}$-space.
		\end{itemize}
		
	\end{corollary}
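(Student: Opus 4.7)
The plan is to apply the abstract symplectic slice theorem (Theorem \ref{t:abs-symp-cr-sec}) directly to the situation at hand, with the symplectic groupoid $\G = T^*G \tto \g$ and the Poisson slice $S = \S_x$. The first step is to recast the holomorphic Hamiltonian $G$-space $(M,\omega,\mu)$ as a Hamiltonian $T^*G$-space via the canonical bijection described in Example \ref{Example: Recasting}; this places us squarely in the setting of Theorem \ref{t:abs-symp-cr-sec}. Next, I would invoke Corollary \ref{Corollary: Poisson slices}, which asserts that $\S_x \subset \g$ is a Poisson slice. With these two ingredients in place, parts (i) and (ii) of Theorem \ref{t:abs-symp-cr-sec} yield that $\mu^{-1}(\S_x)$ is a symplectic slice and a Hamiltonian $\G_{\S_x}^{\S_x}$-space, and that $\G\mu^{-1}(\S_x)$ is open in $M$, giving our claim (i).

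For claim (ii), I would use the fact that under the left-trivialized identification $T^*G \cong G \times \g$, the source fiber $\sss^{-1}(\S_x)$ is simply $G \times \S_x$. Consequently the fiber product $\G_{\S_x} \times_{\S_x} \mu^{-1}(\S_x)$ reduces to $G \times \mu^{-1}(\S_x)$. Part (iii) of Theorem \ref{t:abs-symp-cr-sec} then directly produces the desired symplectomorphism of Hamiltonian $G$-spaces
\[ (G \times \mu^{-1}(\S_x))/\G_{\S_x}^{\S_x} \overset{\cong}\longrightarrow G\mu^{-1}(\S_x), \]
with the symplectic structure on the left obtained from the ambient product structure as in the abstract statement.

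For claim (iii), I would identify $\G_{\S_x}^{\S_x}$ inside $G \times \g$ with the set of pairs $(g,y)$ satisfying $y \in \S_x$ and $\Ad_g(y) \in \S_x$. By Proposition \ref{Proposition: Natural transversal}, $\S_x$ is $G_{x_\semi}$-invariant, so $G_{x_\semi} \times \S_x$ is contained in this set. Checking that this subset is closed under the cotangent groupoid multiplication $(g_1,y_1)(g_2,y_2) = (g_1 g_2, y_2)$ (applicable when $\Ad_{g_2}(y_2) = y_1$) is immediate from the group structure of $G_{x_\semi}$, giving the subgroupoid inclusion. The restriction of the Hamiltonian $\G_{\S_x}^{\S_x}$-action established in (i) to this subgroupoid is precisely a Hamiltonian $G_{x_\semi}$-action on $\mu^{-1}(\S_x)$, with moment map $\mu\big\vert_{\mu^{-1}(\S_x)}$; here the multiplicativity property \eqref{e:multprop} pulled back to $G_{x_\semi} \times \mu^{-1}(\S_x)$ directly certifies the Hamiltonian condition.

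The argument is essentially a formal consequence of the abstract framework, so there is no substantive obstacle. The only point warranting genuine care is the verification in part (iii) that $G_{x_\semi} \times \S_x$ truly lives inside $\G_{\S_x}^{\S_x}$ as a subgroupoid (not merely a subset) — this is where the $G_{x_\semi}$-invariance of $\S_x$ from Proposition \ref{Proposition: Natural transversal}, rather than some weaker invariance property tied only to $x$ itself, plays its essential role.
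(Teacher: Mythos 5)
Your proposal is correct and follows essentially the same route as the paper: the paper obtains this corollary by applying Theorem \ref{t:abs-symp-cr-sec} to the Poisson slice $\S_x$ (Corollary \ref{Corollary: Poisson slices}), identifying $\G_{\S_x}=G\times\S_x$ under the left trivialization so that the fiber product becomes $G\times\mu^{-1}(\S_x)$, and using the $G_{x_\semi}$-invariance of $\S_x$ from Proposition \ref{Proposition: Natural transversal} to realize $G_{x_\semi}\times\S_x\tto\S_x$ as a subgroupoid of $\G_{\S_x}^{\S_x}$, exactly as in its proof of the semisimple case (Corollary \ref{c:slice-semisimple}). No gaps.
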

	\begin{remark}
		\label{r:specialtoprin}
		Let $(M,\omega,\mu)$ be a connected, non-empty holomorphic Hamiltonian $G$-space. When $x \in \D_M$, the description of $\mu^{-1}(\S_x)$ simplifies: it follows from Propositions \ref{Proposition: Characterizations}, \ref{Proposition: Holomorphic characterizations}, and \ref{p:decompcontainDx} that $\mu^{-1}(G_{x_\semi}(\z(\l)_{\mathsf{gen}}+\O))=\emptyset$ for any $(\l,\O)$ appearing in \eqref{e:defUx}, other than $(\g_{x_\semi},\O_{x_\nilp})$ itself. Hence $\mu^{-1}(\S_x)=\mu^{-1}(\z(\g_{x_\semi})_{\mathsf{gen}}+\O_{x_{\nilp}})$.
	\end{remark}
	
	\subsection{A root system-theoretic description of $\S_x$}\label{sec:root system description}
	Suppose that $x\in\g$. We now provide a description of $\S_x$ in more combinatorial terms, as in Corollary \ref{c:UxIJ}. Fix a Cartan subalgebra $\t\subset \g_{x_\semi}$ and choice of simple roots $\Delta\subset \Phi$ with $\g_{x_\semi}=\g_I$ for $I\subset\Delta$, as in Corollary \ref{c:UxIJ}. Let $\Phi_I,\Phi_+,\Phi_{I,+}=\Phi_I\cap \Phi_+$ denote the roots of $\g_I$, positive roots of $\g$, and positive roots of $\g_I$, respectively. We have 
	\[ \S_x=\bigcup_{J\subset I}\bigcup_{x\in \ol{\D^I_{J,\O}}} \D^I_{J,\O}.\]
	
	A brief digression on \textit{Lusztig--Spaltenstein induction} is necessary; see \cite[Section 7.1]{col-mcg:93}, \cite[Section 4]{losev2022deformations} for more details on this procedure. To this end, $M$ be a connected complex reductive algebraic group with Lie algebra $\mathfrak{m}$. Consider a Levi subalgebra $\mathfrak{l}\subset\mathfrak{m}$ and nilpotent orbit $\mathcal{O}\subset\mathfrak{l}$. Choose a parabolic subalgebra $\mathfrak{p}\subset\mathfrak{m}$ that contains $\mathfrak{l}$ as a Levi factor, and let $P\subset M$ be the parabolic subgroup integrating $\mathfrak{p}$. The $M$-saturation $M(\overline{\mathcal{O}}+\mathfrak{u}(\p))\subset\mathfrak{m}$ is equal to the closure of a unique nilpotent orbit $\mathrm{Ind}_{\mathfrak{l}}^{\mathfrak{m}}(\mathcal{O})\subset\mathfrak{m}$. This orbit does not depend on the choice of $\p$ \cite[Lemma 4.1]{losev2022deformations}, and is called the \textit{nilpotent orbit induced by $(\mathfrak{l},\mathcal{O})$}.
	
	The previous two paragraphs give context for the following result.
	\begin{proposition}
		\label{Proposition: Uxcombinatorial}
		Suppose that $x\in\g$. The natural slice $\S_x$ is the union
		\begin{equation} 
			\label{e:IJUx}
			\S_x=\bigcup_{J\subset I}\sideset{}{'}\bigcup_{\O \subset \N_J}G_I(\z(\g_J)_{\mathsf{gen}}+\O) 
		\end{equation}
		where the prime $'$ indicates that the union runs over nilpotent $G_J$-orbits $\O$ such that the induced nilpotent $G_I$-orbit $\tn{Ind}_{\g_J}^{\g_I}(\O)$ contains $\O_{x_\nilp}\coloneqq G_Ix_\nilp$ in its closure.
	\end{proposition}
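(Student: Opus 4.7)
The plan is to combine the combinatorial description of $\S_x$ from Corollary \ref{c:UxIJ} with the Borho--Kraft description of the closure order on decomposition classes. By Proposition \ref{Proposition: Natural transversal} and the definition \eqref{e:defUx}, $\S_x$ is the union of those $G_{x_\semi}$-decomposition classes $\D^{G_{x_\semi}}_{\l,\O}$ whose closure contains $x$. Using $\g_{x_\semi}=\g_I$ and the fact that every Levi subalgebra of $\g_I$ is $G_I$-conjugate to $\g_J$ for some $J\subset I$, one can rewrite this as
\[ \S_x=\bigcup_{J\subset I}\bigcup_{\substack{\O\subset \N_J\\ x\in \ol{\D^I_{J,\O}}}} \D^I_{J,\O}. \]
It therefore suffices to show that, for $J\subset I$ and a nilpotent $G_J$-orbit $\O\subset\N_J$,
\[ x\in\ol{\D^I_{J,\O}}\quad \Longleftrightarrow\quad \O_{x_\nilp}\subset\ol{\tn{Ind}_{\g_J}^{\g_I}(\O)}. \]

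To establish this equivalence, I would invoke the Borho--Kraft characterization of the closure order (see \cite{bor-kra:79} and \cite[Section 3.5]{Broer}): for a connected reductive group $M$ with Lie algebra $\mathfrak{m}$, and decomposition classes $\D^M_{\l_1,\O_1},\D^M_{\l_2,\O_2}$ in $\mathfrak{m}$, one has $\D^M_{\l_2,\O_2}\subset\ol{\D^M_{\l_1,\O_1}}$ iff there exists $m\in M$ with $\Ad_m(\l_1)\subset \l_2$ and $\Ad_m(\O_2)\subset \ol{\tn{Ind}_{\Ad_m(\l_1)}^{\l_2}(\Ad_m(\O_1))}$ as nilpotent orbits in $\l_2$. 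One then applies this inside $\g_I$ with $(\l_1,\O_1)=(\g_J,\O)$ and $(\l_2,\O_2)=(\g_I,\O_{x_\nilp})$, noting that $x\in \D^I_{\g_I,\O_{x_\nilp}}$, so $x\in \ol{\D^I_{J,\O}}$ is equivalent to $\D^I_{\g_I,\O_{x_\nilp}}\subset\ol{\D^I_{J,\O}}$ (since the closure of a decomposition class is a union of decomposition classes). The Levi containment $\g_J\subset \g_I$ is automatic, so one may take $m=e$. Since $\tn{Ind}_{\g_J}^{\g_I}(\O)$ is $G_I$-invariant and independent of the choice of parabolic, the criterion simplifies to $\O_{x_\nilp}\subset\ol{\tn{Ind}_{\g_J}^{\g_I}(\O)}$.

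The main obstacle is pinning down the Borho--Kraft closure description in precisely this form; the literature states it in several equivalent guises, so care is needed in aligning notation. Should a clean citation not be available, I would give a direct argument along the following lines. For the implication ($\Leftarrow$), pick a parabolic $\p_J\subset\g_I$ with Levi factor $\g_J$ and invoke the parabolic presentation $\ol{\tn{Ind}_{\g_J}^{\g_I}(\O)}=G_I\cdot (\ol{\O}+\mathfrak{u}(\p_J))$. Since $x_\semi\in\z(\g_I)\subset\z(\g_J)$ and $\z(\g_J)_{\mathsf{gen}}$ is open dense in $\z(\g_J)$, one constructs sequences $g_n(h_n+e_n+u_n)\to x$ with $h_n\in \z(\g_J)_{\mathsf{gen}}\to x_\semi$, $e_n\in\O$, and $u_n\in\mathfrak{u}(\p_J)$ chosen so that the $G_I$-nilpotent part of the limit lies in $\O_{x_\nilp}$; after conjugating by a suitable element of $P_J$ the perturbation absorbs into $\O+\z(\g_J)_{\mathsf{gen}}$, placing the approximants in $\D^I_{J,\O}$. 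For the implication ($\Rightarrow$), given a convergent sequence $y_n\in\D^I_{J,\O}$ with $y_n\to x$, extract Jordan decompositions, use semicontinuity of stabilizers to control the semisimple parts, and identify the nilpotent part of the limit with an element of $\ol{\tn{Ind}_{\g_J}^{\g_I}(\O)}$ using the parabolic description once more.
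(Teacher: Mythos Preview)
Your reduction to the equivalence
\[
x\in\ol{\D^I_{J,\O}}\quad\Longleftrightarrow\quad \O_{x_\nilp}\subset\ol{\tn{Ind}_{\g_J}^{\g_I}(\O)}
\]
is exactly what the paper isolates as well, so the overall strategy coincides. The difference lies in how this equivalence is established. Rather than invoking a general Borho--Kraft closure-order criterion, the paper computes the closure directly: one has
\[
\ol{\D^I_{J,\O}}=G_I\bigl(\z(\g_J)+\ol{\O}+\n_{I,J}\bigr),
\]
with $\n_{I,J}$ the nilradical of the standard parabolic $\p_{I,J}\subset\g_I$ with Levi $\g_J$ (the argument parallels \cite[Section 39.2.2]{Tauvel-Yu}). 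Since $\z(\g_I)\subset\z(\g_J)$, the inclusion $\z(\g_I)_{\mathsf{gen}}+\O_{x_\nilp}\subset\ol{\D^I_{J,\O}}$ is equivalent to $\O_{x_\nilp}\cap(\ol{\O}+\n_{I,J})\ne\emptyset$, and by definition $G_I(\ol{\O}+\n_{I,J})=\ol{\tn{Ind}_{\g_J}^{\g_I}(\O)}$. This is precisely the computation that underlies the Borho--Kraft criterion you want to cite, so your approach is the same content wrapped in a black box. The paper's route is slightly more self-contained and avoids the issue you flag about pinning down the exact literature statement; your formulation of the Borho--Kraft criterion also has a small glitch (writing $\Ad_m(\O_2)$ when $\O_2$ lives in $\l_2$, which $\Ad_m$ need not preserve), though in the application at hand with $\l_2=\g_I$ this is harmless. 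Your fallback sequence argument would work but is essentially a rederivation of the closure formula above; the paper's one-line use of that formula is cleaner.
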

	\begin{proof}
		We have
		\[ \ol{\D^I_{J,\O}}=G_I(\z(\g_J)+\ol{\O}+\n_{I,J}), \]
		where $\n_{I,J}$ is the nilpotent radical of 
		\[ \p_{I,J}\coloneqq\g_J\oplus \bigoplus_{\alpha \in \Phi_{I,+}\backslash \Phi_J} \g_\alpha, \] 
		the standard parabolic of $\g_I$ containing $\g_J$; the proof is similar to \cite[Section 39.2.2]{Tauvel-Yu}. Since $J\subset I$ implies that $\z(\g_I)\subset \z(\g_J)$, the condition $\ol{\D^I_{J,\O}}\supset \D^I_{I,\O_{x_\nilp}}=\z(\g_I)_{\mathsf{gen}}+\O_{x_\nilp}$ amounts to
		\[ (\ol{\O}+\n_{I,J})\cap \O_{x_\nilp} \ne \emptyset.\]
		The result follows, as $\ol{\O}+\n_{I,J}$ is the closure of $\tn{Ind}_{\g_J}^{\g_I}(\O)$.
	\end{proof}
	
	\subsection{The complementary slice at a general element}
	The transversal $\S_x$ is $G_x$-invariant (even $G_{x_\semi}$-invariant), and yet generally does not satisfy $T_x\S_x\cap T_x(Gx)=\{0\}$. We now describe a transversal $\S_{x,\mathfrak{T}}$ that satisfies the latter property; it is not $G_x$-invariant in general. Note that when $G_x$ is not reductive, the tangent space $T_x(Gx)$ need not have a $G_x$-invariant complement.
	
	Suppose that $x \in \g$. Let $\mathfrak{T}=(e,h,f)\in [\g_{x_\semi},\g_{x_\semi}]^{\times 3}$ be an $\mathfrak{sl}_2$-triple with $e=x_{\nilp}$. Let $G_{x,\mathfrak{T}}\subset G_{x_\semi}$ be the subgroup fixing each of $e,h,f$; it is reductive \cite[Lemma 3.7.3]{col-mcg:93}. By Proposition \ref{Proposition: Slodowy Poisson transversal}, the $G_{x,\mathfrak{T}}$-invariant affine subspace 
	\[ \S_{\mathfrak{T}}\coloneqq e+(\g_{x_\semi})_f\subset \g_{x_\semi} \] is a Poisson transversal in $\g_{x_\semi}$. We also have
	\begin{equation}
		\label{e:Axproperties}
		\S_{\mathfrak{T}}\cap G_{x_\semi}x_\nilp=\{x_\nilp\}\quad\text{and}\quad T_{x_\nilp}\S_{\mathfrak{T}}\oplus [\g_{x_\semi},x_\nilp]=\g_{x_{\semi}};
	\end{equation}
	cf. \cite[Section 3.7]{chr-gin}.
	Since $f \in \g_{x_\semi}$, $\z(\g_{x_\semi})\subset (\g_{x_\semi})_f$. It follows that $\S_{\mathfrak{T}}$ is invariant under translation by $\z(\g_{x_\semi})$, implying that $x=x_\semi+x_\nilp \in \S_{\mathfrak{T}}$. One further implication is that
	\begin{equation}
		\label{e:Axproperties2}
		\S_{\mathfrak{T}}\cap G_{x_\semi}x=\{x\}\quad\text{and}\quad T_x\S_{\mathfrak{T}}\oplus [\g_{x_\semi},x]=\g_{x_{\semi}}.
	\end{equation}
	Recall the \textit{natural slice} $\S_x\subset \g_{x_\semi}$ introduced in Definition \ref{Definition: Natural transversal}.
	
	\begin{definition}\label{Definition: Transversal}
		Suppose that $x\in\g$. Consider an $\sl_2$-triple $\mathfrak{T}=(e,h,f)\in[\g_{x_\semi},\g_{x_\semi}]^{\times 3}$ with $e=x_\nilp$. We define the \textit{complementary slice} associated to $x$ and $\mathfrak{T}$ to be $$\S_{x,\mathfrak{T}}\coloneqq \S_x\cap\S_{\mathfrak{T}}.$$
	\end{definition}
	
	\begin{corollary}\label{Corollary: Nice}
		If $x$ and $\mathfrak{T}$ are as in Definition \ref{Definition: Transversal}, then $\S_{x,\mathfrak{T}}$ is a Poisson slice in $\g$.	
	\end{corollary}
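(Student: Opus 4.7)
The plan is to verify directly that $\S_{x,\mathfrak{T}}$ is a Poisson transversal in $\g$; since the symplectic leaves of $\g$ are adjoint $G$-orbits, any Poisson transversal in $\g$ is automatically a Poisson slice, so this suffices. I would work pointwise: fix $u\in\S_{x,\mathfrak{T}}$ and check the two conditions $T_u(Gu)+T_u\S_{x,\mathfrak{T}}=\g$ and $T_u(Gu)\cap T_u\S_{x,\mathfrak{T}}$ is symplectic in $T_u(Gu)$. Because $\S_x$ is an open subset of $\g_{x_\semi}$ by Proposition \ref{Proposition: Natural transversal}, $\S_{x,\mathfrak{T}}$ is open inside the Slodowy slice $\S_{\mathfrak{T}}\subset\g_{x_\semi}$, so $T_u\S_{x,\mathfrak{T}}=T_u\S_{\mathfrak{T}}=(\g_{x_\semi})_f$.

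For the transversality condition, I would use the $G$-invariant form $\langle\cdot,\cdot\rangle$ from Section \ref{Subsection: Complex reductive algebraic groups} to split $\g=\g_{x_\semi}\oplus\g_{x_\semi}^{\perp}$. Because $u\in\g_{x_\semi}$, $\ad_u$ preserves this splitting, so $[\g,u]=[\g_{x_\semi},u]\oplus[\g_{x_\semi}^{\perp},u]$. On the $\g_{x_\semi}$-summand, applying Proposition \ref{Proposition: Slodowy Poisson transversal} to the reductive Lie algebra $\g_{x_\semi}$ yields $[\g_{x_\semi},u]+(\g_{x_\semi})_f=\g_{x_\semi}$. For the $\g_{x_\semi}^{\perp}$-summand, the key observation is that $u\in\S_x\subset\S_{x_\semi}$ implies, by Proposition \ref{p:jordan}(i) and Proposition \ref{p:Tauvel-Yu-equiv-cond}, that $u_\semi\in\S_{x_\semi}$ and $\g_{u_\semi}\subset\g_{x_\semi}$. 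Hence $\g_{u_\semi}\cap\g_{x_\semi}^{\perp}=\{0\}$, so the semisimple operator $\ad_{u_\semi}$ has trivial kernel on $\g_{x_\semi}^{\perp}$ and is invertible there. Since $[u_\semi,u_\nilp]=0$ and $\ad_{u_\nilp}$ is nilpotent, $\ad_u=\ad_{u_\semi}+\ad_{u_\nilp}$ remains invertible on $\g_{x_\semi}^{\perp}$, yielding $[\g_{x_\semi}^{\perp},u]=\g_{x_\semi}^{\perp}$. Combining, $[\g,u]+(\g_{x_\semi})_f=\g$.

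For the symplecticity of the intersection, I would compute $T_u(Gu)\cap T_u\S_{x,\mathfrak{T}}=[\g,u]\cap(\g_{x_\semi})_f=[\g_{x_\semi},u]\cap(\g_{x_\semi})_f$, where the last equality uses $(\g_{x_\semi})_f\subset\g_{x_\semi}$ together with the $\ad_u$-invariant orthogonal decomposition. By Proposition \ref{Proposition: Slodowy Poisson transversal} applied inside $\g_{x_\semi}$, this intersection is a symplectic subspace of $T_u(G_{x_\semi}u)=[\g_{x_\semi},u]$ for the Kirillov--Kostant--Souriau form induced by $\langle\cdot,\cdot\rangle|_{\g_{x_\semi}}$. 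Because the $G$-invariant form restricts non-degenerately to $\g_{x_\semi}$, the KKS form of $G_{x_\semi}u\subset\g_{x_\semi}$ is the restriction of the KKS form of $Gu\subset\g$ to the subspace $[\g_{x_\semi},u]\subset[\g,u]$. Thus $[\g_{x_\semi},u]\cap(\g_{x_\semi})_f$ is symplectic inside $T_u(Gu)$.

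The main obstacle here is conceptual rather than computational: one must identify, transparently, the sense in which the Slodowy-slice Poisson transversal property inside the Levi subalgebra $\g_{x_\semi}$ propagates to the ambient reductive Lie algebra $\g$. This hinges on the $\ad_u$-invariant orthogonal splitting $\g=\g_{x_\semi}\oplus\g_{x_\semi}^{\perp}$ and the invertibility of $\ad_u$ on the complement, which in turn relies crucially on the defining condition $\g_{u_\semi}\subset\g_{x_\semi}$ of membership in $\S_x$. Once these two points are established, both Poisson-transversal conditions at $u$ decouple cleanly into a $\g_{x_\semi}$-statement (handled by the Slodowy-slice result) and a $\g_{x_\semi}^{\perp}$-statement (handled by invertibility), and the proof concludes.
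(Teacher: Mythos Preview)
Your proof is correct, but it takes a different route from the paper's. The paper argues in two lines via the transitivity of Poisson transversals: $\S_{x,\mathfrak{T}}=\S_x\cap\S_{\mathfrak{T}}$ is a Poisson transversal in $\S_x$ (since $\S_{\mathfrak{T}}$ is a Poisson transversal in $\g_{x_\semi}$ by Proposition~\ref{Proposition: Slodowy Poisson transversal}, and $\S_x$ is open in $\g_{x_\semi}$), while $\S_x$ is a Poisson transversal in $\g$ by Corollary~\ref{Corollary: Poisson slices}; hence $\S_{x,\mathfrak{T}}$ is a Poisson transversal in $\g$. You instead verify the two Poisson-transversal conditions directly at each $u\in\S_{x,\mathfrak{T}}$, using the $\ad_u$-invariant orthogonal splitting $\g=\g_{x_\semi}\oplus\g_{x_\semi}^{\perp}$ and the invertibility of $\ad_u$ on the complement. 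Your argument is essentially an unwinding, at a single stroke, of the paper's proof of Proposition~\ref{Proposition: Semisimple case} together with the Slodowy-slice transversality inside $\g_{x_\semi}$; it is more explicit and self-contained (and makes the invertibility of $\ad_u$ on $\g_{x_\semi}^{\perp}$ visible), while the paper's approach is shorter and highlights the general compositional property of Poisson transversals.
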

	
	\begin{proof}
		Note that $\S_{x,\mathfrak{T}}$ is a Poisson transversal in $\S_x$. Corollary \ref{Corollary: Poisson slices} implies that the latter is a Poisson transversal in $\g$. It follows that $\S_{x,\mathfrak{T}}$ is a Poisson transversal in $\g$. We conclude that $\S_{x,\mathfrak{T}}$ is a Poisson slice in $\g$.
	\end{proof}
	
	Let $x$ and $\mathfrak{T}$ be as in Definition \ref{Definition: Transversal}. By means of Definition \ref{Definition: Contracting action}, $\mathfrak{T}$ determines a contracting action of $\mathbb{C}^{\times}$ on $\mathcal{S}_{\mathfrak{T}}$. The following result arises in this context.
	\begin{proposition}
		\label{Proposition: Slodowy saturation}
		Let $x\in\g$ and $\mathfrak{T}\in [\g_{x_\semi},\g_{x_\semi}]^{\times 3}$ be as in Definition \ref{Definition: Transversal}.
		\begin{itemize}
			\item[\textup{(i)}] The open subset $\S_{x,\mathfrak{T}}\subset \S_{\mathfrak{T}}$ is invariant under the contracting action of $\mathbb{C}^{\times}$ on $\mathcal{S}_{\mathfrak{T}}$.
			\item[\textup{(ii)}] We have $G_{x_\semi}\S_{x,\mathfrak{T}}=\S_x$. 
			\item[\textup{(iii)}] If $x$ is nilpotent, then $\S_{x,\mathfrak{T}}=\S_{\mathfrak{T}}$; otherwise, $\S_{x,\mathfrak{T}}\subsetneqq \S_{\mathfrak{T}}$.
		\end{itemize}
	\end{proposition}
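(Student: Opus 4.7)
The plan is to treat each part in turn, leveraging the fact that $\S_x$ is a union of $G_{x_\semi}$-decomposition classes in $\g_{x_\semi}$ together with a Slodowy-type description of the saturation $G_{x_\semi}\S_{\mathfrak{T}}$.

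For (i), the contracting action is by definition the restriction to $\S_{\mathfrak{T}}$ of the $\mathbb{C}^{\times}$-action on $\g_{x_\semi}$ given by $t\cdot y = t^2\Ad_{\lambda_{\mathfrak{T}}(t)}(y)$, so $\S_{\mathfrak{T}}$ is invariant tautologically and invariance of $\S_{x,\mathfrak{T}}$ reduces to invariance of $\S_x$. Because $\mathfrak{T}$ lies in $[\g_{x_\semi},\g_{x_\semi}]$, the cocharacter $\lambda_{\mathfrak{T}}$ takes values in $G_{x_\semi}$, so $\Ad_{\lambda_{\mathfrak{T}}(t)}$ preserves each $G_{x_\semi}$-decomposition class $\D^{G_{x_\semi}}_{\l,\O}$; combined with dilation invariance (Lemma \ref{Lemma: Dilation}), the formula $t\cdot y$ preserves each such class, and hence $\S_x$.

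For (ii), only $\S_x\subset G_{x_\semi}\S_{\mathfrak{T}}$ requires work. Given a decomposition class $\D\subset\S_x$, so $x\in\overline{\D}$, I apply the contracting action and use $\Ad_{\lambda_{\mathfrak{T}}(t)}(x_\semi)=x_\semi$ (since $x_\semi\in\z(\g_{x_\semi})$) and $\Ad_{\lambda_{\mathfrak{T}}(t)}(e)=t^{-2}e$ to obtain $t\cdot x = t^2 x_\semi + e$. By part (i), $\overline{\D}$ is contracting-invariant, so letting $t\to 0$ gives $e\in\overline{\D}$. The crux is then the Slodowy-type identity $G_{x_\semi}\S_{\mathfrak{T}} = \bigcup_{\D : e\in\overline{\D}}\D$: the $\subset$ direction uses the same contracting-action argument on arbitrary $y\in\S_{\mathfrak{T}}$; the $\supset$ direction combines openness of $G_{x_\semi}\S_{\mathfrak{T}}$ with the Slodowy transverse slice theorem to obtain $\D\cap\S_{\mathfrak{T}}\neq\emptyset$, then upgrades this to $\D\subset G_{x_\semi}\S_{\mathfrak{T}}$ via connectedness and $\mathbb{C}^{\times}$-invariance of $\D$. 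Granting this identity, any $y\in\S_x$ is $G_{x_\semi}$-conjugate into $\S_{\mathfrak{T}}\cap\S_x=\S_{x,\mathfrak{T}}$.

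For (iii), if $x$ is nilpotent then $x=e$, so the identity from (ii) gives $\S_x=G_{x_\semi}\S_{\mathfrak{T}}\supset\S_{\mathfrak{T}}$, yielding $\S_{x,\mathfrak{T}}=\S_{\mathfrak{T}}$. For non-nilpotent $x$ I produce a witness $y=e\in\S_{\mathfrak{T}}\setminus\S_x$: since $\S_x\subset\S_{x_\semi}$, it suffices to show $\g_e\not\subset\g_{x_\semi}$. In the relevant case that $\g_{x_\semi}$ is a proper Levi of $\g$, let $\mathfrak{u}\subset\g$ be the sum of the non-zero $x_\semi$-eigenspaces, so $\g=\g_{x_\semi}\oplus\mathfrak{u}$ with $\mathfrak{u}\neq 0$. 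Since $\mathfrak{T}\subset\g_{x_\semi}$ commutes with $x_\semi$, $\mathfrak{u}$ is $\mathfrak{T}$-invariant, and standard $\sl_2$-representation theory produces a non-zero highest-weight vector in $\mathfrak{u}$, giving an element of $\g_e\cap\mathfrak{u}\subset\g_e\setminus\g_{x_\semi}$. The main obstacle is establishing the $\supset$ direction of the Slodowy-type identity invoked in (ii): showing that every $G_{x_\semi}$-orbit in a class $\D$ with $e\in\overline{\D}$---not merely some orbit---meets $\S_{\mathfrak{T}}$ requires a careful argument combining openness and $\mathbb{C}^{\times}$-invariance of $G_{x_\semi}\S_{\mathfrak{T}}$ with irreducibility of $\D$.
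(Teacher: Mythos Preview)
Your treatments of (i) and (iii) are essentially those of the paper, and in fact your argument for $e\notin\S_{x_\semi}$ in (iii) supplies detail the paper omits.

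The gap is in (ii), precisely at the point you flag as the ``main obstacle.'' Your reduction to the identity $G_{x_\semi}\S_{\mathfrak{T}}=\bigcup_{e\in\overline{\D}}\D$ is valid and conceptually clean---it shows that (ii) for general $x$ follows from (ii) for $x=e$ nilpotent, via the elegant observation that contracting $x$ itself gives $e\in\overline{\D}$ whenever $x\in\overline{\D}$. But the $\supset$ direction of that identity is not established by the ingredients you list. Openness of $G_{x_\semi}\S_{\mathfrak{T}}$ together with $e\in\overline{\D}$ gives only $\D\cap G_{x_\semi}\S_{\mathfrak{T}}\neq\emptyset$; to get $\D\subset G_{x_\semi}\S_{\mathfrak{T}}$ you would need this open subset of $\D$ to be closed as well, and irreducibility plus $\mathbb{C}^\times$-invariance do not force that. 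Concretely, a $G_{x_\semi}$-decomposition class $\D=G_{x_\semi}(\z(\l)_{\mathsf{gen}}+\O)$ is typically a family of $G_{x_\semi}\times\mathbb{C}^\times$-orbits parameterized by an open set in $\mathbb{P}(\z(\l))$ modulo a finite group, so a nonempty open $G_{x_\semi}\times\mathbb{C}^\times$-invariant subset of $\D$ need not be all of $\D$. What is actually required is the orbit-by-orbit statement $e\in\overline{(G_{x_\semi}\times\mathbb{C}^\times)y}$ for \emph{each} $y\in\D$, which is strictly stronger than $e\in\overline{\D}$.

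The paper supplies exactly this missing step by an explicit limiting argument: writing $y=y_\semi+y_\nilp$ with $y_\semi\in\z(\g_J)_{\mathsf{gen}}$, it uses Proposition~\ref{Proposition: Uxcombinatorial} (the Lusztig--Spaltenstein description of which $(\l,\O)$ occur in $\S_x$) to place $e$ in $\overline{\O}+\n_{I,J}$, then constructs elements $g_\lambda\in G_I$ and scalars $\lambda\to 0$ so that $\lambda g_\lambda y$ converges arbitrarily close to $e$. Your framework does not avoid this computation; proving your Slodowy-type identity is equivalent to the nilpotent case of (ii) and requires the same parabolic/nilradical manipulation.
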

	\begin{proof}
		The subsets $\D^{G_{x_\semi}}_{\l,\O}=G_{x_\semi}(\z(\l)_{\mathsf{gen}}+\O)$ appearing in \eqref{e:defUx} are $G_{x_\semi}$-invariant, as well as invariant under non-zero scaling. It follows that $\S_{x,\mathfrak{T}}$ is invariant under the $\bC^\times$-contracting action, proving (i). If $x_\semi=0$, then $x_\nilp=x\in \S_{x,\mathfrak{T}}$. It is also clear that $\S_{x,\mathfrak{T}}$ is an open neighborhood of $x$ in $\S_{\mathfrak{T}}$, invariant under the $\bC^\times$-contracting action. We conclude that $\S_{x,\mathfrak{T}}=\S_{\mathfrak{T}}$. On the other hand, if $x_{\semi} \ne 0$, then $x_{\nilp}\notin \S_{x,\mathfrak{T}}$. Hence $\S_{x,\mathfrak{T}}\subsetneqq \S_{\mathfrak{T}}$. This proves (iii).
		
		Turning to the proof of (ii), it is clear that $G_{x_\semi}\S_{x,\mathfrak{T}}\subset \S_x$. For the reverse inclusion, we must show every element of $\S_x$ is $G_{x_\semi}$-conjugate to an element of $\S_{x,\mathfrak{T}}$. Note that if $y \in \S_x$ is $G_{x_\semi}$-conjugate to an element $y'$ of $\S_{\mathfrak{T}}$, the $G_{x_\semi}$-invariance of $\S_x$ implies that $y'$ belongs to $\S_{x,\mathfrak{T}}\subset \S_{\mathfrak{T}}$ automatically. It therefore suffices to show that $\S_x\subset G_{x_\semi}\S_{\mathfrak{T}}$, or equivalently that
		\[ U:=\S_x-x_\semi \subset G_{x_\semi}\S_{\mathfrak{T}}-x_\semi=G_{x_\semi}\S_{\mathfrak{T}}.\] 
		Slodowy slices have the property that $G_{x_\semi}\S_{\mathfrak{T}}$ contains an open neighborhood $U$ of the orbit $G_{x_\semi}x_\nilp$, and is invariant under the $\bC^\times$-scaling action of $\g$. We claim the following: it suffices to show that for any $y \in U$, the closure of the $G_{x_\semi}\times \bC^\times$-orbit through $y$ contains $e=x_\nilp$; indeed, if this is the case, then since $G_{x_\semi}\S_{\mathfrak{T}}$ is an open neighborhood of $e=x_\nilp$, $(G_{x_\semi}\times \bC^\times)y\cap G_{x_\semi}\S_{\mathfrak{T}}\ne \emptyset$. Then $y \in G_{x_\semi}\S_{\mathfrak{T}}$ by the $(G_{x_\semi}\times \bC^\times)$-invariance of $G_{x_\semi}\S_{\mathfrak{T}}$. 
		
		It is convenient to fix a Cartan subalgebra $\t\subset \g_{x_\semi}$ containing $y_\semi$, as well as a choice of simple roots $\Delta$ such that $\g_{x_\semi}=\g_I$ with $I\subset \Delta$. Let $y_\semi'=y_\semi+x_\semi$ and let $\O=G_{y_\semi}y_\nilp$. Then 
		\[ y=y_\semi+y_\nilp=y_\semi'-x_\semi+y_\nilp\in (\z(\g_J)_{\mathsf{gen}}-x_\semi)+\O\subset U \quad\text{and}\quad y_\semi'+y_\nilp\in \S_x.\] 
		Proposition \ref{Proposition: Uxcombinatorial} tells us that $e=x_\nilp$ belongs to the closure of $\tn{Ind}_{\g_J}^{\g_I}(\O)$, which is the subset $G_I(\ol{\O}+\n_{I,J})\subset \g_I$. Up to conjugating by an element of $G_I$, we may assume $x_\nilp=e=e'+e'' \in \ol{\O}+\n_{I,J}$. Choose a norm on the vector space $\g_I$ for convenience, and choose $g'\in G_J$ such that $y_\nilp':=g'y_\nilp$ is less than a distance $\epsilon>0$ from $e'$. Note that $y_\semi',y_\nilp'$ are the semisimple and nilpotent components of $y'=y_\semi'+y_\nilp'$. We conclude that $\g_{y'}\subset \g_{y_\semi'}$. Moreover, $y_\semi'\in \z(\g_J)_{\mathsf{gen}}$. It follows that $\g_{y_\semi'}=\g_J$. Hence $\g_{y'}\cap \n_{I,J}\subset \g_J\cap \n_{I,J}=0$. Since $\ker(\ad_{y'})=\g_{y'}$, this proves that $\ker(\ad_{y'})\cap \n_{I,J}=0$. We also note that $\ad_{y'}(\n_{I,J})\subset \n_{I,J}$, as $\n_{I,J}$ is normalized by $\g_J$ and $y' \in \g_J$. Noting that $\ker(\ad_{y'})\cap \n_{I,J}=0$, it follows that $\ad_{y'}(\n_{I,J})=\n_{I,J}$; compare this to \cite[Section 39.2.2]{Tauvel-Yu}). The same argument shows $\ad_{y'_\lambda}(\n_{I,J})=\n_{I,J}$ for $y'_\lambda=\lambda y_\semi'+y_\nilp'$ and any $\lambda \in \bC^\times$. By \cite[Lemma 32.2.5]{Tauvel-Yu}, $\exp(\n_{I,J})(\lambda y_\semi'+y_\nilp')=\lambda y_\semi'+y_\nilp'+\n_{I,J}$. We may therefore choose $g''_\lambda\in \exp(\n_{I,J})$ such that $g''_\lambda (\lambda y_\semi'+y_\nilp')=\lambda y_\semi'+y_\nilp'+e''$. We also observe that $y_\nilp' \in \O$ implies $\lambda^{-1}y_\nilp' \in \O$, allowing us to choose $g_\lambda'\in G_J$ such that $g_\lambda'y_\nilp'=\lambda^{-1}y_\nilp'$. Then
		\[ \lambda g''_\lambda g_\lambda'g'y=\lambda g''_\lambda g_\lambda'(y_\semi'-x_\semi+y_\nilp')=\lambda g''_\lambda(y_\semi'-x_\semi+\lambda^{-1}y_\nilp')=g''_\lambda(\lambda y_\semi'+y_\nilp'-\lambda x_\semi)=\lambda y_\semi'+y_\nilp'+e''-\lambda x_\semi. \]
		Letting $\lambda \longrightarrow 0$ gives $y_\nilp'+e''=g'y_\nilp+e''$, which is less than distance $\epsilon$ of $x_\nilp=e=e'+e''$ by construction. Since $\epsilon>0$ was arbitrary, this completes the proof.
	\end{proof}
	
	Applying Theorem \ref{t:abs-symp-cr-sec} to $\S_{x,\mathfrak{T}}$ we obtain a symplectic slice theorem. Let $\G=T^*G=G\times \g^*\tto \g^*$.
	\begin{corollary}
		\label{c:slice-general}
		Let $(M,\omega,\mu)$ be a holomorphic Hamiltonian $G$-space and let $x \in \mu(M)$. Then $\mu^{-1}(\S_{x,\mathfrak{T}})\subset M$ is a symplectic slice and Hamiltonian $\G_{\S_{x,\mathfrak{T}}}^{\S_{x,\mathfrak{T}}}$-space, and there is an isomorphism of Hamiltonian $G$-spaces
		\[ (G\times \mu^{-1}(\S_{x,\mathfrak{T}}))/\G_{\S_{x,\mathfrak{T}}}^{\S_{x,\mathfrak{T}}}\longrightarrow G\mu^{-1}(\S_{x,\mathfrak{T}}).\]
		The action groupoid $G_{x,\mathfrak{T}}\times \S_{x,\mathfrak{T}}\tto \S_{x,\mathfrak{T}}$ is a subgroupoid of $\G_{\S_{x,\mathfrak{T}}}^{\S_{x,\mathfrak{T}}}$ and $\mu^{-1}(\S_{x,\mathfrak{T}})$ is a Hamiltonian $G_{x,\mathfrak{T}}$-space.
	\end{corollary}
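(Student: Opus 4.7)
The plan is to exploit the identification (Example \ref{Example: Recasting}) between Hamiltonian $G$-spaces and Hamiltonian $\G$-spaces for the cotangent groupoid $\G = T^*G \tto \g$, and then to invoke Theorem \ref{t:abs-symp-cr-sec} with the Poisson slice $S = \S_{x,\mathfrak{T}}$. The essential input is already in hand, namely Corollary \ref{Corollary: Nice}, which establishes that $\S_{x,\mathfrak{T}}$ is a Poisson slice in $\g$. Consequently the proof should be essentially a direct transcription of the three parts of Theorem \ref{t:abs-symp-cr-sec}, together with one small additional verification concerning $G_{x,\mathfrak{T}}$-invariance. The whole argument is parallel to that of Corollaries \ref{c:slice-semisimple} and \ref{c:slice-generalUx}.

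First, I would feed $S = \S_{x,\mathfrak{T}}$ into Theorem \ref{t:abs-symp-cr-sec}(i)--(ii), which immediately yields that $\mu^{-1}(\S_{x,\mathfrak{T}})$ is a symplectic slice in $M$ as well as a Hamiltonian $\G_{\S_{x,\mathfrak{T}}}^{\S_{x,\mathfrak{T}}}$-space, and that the saturation $\G\mu^{-1}(\S_{x,\mathfrak{T}}) = G\mu^{-1}(\S_{x,\mathfrak{T}})$ is open in $M$. Next, under the left-trivialization $T^*G = G \times \g$, the source map reads $\sss(g,\xi) = \xi$, so $\G_S = \sss^{-1}(S) = G \times \S_{x,\mathfrak{T}}$ and the fiber product simplifies:
\[ \G_S \times_S \mu^{-1}(S) \;\cong\; G \times \mu^{-1}(\S_{x,\mathfrak{T}}). \]
Plugging this identification into Theorem \ref{t:abs-symp-cr-sec}(iii) then yields the asserted isomorphism of Hamiltonian $G$-spaces
\[ (G \times \mu^{-1}(\S_{x,\mathfrak{T}}))/\G_{\S_{x,\mathfrak{T}}}^{\S_{x,\mathfrak{T}}} \overset{\cong}{\longrightarrow} G\mu^{-1}(\S_{x,\mathfrak{T}}). \]

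For the remaining claim, I would verify that $G_{x,\mathfrak{T}}$ preserves $\S_{x,\mathfrak{T}}$. By definition $G_{x,\mathfrak{T}} \subset G_{x_\semi}$ fixes each of $e, h, f$, so it preserves $\S_{\mathfrak{T}} = e + (\g_{x_\semi})_f$ (it fixes $e$ and preserves $(\g_{x_\semi})_f$ under the adjoint action). Since $\S_x$ is $G_{x_\semi}$-invariant by Proposition \ref{Proposition: Natural transversal}, it is \emph{a fortiori} $G_{x,\mathfrak{T}}$-invariant. Therefore $\S_{x,\mathfrak{T}} = \S_x \cap \S_{\mathfrak{T}}$ is $G_{x,\mathfrak{T}}$-invariant, and the assignment $(g, y) \mapsto (g, y)$ embeds the action groupoid $G_{x,\mathfrak{T}} \times \S_{x,\mathfrak{T}} \tto \S_{x,\mathfrak{T}}$ as a subgroupoid of $\G_{\S_{x,\mathfrak{T}}}^{\S_{x,\mathfrak{T}}}$. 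Restricting the Hamiltonian $\G_{\S_{x,\mathfrak{T}}}^{\S_{x,\mathfrak{T}}}$-structure on $\mu^{-1}(\S_{x,\mathfrak{T}})$ along this subgroupoid then produces the desired Hamiltonian $G_{x,\mathfrak{T}}$-space structure.

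I do not anticipate any real obstacle. The nontrivial content has been pushed into Corollary \ref{Corollary: Nice} (the Poisson slice property of $\S_{x,\mathfrak{T}}$) and Theorem \ref{t:abs-symp-cr-sec} (the abstract slice theorem); once both are available, the proof above is essentially formal, with the mildest subtlety being the check that the two factors $\S_x$ and $\S_{\mathfrak{T}}$ are separately preserved by $G_{x,\mathfrak{T}}$.
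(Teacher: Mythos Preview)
Your proposal is correct and follows exactly the approach of the paper, which simply states ``Applying Theorem \ref{t:abs-symp-cr-sec} to $\S_{x,\mathfrak{T}}$ we obtain a symplectic slice theorem'' before stating the corollary without further proof. Your write-up supplies precisely the details the paper leaves implicit, including the identification $\G_S \cong G \times \S_{x,\mathfrak{T}}$ (parallel to Corollary \ref{c:slice-semisimple}) and the $G_{x,\mathfrak{T}}$-invariance of $\S_{x,\mathfrak{T}}$.
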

	\begin{remark}
		\label{r:specialtoprin2}
		Continuing Remark \ref{r:specialtoprin}, when $x\in \D_M$, the description of $\mu^{-1}(\S_{x,\mathfrak{T}})$ simplifies: since $\O_{x_{\nilp}}\cap \S_{\mathfrak{T}}=\{x_\nilp\}$, we have $\mu^{-1}(\S_{x,\mathfrak{T}})=\mu^{-1}(\z(\g_{x_\semi})_{\mathsf{gen}}+x_\nilp)$, recovering the symplectic subvariety in Proposition \ref{Proposition: Symplectic subvariety}.
	\end{remark}
	
	\bibliographystyle{acm}
	\bibliography{decomposition}
\end{document}